\newcommand{\A}{\ensuremath{\mathcal{A}}}
\newcommand{\C}{\mathrm{C}}
\newcommand{\D}{\ensuremath{\mathcal{D}}}
\newcommand{\E}{\mathrm{E}}
\newcommand{\N}{\ensuremath{\mathbb{N}}}
\newcommand{\B}{\mathrm{B}} %Bounded operators
\let\H\relax %\H ne fait plus rien !
\newcommand{\H}{\mathrm{H}}
\newcommand{\I}{\mathrm{I}} %matrice identit\'e
\newcommand{\K}{\mathrm{K}}
\let\L\relax %\L ne fait plus rien !
\newcommand{\L}{\mathrm{L}}
\newcommand{\J}{\mathcal{J}} %involution
\newcommand{\BMO}{\mathrm{BMO}} %Bounded mean oscillation
\newcommand{\VMO}{\mathrm{VMO}}
\newcommand{\UMD}{\mathrm{UMD}} %UMD Banach spaces
\newcommand{\scr}{\mathscr}
\let\top\relax %\O ne fait plus rien !
\newcommand{\top}{\mathrm{top}} %topological
\newcommand{\red}{\mathrm{red}} %reduced
\newcommand{\M}{\mathrm{M}} %matrix algebra
\newcommand{\symb}{\mathrm{symb}} %symbol
\newcommand{\Bott}{\mathrm{Bott}}
\newcommand{\bi}{\mathrm{bi}} %bisectorial
\newcommand{\Ch}{\mathrm{Ch}}
\newcommand{\GL}{\mathrm{GL}}
\newcommand{\Ell}{\mathrm{Ell}} %Elliptic
\def\qd{\,{\raisebox{-0.2mm}{$\mathchar'26$}\mkern-12mu\mathrm{d}}}%quantized differential
\newcommand{\weyl}{\mathrm{weyl}}
\newcommand{\app}{\mathrm{app}}
\newcommand{\per}{\mathrm{per}} %periodic
\newcommand{\V}{\mathrm{V}}
\let\div\relax
\newcommand{\div}{\mathrm{div}} %divergence
\let\cal\relax
\newcommand{\cal}{\mathcal}
\newcommand{\Z}{\ensuremath{\mathbb{Z}}}
\newcommand{\R}{\ensuremath{\mathbb{R}}}
\newcommand{\T}{\ensuremath{\mathbb{T}}}
\newcommand{\W}{\mathrm{W}}
\newcommand{\Id}{\mathrm{Id}} %Identity
\newcommand{\SL}{\mathscr{S}\mathscr{L}} %subspaces of Lp
\newcommand{\SQL}{\mathscr{S}\mathscr{Q}\mathscr{L}} %subspaces of quotient of Lp
\newcommand{\VN}{\mathrm{VN}} %von Neumann algebra
\newcommand{\nc}{\mathrm{nc}} %noncommutative
\newcommand{\la}{\langle}
\newcommand{\ra}{\rangle}
\renewcommand{\leq}{\ensuremath{\leqslant}}
\renewcommand{\geq}{\ensuremath{\geqslant}}
\newcommand{\qed}{\hfill \vrule height6pt  width6pt depth0pt}
\newcommand{\bnorm}[1]{ \big\| #1  \big\|}
\newcommand{\Bgnorm}[1]{ \Bigg\| #1  \Bigg\|}
\newcommand{\norm}[1]{\left\Vert#1\right\Vert}
\newcommand{\xra}{\xrightarrow}
\newcommand{\co}{\colon}
\newcommand{\ot}{\otimes}
\newcommand{\ovl}{\overline}
\newcommand{\sign}{\mathrm{sign}} %signature
\newcommand{\Fred}{\mathrm{Fred}} %Fredholm
\newcommand{\dsp}{\displaystyle}
\let\i\relax %\i ne fait plus rien !
\newcommand{\i}{\mathrm{i}} %i complex
\let\Im\relax %\Im ne fait plus rien !
\newcommand{\Im}{\mathrm{Im}}
\newcommand{\ov}{\overset}
\newcommand{\w}{\mathrm{w}} %weak
\newcommand{\epsi}{\varepsilon}
\renewcommand{\d}{\mathop{}\mathopen{}\mathrm{d}} %op\'erateur diff\'erentiel
\newcommand{\e}{\mathrm{e}} %constante e
\renewcommand{\d}{\mathop{}\mathopen{}\mathrm{d}}
\let\div\relax
\newcommand{\div}{\mathrm{div}}
\DeclareMathOperator{\Span}{span} %sev engendre
\DeclareMathOperator{\Lip}{\mathrm{Lip}} %Lipschitz
\DeclareMathOperator{\supp}{supp} %support
\DeclareMathOperator{\Index}{Index} %Index
\DeclareMathOperator{\sgn}{\mathrm{sgn}} 
\DeclareMathOperator{\tr}{Tr} %trace
\let\ker\relax %\ker ne fait plus rien !
\DeclareMathOperator{\ker}{Ker} %range
\DeclareMathOperator{\Sp}{Sp} %spectrum
\DeclareMathOperator{\Ran}{Ran} %range
\DeclareMathOperator{\diag}{diag} %diagonal matrices
\DeclareMathOperator{\dom}{dom} %domaine
\let\Re\relax %\Re ne fait plus rien
\DeclareMathOperator{\Re}{Re} %partie r\'eelle
\DeclareMathOperator{\Ind}{Ind} %induced
\DeclareMathOperator*{\esssup}{esssup}
\DeclareMathOperator{\diam}{diam} %diameter
\DeclareMathOperator{\wind}{wind} %wnding number
\DeclareMathOperator{\ch}{ch} %hyperbolic cosinus
\DeclareMathOperator{\pv}{p.v.} %partie principale
\DeclareMathOperator{\rank}{rank} %rank
\newtheorem{thm}{Theorem}[section]
\newtheorem{defi}[thm]{Definition}
\newtheorem{prop}[thm]{Proposition}
\newtheorem{cor}[thm]{Corollary}
\newtheorem{lemma}[thm]{Lemma}
\newtheorem{remark}[thm]{Remark}
\newtheorem{example}[thm]{Example}
\newenvironment{proof}[1][]{\noindent {\it Proof #1} : }{\hbox{~}\qed
\smallskip
}
\numberwithin{equation}{section}
\let\OLDthebibliography\thebibliography
\renewcommand\thebibliography[1]{
  \OLDthebibliography{#1}
  \setlength{\parskip}{0pt}
  \setlength{\itemsep}{0pt plus 0.3ex}
}
\newcommand\reallywidehat[1]{\arraycolsep=0pt\relax%
\begin{array}{c}
\stretchto{
  \scaleto{
    \scalerel*[\widthof{\ensuremath{#1}}]{\kern-.5pt\bigwedge\kern-.5pt}
    {\rule[-\textheight/2]{1ex}{\textheight}} %WIDTH-LIMITED BIG WEDGE
  }{\textheight} % 
}{0.5ex}\\           % THIS SQUEEZES THE WEDGE TO 0.5ex HEIGHT
#1\\                 % THIS STACKS THE WEDGE ATOP THE ARGUMENT
\rule{-1ex}{0ex}
\end{array}
}
\begin{document}
\selectlanguage{english}
\title{\bfseries{Classical harmonic analysis viewed through the prism of noncommutative geometry}}
\date{}
\author{\bfseries{C\'edric Arhancet}}
%Coulhon-Varopulos dimension, curvature,and functional calculus
%On the spectral dimension of spectral triples
%associated to Markov semigroups of operators
\maketitle

%%%%%%%%%%%%%%%%%%%%%%%%%%%%%%%%%%%%%%%%%%%%%%%%%%%%%%%%%%%%%%%%
%%%%%%%%%%%%%%%%%%%%%%%%%%%%%%%%%%%%%%%%%%%%%%%%%%%%%%%%%%%%%%%%
%The aim of this paper is to bridge noncommutative geometry with classical harmonic analysis on Banach spaces, focusing primarily on both classical and noncommutative $\mathrm{L}^p$ spaces. Introducing a notion of Banach Fredholm module, we define new abelian groups, $\mathrm{K}^{0}(\mathcal{A},\mathscr{B})$ and $\mathrm{K}^{1}(\mathcal{A},\mathscr{B})$, of $\mathrm{K}$-homology associated with an algebra $\mathcal{A}$ and a suitable class $\mathscr{B}$ of Banach spaces, such as the class of $\mathrm{L}^p$-spaces. We establish index pairings of these groups with the $\mathrm{K}$-theory groups of the algebra $\mathcal{A}$. Subsequently, by considering (noncommutative) Hardy spaces, we uncover the natural emergence of Hilbert transforms, leading to Banach Fredholm modules and culminating in index theorems.  Moreover, by associating each reasonable sub-Markovian semigroup with a <<Banach noncommutative manifold>>, we explain how this leads to (possibly kernel-degenerate) Banach Fredholm modules, thereby revealing the role of vectorial Riesz transforms in this context. Overall, our approach significantly integrates the analysis of operators on $\mathrm{L}^p$-spaces into the expansive framework of noncommutative geometry, offering new perspectives.

\begin{abstract}
The aim of this paper is to bridge noncommutative geometry with classical harmonic analysis on Banach spaces, focusing primarily on both classical and noncommutative $\L^p$ spaces. Introducing a notion of Banach Fredholm module, we define new abelian groups, $\K^{0}(\cal{A},\scr{B})$ and $\K^{1}(\cal{A},\scr{B})$, of $\K$-homology associated with an algebra $\cal{A}$ and a suitable class $\scr{B}$ of Banach spaces, such as the class of $\L^p$-spaces. We establish index pairings of these groups with the $\K$-theory groups of the algebra $\cal{A}$. Subsequently, by considering (noncommutative) Hardy spaces, we uncover the natural emergence of Hilbert transforms, leading to Banach Fredholm modules and culminating in new index theorems.  Moreover, by associating each reasonable sub-Markovian semigroup of operators with a <<Banach noncommutative manifold>>, we explain how this leads to (possibly kernel-degenerate) Banach Fredholm modules, thereby revealing the role of vectorial Riesz transforms in this context. Overall, our approach significantly integrates the analysis of operators on $\L^p$-spaces into the expansive framework of noncommutative geometry, offering new perspectives.% and methodologies.
\end{abstract}

%\vspace{0.2cm}
%
%-calcul de de l'entropy pour le semigroup fermionic
%
%\emph{\textbf{-lemme L1Lp general}}
%
%-mieux definir fermion algebras

%%%%%%%%%%%%%%%%%%%%%%%%%%%%%%%%%%%%%%%%%%%%%%%%%%%%%%%%%%%%%%%%
%%%%%%%%%%%%%%%%%%%%%%%%%%%%%%%%%%%%%%%%%%%%%%%%%%%%%%%%%%%%%%%%

\makeatletter
 \renewcommand{\@makefntext}[1]{#1}
 \makeatother
 \footnotetext{
 %The authors are supported by the research program ANR-18-CE40-0021 (project HASCON).\\
 2020 {\it Mathematics subject classification:}
 58B34, 47D03, 46L80, 47B90 
%47B90   	Operator theory and harmonic analysis
%  	46L80   	$K$-theory and operator algebras (including cyclic theory) 
%  46L51  Noncommutative measure and integration
%  46M35       Abstract interpolation of topological vector spaces [See also 46B70]
% 46L07       Operator spaces and completely bounded maps [See also 47L25]
% 43A22 Homomorphisms and multipliers of function spaces on groups, semigroups, etc.
% 43A15 Lp-spaces and other function spaces on groups, semigroups, etc.
% 47D03 Groups and semigroups of linear operators For nonlinear operators.
% 58B34 	Noncommutative geometry (à la Connes)
% 81P45   	Quantum information, communication, networks (quantum-theoretic aspects)
% 94A40   	Channel models (including quantum) in information and communication theory
\\
{\it Key words}: $\K$-homology, $\K$-theory, spectral triples, Riesz transforms, Hilbert transforms, $\L^p$-spaces, Fredholm modules, noncommutative geometry.}
%noncommutative $L^p$-spaces, operator spaces

{
  \hypersetup{linkcolor=blue}
 \tableofcontents
}

%%%%%%%%%%%%%%%%%%%%%%%%%%%%%%%%%%%%%%%%%%%%%%%%%%%%%%%%%%%%%%%%%%%%%%%%%%%%%%%%%%%%%%%%%%%%%%%%%%%%%%%%%%%%%%%%%%%%%%%%%%%%%%%%%%%%%%%%%%%%%%%%%%%%%%
%%%%%%%%%%%%%%%%%%%%%%%%%%%%%%%%%%%%%%%%%%%%%%%%%%%%%%%%%%%%%%%%%%%%%%%%%%%%%%%%%%%%%%%%%%%%%%%%%%%%%%%%%%%%%%%%%%%%%%%%%%%%%%%%%%%%%%%%%%%%%%%%%%%%%%
%%%%%%%%%%%%%%%%%%%%%%%%%%%%%%%%%%%%%%%%%%%%%%%%%%%%%%%%%%%%%%%%%%%%%%%%%%%%%%%%%%%%%%%%%%%%%%%%%%%%%%%%%%%%%%%%%%%%%%%%%%%%%%%%%%%%%%%%%%%%%%%%%%%%%%
\section{Introduction}
\label{sec:Introduction}

The aim of this paper is to provide a coherent framework that encompasses various aspects of harmonic analysis on $\L^p$-spaces within the context of noncommutative geometry. We seek to elucidate connections that appear sporadically throughout the literature, clarifying these overlaps and coincidences. This work can be seen as the next step in our ongoing research program, which began in our previous studies \cite{ArK22}, \cite{Arh24a}, \cite{Arh24b}, and \cite{Arh24c}.

It is crucial to emphasize that noncommutative geometry is not merely a <<generalization>> of classical geometry. While it subsumes known spaces as particular cases, it offers a radically different approach to some classical spaces, such as fractals or leaf  spaces of foliations, by introducing powerful analytical points of view and tools. A concrete and recent example is provided by the phenomenon of <<hidden noncommutative geometry>>, which arises in the study of some (Markovian) semigroups of operators acting on classical $\L^p$-spaces, such as the Poisson semigroup on the torus $\mathbb{T}$, described in \cite{JMP18} and \cite{ArK22}. To understand the analogue of the vectorial Riesz equivalence 
\begin{equation}
\label{}
\bnorm{(-\Delta)^{\frac12}f}_{\L^p(\R^n)} 
\approx_{p} \norm{\nabla f}_{\L^p(\R^n,\ell^2_n)},\quad 1 < p < \infty,
\end{equation}
for the Poisson semigroup on $\T$, where the Laplacian $-\Delta$ on $\R^n$ is replaced by the infinitesimal generator $A$ of the Poisson semigroup $(\e^{-tA})_{t \geq 0}$, it is necessary to introduce a gradient $\partial$ taking values in a closed subspace of a suitable noncommutative $\L^p$-space $\L^p(\cal{M})$ associated to a specific noncommutative von Neumann algebra $\cal{M}$. This leads to the equivalence
\begin{equation} 
\label{Riesz-Parcet-commutatif}
\bnorm{A^{\frac12}f}_{\L^p(\mathbb{T})} 
\approx_{p} \norm{\partial f}_{\L^p(\cal{M})},\quad 1 < p < \infty,
\end{equation}
and to the introduction of some <<noncommutative manifold>> defined essentially with the operator introduced in \eqref{Hodge-Dirac-I} below. 

In noncommutative geometry, the starting point is an algebra $\cal{A}$ (which may be commutative or not), representing the space, with its elements acting as bounded operators on a complex Hilbert space $H$ via a representation $\pi \co \cal{A} \to \B(H)$. This algebra $\cal{A}$ replaces the algebra $\C(\mathcal{X})$ of continuous functions on a classical compact Hausdorff space $\mathcal{X}$ endowed with a finite regular Borel measure, which acts on the complex Hilbert space $\L^2(\mathcal{X})$ by multiplication operators.

Recall that in this context, the K-theory groups $\K_0(\cal{A})$ and $\K_1(\cal{A})$ provide homotopic invariants for each $\C^*$-algebra $\cal{A}$ (and even for more general algebras). These groups are abelian and are furthermore countable if $\cal{A}$ is separable. For example, it is known \cite[Corollary 3.2 p.~152]{PiV82} that
$$
\K_0(\C^*_\red(\mathbb{F}_n)) 
= \mathbb{Z} 
\quad \text{and} \quad 
\K_1(\C^*_\red(\mathbb{F}_n)) 
= \mathbb{Z}^n, \quad n \geq 1,
$$
which makes it possible to distinguish the reduced $\C^*$-algebras of free groups. If $\mathcal{X}$ is a compact Hausdorff topological space, it is worth noting that the group $\K_0(\C(\mathcal{X}))$ coincide with the group $\K_{0,\top}(\cal{X})$, which is the Grothendieck group of the abelian monoid of isomorphism classes of complex vector bundles on $\cal{X}$. Finally, if $M$ is a closed manifold (smooth compact without boundary), it is well-known \cite[Theorem 3.8.13 p.~107]{NSSS06} \cite[p.~277]{BlB13} that there exists an isomorphism between the group $\Ell(M)$ of stable homotopy classes of elliptic pseudo-differential operators between complex vector bundles over $M$ and the K-theory group $\K_{0,\top}(\mathrm{T}^*M)$ of the cotangent bundle $\mathrm{T}^*M$.

%Cher Professeur, j'ai quelques sur votre livre. Je sais que vous etes un expert du domaine. Je vous serais très reconnaissant si acceptiez de m'aider.
%
%Let $M$ be a compact smooth manifold without boundary.
%
%1) In the isomorphism \cite[Theorem 10.7 p.~272]{NSSS06}, it is true that we can restrict to the classes of elliptic operators of order zero ? Si oui, c'est quoi le point principal qui permet de le faire ? 
%
%By the way, do you a clear reference for the proof of \cite[Theorem 10.7 p.~272]{NSSS06}, j'ai regardé dans Atiha/Singer 1968 mais j'ai rien vu dedans.
%
%
%2) Page 275, you write <<Usual pseudodifferential elliptic operators are obviously abstract elliptic operators>>. Are you sure that the assumption <<of order zero>> is not missing ? I does not understand how prove without this additional assumption
%
%4) In your paper [Elliptic operators on manifolds with singularities and K-homology>>p.2], you write <<we can apply Poincaré isomorphism in K-theory on the cotangent bundle to the latter group>>. Do you know a reference for this isomorphism ?

A fundamental concept in this framework is that of a Fredholm module. If $\cal{A}$ is unital, a Fredholm module over $\cal{A}$ is a bounded operator $F \co H \to H$ satisfying the three relations $F^2=\Id_{H}$, $F=F^*$ and $[F,\pi(a)]=0$ for any $a \in \cal{A}$ \textit{modulo a compact operator}. The last equality is a form of <<pseudolocality>>. This notion of Fredholm module is a <<normalized>> variant of the notion of Atiyah cycle\footnote{\thefootnote. An Atiyah cycle on $\mathcal{X}$ is a Fredholm operator
$T \co H_1 \to H_2$ in Hilbert spaces $H_1$ and $H_2$ equipped with the structure of $*$-modules over the algebra $\C(\cal{X})$ such that the commutator $[T,f]$ is a compact operator for any function $f \in \C(\cal{X})$. By Kasparov’s lemma (Higson and Roe 2000), if we assume that the module structures are nondegenerate (i.e., $\C(\cal{X})H_{1,2} = H_{1,2}$), then condition
(10.6) is equivalent to pseudolocality: the operator $fDg$ is compact for any continuous functions $f$, $g$ on $X$ with disjoint supports.} (or generalized elliptic operator) introduced in the paper \cite{Ati70} (see also \cite[Definition 7.6 p.~317]{BDF77}, \cite[Definition 6 p.~165]{Dou86} and \cite[Definition 10.9 p.~274]{NSSS06}). If $D$ is the Dirac operator on a compact Riemannian spin manifold $M$, the Fredholm module $\sgn D$ canonically associated to $M$ encodes the conformal structure of the manifold, as discussed in \cite[Theorem 3.1 p.~388]{Bar07}. These operators enable the definition of $\K$-homology groups $\K^0(\cal{A})$ and $\K^1(\cal{A})$, whose elements are equivalence classes of Fredholm modules, defined under appropriate notions of unitary equivalence and homotopy equivalence, and a natural operation of addition. The distinction between the two groups lies in the fact that, in the case of $\K^0(\cal{A})$, only \textit{even} Fredholm modules are considered, provided an appropriate grading is introduced. These groups are linked to the $\K$-theory groups $\K_0(\cal{A})$ and $\K_1(\cal{A})$ of the algebra $\cal{A}$ through pairings 
\begin{equation}
\label{pairings-intro}
\K_0(\cal{A}) \times \K^0(\cal{A}) \to \Z
\qquad \text{and} \qquad
\K_1(\cal{A}) \times \K^1(\cal{A}) \to \Z,
\end{equation}
that lead to index theorems. In general, these pairings are challenging to compute, and this is precisely the focus of index theory. 

If $E$ is a Hermitian complex vector bundle over a closed smooth manifold $M$ endowed with a Lebesgue measure, it is interesting to observe that if $D \co \scr{C}^\infty(M,E) \to \scr{C}^\infty(M,E)$ is a Hermitian elliptic pseudo-differential operator of order $m > 0$ acting on the space $\scr{C}^\infty(M,E)$ of smooth sections of $E$ then it induces a selfadjoint operator on the Hilbert space $\L^2(M,E)$. Moreover, the operator $\sign D$ induces a pseudo-differential operator of order 0 acting on $\scr{C}^\infty(M,E)$, see \cite[Proposition 2.4, p.~143]{BaG82b} and its proof. Recall that the multiplication operator $\pi(f) \co \L^2(M,E) \to \L^2(M,E)$, defined by any function $f\in \C(M)$ acting on the Hilbert space, satisfies a standard property: the commutator $[P,\pi(f)]$ of any pseudo-differential operator $P$ of order 0 is a compact operator. This entails that any Hermitian elliptic pseudo-differential operator defines a K-homology class $[\sign D]$, often denoted $[D]$, in the K-homology group $\K^1(\C(M))$. Using the operator $\sign \begin{bmatrix}
   0  & D^*  \\
   D  & 0  \\
\end{bmatrix}$ for an elliptic pseudo-differential operator $D$ of order $m > 0$, instead of $\sign D$, we can also obtain a K-homology class $[D]$ in the even K-homology group $\K^0(\C(M))$.

If $M$ is a spin closed manifold\footnote{\thefootnote. More generally, it is true for spin closed manifolds.} of dimension $2n$ and if $D$ is the associated spin Dirac operator, then we have essentially by \cite[Theorem 9.6.11 p.~388]{WiY20} and \cite[Proposition 11.3.7]{Eme24} a group isomorphism $\K_{0,\top}(M) \to \K^0(\C(M))$, $[E] \mapsto [D_E]$, where $D_E$ denotes the Dirac operator $D$ twisted by the complex vector bundle $E$ over $M$. This is an analogue of the classical Poincar\'e duality on the structure of the homology and cohomology groups of orientable closed manifolds. In this context, the twisted Dirac operator $D_E$ is a Hermitian elliptic first order differential operator by \cite[p.~530]{BlB13} and can be expressed as $D_E=\begin{bmatrix}
   0  & D_E^-  \\
   D_E^+  & 0  \\
\end{bmatrix}$. The operator $D_E^+$ induces a Fredholm operator between suitable Sobolev spaces. A particular case of the first pairing described in \eqref{pairings-intro} can be written by \cite[Proposition 11.3.7 p.~512]{Eme24}
\begin{equation}
\label{}
\la [D], [E] \ra_{\K_0(\C(M)),\K_{0,\mathrm{top}}(M)}
= \Index D_E^+.
\end{equation}
A particular case of the Atiyah-Singer index theorem describes this index and consequently this pairing explicitly in terms of de Rham cohomology of the manifold $M$ by providing the formula $\Index D_E = (2\pi \i)^{-n} \int_M \ch(E) \frown \hat{\mathrm{A}}(M)$, where $\Ch(E)$ is the Chern character of the vector bundle $E$ and where $\hat{\mathrm{A}}(M)$ is the total $\hat{\mathrm{A}}$-class of the manifold $M$. %voir K-homology and Fredholm operators I: Dirac operators pour la version spinc

The culmination of this theory is perhaps the index theorem of Connes-Moscovici itself \cite{CoM95}, \cite{Hig02}, and its generalization to the locally compact case \cite{CGRS14}, drawing inspiration from the Atiyah-Singer index theorem, whose applications are thoroughly covered in \cite{BlB13} and in the survey paper  \cite{Fre21}. For more information on noncommutative geometry, we refer to the books \cite{GVF01}, \cite{EcI18} and the survey article \cite{CPR11}, as well as specific applications to solid state physics in \cite{PSB16} and related works on $\K$-theory and $\K$-homology \cite{Pus11}, \cite{Had03}, \cite{Had04}, \cite{NeT11}, \cite{EmN18}, \cite{FGMR19}, \cite{Ger22} all of which are grounded in the classical text \cite{HiR00}.

In this paper, we define a notion of \textit{Banach} Fredholm module $F \co X \to X$ on an arbitrary Banach space $X$ over an algebra $\cal{A}$, where elements act on the space $X$ via a representation $\pi \co \cal{A} \to \B(X)$. In Connes' foundational work \cite{Con94}, an illustrative example of a Fredholm module is given by $F \ov{\mathrm{def}}{=}  \i\cal{H}$ where $\cal{H} \co \L^2(\R) \to \L^2(\R)$ is the Hilbert transform. This example is sometimes seen as an <<exotic>> Fredholm module. Here, we show that various Hilbert transforms on $\L^p$-spaces satisfying $F^2 = \Id_X$ \textit{modulo a compact operator} naturally arise as examples. Actually, we introduce \textit{huge} classes of Banach Fredholm modules over group $\mathrm{C}^*$-algebras, which have not been previously considered \textit{even} in the Hilbertian case. If $M_f \co \L^p(\R^n)\to \L^p(\R^n)$, $g \mapsto fg$ is the multiplication operator by a function $f$, the commutators $[\cal{H},M_f]$ of the Hilbert transform $\cal{H}$ (with $n=1$) and more generally commutators $[\cal{R}_j,M_f]$ of Riesz transforms $\cal{R}_j \co \L^p(\R^n) \to \L^p(\R^n)$ where $1 \leq j \leq n$, on $\L^p$-spaces or even similar operators in other contexts is a classical topic in analysis, see the survey \cite{Wic20} and references therein, connected to Hardy spaces, spaces of functions of bounded/vanishing mean oscillation and factorization of functions, and we demonstrate that they also emerge naturally in the commutators of Banach Fredholm modules. Such a commutator is given by $[F,\pi(a)]$ for some $a \in \cal{A}$. In this framework, we can introduce the Connes quantized differential
\begin{equation}
\label{quantized-differential-intro}
\qd a
\ov{\mathrm{def}}{=} \i [F,\pi(a)], \quad a \in \cal{A}
\end{equation}
of $a$. In the case $F=\i \cal{H}$ and $a=f \in \C_0^\infty(\R)$,  we have
\begin{equation}
\label{commutator-Hilbert-R}
\big([F,M_f]g\big)(x)
=\pv\int_{-\infty}^{\infty}\frac{f(x)-f(y)}{x-y}g(y) \d y.
\end{equation}
A well-known phenomenon in the Hilbertian context is the interplay between the differentiability of $a$ and the <<degree of compactness>> of the quantized differential $\qd a$. In the Banach space context, we use some suitable quasi-Banach ideal $S^q_\app(X)$, where $0 < q < \infty$, relying on the concept of $s$-numbers introduced by Pietsch, as a substitute of the Schatten space $S^q(H) \ov{\mathrm{def}}{=} \{T \co H \to H : \tr |T|^q < \infty\}$, allowing us to define a notion of finitely summable Banach Fredholm module. The notion of $s$-numbers is a generalization of the notion of singular value of operators acting on Hilbert spaces.

We equally introduce $\K$-homology groups $\K^{0}(\cal{A},\scr{B})$ and $\K^{1}(\cal{A},\scr{B})$ associated to the algebra $\cal{A}$ and with a class $\scr{B}$ of Banach spaces, as the class of (noncommutative) $\L^p$-spaces. We explore their pairings with the $\K$-theory groups $\K_0(\cal{A})$ and $\K_1(\cal{A})$ of the algebra $\cal{A}$. In the simplest case, the pairing gives the index theorem of Gohberg-Krein for a Toeplitz operator $T_a \co \H^p(\T) \to \H^p(\T)$ acting on the classical Hardy space $\H^p(\T)$ on the circle for any $1 < p < \infty$, generalizing the well-known case $p=2$.

Connes demonstrated how to associate a canonical cyclic cocycle, known as the Chern character, with a \textit{finitely summable} Fredholm module, and how this character can be used to compute the index pairing between the 
$\K$-theory of $\cal{A}$ and the $\K$-homology class of the Fredholm module. We show that our framework admits a similar Chern character. The (odd) Chern character in K-homology is defined from the quantized calculus \eqref{quantized-differential-intro} associated to a Banach Fredholm module by
$$
\Ch_{n}^F(a_0,a_1,\dots,a_{n})
\ov{\mathrm{def}}{=} c_n \tr(F \qd a_0 \qd a_1 \cdots \qd a_n),\qquad a_0,a_1,\dots,a_n \in \cal{A},
$$
for some sufficiently large odd integer $n$ and where $c_n$ is some suitable constant. Here, we use the trace $\tr$ on the space $S^1_\app(X)$ which is the unique continuous extension of the trace defined on the space of finite-rank operators acting on the Banach space $X$.

Within the Hilbertian setting of Connes, it is well-known that a Fredholm module can be constructed from a compact spectral triple (i.e.~a noncommutative compact Riemannian spin manifold). Such a compact spectral triple $(\cal{A},H,D)$ consists of a selfadjoint operator $D$, defined on a dense subspace of the Hilbert space $H$, and satisfying certain axioms. Specifically, $D^{-1}$ must be compact on $\ker D^\perp$, often referred to as the <<unit length>> or <<line element>>, denoted by $\d s$ since we can introduce the noncommutative integral $\int$ with a Dixmier trace.% $\tr_\omega$ by the formula 
%\begin{equation}
%\label{}
%\int a
%\ov{\mathrm{def}}{=} \tr_\omega \big(\pi(a)D^{-d}\big), \quad a \in \cal{A}.
%\end{equation}
%Here $d \ov{\mathrm{def}}{=} \inf\big\{ q > 0 : |D|^{-1} \in S^{q,\infty}(H) \big\}$ is the <<spectral dimension>> of $D$, defined with the weak Schatten space $S^{q,\infty}(H)$. This noncommutative integral generalizes the Lebesgue measure on a Riemannian compact manifold, up to a constant. 

The Dirac operator $D$ on a spin Riemannian compact manifold $M$ can be used for constructing a classical example of spectral triple. Several examples in different contexts are discussed in the survey \cite{CoM08}. % and papers \cite{ChP03}, \cite{ChP10}, \cite{DOS20}, \cite{GuS23}, \cite{GoM15}, \cite{GoM18}, \cite{CGIS14} (on fractals), and . 
The Fredholm module obtained from a spectral triple $(\cal{A},H,D)$ is built using the bounded operator 
\begin{equation}
\label{sgn-Hilbert}
\sgn D \co H \to H,
\end{equation}
constructed with the spectral theorem. 

In this paper, we replace Connes' spectral triples by the Banach compact spectral triples of \cite{ArK22}. Such a triple $(\cal{A},X,D)$ is composed of a Banach space $X$, a representation $\pi \co \cal{A} \to \B(X)$ and a bisectorial operator $D$ on a Banach space $X$ that admits a bounded $\H^\infty(\Sigma_\theta^\bi)$ functional calculus on the open bisector $\Sigma_\theta^\bi \ov{\mathrm{def}}{=} \Sigma_\theta \cup (-\Sigma_\theta)$ where $\Sigma_{\theta} \ov{\mathrm{def}}{=} \big\{ z \in \mathbb{C} \backslash \{ 0 \} : \: | \arg z | < \theta \big\}$, with $0 < \theta <\frac{\pi}{2}$. We refer to Definition \ref{Def-Banach-spectral-triple} for a precise definition with an assumption on some commutators. Roughly speaking, this means that the spectrum $\sigma(D)$ of $D$ is a subset of the closed bisector $\ovl{\Sigma^\bi_{\omega}}$ for some $\omega \in (0,\theta)$ as in Figure \ref{Figure-1}, that we have an appropriate <<resolvent estimate>> and that
\begin{equation}
\label{}
\norm{f(D)}_{X \to X}
\lesssim \norm{f}_{\H^\infty(\Sigma^\bi_\theta)}
\end{equation} 
for any \textit{suitable} function $f$ of the algebra $\H^\infty(\Sigma^\bi_\theta)$ of all bounded holomorphic functions defined on the bisector $\Sigma^\bi_\sigma$. Here <<suitable>> means regularly decaying at 0 and at $\infty$. In broad terms, the operator $f(D)$ is defined by a <<Cauchy integral>> 
\begin{equation}
\label{}
f(D)
=\frac{1}{2\pi \i} \int_{\partial \Sigma^\bi_\nu} f(z)R(z,D) \d z
\end{equation}
by integrating over the boundary of a larger bisector $\Sigma^\bi_\nu$ using the resolvent operator $R(z,D)\ov{\mathrm{def}}{=} (z-D)^{-1}$, where $\omega < \nu <\theta$. Note that the boundedness of such a functional calculus is not free, contrary to the case of the functional calculus of a selfadjoint operator. We refer to our paper \cite{Arh24c} for concrete examples of operator with such a bounded functional calculus, where we use a notion of curvature for obtaining it. Using the function $\sgn$ defined by $\sgn(z) \ov{\mathrm{def}}{=} 1_{\Sigma_\theta}(z)-1_{-\Sigma_\theta}(z)$, we show as the hilbertian case that $\sign D \co X \to X$ is a Fredholm module. This notion of functional calculus was popularized in the paper \cite{AKM06}, which contains a (second) solution to famous  Kato's square root problem solved in \cite{AHLMT02} and in \cite{AKM06} (see also \cite{HLM02} and \cite{Tch01} and the Bourbaki seminar \cite{Mey03}). 

Let us explain how this approach integrates with ours, starting with the simplest case, the one-dimensional scenario. Consider a function $a \in \L^\infty(\R)$ such that $\Re a(x) \geq \kappa > 0$ for almost all $x \in \R$ and the multiplication operator $M_a \co \L^2(\R) \to \L^2(\R)$, $f \mapsto a f$. Following the approach of \cite{AKM06}, we can consider the unbounded operator
\begin{equation}
\label{Dirac-perturbed-intro}
D
\ov{\mathrm{def}}{=}\begin{bmatrix} 
0 & -\frac{\d }{\d x} M_a \\ 
\frac{\d }{\d x} & 0 
\end{bmatrix}, 
\end{equation}
acting on the complex Hilbert space $\L^2(\R) \oplus \L^2(\R)$, where $\frac{\d }{\d x} M_a$ denotes the composition $\frac{\d }{\d x} \circ M_a$. Note that the operator $D$ is not selfadjoint in general. However, by \cite[Theorem 3.1 (i) p.~465]{AKM06}, $D$ admits a bounded $\H^\infty(\Sigma_\theta^\bi)$ functional calculus for some angle $0 < \theta <\frac{\pi}{2}$. Using $D^2=\begin{bmatrix} 
-\frac{\d }{\d x} M_a \frac{\d }{\d x}& 0 \\ 
0 & -\frac{\d^2 }{\d x^2} M_a
\end{bmatrix}$, we see that formally we have a bounded operator
\begin{equation}
\label{formally-bis-2}
\sign D
=D (D^2)^{-\frac{1}{2}}
=\begin{bmatrix} 
0 & * \\ 
\frac{\d }{\d x}( -\frac{\d }{\d x} M_a \frac{\d }{\d x})^{-\frac{1}{2}} & 0 
\end{bmatrix}.
\end{equation}
It is immediate to obtain the obtain the estimate $\norm{\frac{\d f}{\d x}}_{\L^2(\R)} \lesssim \norm{\big(-\frac{\d }{\d x} M_a \frac{\d }{\d x}\big)^{\frac{1}{2}}f}_{\L^2(\R)}$. Actually, a slightly more elaborate argument gives the Kato square root estimate in one dimension
\begin{equation}
\label{Kato-on-dimension}
\norm{\bigg(-\frac{\d }{\d x} M_a \frac{\d }{\d x}\bigg)^{\frac{1}{2}}f}_{\L^2(\R)}
\approx \norm{\frac{\d f}{\d x}}_{\L^2(\R)}, \quad f \in \W^{1,2}(\R).
\end{equation}
Using the homomorphism $\pi \co \C_c^\infty(\R) \mapsto \B(\L^2(\R) \oplus \L^2(\R))$, $a \mapsto M_a \oplus M_a$. We will easily show that $(\C_c^\infty(\R),\L^2(\R) \oplus \L^2(\R),D)$ is a \textit{Banach} locally compact spectral triple, which is not a locally compact spectral triple in the classical hilbertian sense.

It is important to realize that all the theory of sub-Markovian semigroups acting on $\L^p$-spaces can be integrated into the notion of Banach spectral triples.  %, using an operator similar to the one of \eqref{Intosh}. 
Indeed the $\L^2$-generator $-A_2$ of such semigroup $(T_t)_{t \geq 0}$ acting on the Hilbert space $\L^2(\Omega)$ can be written $A_2=\partial^*\partial$ where $\partial$ is a (unbounded) closed derivation defined on a dense subspace of $\L^2(\Omega)$ with values in a Hilbert $\L^\infty(\Omega)$-bimodule $\cal{H}$. Here $T_t=\e^{-tA_2}$ for any $t \geq 0$. The map $\partial$ can be seen as an <<abstract>> analogue of the gradient operator $\nabla$ of a smooth Riemannian manifold $M$, which is a closed operator defined on a subspace of $\L^2(M)$ into the space $\L^2(M,\mathrm{T} M)$ satisfying the relation $-\Delta=\nabla^*\nabla$ where $\Delta$ is the Laplace-Beltrami operator and where $\nabla^*=-\div$.

This fundamental result allows anyone to introduce a triple $(\L^\infty(\Omega),\L^2(\Omega) \oplus_2 \cal{H},D)$ associated to the semigroup in the spirit of the previous Banach spectral triples. Here $D$ is the unbounded selfadjoint operator acting on a dense subspace of the Hilbert space $\L^2(\Omega) \oplus_2 \cal{H}$ defined by 
\begin{equation}
\label{Hodge-Dirac-I}
D
\ov{\mathrm{def}}{=}
\begin{bmatrix} 
0 & \partial^* \\ 
\partial & 0 
\end{bmatrix}.
\end{equation}
It is possible in this context to introduce a homomorphism $\pi \co \L^\infty(\Omega) \to \B(\L^2(\Omega) \oplus_2 \cal{H})$, see \eqref{Def-pi-a}. Now, suppose that $1 < p < \infty$. Sometimes, the map $\partial$ induces a closable unbounded operator $\partial \co \dom \partial \subset \L^p(\cal{M}) \to \cal{X}_p$ for some Banach space $\cal{X}_p$. So we can consider the $\L^p$-realization of the previous operator $D$
%\begin{equation}
%\label{Hodge-Dirac-I}
%D_p
%\ov{\mathrm{def}}{=}
%\begin{bmatrix} 
%0 & (\partial_{p^*})^* \\ 
%\partial_p & 0 
%\end{bmatrix}
%\end{equation}
as acting on a dense subspace of the Banach space $\L^p(\Omega) \oplus_p \cal{X}_p$. If $D$ is bisectorial and admits a bounded $\H^\infty(\Sigma_\theta^\bi)$ functional calculus then using the equalities $D^2=\begin{bmatrix} 
\partial^* \partial & 0 \\ 
0 & \partial \partial^*
\end{bmatrix}$ and $A_2=\partial^*\partial$, we see that formally we have a bounded operator
\begin{equation}
\label{formally-bis}
\sign D
=D (D^2)^{-\frac{1}{2}}
=\begin{bmatrix} 
0 & * \\ 
\partial A^{-\frac{1}{2}} & 0 
\end{bmatrix}.
\end{equation}
From this, it is apparent that the vectorial Riesz transform $\partial A^{-\frac{1}{2}} \co \L^p(\Omega) \to \cal{X}_p$ is bounded and appears in the operator $F \ov{\mathrm{def}}{=}\sign D$ and particularly with the possible pairing with the group $\K_0(\cal{A})$ of $\K$-theory for a suitable subalgebra $\cal{A}$ of the algebra $\L^\infty(\Omega)$, as we will see. Of course, we can consider more generally sub-Markovian semigroups acting on the noncommutative $\L^p$-space $\L^p(\cal{M})$ of a von Neumann algebra $\cal{M}$ in the previous discussion. %Note the coincidence of the use of the operator $\sign D$ with \eqref{sgn-Hilbert}.

\paragraph{Structure of the Paper}
This paper is structured as follows. Section \ref{sec-preliminaries} provides the necessary background and revisits key notations, as well as essential results required for our work. In Section \ref{sec-K-homology}, we introduce the concept of (odd or even) Banach Fredholm modules and define new groups, $\K^{0}(\cal{A},\scr{B})$ and $\K^{1}(\cal{A},\scr{B})$, in $\K$-homology, associated with an algebra $\cal{A}$ and a suitable class $\scr{B}$ of Banach spaces. Section \ref{sec-coupling} focuses on establishing pairings with the $\K$-theory groups of the algebra $\cal{A}$, which is central to our approach. In Section \ref{sec-summability}, we introduce a suitable notion of summability for Banach Fredholm modules, revisiting key concepts related to approximation numbers of operators acting on Banach spaces. Section \ref{sec-Chern} is dedicated to the Chern character associated with finitely summable Banach Fredholm modules. Following the classical Hilbert space approach, we demonstrate how the pairing between $\K$-homology and $\K$-theory groups can be described using the Chern character. In Section \ref{sec-from}, we revisit the notion of Banach spectral triples and explain how Banach Fredholm modules can be constructed from these triples. Section \ref{Examples-1} explores concrete examples of triples constructed using Dirac operators, leading to Banach Fredholm modules, where we compute the pairings with $\K$-theory in specific cases. We also reveal the connection between the summability of Banach spectral triples and Sobolev embedding theorems. In Section \ref{Examples-Hardy}, we present examples of Banach Fredholm modules in the context of noncommutative Hardy spaces, leading to new Banach Fredholm modules on group $\C^*$-algebras, even within the Hilbert space framework. Section \ref{sec-free-Hilbert} present a Banach Fredholm module on the reduced $\mathrm{C}^*$-algebra of the free group $\mathbb{F}_\infty$. Finally, in Section \ref{sec-triples} discusses how to associate a Banach spectral triple with each reasonable sub-Markovian semigroup. We also observe that the corresponding Banach Fredholm module, which may be kernel-degenerate, is closely linked to the vectorial Riesz transform associated to the semigroup.

\section{Preliminaries}
\label{sec-preliminaries}

\paragraph{Fredholm operators}
%\cite{FiJ16} \cite{JoS14} Classes of linear operators: \cite{GGK90} \cite{AbA02} [Traces and Determinants of Linear Operators \cite{GGK00}] [Bleecker] [Conway] 
Following \cite[Definition 4.37 p.~156]{AbA02}, we say that a bounded operator $T \co X \to Y$, acting between complex Banach spaces $X$ and $Y$, is a Fredholm operator if the subspaces $\ker T$ and $Y/\Ran T$ are finite-dimensional. In this case, we introduce the index
\begin{equation}
\label{Fredholm-Index}
\Index T 
\ov{\mathrm{def}}{=} \dim \ker T-\dim Y/\Ran T.
\end{equation}
Every Fredholm operator has a closed range by \cite[Lemma 4.38 p.~156]{AbA02}. Recall the Banach version of Atkinson's theorem \cite[Theorem 4.46 p.~161]{AbA02}.

\begin{thm}
\label{th-Atkinson}
A bounded operator $T \co X \to Y$ between Banach spaces is a Fredholm operator if and only if there exists a bounded operator $R \co Y \to X$ such that $RT-\Id_X$ and $TR-\Id_Y$ are compact operators. Moreover, we can replace <<compact>> by <<finite-rank projection>> in this assertion.
\end{thm}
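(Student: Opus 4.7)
The plan is to prove the two implications separately, establishing the stronger finite-rank statement in the ``only if'' direction, which then immediately yields the compact version in the ``if'' direction. Throughout I read the parametrisation as $R \co Y \to X$, since otherwise the compositions $RT$ and $TR$ do not make sense.

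For the forward direction, assume $T$ is Fredholm. I would invoke two standard Banach space facts: every finite-dimensional subspace of a Banach space is complemented, and every closed subspace of finite codimension is complemented. Since $\ker T$ is finite-dimensional and $\Ran T$ is closed of finite codimension, these yield topological decompositions $X = \ker T \oplus X_0$ and $Y = \Ran T \oplus Y_0$ with $Y_0$ finite-dimensional. The restriction $T|_{X_0} \co X_0 \to \Ran T$ is a continuous bijection between Banach spaces, hence by the open mapping theorem admits a bounded inverse $S \co \Ran T \to X_0$. I then define $R \co Y \to X$ by $R \ov{\mathrm{def}}{=} i_{X_0} \circ S \circ P_{\Ran T}$, where $P_{\Ran T}$ is the projection onto $\Ran T$ along $Y_0$ and $i_{X_0}$ is the inclusion into $X$. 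A direct computation gives $RT - \Id_X = -P_{\ker T}$ and $TR - \Id_Y = -P_{Y_0}$, both of which are projections onto finite-dimensional subspaces, hence finite-rank operators.

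For the reverse direction, suppose there exists $R \co Y \to X$ with $K_1 \ov{\mathrm{def}}{=} RT - \Id_X$ and $K_2 \ov{\mathrm{def}}{=} TR - \Id_Y$ compact. I would invoke the classical Riesz--Schauder theorem, which states that $\Id_X + K_1$ and $\Id_Y + K_2$ are Fredholm operators on their respective Banach spaces. From $\ker T \subseteq \ker(RT) = \ker(\Id_X + K_1)$ I conclude that $\ker T$ is finite-dimensional. From $\Ran T \supseteq \Ran(TR) = \Ran(\Id_Y + K_2)$ and the fact that the latter is closed of finite codimension in $Y$, I conclude that $\Ran T$ has finite codimension; closedness of $\Ran T$ follows because a subspace containing a closed finite-codimensional subspace is itself closed (it is the preimage of a linear subspace of a finite-dimensional quotient).

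The main obstacle is really bookkeeping: ensuring that the finite-rank parametrix $R$ is constructed consistently with the two direct sum decompositions, and, in the converse direction, correctly invoking Riesz--Schauder for compact perturbations of the identity in the Banach setting rather than just the Hilbert one. Neither step is substantive, since both the complementation of finite-dimensional and finite-codimensional closed subspaces and the Fredholm alternative for $\Id + K$ with $K$ compact are standard theorems available in any reference such as \cite{AbA02}. Combining the two directions, the same argument establishes the equivalence with ``finite-rank'' in place of ``compact'', since the construction in the forward direction produces a finite-rank parametrix and the reverse direction uses only compactness.
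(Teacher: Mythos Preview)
The paper does not give its own proof of this theorem; it is recalled as the Banach version of Atkinson's theorem and attributed to \cite[Theorem 4.46 p.~161]{AbA02}. Your argument is correct and is essentially the standard proof one finds in that reference: construct a parametrix from the direct sum decompositions $X=\ker T\oplus X_0$ and $Y=\Ran T\oplus Y_0$ in the forward direction, and use the Riesz--Schauder theory of $\Id+K$ with $K$ compact in the reverse direction. Your observation that the statement contains a typo (the parametrix $R$ must map $Y\to X$, not $X\to Y$) is also correct.
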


According to \cite[Theorem 4.48 p.~163]{AbA02}, the set $\Fred(X,Y)$ of all Fredholm operators from $X$ into $Y$ is an open subset of $\B(X,Y)$ and the index function $\Index \co \Fred(X,Y) \to \Z$ is continuous (hence locally constant). By \cite[Corollary 4.47 p.~162]{AbA02}, if $T \co X \to Y$ is a Fredholm operator and if $K \co X \to Y$ is a compact operator, then $T+K$ is a Fredholm operator and 
\begin{equation}
\label{invariance-under-compact-perturbations}
\Ind T 
= \Ind(T+K)
\end{equation}
(invariance under compact perturbations). If $T \co X \to Y$ and $S \co Y \to Z$ are Fredholm operators then the composition $ST$ is also a Fredholm operator and $\Index ST=\Index S +\Index T$, see \cite[Theorem 4.43 p.~158]{AbA02}. Moreover, by \cite[Theorem 4.42 p.~157]{AbA02}, a bounded operator $T \co X \to Y$ is a Fredholm operator if and only if its adjoint $T^* \co X^* \to Y^*$ is a Fredholm operator. In this case, we have 
\begin{equation}
\label{Index-adjoint}
\Index T^*
=-\Index T.
\end{equation}

Recall that by \cite[Theorem 4.54 p.~167]{AbA02} any Fredholm operator $T \co X \to Y$ admits a generalized inverse (or pseudo-inverse), i.e., an operator $U \co Y \to X$ such that
\begin{equation}
\label{generalized-inverse}
TUT=T
%\quad \text{and} \quad ?UTU=U?
.
\end{equation}
The same result says that every generalized inverse $U$ of $T$ is also a Fredholm operator and satisfies $\Index U=-\Index T$. In this case, the proof of \cite[Theorem 4.52 p.~166]{AbA02} %\cite[p.~192]{GGK90} 
shows that the operators $TU \co Y \to Y$ and $\Id-UT \co X \to X$ are bounded projections on the subspaces $\Ran T$ and $\ker T$.

Two bounded operators $T_0, T_1 \co X \to Y$ are called homotopic (through Fredholm operators) if there exists a continuous map $f \co [0,1] \to \Fred(X,Y)$ with $f(0)=T_0$ and $f(1)=T_1$. In this case, we have $\Index T_0 =\Index T_1$ by \cite[Proposition 4 p.~15]{ReS82}.

\paragraph{Unbounded operators}
The following result is \cite[Theorem 1.2.4 p.~15]{HvNVW16}, which allows swapping a closed operator and an integral when some conditions are satisfied.

\begin{prop}
\label{prop-Hille}
Let $f \co \Omega \to X$ be a Bochner integrable function and let $T$ be a closed linear operator with domain $\dom T$ in $X$ and with values in a Banach space $Y$. Suppose that $f$ takes its values in $\dom T$ almost everywhere and that the almost everywhere defined function $T \circ f \co \Omega \to Y$ is Bochner integrable. Then $f$ is Bochner integrable as a $\dom(T)$-valued function, $\int_\Omega f \d\mu$ belongs to $\dom(T)$ and
\begin{equation}
\label{unbounded-Hille}
T\bigg(\int_\Omega f \d\mu\bigg)
=\int_\Omega T \circ f \d\mu.
\end{equation}
\end{prop}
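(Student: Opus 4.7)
The plan is to use the graph of $T$. Endow $\dom T$ with the graph norm $\|x\|_{\dom T} \ov{\mathrm{def}}{=} \|x\|_X+\|Tx\|_Y$; since $T$ is closed, $(\dom T,\|\cdot\|_{\dom T})$ is a Banach space, and the map
$$
\iota \co \dom T \longrightarrow X \oplus Y, \quad x \longmapsto (x,Tx)
$$
is an isometric embedding onto the graph $G(T)$, which is a closed subspace of $X\oplus Y$. After modifying $f$ on a null set (which does not affect Bochner integrals), I may assume $f(\omega)\in \dom T$ and that $Tf$ is defined everywhere.

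The first step is to show that $f$, viewed as a $\dom T$-valued function, is strongly measurable. I would argue that $\iota \circ f = (f,Tf)$ is strongly measurable into $X\oplus Y$ as the Cartesian pair of two strongly measurable functions, and then use that $\iota$ is an isometric isomorphism onto its closed range in $X\oplus Y$: strong measurability of $\iota \circ f$ with values in $G(T)$ pulls back to strong measurability of $f$ with values in $\dom T$. The second step is integrability, which is immediate:
$$
\int_\Omega \|f(\omega)\|_{\dom T}\d\mu(\omega)
=\int_\Omega \|f(\omega)\|_X\d\mu(\omega)+\int_\Omega \|Tf(\omega)\|_Y\d\mu(\omega)
<\infty
$$
by the two Bochner integrability hypotheses. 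Hence $f$ is Bochner integrable with values in $\dom T$.

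The third step is to apply the standard fact that continuous linear maps commute with the Bochner integral to the two bounded operators $j \co \dom T \to X$ (the inclusion) and $T \co \dom T \to Y$. Let $I \ov{\mathrm{def}}{=}\int_\Omega f \d\mu$ computed in the Banach space $\dom T$. Then $j(I)=\int_\Omega j\circ f\d\mu=\int_\Omega f \d\mu$ coincides with the integral computed in $X$, so the integral in $X$ lies in $\dom T$. Applying $T$, we obtain
$$
T\bigg(\int_\Omega f \d\mu\bigg)
=T(I)
=\int_\Omega T\circ f \d\mu,
$$
which is the desired identity \eqref{unbounded-Hille}.

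The only subtle point, and what I expect to be the main obstacle to write cleanly, is the strong measurability claim for $\iota\circ f$ into the product space: one needs to recall that the product of two strongly measurable Banach-valued functions is strongly measurable into the direct sum (via approximation by simple functions whose components can be simultaneously refined), and that the pre-image under the isometric isomorphism $\iota^{-1} \co G(T) \to \dom T$ preserves strong measurability. Everything else is a routine appeal to the functoriality of the Bochner integral under bounded linear maps.
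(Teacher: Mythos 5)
Your proof is correct. Note that the paper itself does not prove this proposition---it simply cites it as Theorem 1.2.4 in Hyt\"onen--van Neerven--Veraar--Weis---so there is no in-paper argument to compare against. Your route via the graph norm on $\dom T$ is a standard one and is close in spirit to the reference's proof, which also rests on the fact that $G(T)$ is a closed subspace of $X\oplus Y$; the reference then invokes the lemma that a Bochner integral of a function valued a.e.\ in a closed subspace stays in that subspace (proved by Hahn--Banach), whereas you instead make $(\dom T,\|\cdot\|_{\dom T})$ into a Banach space and apply the commutation of Bochner integrals with the two bounded maps $j\co\dom T\to X$ and $T\co\dom T\to Y$. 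Your version has the small advantage of delivering the clause ``$f$ is Bochner integrable as a $\dom T$-valued function'' directly, rather than as an afterthought. The measurability step you flag is handled correctly: $(f,Tf)$ is essentially separably valued and measurable, hence strongly measurable into $X\oplus Y$, and composing with the linear isometry $\iota^{-1}\co G(T)\to\dom T$ preserves strong measurability.
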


\paragraph{$R$-boundedness} Suppose that $1 \leq p < \infty$. Following \cite[Definition 8.1.1, Remark 8.1.2 p.~165]{HvNVW18}, we say that a set $\cal{F}$ of bounded operators on a Banach space $X$ is $R$-bounded \index{$R$-boundedness} if there exists a constant $C \geq 0$ such that for any integer $n \geq 1$, any $T_1,\ldots, T_n$ in $\cal{F}$ and any $x_1,\ldots,x_n$ in $X$, we have
\begin{equation}
\label{R-boundedness}
\Bgnorm{\sum_{k=1}^{n} \epsi_k \ot T_k (x_k)}_{\L^p(\Omega,X)}
\leq C \Bgnorm{\sum_{k=1}^{n} \epsi_k \ot x_k}_{\L^p(\Omega,X)},
\end{equation}
where $(\epsi_{k})_{k \geq 1}$ is a sequence of independent Rademacher variables on some probability space $\Omega$. As explained in \cite[Remark 8.1.2 p.~165]{HvNVW18}, this property is independent of $p$. By \cite[Theorem 8.1.3 p.~166]{HvNVW18}, an $R$-bounded subset of operators is bounded. Finally, a singleton $\{T\}$ is automatically $R$-bounded by \cite[Example 8.1.7 p.~170]{HvNVW18}. Actually, the notion of $R$-boundedness can be viewed as a natural and powerful substitute for the usual notion of boundedness, particularly useful when extending results that hold on Hilbert spaces to the more general setting of Banach spaces.

\paragraph{Sectorial operators}
For any angle $\omega \in (0,\pi)$, we introduce the open sector symmetric around the positive real half-axis with opening angle $2\omega$
\begin{equation}
\label{def-sigma-omega}
\Sigma_{\omega} 
\ov{\mathrm{def}}{=} \big\{ z \in \mathbb{C} \backslash \{ 0 \} : \: | \arg z | < \omega\big\}.
\end{equation}
See Figure 1. It will be useful to put $\Sigma_0 \ov{\mathrm{def}}{=} (0,\infty)$. 

\begin{figure}[ht]
\begin{center}
\includegraphics[scale=0.4]{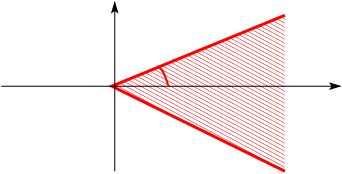}
\begin{picture}(0,0)
%\put(-100,25){{\footnotesize $0$}}

\put(-70,38){ $\omega$}

\put(-45,20){$\Sigma_\omega$}
\end{picture}

Figure 1: open sector $\Sigma_\omega$ of angle $\omega$
\end{center}
\label{figure-sector}
\end{figure}
We refer to the books \cite{Haa06} and \cite{HvNVW18} for background on sectorial operators and their $\H^\infty$ functional calculus, introduced in the seminal papers \cite{CDMY96} and \cite{McI86}. Let $A \co \dom A \subset X \to X$ be a closed densely defined linear operator acting on a Banach space $X$. We say that $A$ is a sectorial operator of type $\omega \in (0,\pi)$ if its spectrum $\sigma(A)$ is a subset of the closed sector $\ovl{\Sigma_\omega}$ and if the set  
%\begin{equation}
%\label{2Sectorial}
$\big\{zR(z,A) : z \in \mathbb{C} - \overline{\Sigma_\omega}\big\}$ 
%\end{equation}
is bounded in the algebra $\B(X)$ of bounded operators acting on $X$\label{bounded-BX}, where $R(z,A) \ov{\mathrm{def}}{=} (z-A)^{-1}$\label{resolvent-operator} is the resolvent operator. The operator $A$ is said to be sectorial if it sectorial operator of type $\omega$ for some angle $\omega \in (0,\pi)$. In this case, we can introduce the angle of sectoriality
\begin{equation}
\label{equ-sectorial-operator}
\omega_{\sec}(A) 
\ov{\mathrm{def}}{=} \inf\{ \omega \in (0,\pi) : A \textrm{ is sectorial of type $\omega$} \}.
\end{equation}

\begin{example} \normalfont
\label{prop-sec-pi-2}
If $-A$ is the generator of a bounded strongly continuous semigroup $(T_t)_{t \geq 0}$ on a Banach space $X$ then by \cite[Example 10.1.2 p.~362]{HvNVW18} the operator $A$ is sectorial of type $\frac{\pi}{2}$, i.e.~$\omega_{\sec}(A) \leq \frac{\pi}{2}$.
\end{example}

Moreover, by \cite[Example 10.1.3 p.~362]{HvNVW18}, $A$ is sectorial of type $< \frac{\pi}{2}$ if and only if $-A$ generates a bounded holomorphic strongly continuous semigroup. If $A$ is a sectorial operator on a \textit{reflexive} Banach space $X$, we have by \cite[Proposition 2.1.1 (h) p.~21]{Haa06} or \cite[Proposition 10.1.9 p.~367]{HvNVW18} a topological decomposition
\begin{equation}
\label{decompo-reflexive}
X
=\ker A \oplus \ovl{\Ran A}.
\end{equation}
%If in addition $A$ is the negative generator of a bounded strongly continuous semigroup $(T_t)_{t \geq 0}$, this means that $(T_t)_{t \geq 0}$ is mean ergodic \cite[Definition 4.3.3]{ABHN1}, \cite[p. 338]{EnN1}. In particular, the map $P \co X \mapsto X$, $x \mapsto \lim_{t \to \infty} \frac{1}{t} \int_{0}^{t} T_s(x) \d s$ is the bounded projection\index{Projection!onto $\ker A$} on $\ker A$ associated to \eqref{decompo-reflexive}. By \cite[Corollary 4.3.2]{ABHN1} (see also \cite[(10.5) and p.~367]{HvNVW2}), it is also given by
%\begin{equation}
%\label{proj-ergodic}
%P(x)
%=\lim_{\lambda \to 0} \lambda (\lambda+A)^{-1}x, \quad x \in X.
%\end{equation}

For any angle $\theta \in (0,\pi)$, we consider the algebra $\H^{\infty}(\Sigma_\theta)$\label{algebra-Hinfty} of all bounded analytic functions $f \co \Sigma_\theta \to \mathbb{C}$, equipped with the supremum norm 
\begin{equation}
\label{norm-Hinfty}
\norm{f}_{\H^{\infty}(\Sigma_\theta)}
\ov{\mathrm{def}}{=} \sup\bigl\{\vert f(z)\vert \, :\, z\in \Sigma_\theta\bigr\}.
\end{equation}
Let $\H^{\infty}_{0}(\Sigma_\theta)$\label{algebra-Hinfty0} be the subalgebra of bounded analytic functions $f \co \Sigma_\theta \to \mathbb{C}$ for which there exist $s,c>0$ such that 
\begin{equation*}
\label{ine-Hinfty0}
\vert f(z)\vert
\leq C\min\{|z|^s,|z|^{-s}\}, \quad z \in \Sigma_\theta,
\end{equation*} 
as discussed in \cite[Section 2.2]{Haa06}

\begin{example} \normalfont
\label{Example-Haase}
Let $\alpha$ and $\beta$ be complex numbers such that $0 < \Re \beta < \Re \alpha$. Then for any angle $\theta \in(0,\pi)$ the function $z \mapsto \frac{z^\beta}{(1+z)^\alpha}$ belongs  to the algebra $\H^{\infty}_{0}(\Sigma_\theta)$, as observed in \cite[Example 2.2.5 p.~29]{Haa06}.
\end{example}

Let $A$ be a sectorial operator acting on a Banach space $X$. Consider some angle $\theta \in (\omega_{\sec}(A), \pi)$ and a function $f \in \H^\infty_0(\Sigma_\theta)$. Following \cite[p.~30]{Haa06}, \cite[p.~5]{LM99} (see also \cite[p.~369]{HvNVW18}), for any angle $\nu \in (\omega_{\sec}(A),\theta)$ %following \cite[Theorem 9.2 p.~197]{KuW04} 
 we introduce the operator
\begin{equation}
\label{2CauchySec}
f(A)
\ov{\mathrm{def}}{=} \frac{1}{2\pi \i}\int_{\partial\Sigma_\nu} f(z) R(z,A) \d z,
\end{equation}
acting on $X$, with a Cauchy integral, where the boundary $\partial\Sigma_\nu$ is oriented counterclockwise. The sectoriality condition ensures that this integral is absolutely convergent and defines a bounded operator on the Banach space $X$. Using Cauchy's theorem, it is possible to show that this definition does not depend on the choice of the chosen angle $\nu$. The resulting map $\H^\infty_0(\Sigma_\theta) \to \B(X)$, $f \mapsto f(A)$ is an algebra homomorphism.% which is consistent with the usual functional calculus for rational functions.

Following \cite[Definition 2.6 p.~6]{LM99} (see also \cite[p.~114]{Haa06}), we say that the operator $A$ admits a bounded $\H^\infty(\Sigma_\theta)$ functional calculus if the latter homomorphism is bounded, i.e., if there exists a constant $C \geq 0$ such that 
\begin{equation}
\label{Def-functional-calculus}
\norm{f(A)}_{X \to X} 
\leq C\norm{f}_{\H^\infty(\Sigma_\theta)},\quad f \in \H^\infty_0(\Sigma_\theta).
\end{equation}
In this context, we can introduce the $\H^\infty$-angle
\begin{equation*}
\label{angle-Hinfty}
\omega_{\H^\infty}(A) 
\ov{\mathrm{def}}{=} \inf\{\theta \in (\omega_{\sec}(A),\pi) : A \text{ admits a bounded $\H^\infty(\Sigma_\theta)$ functional calculus} \}.
\end{equation*}
If the operator $A$ has dense range and admits a bounded $\H^\infty(\Sigma_\theta)$ functional calculus, then the previous homomorphism naturally extends to a bounded homomorphism $f \mapsto f(A)$ from the algebra $\H^\infty(\Sigma_\theta)$ into the algebra $\B(X)$ of bounded linear operators on $X$. 

\begin{example} \normalfont
\label{Laplacian-funct}
If $1 < p < \infty$ and if $X$ is a $\UMD$ Banach space, then by \cite[Theorem 10.2.25 p.~391]{HvNVW18}, the Laplacian $-\Delta$ admits a bounded $\H^\infty(\Sigma_\theta)$ functional calculus on the Bochner space $\L^p(\R^n,X)$ for any angle $\theta >0$, i.e.~$\omega_{\H^\infty}(-\Delta)=0$.
\end{example}

\begin{example} \normalfont
\label{analytic-funct}
If $1 < p < \infty$ and if $\Omega$ is a measure space, then by \cite[Theorem 10.7.13 p.~462]{HvNVW18}, the generator $A$ of any bounded holomorphic strongly continuous semigroup $(e^{-tA})_{t \geq 0}$ of positive contractions on $\L^p(\Omega)$ admits a bounded $\H^\infty(\Sigma_\theta)$ functional calculus for some $\theta \in (0,\frac{\pi}{2})$.
\end{example}

\paragraph{Fractional powers}
See \cite{Haa06}, \cite{HvNVW23} and \cite{KuW04} for more information on fractional powers. Let $A$ be an injective sectorial operator on a Banach space $X$.  
Consider a complex number $\alpha \in \mathbb{C}$ such that $0 < \Re \alpha < 1$. By \cite[Proposition 3.2.1 p.~70]{Haa06} or \cite[p.~449]{HvNVW23}, we have
\begin{equation}
\label{for-frac-powers}
A^{-\alpha}
=\frac{\sin \pi \alpha}{\pi} \int_{0}^{\infty} t^{-\alpha} (t+A)^{-1}x \d t, \quad x \in \Ran A, 
\end{equation}
where the integral is an improper Bochner integral.

\paragraph{Bisectorial operators}
We refer to \cite{Ege15} and to the books \cite{HvNVW18} and \cite{HvNVW23} for more information on bisectorial operators, which can be seen as generalizations of unbounded selfadjoint operators in the framework of Banach spaces. For any angle $\omega \in (0,\frac{\pi}{2})$, we consider the open bisector $\Sigma_\omega^\bi \ov{\mathrm{def}}{=} \Sigma_\omega  \cup (-\Sigma_\omega)$ where the sector $\Sigma_{\omega}$ is defined in \eqref{def-sigma-omega} and $\Sigma_0^\bi \ov{\mathrm{def}}{=} (-\infty,\infty)$. Following \cite[Definition 10.6.1 p.~447]{HvNVW18}
%different de la def de Egert
, we say that a closed densely defined operator $D$ on a Banach space $X$ is bisectorial of type $\omega \in (0,\frac{\pi}{2})$ if its spectrum $\sigma(D)$ is a subset of the closed bisector $\ovl{\Sigma^\bi_{\omega}}$ and if the subset $\big\{z R(z,D) : z \not\in \ovl{\Sigma_{\omega}^\bi} \big\}$ is bounded in the space $\B(X)$ of bounded operators acting on $X$, where $R(z,D) \ov{\mathrm{def}}{=} (z-D)^{-1}$ denotes the resolvent operator. See Figure 1. The infimum of all $\omega \in (0,\frac{\pi}{2})$ such that $D$ is bisectorial is called the angle of bisectoriality of $D$ and denoted by $\omega_{\bi}(D)$. The definition of an $R$-bisectorial operator is obtained by replacing <<bounded>> by <<$R$-bounded>>.

\begin{center}
%\begin{tikzpicture}
%\clip (-3.5,-2) rectangle (3.5,2);
%%\filldraw[fill=blue!20, draw=black, domain=0:2*pi, samples=100] plot ({2*sqrt(2)*cos(\x r)/(1+sin(\x r)^2)}, {sqrt(2)*cos(\x r)*sin(\x r)/(1+sin(\x r)^2)}) -- cycle;
%\filldraw[fill=blue!20, draw=black, domain=0:2*pi, samples=100] plot ({4*sqrt(2)*cos(\x r)/(1+sin(\x r)^2)}, {1.5*sqrt(2)*cos(\x r)*sin(\x r)/(1+sin(\x r)^2)}) -- cycle;
%\draw (1,0) arc (0:45:1);
%\draw (1,0.6) node[right] {$\omega$};
%\draw (-3,0) -- (3,0) ;
%\draw (-2,2) -- (2,-2) ;
%\draw (0,2) -- (0,-2) ;
%\draw (-2,-2) -- (2,2);
%\draw (-2,0.25) node[right] {$\sigma(D)$};
%\end{tikzpicture}
\begin{center}
\begin{tikzpicture}[scale=0.55]
\clip (-4.5,-2) rectangle (4.5,2);

% --- hachures du bisecteur (prolongées jusqu'à x=±5) ---
\fill[pattern=north east lines, pattern color=blue!40, opacity=0.4]
  (0,0) -- (5,0) -- (5,5) -- (0,0);
\fill[pattern=north east lines, pattern color=blue!40, opacity=0.4]
  (0,0) -- (5,0) -- (5,-5) -- (0,0);

\fill[pattern=north east lines, pattern color=blue!40, opacity=0.4]
  (0,0) -- (-5,0) -- (-5,5) -- (0,0);
\fill[pattern=north east lines, pattern color=blue!40, opacity=0.4]
  (0,0) -- (-5,0) -- (-5,-5) -- (0,0);

% --- spectre elliptique (ton tracé original) ---
\filldraw[fill=blue!20, draw=black, domain=0:2*pi, samples=200] 
  plot ({4*sqrt(2)*cos(\x r)/(1+sin(\x r)^2)}, 
        {1.5*sqrt(2)*cos(\x r)*sin(\x r)/(1+sin(\x r)^2)}) -- cycle;

% --- angle ? ---
\draw (1,0) arc (0:45:1);
\node at (1,0.8) [right] {$\omega$};

% --- axes et diagonales ---
\draw (-4.5,0) -- (4.5,0);
\draw (0,-2) -- (0,2);
\draw (-2,2) -- (2,-2);
\draw (-2,-2) -- (2,2);

% --- sigma(D) ---
\node[right] at (-4.5,0.25) {$\sigma(D)$};

\node[right] at (2.5,1.3) {$\ovl{\Sigma^\bi_{\omega}}$};
\end{tikzpicture}
\end{center}

\label{Figure-1}

Figure 1: the spectrum $\sigma(D)$ of a bisectorial operator $D$
\end{center}
By \cite[p.~447]{HvNVW18}, a linear operator $D$ is bisectorial (resp.~$R$-bisectorial) if and only if 
\begin{equation}
\label{Def-R-bisectorial}
\i \R^* \subset \rho(D)
\quad \quad \text{and if the set} \quad
 \{t R(\i t, D) : t \in \R_+^* \} \text{ is bounded (resp.~$R$-bounded)}.
%\sup_{t \in \R_*^+} \norm{tR(\i t,D)}_{X \to X} \{t R(\i t, D) : t \in \R_+^* \} \text{ is $R$-bounded}
%< \infty.
\end{equation}
%Similarly, a linear operator $D$ is $R$-bisectorial if and only if 
%\begin{equation}
%\label{Def-R-bisectorial}
%\i \R^* \subset \rho(D)
%\quad \text{and if the set} \quad
 %\{t R(\i t, D) : t \in \R_+^* \} \text{ is $R$-bounded}.
%\end{equation}

\begin{example} \normalfont
\label{generators-bisectorial}
Let $D$ be a linear operator $D$ on a Banach space $X$. If the operator $\i D$ generates a strongly continuous group of operators on $X$ then by \cite[Example 10.6.3 p.~448]{HvNVW18} the operator $D$ is bisectorial of angle 0, i.e.~$\omega_{\bi}(D)=0$. Combined with Stone's theorem on one-parameter unitary groups \cite[Theorem G.6.2 p.~544]{HvNVW18}, we see in particular that a unbounded selfadjoint operator with dense domain on a Hilbert space is bisectorial of angle 0.
\end{example}

If $D$ is a bisectorial operator of type $\omega$ on a Banach space $X$ then by \cite[Proposition 10.6.2 (2) p.~448]{HvNVW18} its square $D^2$ is sectorial of type $2\omega$ and we have
\begin{equation}
\label{Bisec-Ran-Ker}
\ovl{\Ran D^2}
=\ovl{\Ran D}
\quad \text{and} \quad
\ker D^2
=\ker D.
\end{equation}
\paragraph{Functional calculus} Consider a bisectorial operator $D$ on a Banach space $X$ of angle $\omega_\bi(D)$. For any angle $\theta \in (\omega_\bi(D), \frac{\pi}{2})$ and any function $f$ in the space 
$$
\H^{\infty}_0(\Sigma_\theta^\bi) 
\ov{\mathrm{def}}{=}  \left\{ f \in \H^\infty(\Sigma_\theta^\bi) :\: \exists C,s > 0 \: \forall \: z \in \Sigma_\theta^\bi : \: |f(z)| \leq C \min\{|z|^s, |z|^{-s} \}  \right\},
$$
we can define a bounded operator $f(D)$ acting on the space $X$ by integrating on the boundary $\partial \Sigma^\bi_{\nu}$ of the bisector $\Sigma^\bi_{\nu}$ for some angle $\nu \in (\omega_\bi(D),\theta)$ using a Cauchy integral
\begin{equation}
\label{def-f(D)}
f(D)
\ov{\mathrm{def}}{=} \frac{1}{2\pi \i}\int_{\partial \Sigma^\bi_{\nu}} f(z)R(z,D) \d z.
\end{equation}
The integration contour is oriented counterclockwise, so that the interiors of the two sectors $\Sigma_\nu$ and $-\Sigma_\nu$ are always to its left. The integral in \eqref{def-f(D)} converges absolutely thanks to the decay of the function $f$ and is independent of the particular choice of the angle $\nu$ by Cauchy's integral theorem. We refer to \cite[Section 3.2.1]{Ege15} and \cite[Theorem 10.7.10 p.~449]{HvNVW18} for more complete explanations. 

The operator $D$ is said to admit a bounded $\H^\infty(\Sigma_\theta^\bi)$ functional calculus, if there exists a constant $C \geq 0$ such that 
\begin{equation}
\label{funct-cal-bisector}
\bnorm{f(D)}_{X \to X} 
\leq C \norm{f}_{\H^\infty(\Sigma_\theta^\bi)}, \quad f \in \H^{\infty}_0(\Sigma_\theta^\bi).
\end{equation}
The infimum  of all $\theta \in (\omega_\bi(D),\frac{\pi}{2})$ such that $D$ admits a bounded $\H^\infty(\Sigma_\theta^\bi)$ functional calculus is denoted by $\omega_{\H^\infty}(D)$.

\begin{example} \normalfont
\label{ex-bound-Hinfty}
By \cite[Theorem 10.7.10 p.~461]{HvNVW18}, if the operator $\i D$ is the generator of a bounded strongly continuous group on a $\UMD$ Banach space $X$ then the operator $D$ admits a bounded $\H^{\infty}(\Sigma_\theta^\bi)$ functional calculus for any angle $\theta > 0$, i.e.~$\omega_{\H^\infty}(D)=0$.
\end{example}

\begin{example} \normalfont
\label{ex-signe}
With the function $\sgn\ov{\mathrm{def}}{=} 1_{\Sigma_\theta}-1_{-\Sigma_\theta}$, we can define the bounded operator $\sgn D \co X \to X$.
\end{example}

The following result combines \cite[Proposition 2.3 p.~6]{NeV17} and \cite[Proposition 10.6.2 p.~448]{HvNVW18}. It establishes a connection between sectorial and bisectorial operators and their functional calculi.

\begin{prop}
\label{prop-bisectorial-to-sectorial}
Let $\omega \in (0,\frac{\pi}{2})$. 
\begin{enumerate}
	\item If $A$ is a bisectorial operator of type $\omega$ then the operator $A^2$ is sectorial of type $2\omega$.
	
	\item Suppose that $A$ is an $R$-bisectorial operator on a Banach space $X$ of finite cotype. Then $A^2$ is $R$-sectorial and the operator $A$ admits a bounded $\H^\infty(\Sigma_\omega^\bi)$-calculus if and only if $A^2$ admits a bounded $\H^\infty(\Sigma_{2\omega}^+)$-calculus.
\end{enumerate}
\end{prop}

%\H^{\infty}_0(\Sigma_\theta^\bi) 
%\ov{\mathrm{def}}{=}  \left\{ f \in \H^\infty(\Sigma_\theta^\bi) :\: \exists C,s > 0 \: \forall \: z \in \Sigma_\theta^\bi : \: |f(z)| \leq C \min\{|z|^s, |z|^{-s} \}  \right\}.
%$$
%The functions of this space are said <<regularly decaying>> at 0 and at $\infty$. 
%By \cite[p.~447]{HvNVW18} (see also \cite[Proposition 10.3.3]{HvNVW18}), a linear operator is bisectorial if and only if 
%\begin{equation}
%\label{Def-R-bisectoriel}
%\i \R-\{0\} \subseteq \rho(A)
%\quad \text{and} \quad
%\sup_{t \in \R^+-\{0\}} \norm{tR(\i t,A)} 
%< \infty
%\end{equation}
%(resp. $\{t R(\i t, A):\:t \in \R^+-\{0\}\}$ is $R$-bounded).
%If $A$ is bisectorial of type $\sigma$ then by \cite[Proposition 10.6.2 (2)]{HvNVW18} the operator $A^2$ is sectorial of type $2\sigma$ and we have
%\begin{equation}
%\label{Bisec-Ran-Ker}
%\ovl{\Ran A^2}
%=\ovl{\Ran A}
%\quad \text{and} \quad
%\ker A^2
%=\ker A.
%\end{equation}

\paragraph{$\K$-theory} 
%Higher index theory
%Rufus Willett and Guoliang Yu
%
%Lectures on K-Theory and Operator Algebras
%Math 582
%John Roe
%
%K-theory for C-algebras, and beyond
%S. Richard
%
%K-theory and C-algebras
%Kevin Massard
%
%[Baum K-theory for group C-algebra]
%
%\cite{Had03} \cite{Had04}

We refer to the books \cite{Bl98}, \cite{CMR07}, \cite{Eme24}, \cite{GVF01}, \cite{RLL0} and \cite{WeO93} for more information and to \cite{BaS11} for a short introduction. We briefly recall the algebraic construction of the $\K_0$-group, which applies to Banach algebras as well as to more general rings.

\paragraph{$\mathrm{K}_0$-group}
Let $\cal{A}$ be a unital ring. We denote by $\M_{\infty}(\cal{A})$ the algebraic direct limit of the algebras $\M_n(\cal{A})$ under the embeddings $x \mapsto \diag(x,0)$. The direct limit $\M_{\infty}(\cal{A})$ can be thought of as the algebra of infinite-dimensional matrice with entries in $\cal{A}$, all but finitely many of which are zero. Two idempotents $e,f \in \M_{\infty}(\cal{A})$ are said to be (Murray--von Neumann) equivalent if there exist elements $v,w \in \M_{\infty}(\cal{A})$ such that $e=v w$ and $f=w v$. The equivalence class of an idempotent $e$ is denoted by $[e]$ and we introduce the set $\V(\cal{A}) \ov{\mathrm{def}}{=} \{ [e] : e \in \M_{\infty}(\cal{A}), e^2=e \}$ of all equivalence classes of idempotents in $\M_\infty(\cal{A})$.

According to \cite[pp.~3-4]{CMR07}, %\cite[Proposition 6.1.3 p.~109]{WeO93}, 
the set $\V(\cal{A})$ has a canonical structure of abelian semigroup, with the addition operation defined by the formula $[e]+[f] \ov{\mathrm{def}}{=} [\diag(e,f)]$. The identity of the semigroup $\V(\cal{A})$ is given by $[0]$, where $0$ is the zero idempotent. The group $\K_0(\cal{A})$ of the unital ring $\cal{A}$ is the Grothendieck group of the abelian semigroup $\V(\cal{A})$. In particular, each element of $\K_0(\cal{A})$ identifies with a formal difference $[e]-[f]$. Two such formal differences $[e_1] - [f_1]$ and $[e_2] - [f_2]$ are equal in $\K_0(\cal{A})$ if there exists $g \in \V(\cal{A})$ such that $[e_1] + [f_2] +  [g] = [f_1] + [e_2] +  [g]$.

Recall that by \cite[p.~4 and p.~9]{CMR07}, 
%\cite[pp.~109-110]{WeO93}
$\K_0$ is functorial in a natural way: if $\phi \co \cal{A} \to \cal{B}$ is a unital ring homomorphism, and $e \in \M_\infty(\cal{A})$ is an idempotent, then $\phi(e)$ is an idempotent in $\M_\infty(\cal{B})$, and the map $\V(\cal{A}) \to \V(\cal{B})$, $[e] \mapsto [\phi(e)]$ is well-defined. Hence it induces a group homomorphism $\phi_* \co \K_0(\cal{A}) \to \K_0(\cal{B})$ by the universal property of the Grothendieck group.

\begin{example} \normalfont %Carey cours p 27, \cite[Corollary 3.21 p.~101]{GVF01}
If $\cal{A} = \C(\cal{X})$ is the algebra of continuous functions on a second countable compact Hausdorff topological space $\cal{X}$, then by \cite[Proposition 8.1.7 p.~305]{Eme24} we have an isomorphism $\K_0(\C(\cal{X})) \cong \K^0(\cal{X})$ where $\K^0(\cal{X})$ is the topological $\K$-theory group defined by vector bundles.
\end{example}

If $\cal{A}$ is a (not necessarily unital) ring, let $\tilde{\cal{A}}=\cal{A} \oplus \Z$
be its unitization ring, with the canonical unital homomorphism $\epsi \co \tilde{\cal{A}} \to \Z$,
$(a,\lambda) \mapsto \lambda$ and kernel $\cal{A}$. Following \cite[p.~11]{CMR07}, we set $\K_0(\cal{A}) \ov{\mathrm{def}}{=} \ker \epsi_* $ where $\epsi_* \co \K_0(\tilde{\cal{A}}) \to \K_0(\Z) \approx \Z$.

\paragraph{$\mathrm{K}_1$-group} We now recall the topological definition of $\K_1$ for Banach algebras. Let $\cal{A}$ be a unital Banach algebra. We denote by $\GL_n(\cal{A})$ the set of invertible matrices with entries in $\cal{A}$. It is a topological group. Using the embeddings $x \mapsto \diag(x,1)$, we can introduce the direct limit $\GL_\infty(\cal{A}) \ov{\mathrm{def}}{=} \varinjlim \GL_n(\cal{A})$, which is also a topological group. The group $\K_1(\cal{A}) \ov{\mathrm{def}}{=} \pi_0(\GL_\infty(\cal{A}))$ is defined\footnote{\thefootnote. If $X$ is a topological group, then $\pi_0(X) \cong X/X_0$, where $X_0$ denotes the path-connected component of the identity element, and $\pi_0(X)$ inherits a group structure as a quotient.} as the $0$-th homotopy group of the topological group $\GL_\infty(\cal{A})$.

 %\GL_\infty(\cal{A})/\GL_\infty(\cal{A})_0$.

 %and that $\GL_\infty(\cal{A})_0 \ov{\mathrm{def}}{=} \varinjlim \GL_n(\cal{A})_0$, where the group $\GL_n(\cal{A})_0$ is the connected component of the identity. It is known \cite[p.~59]{Bl98} that the group $\K_1(\cal{A})$ is countable if the algebra $\cal{A}$ is separable.

%Given a unital $\C^*$-algebra $\cal{A}$, recall that $\K_0(\cal{A})$ is the abelian group with one generator $[p]$ for each projection $p$ in each matrix algebra $\M_n(\cal{A})$, $n = 1, 2,\ldots$ and the following relations:
%\begin{enumerate}
%\item if $p, q \in \M_n(\cal{A})$ and $p, q$ are joined by a norm continuous path of projections in $\M_n(\cal{A})$ then $[p] = [q]$,
%
%\item $[0] = 0$ for any square matrix of zeroes,
%
%\item $[p] + [q] = [p \oplus q]$ for any $[p], [q]$.
%\end{enumerate}

%It s worth noting that it is known \cite[Exercise p.~95]{GVF01} that the group $\K_0(\cal{A})$ is countable if the algebra $\cal{A}$ is separable.

%%%%%%%%%%%%%%%%%%%%%%%%%%%%%%%%%%%%%%%%%%%%%%%%%
\section{Banach Fredholm modules and Banach $\K$-homology}
\label{sec-K-homology}

%\cite{CGIS14} \cite{HiR00} \cite{ArK22} \cite{Arh23} \cite{BaJ83} \cite{FGMR19} [Carey, course]

%-FREDHOLM MODULES OVER CERTAIN GROUP C*-ALGEBRAS

%-ELLIPTIC OPERATORS AND K-HOMOLOGY ANNA DUWENIG

%-NONCOMMUTATIVE GEOMETRY OF THE QUANTUM DISK

%-K-HOMOLOGICAL FINITENESS AND HYPERBOLIC GROUPS TB

In this section, we introduce a Banach space variant of the theory of $\K$-homology, relying on the notion of Fredholm module. Our aim is to replace Hilbert spaces by Banach spaces. %As we will explain, the motivation is the boundedness of the operator $\sgn(\slashed{D}_p)$ obtained in Remark \ref{remark-sgn}. 
Note that the classical notion of Fredholm module admits several variations in the literature (compare the references \cite[Definition 8.1.1 p.~199]{HiR00}, \cite[Definition 1 p.~293]{Con94} and \cite[Definition 2.2]{CGIS14}). After the first version of this work appeared, we realized that Lafforgue had implicitly introduced a similar notion in \cite[Definition 1.2.1 p.~16]{Laf02}, although with a different purpose, namely in connection with the Baum-Connes conjecture (see \cite{GJV19} for a survey). %The one of \cite[Definition 1 p.~293]{Con94} is defined for an involutive algebra $A$ over $\mathbb{C}$. 
%A Fredholm module $(\pi,H,F)$ over $A$ is given by an involutive representation $\pi \co A \to \B(H)$ of $A$ in a Hilbert space $H$ and a bounded operator $F \co H \to H$ such that $F^*=F$, $F^2 = \Id_H$ such that $[F,\pi(a)]$ is a compact operator for any $a \in A$ (such Fredholm module is called odd). A Fredholm module $(\pi,H,F)$ is called even if there exists a symmetry $\gamma \co H \to H$ such that
%$$
%F\gamma + \gamma F = 0, \quad \pi(a)\gamma - \gamma\pi(a) 
%= 0, \quad a \in A.
%$$
%In other words, the operator $F$ of an even Fredholm module acts as an antidiagonal matrix with respect to the orthogonal decomposition of $H = H_{-1} \oplus H_1$ into eigenspaces of the symmetry $\gamma$ corresponding to its eigenvalues $-1$ and $1$, while the elements of $A$ act diagonally. Inspired by this definition and the one of \cite[Definition 2.2]{CGIS14}, 
We begin with the following definition. 

\begin{defi}
\label{def-Fredholm-module-odd}
Let $\cal{A}$ be an algebra. An odd Banach Fredholm module $(X,\pi,F)$ over $\cal{A}$ on $X$ consists of a Banach space $X$ endowed with a representation $\pi \co \cal{A} \to \B(X)$, and a bounded operator $F \co X \to X$ such that
\begin{enumerate}
%\item $X$ can be written $X=\ker F \oplus Y$ for some Banach space $Y$ (i.e. $\ker F$ is complemented in $X$)
\item $F^2-\Id_X$ is a compact operator on $X$,
%\item $F^* - F$ is a compact operator,
\item for any $a \in \cal{A}$ the commutator $[F, \pi(a)]$ is a compact operator on $X$.
\end{enumerate}
\end{defi}
Sometimes, we will use the notation $[F, a]$ for $[F, \pi(a)]$ and $\sim$ for the equality up to a compact operator., i.e., equality in the Calkin algebra. We also introduce a natural notion of \textit{even} Fredholm module.

\begin{defi}
\label{def-Fredholm-module-even}
Let $\cal{A}$ be an algebra. An even Banach Fredholm module $(X,\pi,F,\gamma)$ over $\cal{A}$ on $X$ consists of a Fredholm module $(X,\pi,F)$ endowed with a bounded operator $\gamma \co X \to X$ with $\gamma^2=\Id_X$ and such that
\begin{equation}
\label{commuting-rules}
F\gamma
=-\gamma F
\quad \text{and} \quad  
[\pi(a),\gamma]=0
\end{equation}
for any $a \in \cal{A}$. We say that $\gamma$ is the grading operator.
\end{defi}
In this case, $P \ov{\mathrm{def}}{=} \frac{\Id_X+\gamma}{2}$ is a bounded projection and we can write $X=X_+\oplus X_-$ where $X_+ \ov{\mathrm{def}}{=} \Ran P$ and $X_- \ov{\mathrm{def}}{=}\Ran (\Id_X-P)$. With respect to this decomposition, the equations of \eqref{commuting-rules} implies that we can write
\begin{equation}
\label{Fredholm-even}
\pi(a)
=\begin{bmatrix}
 \pi_+(a)    &  0 \\
  0   & \pi_-(a)  \\
\end{bmatrix}
\quad \text{and} \quad 
F=\begin{bmatrix}
  0   & F_-  \\
  F_+   & 0  \\
\end{bmatrix},
\end{equation}
where $\pi_+ \co \cal{A} \to \B(X_+)$ and $\pi_- \co \cal{A} \to \B(X_-)$ are representations of $\cal{A}$. In particular, we have
\begin{equation}
\label{commutator-12}
\left[F,\pi(a)\right]
=\begin{bmatrix}
  0   &  F_-\pi_-(a) -\pi_+(a)F_+ \\
  F_+\pi_+(a)-\pi_-(a)F_+   &  0 \\
\end{bmatrix}.
\end{equation}

\begin{remark} \normalfont
Note that if $X$ is a Hilbert space, these definitions are weaker generalizations of the notions of Fredholm modules of the previous references since the assumption of selfadjointness is not required.
\end{remark}

\begin{example}[from odd to even Fredholm Banach modules] \normalfont
Let $(X,\pi,F)$ be an odd Banach Fredholm module over $\cal{A}$. It is possible to construct an \textit{even} Banach Fredholm module $(X',\pi',F',\gamma)$ by letting $X' \ov{\mathrm{def}}{=}X \oplus X$, $\pi' \ov{\mathrm{def}}{=} \pi \oplus \pi$, $F' \ov{\mathrm{def}}{=} \begin{bmatrix}
   0  & F  \\
   F  &  0 \\
\end{bmatrix}$ and $\gamma\ov{\mathrm{def}}{=} \begin{bmatrix}
  -\Id_X   & 0  \\
  0   & \Id_X  \\
\end{bmatrix}$. Indeed, we have $(F')^2=\begin{bmatrix}
   F^2  & 0  \\
   0  &  F^2 \\
\end{bmatrix} \sim \begin{bmatrix}
   \Id_X  & 0  \\
   0  &  \Id_X \\
\end{bmatrix}$ (equality up to a compact operator). Moreover, the commutator
$$
\left[F',\pi'(a)\right]
=F'\pi'(a)-\pi'(a)F'
=\begin{bmatrix}
   0  & [F,\pi(a)]  \\ 
  [F,\pi(a)]   &  0 \\
\end{bmatrix}
$$
is compact. Furthermore, we observe that
$\gamma^2=\begin{bmatrix}
   \Id_X  & 0  \\
   0  &  \Id_X \\
\end{bmatrix}$, 
$$
F'\gamma
=\begin{bmatrix}
   0  & F  \\
   F  &  0 \\
\end{bmatrix}\begin{bmatrix}
  -\Id_X   & 0  \\
  0   & \Id_X  \\
\end{bmatrix}
=\begin{bmatrix}
   0  &  F \\
  -F   & 0  \\
\end{bmatrix}
=-\begin{bmatrix}
  -\Id_X   & 0  \\
  0   & \Id_X  \\
\end{bmatrix}\begin{bmatrix}
   0  & F  \\
   F  &  0 \\
\end{bmatrix}
=-\gamma F'
$$ 
and finally for any $a \in \cal{A}$
$$
\left[\pi'(a),\gamma\right]
%=\pi'(a)\gamma-\gamma\pi'(a)
=\begin{bmatrix}
   \pi(a)  & 0  \\
   0  &  \pi(a) \\
\end{bmatrix}\begin{bmatrix}
  -\Id_X   & 0  \\
  0   & \Id_X  \\
\end{bmatrix}-\begin{bmatrix}
  -\Id_X   & 0  \\
  0   & \Id_X  \\
\end{bmatrix}\begin{bmatrix}
   \pi(a)  & 0  \\
   0  &  \pi(a) \\
\end{bmatrix}
=0.
$$
\end{example}

The next definition is straightforward variation of \cite[Definition 8.2.1 p.~204]{HiR00}. It is important to note that in the next statement the isomorphism is not necessarily isometric.

\begin{defi}[isomorphic Banach Fredholm modules]
\label{}
Let $(X,\pi,F)$ be an odd Banach Fredholm module and let $U \co Y \to X$ be an isomorphism. Then $(Y,U^{-1}\pi U,U^{-1}FU)$ is also a Banach Fredholm module, and we say that it is isomorphic to $(X,\pi,F)$.
\end{defi}

There is a natural notion of direct sum for Banach Fredholm modules. One takes the direct sum $\oplus_2$ of the Banach spaces, of the representations, and of the operators. Actually, we may use any direct sum $\oplus_p$ with $1 \leq p \leq \infty$, since for any Banach spaces $X$ and $Y$, the spaces $X \oplus_p Y$ and $X \oplus_q Y$ are isomorphic for all $1 \leq p,q \leq \infty$. We also define the zero Banach Fredholm module with the zero Banach space, the zero representation and the zero operator.

Similarly to \cite[Definition 8.2.2 p.~204]{HiR00}, we introduce the following notion of equivalence.

\begin{defi}[operator homotopic Banach Fredholm modules]
\label{}
Suppose that $(X,\pi,F_t)$ is a family of odd Banach Fredholm modules parametrized by $t \in [0,1]$, in which the representation and the Banach space remain constant but the operator $F_t$ varies with $t$. If the function $[0,1] \to \B(X)$, $t \mapsto F_t$ is norm continuous, then we say that the family defines an operator homotopy between the odd Banach Fredholm modules $(X,\pi,F_0)$ and $(X,\pi,F_1)$, and that these two odd Fredholm modules are operator homotopic.
\end{defi}
 
In addition to operator homotopy, we shall also use the following definition.

\begin{defi}[compact perturbations]
Suppose that $(X,\pi,F)$ and $(X,\pi,F')$ are Banach Fredholm modules on the same Banach space $X$, and that $(F-F')\pi(a)$ is compact for all $a \in \cal{A}$  and $F-F'$ is compact. In this case, we say that $(X,\pi,F)$ is a compact perturbation of $(X,\pi,F')$.
\end{defi}

Compact perturbation implies operator homotopy since the linear path $t \mapsto (1-t)F+t F'$ from $F$ to $F'$ defines an operator homotopy. Finally, we need the following definition.

\begin{defi}[degenerated Banach Fredholm module]
\label{def:degenerate-Banach-Fredholm}
Let $\cal{A}$ be an algebra. A odd Fredholm module $(X,\pi,F)$ over $\cal{A}$ on $X$ is said to be degenerate if $F^2=\Id_X$ and if for any $a \in \cal{A}$ we have $[F, \pi(a)]=0$.
\end{defi}

Similar definitions for the even case are left to the reader. In the spirit of \cite[Definition 8.2.5 p.~205]{HiR00}, we introduce the next definition.

%\begin{defi}[degenerate Banach Fredholm module]
%\label{def:degenerate-Banach-Fredholm}
%In the \emph{even} case, $(X,\pi,F,\gamma)$ is called degenerate if
%\[
%F^2=\Id_X,\quad [F,\pi(a)]=0\ \ (\forall a\in\cal A),\quad
%\gamma^2=\Id_X,\ \ [\pi(a),\gamma]=0,\ \ F\gamma=-\gamma F .
%\]
%\end{defi}

\begin{defi}[odd Banach $\K$-homology group]
Let $\scr{B}$ be a class of Banach spaces containing the zero space stable under finite sums and let $\cal{A}$ be an algebra. The Banach $\K$-homology group $\K^{1}(\cal{A},\scr{B})$ is the abelian group with one generator for each isomorphic equivalence class $[(X,\pi,F)]$ of Banach Fredholm modules over $\cal{A}$ on any $X$ of $\scr{B}$ subject only to the relations:
\begin{enumerate}
	\item if $(X,\pi,F_0)$ and $(X,\pi,F_1)$ are operator homotopic Fredholm modules then
$[(X,\pi,F_0)] =[(X,\pi,F_1)]$ in $\K^{1}(\cal{A},\scr{B})$,
	\item if $(X_0,\pi_0,F_0)$ and $(X_1,\pi_1,F_1)$ are any two Fredholm modules then 
	$$
	[(X_0,\pi_0,F_0) \oplus (X_1,\pi_1,F_1)] 
	=[(X_0,\pi_0,F_0)] + [(X_1,\pi_1,F_1)]
	$$
	in $\K^{1}(\cal{A},\scr{B})$.
	
	\item If $(X,\pi,F)$ is degenerate in the sense of Definition~\ref{def:degenerate-Banach-Fredholm}, then $[(X,\pi,F)]=0$ in $\K^{1}(\cal A,\scr B)$.
\end{enumerate}
\end{defi}

Similarly, we introduce a group $\K^{0}(\cal{A},\scr{B})$ in the even case. At the present time of the work, it is not clear if we need to require that a compact perturbation of a Fredholm module $(X,\pi,F)$  must give the same class than the one of $(X,\pi,F)$.

\begin{example} \normalfont
If $\scr{B}$ is the class $\scr{H}$ of Hilbert spaces, the previous groups $\K^{0}(\cal{A},\scr{H})$ and $\K^{1}(\cal{A},\scr{H})$ seems different from the K-homology groups $\K^0(\cal{A})$ and $\K^1(\cal{A})$ of \cite[Definition 8.2.5 p.~205]{HiR00} (defined for a separable $\C^*$-algebra $\cal{A}$) since the bounded operator $F$ of a Banach Fredholm module $(H,\pi,F)$ is not necessarily selfadjoint. %\textbf{bisectorial not selfadjoint, isometry not selfadjoint}
\end{example}

\begin{example} \normalfont
For any $1 \leq p < \infty$, it is natural to consider the class $\scr{L}^p$ of spaces isomorphic to $\L^p$-spaces and the class $\mathscr{L}^p_{\nc}$ of spaces isomorphic to noncommutative $\L^p$-spaces. We could also consider the class $\SL_p$ of saces isomorphoc to subspaces of $\L^p$-spaces and the class $\SQL_p$ of spaces isomorphic to subspaces of quotients of $\L^p$-spaces. Finally, we can use the class $\mathrm{Ban}$ of all Banach spaces.
\end{example}

Identifying the two graded components through an isomorphism that intertwines both the representation and the even Fredholm operator allows us to obtain an odd Banach Fredholm module from an even module.
 
\begin{prop}[even to odd Banach Fredholm module]
\label{prop-even-to-odd-Fredholm}
Let $\cal{A}$ be an algebra. Consider an even Banach Fredholm module $(X,\pi,F,\gamma)$. Let $V \co X_- \to X_+$ be a surjective isomorphism such that
\begin{equation}
\label{div-986}
V F_+V 
=F_-,
\quad 
V\pi_-(a)
=\pi_+(a)V, \quad a \in \cal{A}.
\end{equation}
With the notations of \eqref{Fredholm-even} and $G \ov{\mathrm{def}}{=} V F_+ \co X_+ \to X_+$, the triple $(X_+,\pi_+,G)$ is an odd Banach Fredholm module on $\cal{A}$, where the Banach space $X_+$ is endowed with the homomorphism $\pi_+ \co \cal{A} \to \B(X_+)$.
\end{prop}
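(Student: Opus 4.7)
The plan is to verify the two defining properties of an odd Banach Fredholm module for $(X_+, G)$ by reducing them, via the block decomposition \eqref{Fredholm-even} and the intertwining identities \eqref{div-986}, to the corresponding properties of the even Fredholm module $(X, F, \gamma)$. The key observation is that $V$, being bounded (being an isomorphism of Banach spaces), preserves compactness under composition, and the compatibility relations \eqref{div-986} allow each computation on $X_+$ to be pulled back to a block of an operator already known to be compact on $X = X_+ \oplus X_-$.

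First I would verify the compactness of $G^2 - \Id_{X_+}$. Expanding $G^2 = (VF_+)(VF_+) = (VF_+V)F_+$ and using the first identity in \eqref{div-986} gives $G^2 = F_- F_+$. On the other hand, multiplying the block form of $F$ in \eqref{Fredholm-even} with itself yields
\begin{equation*}
F^2 = \begin{bmatrix} F_- F_+ & 0 \\ 0 & F_+ F_- \end{bmatrix},
\end{equation*}
so the compactness of $F^2 - \Id_X$ from Definition \ref{def-Fredholm-module-odd} forces the diagonal block $F_- F_+ - \Id_{X_+}$ to be compact. Hence $G^2 - \Id_{X_+} = F_- F_+ - \Id_{X_+}$ is compact on $X_+$.

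Next I would verify the compactness of $[G, \pi_+(a)]$ for each $a \in \cal{A}$. Writing out the commutator and using the second identity $\pi_+(a) V = V \pi_-(a)$ from \eqref{div-986}, I would compute
\begin{equation*}
[G, \pi_+(a)] = V F_+ \pi_+(a) - \pi_+(a) V F_+ = V F_+ \pi_+(a) - V \pi_-(a) F_+ = V\bigl(F_+ \pi_+(a) - \pi_-(a) F_+\bigr).
\end{equation*}
By \eqref{commutator-12}, the operator $F_+ \pi_+(a) - \pi_-(a) F_+$ is precisely the lower-left block of $[F, \pi(a)]$, which is compact from $X_+$ to $X_-$ by Definition \ref{def-Fredholm-module-odd}. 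Composing on the left with the bounded map $V$ preserves compactness, so $[G, \pi_+(a)]$ is compact on $X_+$.

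There is no real obstacle: the statement is a purely algebraic unpacking of the block structure combined with the standard fact that bounded operators preserve the ideal of compact operators. The only point worth flagging is that the hypothesis on $V$ is strong enough (surjective isomorphism together with both intertwining relations) that one does not need to invoke pseudo-inverses or Atkinson's theorem; the verification is direct from Definitions \ref{def-Fredholm-module-odd} and \ref{def-Fredholm-module-even}.
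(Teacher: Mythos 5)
Your proof is correct and follows essentially the same approach as the paper's: verify both axioms for $(X_+,G)$ by reducing to blocks of $F^2-\Id_X$ and $[F,\pi(a)]$ via the intertwining relations \eqref{div-986}, with the only cosmetic difference that you write $[G,\pi_+(a)]=V\bigl(F_+\pi_+(a)-\pi_-(a)F_+\bigr)$ (the lower-left block of $[F,\pi(a)]$ composed with $V$ on the left) while the paper inserts $VV^{-1}$ and arrives at $\bigl(F_-\pi_-(a)-\pi_+(a)F_-\bigr)V^{-1}$ (the upper-right block composed with $V^{-1}$ on the right). In fact your treatment of $G^2-\Id_{X_+}$ is slightly more careful than the paper's, which opens by asserting $F^2=\Id_X$ even though Definition~\ref{def-Fredholm-module-odd} only requires $F^2-\Id_X$ to be compact; you correctly read off compactness of the diagonal block $F_-F_+-\Id_{X_+}$ without overclaiming.
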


\begin{proof}
Since $F^2 \sim \Id_X$, we have $F_-F_+ \sim \Id_{X_+}$ and $F_+F_- \sim\Id_{X_-}$. We obtain that the operator
$$
G^2-\Id_{X_+}
=V F_+ V F_+-\Id_{X_+}
\ov{\eqref{div-986}}{=} F_- F_+-\Id_{X_+}
$$ 
is compact. Moreover, we have
\begin{align*}
\MoveEqLeft
\big[VF_+,\pi_+(a)\big]         
=VF_+ \pi_+(a)-\pi_+(a)VF_+ 
\ov{\eqref{div-986}}{=} VF_+ \pi_+(a)VV^{-1}-\pi_+(a)F_- V^{-1} \\
&\ov{\eqref{div-986}}{=} VF_+ V\pi_-(a)V^{-1}-\pi_+(a)F_- V^{-1} 
\ov{\eqref{div-986}}{=} F_-\pi_-(a)V^{-1}-\pi_+(a)F_- V^{-1}\\
&=(F_-\pi_-(a)-\pi_+(a)F_- )V^{-1},
\end{align*}
which is a compact operator by \eqref{commutator-12} and since the space of compact operators is an ideal.
\end{proof}

For the proof of Theorem \ref{th-Chern-coupling-odd}, we introduce the notation
\begin{equation}
\label{quantized-differential}
\qd x 
\ov{\mathrm{def}}{=}[F,x]
\end{equation}
(note that $\qd x 
\ov{\mathrm{def}}{=} \i[F,x]$ or $\qd x 
\ov{\mathrm{def}}{=} 2\pi \i[F,x]$ is better for some topics but we do not need it in this paper). A simple computation shows that
\begin{equation}
\label{prop-quantized-deriv}
\qd(ab)
=a\qd b+\qd a \cdot b 
\quad\text{and} \quad F\qd a \sim -\qd a F.
\end{equation}

We also need a notion of Banach Fredholm module such that the relation $F^2-\Id$ is compact is <<almost satisfied>> inspired by \cite[Definition 2.2 p.~4818]{CGIS14}.

\begin{defi}[possibly kernel-degenerate Banach Fredholm module]
\label{def-Fredholm-module}
Let $\cal{A}$ be an algebra. A possibly kernel-degenerate Banach Fredholm module $(X,\pi,F)$ over $\cal{A}$ on $X$ consists of a Banach space $X$ endowed with a representation $\pi \co \cal{A} \to \B(X)$, and a bounded operator $F \co X \to X$ such that
\begin{enumerate}
\item $X$ can be written $X = Y \oplus \ker F$ for some Banach space $Y$, i.e., the subspace $\ker F$ is complemented in $X$ by a bounded projection $Q \co X \to X$,
\item $Q(F^2-\Id_X)|_Y$ is compact on $Y$,
%\item $F^* - F$ is a compact operator,
\item for any $a \in \cal{A}$ the commutator $Q[F, \pi(a)]|_Y$ is a compact operator on $Y$.
\end{enumerate}
\end{defi}

\section{Coupling between Banach K-homology with K-theory}
\label{sec-coupling}

%While Lafforgue’s $KK$ban -theory provides a powerful bivariant framework, it does not offer an explicit analytic formula for the pairing with $K$-theory. The results of this section give a Banach-space analogue of the classical index pairing of Connes, expressed directly in terms of Fredholm operators on Banach modules.

This section establishes an explicit analytic index pairing between $\K$-theory and our Banach $\K$-homology groups defined by Banach Fredholm modules. In spirit it is the Banach-space analogue of index pairing in the Hilbert setting \cite[Ch.~IV.2]{Con94}. Given a Banach Fredholm module $(X,\pi,F)$, we show that the Toeplitz-type operators $P u P-(\Id-P)$ in the odd case and $e F_+e$ in the even case are Fredholm and yield a well-defined integer that depends only on the classes $[u]\in \K_1(\A)$ or $[e]\in \K_0(\A)$ and $[F] \in \K^0(\A,\scr B)$ (or $[F]\in \K^1(\A,\scr B)$. 

%Conceptually, we work in the Calkin algebra $\B(X)/\K(X)$, where $[P]^2=[P]$ and the pseudolocality implies that $[P]$ centralizes $\tilde\pi(\A)$; Atkinson’s theorem then gives Fredholmness and homotopy invariance. 

While Lafforgue's $\K\K^{\mathrm{ban}}$ provides a powerful bivariant framework and an abstract product formalism \cite{Laf02}, it does not supply a concrete Toeplitz-type formula for the pairing. The novelty here is precisely this explicit, calculable index formula in a non-Hilbert Banach context. The construction applies uniformly to classes $\scr{B}$ stable finite sums (e.g., the class $\scr{L}^p$ of $\L^p$-spaces up to isomorphism or the class $\mathrm{Ban}$ of all Banach spaces), and it will serve as the analytic backbone for the Banach-space Chern characters developed later in the paper.

We start with the odd case.

\begin{thm}[coupling, odd case]
\label{th-pairing-odd}
Let $\cal{A}$ be a unital algebra and let $\scr{B}$ be a class of Banach spaces stable under finite sums and containing the zero space. Consider an invertible $u$ of the matrix algebra $\M_n(\cal{A})$ and an odd Banach Fredholm module $(X,\pi,F)$ over $\cal{A}$ with $\pi(1)=\Id_X$. We introduce the bounded operators $P_n \ov{\mathrm{def}}{=} \Id \ot \frac{\Id+F}{2} \co X^{\oplus n} \to X^{\oplus n}$ and $u_n \ov{\mathrm{def}}{=} (\Id \ot \pi)(u) \co X^{\oplus n} \to X^{\oplus n}$. Then the bounded operator
\begin{equation}
\label{PuP-98}
P_n u_n P_n -(\Id-P_n) \co X^{\oplus n} \to X^{\oplus n}
\end{equation}
is Fredholm. If $X$ is in $\scr{B}$, its Fredholm index depends only on $[u] \in \K_1(\cal{A})$ and on $[F] \in \K^{1}(\cal{A},\scr{B})$.
\end{thm}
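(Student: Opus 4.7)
The plan is to establish the Fredholm property of \eqref{PuP-98} via Atkinson's theorem and then verify that the resulting index is invariant under the equivalence relations defining $\K_1(\cal{A})$ and $\K^{1}(\cal{A},\scr{B})$. For the Fredholm property, the natural parametrix for $T \ov{\mathrm{def}}{=} P_n u_n P_n - (\Id - P_n)$ is $S \ov{\mathrm{def}}{=} P_n v_n P_n - (\Id - P_n)$, where $v_n \ov{\mathrm{def}}{=} (\Id \ot \pi)(u^{-1})$ is the bounded inverse of $u_n$. Two modulo-compact facts drive the computation: $P_n^2 - P_n$ is compact (since $F^2 - \Id_X$ is), and both commutators $[P_n, u_n]$ and $[P_n, v_n]$ are compact (entry-wise, each $[F, \pi(a_{ij})]$ is compact by hypothesis). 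Expanding $TS$ and $ST$ while commuting $P_n$ past $u_n$ and $v_n$ modulo compacts yields $TS \equiv ST \equiv \Id_{\ell^2_n(X)}$ modulo compact operators, so Theorem \ref{th-Atkinson} delivers the Fredholm property.

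Next I would verify invariance of $\Index T$ in both slots. Norm-continuous paths $t \mapsto u(t) \in \GL_n(\cal{A})$ and norm-continuous operator homotopies $s \mapsto F_s$ produce norm-continuous families of Fredholm operators, so the index is constant on connected components; this handles simultaneously the operator homotopy relation in $\K^{1}(\cal{A},\scr{B})$ and the quotient by $\GL_\infty(\cal{A})_0$ defining $\K_1(\cal{A})$. Isometric equivalences of Banach Fredholm modules conjugate $T$ by an isometry of $\ell^2_n$-direct sums, preserving the index, while direct sums $(X_0,F_0)\oplus(X_1,F_1)$ make $T$ block diagonal on $\ell^2_n(X_0) \oplus \ell^2_n(X_1)$, yielding the additivity of the index that matches the defining relation of $\K^{1}(\cal{A},\scr{B})$.

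The subtlest point, and the main obstacle I expect, is compatibility with the stabilization $u \mapsto u \oplus 1_{\cal{A}}$ that passes from $\GL_n(\cal{A})$ to $\GL_\infty(\cal{A})$. Using the assumption $\pi(1) = \Id_X$, under this stabilization $T$ gains a direct summand which modulo compact operators reduces to $F$ itself. Since $F^2 \equiv \Id_X$ modulo compacts, $F$ is a Fredholm operator by Theorem \ref{th-Atkinson}, and the multiplicativity of the Fredholm index together with its invariance under compact perturbations yields $2 \Index F = \Index F^2 = \Index \Id_X = 0$, so $\Index F = 0$ as required. Beyond this key observation, the remaining technical work is the careful matrix-entry bookkeeping of the modulo-compact identities used in the first paragraph.
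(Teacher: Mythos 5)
Your proposal is correct and follows essentially the same route as the paper: the parametrix $P_n v_n P_n - (\Id - P_n)$, reduction via $P_n^2 \sim P_n$ and compactness of $[P_n, u_n]$, $[P_n, v_n]$, then Atkinson's theorem. The paper only works out the Fredholm computation for $n=1$ and dismisses all invariance checks as ``routine,'' so your treatment of the stabilization $u \mapsto u \oplus 1_{\cal{A}}$ --- observing that the extra block reduces modulo compacts to $F$, and that $2\Index F = \Index F^2 = \Index \Id_X = 0$ by compact-perturbation invariance --- usefully makes explicit the one non-obvious step the paper leaves to the reader.
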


\begin{proof}
We only do the case $n=1$ and we use the shorthand notation $P$ for the map  $P_1=\frac{\Id+F}{2}$. The case $n > 1$ is left to the reader. Since 
\begin{equation}
\label{P2=P}
P^2
=\frac{\Id+2F+F^2}{4}
=\frac{\Id+2F}{4}+\frac{1}{4}F^2 
\sim \frac{\Id+2F}{4}+\frac{1}{4}\Id
=\frac{\Id+F}{2} 
=P.
\end{equation} 
the map $P$ is a projection modulo a compact operator. If $u \in \cal{A}$ is invertible, we have
\begin{align*}
\MoveEqLeft
\big[P \pi(u) P-(\Id-P)\big]\big[P \pi(u^{-1}) P-(\Id-P)\big]   \\
&=P \pi(u) \underbrace{P^2}_{\sim P} \pi(u^{-1})P
-P \pi(u) \underbrace{P(\Id-P)}_{\sim 0}
-\underbrace{(\Id-P)P}_{\sim 0} \pi(u^{-1}) P
+\underbrace{(\Id-P)^2}_{\sim \I-P} \\
&\sim P \pi(u) P \pi(u^{-1})P+(\Id-P)
=P \pi(u) \big[P \pi(u^{-1})-\pi(u^{-1})P+\pi(u^{-1})P\big] P+(\Id-P) \\
&=P \pi(u)\big([P, \pi(u^{-1})]+\pi(u^{-1})P\big) P+(\Id-P) \\
&=P \pi(u)\big(\tfrac{1}{2}[F, \pi(u^{-1})]+\pi(u^{-1})P\big) P +(\Id-P) \\
&\ov{\eqref{P2=P}}{\sim} \tfrac{1}{2}P \pi(u)[F, \pi(u^{-1})]P+P \pi(u u^{-1})P+(\Id-P) \\
&= \tfrac{1}{2}P \pi(u)\underbrace{[F, \pi(u^{-1})]}_{\textrm{compact}}P+P^2+(\Id-P)
\sim \Id.
\end{align*}
Similarly, we have $\big[P \pi(u^{-1}) P-(\Id-P)\big]\big[P \pi(u) P-(\Id-P)\big] \sim \Id$. By Atkinson's theorem (Theorem \ref{th-Atkinson}), we deduce that the bounded operator $P \pi(u) P-(\Id-P)$ is Fredholm. The other verifications are routine.%The map $(a, (X,F)) \to \Ind(P \pi(u) P-(\Id-P))$ is clearly stable under homotopies.
\end{proof}

So we have a pairing $\la \cdot, \cdot \ra_{\K_1(\cal{A}),\K^1(\cal{A},\scr{B})} \co \K_1(\cal{A}) \times \K^1(\cal{A},\scr{B}) \to \Z$ defined by
\begin{equation}
\label{pairing-odd-1}
\big\la [u], [(X,\pi,F)] \big\ra_{\K_1(\cal{A}),\K^1(\cal{A},\scr{B})} 
=\Index P_n u_n P_n -(\Id-P_n)
=\Index PuP-(\Id-P).
\end{equation}
If $P^2=P$, a similar proof shows that
\begin{equation}
\label{pairing-odd-2}
\big\la [u], [(X,\pi,F)] \big\ra_{\K_1(\cal{A}),\K^1(\cal{A},\scr{B})} 
=\Index P_n u_n P_n \co P_n(X^{\oplus n}) \to P_n(X^{\oplus n}).
\end{equation}

 Similarly, we prove the even case.

\begin{thm}[coupling, even case]
\label{th-pairing-even}
Let $\cal{A}$ be a unital algebra and let $\scr{B}$ be a class of Banach spaces stable under finite sums and containing the zero space. Consider a projection $e$ of the matrix algebra $\M_n(\cal{A})$ and an even Banach Fredholm module $\bigg(X_+ \oplus X_-,\pi,\begin{bmatrix}
  0   & F_-  \\
   F_+  &  0 \\
\end{bmatrix},\gamma\bigg)$ over $\cal{A}$ with $\pi(1)=\Id_X$. We introduce the bounded operator $e_n \ov{\mathrm{def}}{=} (\Id \ot \pi)(e) \co X^{\oplus n} \to X^{\oplus n}$. Then the bounded operator
\begin{equation}
\label{eFe-90}
e_n (\Id \ot F_+) e_n \co e_n(X_+^{\oplus n}) \to e_n(X_-^{\oplus n}) %-(\Id-P_n)
\end{equation}
is Fredholm. If $X$ is in $\scr{B}$, its Fredholm index depends only on $[e] \in \K_0(\cal{A})$ and on $[F] \in \K^{0}(\cal{A},\scr{B})$.
\end{thm}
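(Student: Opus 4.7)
The plan is to mimic the odd-case argument (Theorem \ref{th-pairing-odd}), exploiting the block decomposition induced by the grading $\gamma$. With respect to $X = X_+ \oplus X_-$, write $F_{\pm}$ as in \eqref{Fredholm-even}; the hypothesis that $F^2-\Id_X$ is compact then unfolds into the compactness of $F_-F_+ - \Id_{X_+}$ and $F_+F_- - \Id_{X_-}$, while the commutator condition combined with $[\pi(a),\gamma]=0$ yields via \eqref{commutator-12} that both $F_-\pi_-(a) - \pi_+(a)F_-$ and $F_+\pi_+(a) - \pi_-(a)F_+$ are compact for every $a \in \cal{A}$. Setting $e_n^{\pm} \ov{\mathrm{def}}{=} (\Id_{\M_n} \ot \pi_{\pm})(e)$, the idempotent relation $e^2 = e$ produces idempotents $(e_n^{\pm})^2 = e_n^{\pm}$, and I would propose $e_n^+(\Id \ot F_-) e_n^-$ as a parametrix for $e_n^-(\Id \ot F_+) e_n^+$.

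The central computation is to show that the two compositions reduce to the identity on $e_n^+(\ell^2_n(X_+))$, respectively $e_n^-(\ell^2_n(X_-))$, modulo compact operators. Writing $e = (e_{ij})_{i,j}$ with entries in $\cal{A}$ and expanding
$$
(\Id \ot F_-)\,e_n^-\,(\Id \ot F_+) = \sum_{i,j} E_{ij} \ot F_-\,\pi_-(e_{ij})\,F_+,
$$
one commutes $F_-$ past $\pi_-(e_{ij})$ at the cost of a compact error, and then uses $F_-F_+ \sim \Id_{X_+}$ to identify the result with $e_n^+$ modulo compacts. Multiplying on both sides by $e_n^+$ and using $(e_n^+)^2 = e_n^+$ yields $e_n^+ (\Id \ot F_-) e_n^- (\Id \ot F_+) e_n^+ \sim e_n^+$; the symmetric calculation gives the analogous identity on $e_n^-(\ell^2_n(X_-))$. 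Atkinson's theorem (Theorem \ref{th-Atkinson}) then forces \eqref{eFe-90} to be Fredholm.

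For well-definedness of the index, I would argue as in the odd case: an operator homotopy of $F$ gives a norm-continuous family of Fredholm operators on $e_n^+(\ell^2_n(X_+))$, whose integer-valued index is locally constant and hence constant; direct sums decompose entrywise, giving additivity; isometric equivalence transports the operator by a bounded invertible intertwiner. On the K-theory side, stabilization $e \mapsto e \oplus 0$ adjoins a zero summand contributing index zero, while Murray--von Neumann equivalence of idempotents in $\M_\infty(\cal{A})$, realized as conjugation by an invertible element (as recalled in the Preliminaries), again produces a bounded invertible intertwiner between the two associated Fredholm operators and therefore preserves the index.

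The main obstacle I expect is the parametrix computation in the second paragraph: one must track the grading carefully through the matrix entries of $e$ and absorb compact remainders in the correct order, since compactness of commutators involves crossing between $\pi_+$ and $\pi_-$. Once this is in place, the remaining verifications are soft consequences of the continuity, additivity, and compact-perturbation invariance of the Fredholm index, exactly parallel to the odd case.
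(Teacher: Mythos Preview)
Your proposal is correct and follows essentially the same route as the paper: use $e_n^+(\Id\ot F_-)e_n^-$ as a parametrix, reduce the two compositions to $e_n^{\pm}$ modulo compacts by exploiting the compactness of $F_\mp F_\pm - \Id_{X_\pm}$ together with that of the off-diagonal commutators \eqref{commutator-12}, and conclude via Atkinson. The paper carries this out only for $n=1$, writing the key step as $eF_+eF_-e = e[F_+,e]F_-e + eF_+F_-e \sim e$ and leaving the matrix amplification and the well-definedness checks to the reader; your entrywise expansion for general $n$ and your sketch of the invariance arguments simply make those routine verifications explicit.
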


\begin{proof}
We only do the case $n=1$ and we use the shorthand notation $e=e_1$. The case $n > 1$ is left to the reader. Note that the operators $F_-F_+-\Id_{X_+}$ and $F_+F_- - \Id_{X_-}$ are compact since $\begin{bmatrix}
   F_-F_+  &  0 \\
   0  &  F_+F_- \\
\end{bmatrix}
-\Id_X=F^2-\Id_X$ is compact. We have
\begin{align*}
\MoveEqLeft
(e F_+ e)(e F_- e) 
= e F_+ e F_- e 
=e (F_+e-eF_+ + e F_+)F_- e \\
&=e ([F_+,e] + e F_+)F_- e
=e \underbrace{[F_+,e]}_{\textrm{compact}}F_-e + e F_+F_- e
\sim  e.
\end{align*}
Similarly, we have $(e F_- e)(e F_+ e) \sim e$. By Atkinson's theorem (Theorem \ref{th-Atkinson}), we deduce that the bounded operator $eF_+e \co e(X_+) \to e(X_-)$ is Fredholm. The proof is complete.
\end{proof}

Consequently, we have a pairing $\la \cdot, \cdot \ra_{\K_0(\cal{A}),\K^0(\cal{A},\scr{B})} \co \K_0(\cal{A}) \times \K^0(\cal{A},\scr{B}) \to \Z$ defined by
\begin{equation}
\label{pairing-even-1}
\big\la [e], [(X,\pi,F,\gamma)] \big\ra_{\K_0(\cal{A}),\K^0(\cal{A},\scr{B})} 
=\Index e_n (\Id \ot F_+) e_n.
\end{equation}

\begin{remark} \normalfont
\label{non-unital-case}
Similarly to \cite[Proposition 2 p.~289]{Con94}, it is easy to extend these result to the non-unital case. For example, in the odd case, it suffices to replace $\M_n(\cal{A})$ by $\M_n(\tilde{\cal{A}})$ where $\tilde{\cal{A}}$ is the unitization of the non-unital algebra $\cal{A}$. 
\end{remark}

\section{Summability of Banach Fredholm modules}
\label{sec-summability}

\paragraph{Approximation numbers} In the classical Hilbert-space setting of noncommutative geometry, the notions of summability and dimension are defined in terms of singular values. In our Banach-space framework, however, one must rely on suitable generalizations of these quantities for bounded operators acting on Banach spaces. Following \cite[Definition p.~68]{Kon86}, we say that a map assigning to any bounded operator $T$ between Banach spaces a sequence $((s_n(T))_{n \geq 1}$ of real numbers is an $s$-number function if the following conditions are satisfied.
\begin{enumerate}
\item $\norm{T} \geq s_1(T) \geq s_2(T) \geq \cdots \geq 0$.
\item If $R,T \co X \to Y$ and if $n,m \geq 1$ then $s_{n+m-1}(R+T) \leq s_n(R)+s_n(T)$.
\item If $T \co X_0 \to X$, $S \co X \to Y$ and $R \co Y \to Y_0$ are bounded operators then we have
\begin{equation}
\label{majo-sn-2}
s_n(RST) 
\leq \norm{R} s_n(S) \norm{T}.
\end{equation}
\item We have $s_n(T)=0$ if $\rank T <n$ and $s_n(\Id_{\ell^2_n})=1$.
\end{enumerate}
This concept was introduced by Pietsch in \cite{Pie74}. If $X$ and $Y$ are Hilbert spaces and if $T \co X \to Y$ is a compact operator then by \cite[p.~69]{Kon01} these numbers coincide with the singular numbers of the operator $T$. We refer to the books \cite{Pie80}, \cite{Pie87}, \cite{Kon01} and to the survey papers \cite{Kon01}, \cite{Ger25} and \cite{Pie23} for more information. The history of this topic is equally described in \cite[Chapter 6]{Pie07}. The $s$-numbers are continuous functions since $|s_n(S)-s_n(T)| \leq \norm{S-T}$ for any bounded operators $S,T \co X \to Y$ by \cite[Theorem 1 p.~203]{Pie74}. Finally, when $Y$ is reflexive we have by \cite[Remark 7 p.~237]{EdT86} the equality
\begin{equation}
\label{duality-an}
a_n(T)
=a_n(T^*), \quad n \geq 1.
\end{equation}

%[Pietsch-Can one hear the structure of a Banach space?]
%Survey Pietsch
Let us now describe a few examples.

\begin{example} \normalfont
The approximation numbers \cite[Definition I.d.14 p.~69]{Kon86} of a bounded linear operator $T \co X \to Y$ are defined by 
\begin{equation}
\label{def-approximation-numbers}
a_n(T) 
\ov{\mathrm{def}}{=} \inf\{\norm{T-R} : R \co X \to Y,\,  \rank R < n \}, \quad n \geq 1.
\end{equation}
These numbers quantify how well the map can be approximated by finite-dimensional linear operators. The Weyl numbers are defined by
\begin{equation}
\label{}
x_n(T)
\ov{\mathrm{def}}{=} \sup \{a_n(TA): A \co \ell^2\to X,\, \norm{A}=1\}, \quad n \geq 1.
\end{equation}
These sequences are example of $s$-number sequences by \cite[Lemma p.~69]{Kon86}. Moreover, we have 
\begin{equation}
\label{order-xn-an}
x_n(T) \leq a_n(T), \quad n \geq 1.
\end{equation}
Actually, the approximation numbers are the largest $s$-numbers by \cite[Lemma p.~69]{Kon86} or  \cite[Theorem 11.2.3 p.~146]{Pie80}. 
%It is known that the numbers of isomorphisms $i_n(T)$ defined in [...] are the smallest ones.
\end{example} 

We refer to \cite[Section 6.2]{Pie07} for more examples of $s$-number sequences. 
%An $s$-number function is said to be multiplicative \cite[Definition 1.d.15 p.~70]{Kon86} if for any bounded operators $T_1,T_2$ we have
%\begin{equation}
%\label{s-numbers-multiplicative}
%a_{m+n-1}(T_1 T_2)
%\leq \max\big\{a_m(T_1),a_n(T_2)\big\},\qquad m,n\geq 1,
%\end{equation}
%By \cite[Lemma p.~70]{Kon86}, the approximation numbers and the Weyl numbers are multiplicative.
In Section \ref{sec-Dirac-Rn}, we will use the following observation.
\begin{lemma}
\label{lemma:approx-finite-direct-sum}
Suppose that $1 \leq p \leq \infty$. Consider two $\ell^p$-direct sums $
X \ov{\mathrm{def}}{=} \bigoplus_{k=1}^N X_k$ and $Y \ov{\mathrm{def}}{=} \bigoplus_{k=1}^N Y_k$ of Banach spaces and a block diagonal operator $T \ov{\mathrm{def}}{=} \bigoplus_{k=1}^N T_k \co X \to Y$, where each $T_k \co X_k \to Y_k$ is a bounded linear operator. Then for every integer $n \geq 1$ we have
\begin{equation}
\label{eq:an-direct-sum}
a_{Nn}(T)
\leq \max_{1 \leq k \leq N} a_n(T_k).
\end{equation}
%In particular, since the sequence $(a_m(T))_{m \geq 1}$ is decreasing, for each integer $m \geq 1$ there exists an integer $n \geq 1$ with
%\[
%N(n-1) < m \leq Nn
%\]
%such that
%\begin{equation}
%\label{eq:an-direct-sum-m}
%a_m(T)
%\leq a_{Nn}(T)
%\leq \max_{1 \leq k \leq N} a_n(T_k).
%\end{equation}
\end{lemma}

\begin{proof}
Let $n \geq 1$ and $\epsi > 0$. For each $k \in \{1,\dots,N\}$ choose a finite-rank operator $S_k \co X_k \to Y_k$ with
$
\rank S_k < n
$ and $
\norm{T_k - S_k}
\leq a_n(T_k) + \epsi$. 
Consider the block diagonal operator $
S \ov{\mathrm{def}}{=} \bigoplus_{k=1}^N S_k \co X \to Y$. Then $\rank S \leq \sum_{k=1}^N \rank S_k < Nn$. Moreover, by the definition of the operator norm on the direct sum,
\[
\norm{T-S}
= \sup_{1 \leq k \leq N} \norm{T_k - S_k}
\leq \sup_{1 \leq k \leq N} \big(a_n(T_k) + \epsi\big)
= \max_{1 \leq k \leq N} a_n(T_k) + \epsi.
\]
Taking the infimum over all such families $(S_k)_{1 \leq k \leq N}$ and letting $\epsi \to 0$ yields \eqref{eq:an-direct-sum}. 
%The estimate \eqref{eq:an-direct-sum-m} follows from \eqref{eq:an-direct-sum} and the monotonicity of approximation numbers.
\end{proof}

\paragraph{Quasi-Banach ideals} 
The notion of operator ideals plays a central role in the study of compact operators. In the Banach space setting, using the notion of $s$-numbers, we introduce some (quasi-)ideals for measuring the <<degree of compactness>> of operators and for defining summability conditions analogous to those used in the classical noncommutative geometry. Following \cite[Definition I.d.1 p.~56]{Kon86}, a quasi-Banach ideal of operators $(\mathfrak{A},\alpha)$ is a subclass $\mathfrak{A}$ of all bounded linear operators between Banach spaces together with $\alpha \co \mathfrak{A} \to \R^+$ such that for all Banach spaces $X,Y$ the sets $\mathfrak{A}(X,Y) \ov{\mathrm{def}}{=} \mathfrak{A} \cap \B(X,Y)$ satisfy:
\begin{enumerate}
\item $\mathfrak{A}(X,Y)$ contains all finite-rank operators from $X$ to $Y$ and $\alpha(\Id_\mathbb{C}) = 1$.
\item $(\mathfrak{A}(X,Y),\alpha)$ is a quasi-Banach space with quasi-triangle constant $K$ independent of $X$ and $Y$, i.e.,
$$
\alpha(R + T)
\leq K (\alpha(R) + \alpha(T)), \quad R, T \in \mathfrak{A}(X,Y) 
$$
\item If $R \in \B(X_0,X)$, $T \in \mathfrak{A}(X,Y)$, $S \in \B(Y,Y_0)$ for some Banach spaces $X_0$, $Y_0$, then
\begin{equation}
\label{ideal}
STR \in \mathfrak{A}(X_0,Y_0)
\quad \text{and} \quad 
\alpha(STR) \leq 
\norm{S} \alpha(T)\norm{R}. 
\end{equation}
\end{enumerate}
If $\alpha$ is a norm on each $\mathfrak{A}(X,Y)$, i.e., $K = 1$, then we say that $(\mathfrak{A},\alpha)$ is a Banach ideal of operators and that $\alpha$ is the ideal norm. 
%If $\alpha$ is a $p$-norm $(\mathfrak{A},\alpha)$ is a $p$-Banach ideal. 
We also define $\mathfrak{A}(X) \ov{\mathrm{def}}{=} \mathfrak{A}(X,X)$.

\paragraph{Quasi-Banach ideals associated to $s$-numbers} Suppose that $0 < q < \infty$ and $0 < r \leq \infty$. Let $T \mapsto (s_n(T))_{n \geq 0}$ be an $s$-number sequence. For any Banach spaces $X$ and $Y$, following \cite[Definition 1.d.18 p.~72]{Kon86} we define the classes
\begin{equation}
\label{def-Sqs}
S^{q,r}_s(X,Y)
\ov{\mathrm{def}}{=} \big\{T \co X \to Y : (s_n(T)) \in \ell^{q,r} \big\}
\end{equation}
%\begin{equation}
%\label{def-Sqs-weak}
%S^{q,\infty}_s(X,Y)
%\ov{\mathrm{def}}{=} \big\{T \co X \to Y : (s_n(T)) \in \ell^{q,\infty} \big\}.
%\end{equation}
%voir livre Sukochev II p6.
In the case $p=q$, we let
\begin{equation}
\label{def-Sqs-bis}
S^{q}_s(X,Y)
\ov{\mathrm{def}}{=} S^{q,q}_s(X,Y)=\big\{T \co X \to Y : (s_n(T)) \in \ell^q \big\}.
\end{equation}
Moreover, if $T \in S^{q,r}_s(X,Y)$, we let 
\begin{equation}
\label{def-quasi-norm-Spq}
\norm{T}_{S^{q,r}_s(X,Y)} 
\ov{\mathrm{def}}{=} \norm{(s_n(T))}_{\ell^{q,r}}
=\begin{cases}
\dsp\bigg(\sum_{n=1}^\infty n^{\frac{r}{q}-1}s_n(T)^r\bigg)^{\frac{1}{r}} & \text{if } 0 < r < \infty \\
\dsp \sup_{n \geq 1} n^{\frac{1}{q}} s_n(T)& \text{if } r =\infty \\
\end{cases}.
\end{equation}%livre Pietsch History
By \cite[Lemma p.~72]{Kon86}, $(S^{q,r}_s,\norm{\cdot}_{S^{q,r}_s})$ is a quasi-Banach ideal. 
%In particular, for any bounded operators $R \co X_0 \to X$, $S \co Y \to Y_0$ and any $T \in S^p_s(X,Y)$, we have %$$ and
%\begin{equation}
%\label{ideal}
%STR \in S^p_s(X_0,Y_0).
%%\norm{STR}_{S^p_s(X,Y)}
%%\leq \norm{S}_{Y \to Y_0} \norm{T}_{S^p_s(X,Y)}\norm{R}_{X_0 \to X}.
%\end{equation} 
For the cases $s_n=a_n$ or if $s_n=x_n$ we will use the notations $S^{q,r}_\app$ and $S^{q,r}_\weyl$. If $X=Y$, we use the notation $S^{q,r}_s(X)$. If, in addition $X$ is equal to a Hilbert space $H$, we recover the Schatten class $S^q(H)$ with the space $S^q(X)$. %According to \cite[Theorem 9.3 p.~220]{Pie74} that $S^{\infty}_{\mathrm{gel}}$ is the space of compact operators.

If $0 < p < q < \infty $, we have obviously the contractive inclusion $S^p_s(X) \subset S^q_s(X)$ and $S^q_s(X) \subset S^{q,\infty}_s(X)$. Furthermore, if $\frac{1}{r}=\frac{1}{p}+\frac{1}{q}$, we have by \cite[Proposition 1.d.19 p.~73]{Kon86} the inclusion 
\begin{equation}
\label{composition-Schatten}
S^p_s(X) \circ S^q_s(X)
\subset S^r_s(X).
\end{equation}
Finally, if the Banach space $X$ has the approximation property, then by \cite[]{Pie80} we have $s_n(T) \to 0$ if and only if $T$ is compact.

\paragraph{Links with the trace} By \cite[p.~220]{Kon86}, the trace $\tr$ on the space of finite-rank operators acting on a Banach space $X$ admits a unique continuous extension on the space $S^1_\app(X)$, again denoted $\tr$. The same thing is true for $S^1_\weyl$ for the class of Banach spaces with the bounded approximation property. Moreover, by \cite[pp.~224-225]{Kon86}, the previously trace coincide with the sum of eigenvalues of $T$, i.e., $\tr T=\sum_{n=0}^{\infty} \lambda_n(T)$. Furthermore, if $T \in S^1_\app(X)$ and if $R \co X \to X$ is a bounded operator then by \cite[Corollary 2 p.~228]{Kon86}
\begin{equation}
\label{trace-prop}
\tr(TR)
=\tr(RT).
\end{equation}
It is worth noting that by \cite[Theorem 4.b.12 p.~245]{Kon86} a Banach space $X$ is isomorphic to a Hilbert space if and only if the space $\mathcal{N}(X)$ of nuclear operators acting on $X$ coincide with the space $S^1_\app(X)$.

\paragraph{Weyl's inequality} Recall Weyl's inequality for operators acting on Banach spaces. For that For this purpose, we first recall that a bounded operator $T \co X \to X$ is a Riesz operator \cite[Definition 1.a.3 p.~19]{Kon86} if
\begin{enumerate}
\item for all $\lambda \in \mathbb{C}- \{0\}$, $T- \lambda \Id_X$ is a Fredholm operator and has finite ascent and finite descent,

\item all non-zero spectral values $\lambda \in \sigma(T)$ are eigenvalues of finite multiplicity and have no accumulation point except possibly zero.
\end{enumerate}
Then by \cite[Theorem 2.a.6 p.~85]{Kon86} any bounded operator $T \in S^q_\weyl(X)$ for some $0 < q < \infty$ is a Riesz operator such that its sequence $(\lambda_n)$ of eigenvalues belongs to the Banach space $\ell^q$ and we have the inequality
\begin{equation}
\label{Weyl-inequality}
\norm{(\lambda_n)}_{\ell^q}
\leq 2^{\frac{1}{q}}\sqrt{2\e} \norm{T}_{S^q_\weyl(X)}.
\end{equation}
%\textbf{ADD dire qu'il y a inclusion dans l'espace des nuclear, interpolation}

\paragraph{Summability of Banach Fredholm modules}

Here, we give a generalization of \cite[Definition 3 p.~290]{Con94}.

\begin{defi}
Suppose that $0 < q < \infty$. We say an odd Banach Fredholm module $(X,\pi,F)$ or an even Banach Fredhlom $(X,\pi,F,\gamma)$ over an algeba $\cal{A}$ is $q$-summable if the commutator $[F, \pi(a)]$ belongs to the space $S^q_\app(X)$ for any $a \in \cal{A}$. 
	%\item We say a Fredholm module $(\pi,X,F)$ is $q$-summable if $[P, \pi(a)]$ is in $S^{p+}(X)$ and $q^+$-summable if $[P, π(a)]$ is in $S^p(X)$ respectively in Lp+(H), for all a in a dense subalgebra of A.
% \end{enumerate}
\end{defi}

By \eqref{composition-Schatten}, this implies that if $n+1 \geq q$ every product $[F,a_0][F,a_1] \cdots [F,a_n]$ of $n+1$ commutators belongs to the space $S^1_\app(X)$.

\begin{remark} \normalfont
Puschnigg demonstrated in \cite[Corollary 3.4 p.~230]{Pus11} that the reduced $\C^*$-algebras of lattices in higher-rank Lie groups do not admit any Fredholm modules that are $q$-summable for some $0 < q <\infty$ over them and which define non-zero classes of $\K$-homology.
\end{remark}

%%%%%%%%%%%%%%%%%%%%%%%%%%%%%%%%%%%%%%%%%%%%%%%%%%%%%%%%%%%%%%%%%%%%%%%%%%%%%%%%%%%%%%%%%%%%%%%%%%
\section{The Chern character of a $q$-summable Banach Fredhlom module}
\label{sec-Chern}

In this section, we extend the notion of Chern character introduced by Connes to our setting. In the spirit of \cite[Definition 3 p.~295]{Con94} (see also \cite[Definition 4.13 p.~34]{CPR11}), we introduce the following definition.

\begin{defi}[odd case]
Consider a $q$-summable odd Banach Fredholm module $(X,\pi,F)$ for some $q > 0$ over an algebra $\cal{A}$. Let $n$ be an odd integer with $n+1 \geq q$. We define the Chern character by the formula
\begin{equation}
\label{Chern-odd}
\Ch_{n}^F(a_0,a_1,\dots,a_{n})
\ov{\mathrm{def}}{=} c_n \tr(F[F,a_0][F,a_1]\cdots[F,a_n]),\qquad a_0,a_1,\dots,a_n \in \cal{A},
\end{equation}
where $c_n$ is a constant (but the exact value is not useful for this paper).%\frac{\Gamma(\frac{n}{2}+1)}{2 n!}
%where we use the constant $\lambda_n \ov{\mathrm{def}}{=}\sqrt{2 \i}(-1)^{\frac{n(n-1)}{2}} \Gamma\left(\frac{n}{2}+1\right)$. 
\end{defi}

In the even case, the definition is slightly different.

\begin{defi}[even case]
Consider a $q$-summable even Banach Fredholm module $(X,\pi,F,\gamma)$ for some $q > 0$ over an algebra $\cal{A}$. Let $n$ be an even integer with $n+1 \geq q$. We define the Chern character by the formula
\begin{equation}
\label{}
\Ch_{n}^F(a_0,a_1,\dots,a_{n})
\ov{\mathrm{def}}{=} c_n \tr(\gamma F[F,a_0][F,a_1]\cdots[F,a_n]),\qquad a_0,a_1,\dots,a_n \in \cal{A},
\end{equation}
where we use a constant $c_n$ (but the exact value is not useful for this paper).%$\lambda_n \ov{\mathrm{def}}{=}\sqrt{2 \i}(-1)^{\frac{n(n-1)}{2}} \Gamma\left(\frac{n}{2}+1\right)$. 
\end{defi}

%Le pair en mange un nombre impair.

%\textbf{Le Chern impair $n$ mange un nombre pair $n+1$.}

We need the following Banach space generalization of \cite[p.~143]{GVF01}. 
%\cite[Theorem 7.1 p.~193]{GGK90

\begin{prop}
\label{prop-Calderon}
Let $X$ be a Banach space. Let $T \co X \mapsto X$ be a Fredholm operator and let $R \co X \mapsto X$ be a bounded operator such that the operators $\Id-TR$ and $\Id-RT$ belong to the space $S^1_\app(X)$. Then
$$
\Ind T
=\tr(\Id-TR)-\tr(\Id-RT).
$$
\end{prop}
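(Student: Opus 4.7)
The plan is to reduce to the special case where $R$ is a generalized inverse of $T$ and then extend to an arbitrary parametrix by a trace-cyclicity argument.

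First, I would use the generalized inverse recalled in the preliminaries, see \eqref{generalized-inverse}: a bounded operator $U \co X \to X$ with $TUT = T$ for which $TU$ and $\Id-UT$ are finite-rank projections onto $\Ran T$ and $\ker T$ respectively. Thus $\Id-TU$ and $\Id-UT$ are finite-rank operators of traces $\dim(X/\Ran T)$ and $\dim \ker T$, and the proposition holds for $R = U$ by the definition \eqref{Fredholm-Index} of the Fredholm index. For an arbitrary $R$ satisfying the hypothesis, I would set $S \ov{\mathrm{def}}{=} R - U$, so that $TS = (\Id-TU) - (\Id-TR)$ and $ST = (\Id-UT) - (\Id-RT)$ both lie in $S^1_\app(X)$ (as sums of a finite-rank operator and an element of $S^1_\app(X)$), and a direct subtraction shows that the proposition is equivalent to the cyclicity identity $\tr(TS) = \tr(ST)$.

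The subtlety is that neither $T$ nor $S$ individually lies in $S^1_\app(X)$, so the property \eqref{trace-prop} does not apply directly. To overcome this, I would use the decomposition $X = \ker T \oplus \tilde X = W \oplus \Ran T$, where $\tilde X$ is a closed complement of the finite-dimensional subspace $\ker T$ and $W$ is a finite-dimensional complement of the closed subspace $\Ran T$. In this decomposition, $T$ takes the block-diagonal form $0 \oplus T'$ with $T' \co \tilde X \to \Ran T$ a Banach-space isomorphism. A block-matrix computation reduces $\tr(TS)$ and $\tr(ST)$ to $\tr_{\Ran T}(T'M)$ and $\tr_{\tilde X}(MT')$ respectively, where $M \co \Ran T \to \tilde X$ is the $(2,2)$-block of $S$. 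The ideal property \eqref{ideal}, applied to the factorization $M = T'^{-1}(T'M)$, then places $M$ itself in $S^1_\app(\Ran T, \tilde X)$.

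The main obstacle is the last step: proving $\tr(T'M) = \tr(MT')$ when $M \in S^1_\app$ acts between two different Banach spaces, since the stated cyclicity \eqref{trace-prop} is formulated only for operators on a single space. I would handle this by approximating $M$ in the $S^1_\app$-quasi-norm by finite-rank operators $M_n \to M$ (possible by density of finite-rank operators in $S^1_\app$), invoking the elementary finite-rank identity $\tr(T'M_n) = \tr(M_n T')$, and passing to the limit by the continuity of the trace on $S^1_\app$ recalled in the preliminaries.
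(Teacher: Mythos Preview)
Your reduction to the identity $\tr(TS)=\tr(ST)$ for $S\ov{\mathrm{def}}{=} R-U$ is correct, and the block-decomposition route you outline can be pushed through. But you have overlooked the one observation that makes the whole detour unnecessary: contrary to what you write, the operator $S=R-U$ \emph{does} belong to $S^1_\app(X)$. The paper sees this via the algebraic identity
\[
R-U=(\Id-UT)R-U(\Id-TR),
\]
which you can verify by expanding the right-hand side. The first summand is finite-rank because $\Id-UT$ is the finite-rank projection onto $\ker T$; the second summand lies in $S^1_\app(X)$ by the ideal property \eqref{ideal} since $\Id-TR\in S^1_\app(X)$ by hypothesis. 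Hence $S\in S^1_\app(X)$, and now \eqref{trace-prop} applies directly with the bounded operator $T$ to give $\tr(TS)=\tr(ST)$.

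Your alternative path requires justifying that the trace on $S^1_\app(X)$ respects block decompositions along complemented subspaces, that finite-rank operators are dense in $S^1_\app$ between distinct Banach spaces, and that the elementary cyclicity $\tr(T'M_n)=\tr(M_nT')$ survives passage to the limit. These are all true (and implicit in the continuity of the trace you invoke), but the paper's single-line rewriting of $R-U$ bypasses them entirely.
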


\begin{proof} 
Let $U$ be a generalized inverse of $T$ as in \eqref{generalized-inverse}. Then 
$$
R
=U+\underbrace{(\Id- UT)R -U(\Id - TR)}_{\ov{\mathrm{def}}{=} G}.
$$
Since $\Id-UT$ has finite rank and $\Id - TR$ belongs to the space $S^1_\app(X)$, it follows that $G$ also belongs to the space $S^1_\app(X)$. Observe that
$$
RT-TR
=(U+G)T-T(U+G)
= UT - TU + GT - TG.
$$
We note that $\tr GT 
\ov{\eqref{trace-prop}}{=} \tr TG$. Consequently, it suffices to prove the result for $R = U$.

Recall that $\Id-TU$ and $\Id-UT$ are projections of finite rank. Thus $\Id-TU$ and $\Id -UT$ belong to $S^1_\app(X)$. Hence $TU - UT$ belongs to $S^1_\app(X)$ and recalling that the trace of a finite rank projection is equal to the rank of the projection, we obtain
\begin{align*}
\MoveEqLeft
\tr(\Id - UT)- \tr(\Id-TU) 
=\rank(\Id - UT)- \rank (\Id - TU) \\
&=\dim \ker T-\dim( Y / \Ran T) 
=\Ind T.         
\end{align*}
\end{proof} 

\begin{remark} \normalfont
Let $T \co X \to X$ be a bounded operator. At the time of writing, it is unclear if the existence of a bounded operator $R \co X \mapsto X$ be a bounded operator such that the operators $\Id-TR$ and $\Id-RT$ belong to the space $S^1_\app(X)$ implies that $T$ is a Fredholm operator.
\end{remark}

The following is a Banach space generalization of \cite[Proposition 4.2 p.~144]{GVF01}.

\begin{cor}
\label{cor-useful-23}
Let $X$ be a Banach space. Let $T \co X \mapsto X$ be a Fredholm operator and let $R \co X \mapsto X$ be a bounded operator such that the operators $(\Id - TR)^n$ and $(\Id - RT)^n$ belong to the space $S^1_\app(X)$ for some integer $n \geq 1$. Then the index of the Fredholm operator $T$ is given by
\begin{equation}
\label{index-power-N}
\Ind T
= \tr(\Id-RT)^n-\tr (\Id-TR)^n.
\end{equation}
\end{cor}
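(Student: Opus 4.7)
The idea is to reduce to the already-proven case $n=1$ (Proposition \ref{prop-Calderon}) by cooking up a better pseudo-inverse of $T$ whose defect operators are exactly the $n$-th powers appearing in the statement. Concretely, define
\begin{equation*}
S \ov{\mathrm{def}}{=} \sum_{k=0}^{n-1} (\Id - RT)^k R \co X \to X,
\end{equation*}
which is bounded since each summand is. I will show that $\Id - TS = (\Id - TR)^n$ and $\Id - ST = (\Id - RT)^n$, and then invoke Proposition \ref{prop-Calderon} with $R$ replaced by $S$.

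\textbf{Step 1 (intertwining and telescoping).} The elementary identity $T(\Id - RT) = (\Id - TR)T$ gives by an immediate induction $T(\Id - RT)^k = (\Id - TR)^k T$ for every $k \geq 0$. Writing $A \ov{\mathrm{def}}{=} \Id - TR$ and $B \ov{\mathrm{def}}{=} \Id - RT$, so that $TR = \Id - A$ and $RT = \Id - B$, this yields
\begin{equation*}
TS = \sum_{k=0}^{n-1} T(\Id-RT)^k R = \sum_{k=0}^{n-1} A^k TR = \sum_{k=0}^{n-1} A^k(\Id - A) = \Id - A^n,
\end{equation*}
where the last equality is a telescoping sum. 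Analogously,
\begin{equation*}
ST = \sum_{k=0}^{n-1} (\Id - RT)^k RT = \sum_{k=0}^{n-1} B^k(\Id - B) = \Id - B^n.
\end{equation*}
Hence $\Id - TS = (\Id - TR)^n$ and $\Id - ST = (\Id - RT)^n$, both of which lie in $S^1_\app(X)$ by hypothesis.

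\textbf{Step 2 (apply the case $n=1$).} Now $T$ is a Fredholm operator and $S$ is a bounded operator for which $\Id - TS$ and $\Id - ST$ belong to the quasi-Banach ideal $S^1_\app(X)$, so Proposition \ref{prop-Calderon} applies and gives
\begin{equation*}
\Ind T = \tr(\Id - ST) - \tr(\Id - TS) = \tr\bigl((\Id - RT)^n\bigr) - \tr\bigl((\Id - TR)^n\bigr),
\end{equation*}
which is \eqref{index-power-N}.

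\textbf{Main obstacle.} There is essentially no conceptual difficulty; the work is entirely in choosing the right auxiliary $S$ so that telescoping collapses $TS$ and $ST$ into clean $n$-th powers. The only points requiring a little care are the intertwining identity $T(\Id-RT)^k=(\Id-TR)^k T$ (straightforward induction) and verifying that $S$ is well-defined as a bounded operator on $X$ (immediate from boundedness of $R$ and of each power $(\Id-RT)^k$). Tracking the sign convention inherited from Proposition \ref{prop-Calderon} is the only bookkeeping subtlety.
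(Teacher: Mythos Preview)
Your proof is correct and follows essentially the same route as the paper: the paper also introduces the auxiliary operator $S_n \ov{\mathrm{def}}{=} \sum_{j=0}^{n-1}(\Id-RT)^j R$, uses the intertwining $T(\Id-RT)=(\Id-TR)T$ to telescope $TS_n$ and $S_nT$ into $\Id-(\Id-TR)^n$ and $\Id-(\Id-RT)^n$, and then applies Proposition~\ref{prop-Calderon}. Your write-up is, if anything, a bit more explicit about the induction behind the intertwining.
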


\begin{proof}
We let $K \ov{\mathrm{def}}{=} \Id-RT$ and $L \ov{\mathrm{def}}{=} \Id-TR$. Note that $TK=T-TRT=LT$. We introduce the sum $S_n=\sum_{j=0}^{n-1}K^j R$. We have
$$
S_nT
=\sum_{j=0}^{n-1} K^j RT
=\sum_{j=0}^{n-1} K^j (\Id-K)
=\Id-K^n.
$$
Moreover, using $TK=LT$ in the second equality, we see that
$$
TS_n
=T\sum_{j=0}^{n-1}K^j R
=\sum_{j=0}^{n-1}L^j T R 
=\sum_{j=0}^{n-1}L^j (\Id-L)
=\Id-L^n.
$$
Hence the operators $\Id-S_nT=K^n=(\Id - RT)^n$ and $\Id-TS_n=L^n=(\Id-TR)^n$ belong to the space $S^1_\app$. Consequently, using Proposition \ref{prop-Calderon} in the first equality, we obtain
$$
\Ind T
=\tr(\Id-S_nT) -\tr(\Id-TS_n)
=\tr K^n- \tr L^n
= \tr(\Id-RT)^n-\tr (\Id-TR)^n.
$$
%By the previous result, we can assume that $n > 1$. We replace $R$ by $R' :=R + R(\I-TR) + \cdots+ R(\I - TR)^{n-1}$. Using the equality
%$$
%(1-x)(1+x+\cdots+x^{n-1})
%=1-x^n
%$$
%we see that $\I-R'T =(A-RT)^n$ and $\I-TR' =(\I-TR)^n$, and the conclusions follows from D.2).
\end{proof}

Now we describe the pairing \eqref{pairing-odd-2} in the odd case.

\begin{thm}[odd case]
\label{th-Chern-coupling-odd}
Let $(X,\pi,F)$ be a $q$-summable odd Banach Fredholm module over the algebra $\cal{A}$ for some $q > 0$ with $F^2=\Id$. Consider a class $\scr{B}$ of Banach spaces stable under countable sums and containing the zero space. For any element $[u]$ of the group $\K_1(\cal{A})$ and any odd integer $n$ with $n+1 \geq q$, we have
\begin{equation}
\label{Chern-odd-123}
\big\la [u], [(X,\pi, F)] \big\ra_{\K_1(\cal{A}),\K^1(\cal{A},\scr{B})} 
=\frac{(-1)^{\frac{n+1}{2}}}{ 4^{\frac{n+1}{2}}c_n}  \Ch_n^F(u^{-1},u,\ldots,u^{-1},u).
\end{equation}
\end{thm}

\begin{proof} 
We let $N \ov{\mathrm{def}}{=} \frac{n+1}{2}$ and $P \ov{\mathrm{def}}{=} \frac{\I+F}{2}$. 
Note that
\begin{align*}
\MoveEqLeft
P[P,u^{-1}]P
=P(Pu^{-1}-u^{-1}P)P 
=P(Pu^{-1}+Pu^{-1}P-Pu^{-1}P-u^{-1}P)P \\
&=P(Pu^{-1}+u^{-1}P-Pu^{-1}-u^{-1}P)P 
=P[P,u^{-1}]P+P[P,u^{-1}]P.  
\end{align*}
Hence $P[P,u^{-1}]P=0$ which is equivalent to $P \d u^{-1}P=0$. Using this equality, in the last equality, we obtain
\begin{align}
\MoveEqLeft  
\label{div96}       
P - Pu^{-1}P u P
=Pu^{-1}uP - Pu^{-1}Pu P 
=-P(u^{-1}P-Pu^{-1})uP\\
&=P([P,u^{-1}])uP
=\frac{1}{2} P \d u^{-1} uP 
=\frac{1}{2} P \d u^{-1} (u P-Pu+Pu) \nonumber \\
&=-\frac{1}{4} P \d u^{-1} \d u+\frac{1}{2} P \d u^{-1} P u 
= -\frac{1}{4} P \d u^{-1} \d u. \nonumber
\end{align}
Since the the Banach Fredholm module is $q$-summable, the elements $\d u$ and $\d u^{-1}$ belongs to the space $S^q_\app(X)$. We deduce that $P - Pu^{-1}P u P$ belongs to $S^{\frac{q}{2}}_\app(X)$, hence to $S^{N}_\app(X)$ since $n+1 \geq q$. Similarly, we can prove that
\begin{equation}
\label{div90}
P - P u P u^{-1}P
=-\frac{1}{4} P \d u \d u^{-1}.
\end{equation}
Consequently, this operator also belongs to the space $S^{N}_\app(X)$. So we use Corollary \ref{cor-useful-23} with the operators $PuP \co P(X) \to P(X)$ and $Pu^{-1}P \co P(X) \to P(X)$ replacing $T$ and $R$.  Using the equalities $\tr (\d u^{-1} \d u)^N=\tr (\d u \d u^{-1})^{N}$ and $\tr F(\d u \d u^{-1})^N \ov{\eqref{prop-quantized-deriv}}{=} -\tr F (\d u^{-1} \d u)^{N}$, we obtain
\begin{align*}
\MoveEqLeft
\Index(PuP) 
\ov{\eqref{index-power-N}}{=} \tr(P - Pu^{-1}P u P)^{N} - \tr (P - P u P u^{-1}P)^{N} \\
&\ov{\eqref{div96} \eqref{div90}}{=} \tr(-\tfrac{1}{4} P \d u^{-1} \d u)^{N} - \tr(-\tfrac{1}{4} P \d u \d u^{-1})^{N} \\
&=\frac{(-1)^{N}}{4^{N}}\big[\tr( P \d u^{-1} \d u)^N - \tr( P \d u \d u^{-1})^{N}\big] 
=\frac{(-1)^{N}}{4^{N}}\big[\tr P (\d u^{-1} \d u)^N - \tr P (\d u \d u^{-1})^{N}\big] \\
&=\frac{(-1)^{N}}{4^{N}}\bigg[\tr \tfrac{\I+F}{2} (\d u^{-1} \d u)^{N} - \tr \tfrac{\I+F}{2} (\d u \d u^{-1})^{N}\bigg]\\
&=\frac{(-1)^{N}}{2 \cdot 4^n}\bigg[\tr (\d u^{-1} \d u)^{N} +\tr F(\d u^{-1} \d u)^{N}-\tr (\d u \d u^{-1})^{N}- \tr F (\d u \d u^{-1})^{N}\bigg] \\
&=\frac{(-1)^{N}}{4^{N}} \tr \big(F (\d u^{-1} \d u)^{N}\big) 
=\frac{(-1)^{N}}{4^{N}}\tr\big(F\underbrace{[F,u^{-1}][F,u]\cdots[F,u^{-1}]}_{2N=n+1}\big) \\
&\ov{\eqref{Chern-odd}}{=} \frac{(-1)^{\frac{n+1}{2}}}{ 4^{\frac{n+1}{2}}c_n} \Ch_{n}^F(u^{-1},u,\ldots,u^{-1},u).   
\end{align*}%\frac{2(-1)^{\frac{n+1}{2}} n!}{ 4^{\frac{n+1}{2}} 
%
%\begin{equation}
%\Ch_{n}^F(a_0,a_1,\dots,a_{n})
%\ov{\mathrm{def}}{=} \frac{\Gamma(\frac{n}{2}+1)}{2 n!} \tr(F[F,a_0][F,a_1]\cdots[F,a_n]),\qquad a_0,a_1,\dots,a_n \in \cal{A}.
%\end{equation}
\end{proof}

Now, we describe the pairing \eqref{pairing-even-1} in the even case. The proof is left to the reader as an easy exercise.

\begin{thm}[even case]
\label{Th-even-chern}
Let $(X,\pi,F,\gamma)$ be a $q$-summable even Banach Fredholm module over the algebra $\cal{A}$ for some $q > 0$ with $F^2=\Id$. Consider a class $\scr{B}$ of Banach spaces stable under countable sums and containing the zero space. For any element $[e]$ of the group $\K_0(\cal{A})$ and any even integer $n \geq q$, we have
\begin{equation}
\label{Chern-even-123}
\big\la [e], [(X,\pi, F,\gamma)] \big\ra_{\K_0(\cal{A}),\K^0(\cal{A},\scr{B})} 
=\frac{(-1)^{\frac{n}{2}}}{2 c_n} \Ch_n^F(e,e,\ldots,e).
\end{equation}%\frac{1}{\sqrt{2 \i}\Gamma(\frac{n}{2}+1)}
\end{thm}

\section{From Banach spectral triples to Banach Fredhlom modules}
\label{sec-from}

%A classical result says that a spectral triple $(\cal{A},H,D)$ give rise to a Fredholm module by considering the operator $F \ov{\mathrm{def}}{=} \sgn(D)$. We generalize this result to the Banach setting here.

The following definition is extracted of \cite{ArK22}. 

\begin{defi}[Lipschitz algebra]
\label{Def-Lipschitz-algebra}
Consider a triple $(\cal{A},X,D)$ constituted of the following data: a Banach space $X$, a closed unbounded operator $D$ on $X$ with dense domain $\dom D \subset X$, and an algebra $\cal{A}$ endowed with a homomorphism $\pi \co \cal{A} \to \B(X)$. In this case, we define the Lipschitz algebra
\begin{align}
\label{Lipschitz-algebra-def}
\MoveEqLeft
\Lip_D(\cal{A}) 
\ov{\mathrm{def}}{=} \big\{a \in \cal{A} : \pi(a) \cdot \dom D \subset \dom D 
\text{ and the unbounded operator } \\
&\qquad \qquad  [D,\pi(a)] \co \dom D \subset X \to X \text{ extends to an element of } \B(X)\big\}. \nonumber
\end{align} 
\end{defi}

We refer to \cite[Section 5.7]{ArK22} for some properties of $\Lip_D(\cal{A})$. If $\cal{B}$ is a subalgebra of $\cal{A}$, observe that we have $\Lip_D(\cal{B})= \Lip_D(\cal{A}) \cap \cal{B}$. 

The following definition is essentially \cite[Definition 5.10 p.~218]{ArK22}. Here $\rho(D) \ov{\mathrm{def}}{=} \mathbb{C}\backslash \sigma(D)$ is the resolvent set of the operator $D$. 

\begin{defi}[compact Banach spectral triple]
\label{Def-Banach-spectral-triple}
Consider a triple $(\cal{A},X,D)$ constituted of the following data: a reflexive\footnote{\thefootnote. It may perhaps be possible to replace the reflexivity by an assumption of weak compactness, see \cite[p.~361]{HvNVW18}.} Banach space $X$, a bisectorial operator $D$ on $X$ with dense domain $\dom D \subset X$, and a Banach algebra $\cal{A}$ equipped with a continuous homomorphism $\pi \co \cal{A} \to \B(X)$. We say that $(\cal{A},X,D)$ is a compact Banach spectral triple if
\begin{enumerate}
\item{} $D$ admits a bounded $\H^\infty(\Sigma^\bi_\theta)$ functional calculus on a bisector $\Sigma^\bi_\theta$ for some angle $\theta \in (\omega_{\bi}(D),\frac{\pi}{2})$,
\item{} $D$ has compact resolvent, i.e.~there exists $\lambda \in \rho(D)$ such that $R(\lambda,D)$ is compact,
\item{} the subset $\Lip_D(\cal{A})$ is dense in the algebra $\cal{A}$.
\end{enumerate}
%If $\ker D$ is in addition finite-dimensional, we say that $(\cal{A},X,D)$ is a compact Banach spectral triple.
If the second condition is replaced by the weaker assumption <<$D^{-1}$ is a compact operator on $\ovl{\Ran D}$>>, the corresponding structure is referred to as a possibly kernel-degenerate compact Banach spectral triple.
If in addition, there exists an isomorphism $\gamma \co X \to X$, with $\gamma^2=\Id$, 
\begin{equation}
\label{grading-ST}
D\gamma=-\gamma D 
\quad \text{and} \quad 
\gamma \pi(a)=\pi(a)\gamma, \quad a \in \cal{A},
\end{equation}
the spectral triple is said to be even or graded. Otherwise, it is said to be odd or ungraded. 
\end{defi}

%\begin{defi}[compact Banach spectral triple]
%\label{Def-Banach-spectral-triple}
%Consider a triple $(\cal{A},X,D)$ constituted of the following data: a reflexive\footnote{\thefootnote. It may perhaps be possible to replace the reflexivity by an assumption of weak compactness, see \cite[p.~361]{HvNVW18}.} Banach space $X$, a closed unbounded bisectorial operator $D$ on $X$ with dense domain $\dom D \subset X$, and a Banach algebra $\cal{A}$ equipped with a homomorphism $\pi \co \cal{A} \to \B(X)$. We say that $(\cal{A},X,D)$ is a possibly kernel-degenerate compact Banach spectral triple if
%\begin{enumerate}
%\item{} $D$ admits a bounded $\H^\infty(\Sigma^\bi_\omega)$ functional calculus on a bisector $\Sigma^\bi_\omega$.
%\item{} $D^{-1}$ is a compact operator on $\ovl{\Ran D}$.
%\item{} The subset $\Lip_D(A)$ is dense in $\cal{A}$.
%\end{enumerate}
%If $\ker D$ is in addition finite-dimensional, we say that $(\cal{A},X,D)$ is a compact Banach spectral triple.
%\end{defi}
Consider a compact Banach spectral triple $(\cal{A},X,D)$. In this situation, since $X$ is reflexive, \cite[p.~448]{HvNVW18} yields the direct sum decomposition 
\begin{equation}
\label{decompo-space-bisect}
X
=\ovl{\Ran D} \oplus \ker D.
\end{equation} 
Moreover, by \cite[Theorem 6.29 p.~187]{Kat76}, the spectrum of $D$ consists entirely of isolated eigenvalues with finite multiplicities and $R(\lambda,D)$ is compact for any $\lambda \in \rho(D)$.

Now, we introduce of summability similar to the hilbertian context. Recall that for any $0<q<\infty$ the spaces $S^q_\app(X)$ and $S^{q,\infty}_\app(X)$ are defined in \eqref{def-Sqs}. The operator $|D|^{-1}$ is well-defined on $\ovl{\Ran D}$. Furthermore, we can extend it by letting $|D|^{-1}=0$ on $\ker D$. 

\begin{defi}[summability]
\label{summability-spectral-triple}
Suppose that $0 < q <\infty$. We say that a possibly kernel-degenerate compact spectral triple $(\cal{A},X,D)$ is $q$-summable if the operator $|D|^{-1}$ belongs to $S^q_\app(X)$. The triple is said to be $q^+$-summable if $|D|^{-1}$ belongs to $S^{q,\infty}_\app(X)$. In this case, the spectral dimension of the spectral triple is defined by
\begin{equation}
\label{Def-spectral-dimension}
\dim(\cal{A},X,D)  
\ov{\mathrm{def}}{=} \inf\big\{ q > 0 : |D|^{-1} \in S^{q,\infty}_\app(X) \big\}.
\end{equation}
%\textbf{(les deux remplacent $|D|$ by $(I+D^2)^{-\frac{1}{2}}$}) \cite[page 42]{Lan10} 
\end{defi}

The following definition is essentially \cite[Definition 5.11 p.~2123]{ArK22}. Compare to \cite[Definition 2.1 p.~33]{CGRS14}. We can also introduce variants for algebras which are not Banach algebras. These extensions are left to the reader. 

\begin{defi}[locally compact Banach spectral triple]
\label{Def-Banach-spectral-triple}
Consider a triple $(\cal{A},X,D)$ constituted of the following data: a reflexive Banach space $X$, a bisectorial operator $D$ on $X$ with dense domain $\dom D \subset X$, and a Banach algebra $\cal{A}$ equipped with a continuous homomorphism $\pi \co \cal{A} \to \B(X)$. We say that $(\cal{A},X,D)$ is a locally compact Banach spectral triple if
\begin{enumerate}
\item{} $D$ admits a bounded $\H^\infty(\Sigma^\bi_\theta)$ functional calculus on a bisector $\Sigma^\bi_\theta$ for some angle $\theta \in (\omega_{\bi}(D),\frac{\pi}{2})$,
\item{} $\pi(a)(D+\i)^{-1}$ is a compact operator.
\item{} The subset $\Lip_D(\cal{A})$ is dense in the algebra $\cal{A}$.
\end{enumerate}
If the second condition is replaced by the weaker assumption <<$\pi(a)|D|^{-1}$ is a compact operator>>, the corresponding structure is referred to as a possibly kernel-degenerate locally compact Banach spectral triple.
\end{defi}
In particular, a locally compact Banach spectral triple where the algebra $\cal{A}$ is unital with $\pi(1)=\Id_X$ is a \textit{compact} Banach spectral triple.

In this case, the spectral dimension of the locally compact Banach spectral triple is defined by
\begin{equation}
\label{Def-spectral-dimension}
\dim(\cal{A},X,D)  
\ov{\mathrm{def}}{=} \inf\big\{ q > 0 : \text{for any $a \in \Lip_D(\cal{A}) $ we have }\pi(a)(D+\i)^{-1} \in S^{q,\infty}_\app(X) \big\}.
\end{equation}
In the degenerated situation, we need to replace $(D+\i)^{-1}$ by $|D|^{-1}$.

%\item{} $\pi(a)|D|^{-1}|_{\ovl{\Ran D}}$ is a compact operator on $\ovl{\Ran D} \to X$.possibly kernel-degenerate

\paragraph{Strong convex compactness property} Recall that compactness is preserved under strong integrals by \cite[Theorem 1.3 and Remark 1.2 b) p.~260]{Voi92}. More precisely, for any measure space $(\Omega,\mu)$ and any strongly measurable function $T \co \Omega \to \cal{K}(X,Y)$, with values in the space $\cal{K}(X,Y)$ of compact operators from $X$ into $Y$, such that the integral $\int_\Omega \norm{T(\omega)} \d \mu(\omega)$ is finite, the strong integral $\int_\Omega T(\omega) \d \mu(\omega)$ is a compact operator. 

Proposition \ref{prop-triple-to-Fredholm} is a variant of \cite{BaJ83}, \cite[Proposition 2.4 p.~4818]{CGIS14} and \cite[Proposition 4.4]{CPR11}, unfortunately, with a level of rigor that remains somewhat incomplete. The proof is in the same spirit as the one in the Hilbert space setting. We need the two following elementary lemma. The first lemma is proved with \cite[Lemma~5.1.2]{Haa06}.

\begin{lemma}
\label{lemma-limit-sectorial}
Let $A$ be a sectorial operator on a reflexive Banach space $X$ such that $0$ is an isolated spectral point. Assume that $A$ admits a bounded $\H^\infty(\Sigma_\theta)$ functional calculus for some $\theta\in(\omega_{\sec}(A),\pi)$. Then in the operator norm we have
\begin{equation}
\label{eq:limit-sectorial-Hinfty}
s^2(s^2+A)^{-1}
\xrightarrow[s \to 0]{} P_{\ker A},
\end{equation}
where $P_{\ker A}$ is the projection onto $\ker A$ with respect to the decomposition \eqref{decompo-reflexive}.
\end{lemma}

The second lemma is proved with a bisectorial version of \cite[Lemma~5.1.2]{Haa06}.

\begin{lemma}
\label{lemma-limit-bisector-D}
Let $D$ be bisectorial operator on a reflexive Banach space $X$ admitting a bounded $\H^\infty(\Sigma_\theta^\bi)$ functional calculus with compact resolvent and let $P_{\ker D}$ be the projection onto $\ker D$ with respect to the decomposition \eqref{decompo-space-bisect}. Then in operator norm we have
\begin{equation}
\label{eq:bis-limit}
D(s^2+D^2)^{-1}
\xrightarrow[s\to 0]{} \sgn(D) |D|^{-1},
\end{equation}
where $|D|^{-1}$ is extended by $0$ on $\ker D$.
\end{lemma}

Now, we can prove the main result of this section. 

\begin{prop} 
\label{prop-triple-to-Fredholm}
Let $(\cal{A},X, D)$ be a (possibly kernel-degenerate) compact Banach spectral triple over an algebra $\cal{A}$ via a homomorphism $\pi \co \cal{A} \to \B(X)$. Then $(X,\pi,\sgn D)$ is a (possibly kernel-degenerate) Banach Fredholm module over $\cal{A}$. %Moreover, if $(\cal{A},X, D)$ is $q$-summable then $(X,\pi,\sgn D)$ is $q$-summable.
\end{prop}

\begin{proof}
First, assume that $(\cal{A},X, D)$ is compact Banach spectral triple. Using the boundedness of the functional calculus of the operator $D$, we can introduce the bounded operator $F \ov{\mathrm{def}}{=} \sgn D$ with Example \ref{ex-signe}. Since $F^2 -\Id_X=-P_{\ker D}$, with $P_{\ker D}$ the projection onto $\ker D$ vanishes on $\ovl{\Ran D}$ by construction, the first requirement in the Definition \ref{def-Fredholm-module-odd} of a Banach Fredholm module hold true.

For any $a \in \Lip_D(\cal{A})$, we have
\begin{align}
\MoveEqLeft
\label{inter-33R}
\left[ F , a \right]       
=[D|D|^{-1},a] =D|D|^{-1} a - a D|D|^{-1} \\
&= (Da-aD)|D|^{-1}+D(|D|^{-1}a-a|D|^{-1}) 
=[D,a]|D|^{-1}+D[|D|^{-1},a]. \nonumber
\end{align}
Observe that the operator $[D,a]|D|^{-1}$ is compact. It suffices to prove that the operator $D[|D|^{-1},a]$ is compact. 

The operator $D$ is bisectorial and admits a bounded $\H^\infty(\Sigma_\theta^\bi)$ functional calculus. Consequently, by Proposition \ref{prop-bisectorial-to-sectorial} the operator $D^2$ is sectorial and admits a bounded $\H^\infty(\Sigma_\theta)$ functional calculus. Recall that the functional calculus of the operators $D$ and $D^2$ are compatible. So using the change of variables $t=s^2$ in the last equality, we obtain for any $x \in \Ran D$ 
\begin{align}
\MoveEqLeft
\label{div-890}
|D|^{-1}x
=(D^2)^{-\frac{1}{2}}x 
\ov{\eqref{for-frac-powers}}{=} \frac{1}{\pi} \int_{0}^{\infty} (t+D^2)^{-1}x \frac{\d t}{\sqrt{t}} 
=\frac{2}{\pi}\int_0^\infty (s^2+D^2)^{-1}\d s.         
\end{align}
Consequently, we have
$$
D[|D|^{-1},a]
\ov{\eqref{div-890}}{=} D\bigg(\frac{2}{\pi} \int_{0}^{\infty} (s^2+D^2)^{-1}\d s,a\bigg]
=D\bigg(\frac{2}{\pi} \int_{0}^{\infty} [(s^2+D^2)^{-1},a] x\d s\bigg).
$$
We want pass the operator $D$ through the left integral by using Proposition \ref{prop-Hille}. Note that the same proof than the one of \cite[Lemma 2.3 p.~685]{CaP98} provides the formula
\begin{equation}
\label{formule-Carey}
[(s^2+D^2)^{-1},a]
=D(s^2+D^2)^{-1}[a,D](s^2+D^2)^{-1}+(s^2+D^2)^{-1}[a,D]D(s^2+D^2)^{-1}.
\end{equation}
Consequently, the integral $\int_{0}^{\infty} D[(s^2+D^2)^{-1},a] \d s$ identifies to the integral
\begin{equation}
\label{integral-cv}
\int_{0}^{\infty} \bigg(\underbrace{D^2(s^2+D^2)^{-1}[a,D](s^2+D^2)^{-1}}_{K_1(s)}
+\underbrace{D(s^2+D^2)^{-1}[a,D]D(s^2+D^2)^{-1}}_{K_2(s)}\bigg) \d s,
\end{equation}
Using the computation 
\begin{equation}
\label{inter-6778}
\Id -D^2(s^2+D^2)^{-1}
=(s^2+D^2)(s^2+D^2)^{-1}-D^2(s^2+D^2)^{-1}
=s^2(s^2+D^2)^{-1},
\end{equation}
we see that the last integral is equal to
$$
\int_{0}^{\infty} \bigg(\big(\Id-s^2(s^2+D^2)^{-1}\big)[a,D](s^2+D^2)^{-1}
+D(s^2+D^2)^{-1}[a,D]D(s^2+D^2)^{-1}\bigg) \d s.
$$
Note that the operator $D(s^2+D^2)^{-1}$ identifies to $f_s(D^2)$ where
$$
f_s(z)
\ov{\mathrm{def}}{=} \frac{z^{\frac{1}{2}}}{s^2+z}
=\frac{1}{s} \frac{(\frac{z}{s^2})^{\frac{1}{2}}}{1+\frac{z}{s^2}}, \quad z \in \Sigma_\theta. 
$$
By Example \ref{Example-Haase}, this function belongs to the algebra $\H^{\infty}_{0}(\Sigma_\theta)$ with $\norm{f_s}_{\H^\infty(\Sigma_\theta)} \lesssim \frac{1}{s}$. Using the $\H^\infty$ functional calculus of the sectorial operator $D^2$, we infer that the operator $D(s^2+D^2)^{-1}$ is bounded with 
$$
\norm{D(s^2+D^2)^{-1}}
\lesssim \frac{1}{s}.
$$ 
Similarly, we have $\norm{D^2(s^2+D^2)^{-1}} \lesssim \frac{1}{s^2}$. 

\textit{Norm–integrability at $\infty$.}
For any $s \geq 1$, we have
\[
\|K_1(s)\|
\leq \norm{s^2(s^2+D^2)^{-1}} \norm{[a,D]} \norm{(s^2+D^2)^{-1}}
\lesssim \frac{1}{s^2},
\]
and similarly
\[
\norm{K_2(s)}
\leq \norm{D(s^2+D^2)^{-1}} \norm{[a,D]} \norm{D(s^2+D^2)^{-1}}
\lesssim \frac{1}{s^2}.
\]
Hence $\int_1^\infty \norm{K_1(s)}+\norm{K_2(s)} \d s 
<\infty$.

\textit{Behaviour at $s=0$.} Using the projection $P_{\ker D}$, we deduce by \eqref{Bisec-Ran-Ker}, Lemma \ref{lemma-limit-sectorial} and Lemma \ref{lemma-limit-bisector-D} the norm limits in $\B(X)$
\begin{equation*}
\label{eq:limits-at-0}
s^2(s^2+D^2)^{-1}
\xrightarrow[s\to 0]{} P_{\ker D}
\quad \text{and} \quad
D(s^2+D^2)^{-1}
\xrightarrow[s\to 0]{} \sgn(D) |D|^{-1},
\end{equation*}
where $|D|^{-1}$ is extended by $0$ on $\ker D$.

We conclude that the integral \eqref{integral-cv} is norm convergent. Using Proposition \ref{prop-Hille}, we conclude that
$$
D[|D|^{-1},a]
=\int_{0}^{\infty} \bigg(D^2(s^2+D^2)^{-1}[a,D](s^2+D^2)^{-1}
+D(s^2+D^2)^{-1}[a,D]D(s^2+D^2)^{-1}\bigg) \d s.
$$
Note that the operator $[a,D]$ is bounded and that the operator $(s^2+D^2)^{-1}=-R(\i s,D)R(-\i s,D)$ is compact. Since the space $\cal{K}(X)$ of compact operators acting on $X$ has the strong convex compactness property, we conclude that the operator $[F,a]$ is compact. Let $a \in \cal{A}$. There exists a sequence $(a_k)$ of elements of $\Lip_D(\cal{A})$ such that $\pi(a_k) \to \pi(a)$. Each operator $[\sgn D,a_k]$ is compact by the first part of the proof. Since the operator product is continuous in $\B(X)$, we have $[\sgn D,a_k] \to \left[\sgn D, a\right]$  We conclude that the operator $\left[\sgn D, a\right]$ is also compact.

If $(\cal{A},X, D)$ is kernel-degenerate, replacing $X$ by $\ovl{\Ran D}$, we obtain the conclusion.
\end{proof}

\begin{remark} \normalfont
A more simple proof can be given with the additional assumption that $[|D|, a]$ defines a bounded operator on the Banach space $X$ for all $a \in \Lip_D(\cal{A})$. Indeed, 
%\begin{prop}
%\label{prop-spect-to-Fredholm-compactness}
%Let $(\cal{A},X, D)$ be a Banach spectral triple such that $[|D|, a]$ defines a bounded operator on $X$ for all $a \in \cal{A}$. Then $(\ovl{\Ran D}, \sgn D)$ is a Banach Fredholm module for the algebra $\ovl{\pi(\cal{A}})$.
%\end{prop}
for any $a \in \Lip_D(\cal{A})$, we have %\textbf{voir livre Riesz}
\begin{align}
\MoveEqLeft
\label{formula-simple}
\left[\sgn D, a\right]
=D|D|^{-1}a - a D|D|^{-1} 
=(Da-aD)|D|^{-1}+D(|D|^{-1}a- a|D|^{-1}) \\
&=[D,a]|D|^{-1}+D|D|^{-1} (a|D|-|D|a)|D|^{-1} 
=[D,a]|D|^{-1}-\sign(D) [|D|,a]|D|^{-1} \nonumber \\
&=\big([D,a]-\sgn(D)[|D|,a]|\big)|D|^{-1}. \nonumber
\end{align}
This operator is compact as a product of a bounded operator and a compact operator.
\end{remark}

The next result is a variant of \cite[Lemma 2.4 p.~35]{CGRS14}, \cite[Lemma 10.18 p.~462]{GVF01}, \cite[Proposition 5.9]{CPR11} and \cite[Proposition 2.4 p.~686]{CaP98} \cite{SWW98}.

\begin{prop}
\label{prop-q-summable-Fredhom-123}
Let $q > 0$. Suppose that $(\cal{A},X, D)$ is a $q$-summable Banach spectral triple over an algebra $\cal{A}$ via a homomorphism $\pi \co \cal{A} \to \B(X)$. Assume that $[|D|, a]$ induces a well-defined bounded operator for any $a \in \cal{A}$. Then $(X,\pi, \sgn D)$ is a $q$-summable Banach Fredholm module over $\cal{A}$.%(\lfloor r \rfloor + 1), where $\lfloor p \rfloor$ is the largest integer less than or equal to $r$.
\end{prop}

\begin{proof}
Let $a \in \cal{A}$. If $R \ov{\mathrm{def}}{=} [D,a]-\sgn(D)[|D|,a]|$, we have
$$
\left[\sgn D, a\right]
\ov{\eqref{formula-simple}}{=} R|D|^{-1}. \nonumber
$$
By the ideal property \eqref{ideal}, the operator $R|D|^{-1}$ belongs to the space $S^{q}_\app(X)$.
%Now observe that $R$ is bounded, and we want to show
%$$
%R|D|^{-1} R|D|^{-1} \cdots R|D|^{-1} \in S^1_\app(X)
%$$
%where we have a product of $\lfloor p \rfloor+1$ factors $R|D|^{-1}$. For each $\epsi > 0$ we have $R|D|^{-1} \in S^{r+\epsi}_\app(X)$. As $\lfloor r \rfloor \leq r < \lfloor r \rfloor+1$, we can choose $r + \epsi$ between $r$ and $\lfloor r \rfloor + 1$, and so $R|D|^{-1} \in S^{\lfloor r \rfloor+1}_\app(X)$, by \eqref{composition-Schatten} the product is in the space $S^1_\app(X)$ and we are done.
\end{proof}

\begin{remark} \normalfont
Let $(\cal{A},X, D)$ be a $q$-summable compact Banach spectral triple for some $q > 0$. If $S^{q}_\app(X)$ has the strong convex compactness property then the proof of Proposition \ref{prop-triple-to-Fredholm} shows that the Banach Fredholm module $(X,\pi, \sgn D)$ is $q$-summable if we can extend the Bochner integral to quasi-Banach spaces.
\end{remark} 
\section{Examples of triples and Banach Fredholm modules}
\label{Examples-1}

We describe in details some examples. We also refer to \cite{Arh26} for the case of Hodge-Dirac operators on oriented compact Riemannian manifolds.

%%%%%%%%%%%%%%%%%%%%%%%%%%%%%%%%%%%%%%%%%%%%%%%%%%%%%%%%%%%%%%%%%%%%%%%%%%%%%%%%%%%%%%%%%%%%%%%%%%%%%%%%%%%
\subsection{The Dirac operator $\frac{1}{\i}\frac{\d}{\d \theta}$ on the space $\L^p(\T)$ and the periodic Hilbert transform}
\label{sec-dtheta-T}

%\textbf{Mettre $\frac{1}{2\pi \i}$ comme Higson cours?}

Suppose that $1 < p < \infty$. Here $\T \ov{\mathrm{def}}{=} \{z \in \mathbb{C}: |z|=1\}$ is the one-dimensional torus. The functions defined on $\T$ can be identified with periodic functions on $\R$ with period $2\pi$. In particular, the algebra of continuous complex functions on $\T$ can be written as
$$
\C(\T)
= \{ f \in \C(\R) : f(\theta + 2\pi) = f(\theta) \}.
$$
Recall that by \cite[Proposition 4.10 p.~31]{EnN00} any multiplication operator $M_f \co \L^p(\T) \to \L^p(\T)$, $g \mapsto fg$ by a measurable function $f \co \T \to \mathbb{C}$ is bounded if and only if $f$ is essentially bounded. In this case, we have $\norm{M_f}_{\L^p(\T) \to \L^p(\T)}=\norm{f}_{\L^\infty(\T)}$. Consequently, we can consider the isometric homomorphism $\pi \co \L^\infty(\T) \to \B(\L^p(\T))$, $f \mapsto M_f$. It is well known, essentially from \cite[Exercise 4.2.6 p.~194]{Kha13} and \cite[p.~390]{GVF01}, that $(\C(\T),\L^2(\T),\frac{1}{\i}\frac{\d}{\d \theta})$ is a compact spectral triple, where the used homomorphism is the restriction of $\pi$ on the algebra $\C(\T)$. First, we prove an $\L^p$-generalization of this classical fact. Here, we consider the closure of the closable\footnote{\thefootnote. Let $(f_n)_{n\geq1}$ be a sequence of functions belonging to $\C^\infty(\T)$ such that $f_n \to 0$ in $\L^p(\T)$ and $f_n' \to g$ in $\L^p(\T)$ for some $g \in \L^p(\T)$. For any $h \in \C^\infty(\T)$, integration by parts gives
$\int_0^{2\pi} f_n'(\theta)h(\theta) \d\theta
= -\int_0^{2\pi} f_n(\theta)h'(\theta)\d\theta$. Passing to the limit (using H\"older's inequality), we obtain $\int_0^{2\pi} g(\theta) h(\theta)\d\theta 
= 0$ for all $h \in \C^\infty(\T)$. Hence $g=0$ in $\L^p(\T)$ by \cite[Proposition 1.1.5 p.~4]{PRR19}.} unbounded operator $\frac{\d}{\d \theta} \co \C^\infty(\T) \subset\L^p(\T) \to \L^p(\T)$ and we denote again $\frac{\d}{\d \theta}$ its closure whose the domain $\dom \frac{\d}{\d \theta}$ is the Sobolev space
$$
\W^{1,p}(\T)
\ov{\mathrm{def}}{=}\{f \in \L^p(\R) : f(\theta + 2\pi) = f(\theta), f \text{ is absolutely continuous}, f' \in \L^p(\R) \}.
$$ 
Finally, note that the operator $\frac{\d}{\d \theta}$ is an unbounded Fourier multiplier with symbol $(\i \,n)_{n \in \Z}$.

Suppose $1 < p < q < \infty$ and consider some $\alpha > 0$ such that $\frac{\alpha}{d} \geq \frac{1}{p}-\frac{1}{q}$. We will use the Sobolev inequality 
\begin{equation}
\label{Sobolev-inequality-torus}
\norm{f}_{\L^q(\T^d)}
\lesssim \norm{f}_{\W^{\alpha,p}(\T^d)}.
\end{equation}
of \cite[Corollary 1.2]{BeO13}, 
%voir aussi Junge-Mei et mon papier?
which gives a bounded embedding $J \co \W^{\alpha,p}(\T^d) \to \L^q(\T^d)$. Note that Sobolev embeddings are also available for compact Riemannian manifolds, see e.g., \cite[Proposition 2.2]{ReS24}. 
%Moreover, if $1 \leq p \leq q \leq 2$ satisfy $\frac{\alpha}{d} > \frac{1}{p}-\frac{1}{q}$ the result \cite[Theorem pp.~186-187]{Kon86} \textbf{provides essentially }the estimate  
%\begin{equation}
%\label{sn-de-Sobolev}
%a_n(J)
%\lesssim \frac{1}{n^{\frac{\alpha}{d}}}, \quad n \geq 1
%\end{equation}
%on the approximation numbers.
We need the following estimates on the approximation numbers.

\begin{prop}
\label{prop:Koenig-on-torus}
Suppose that $1 < p \leq q \leq 2$. Consider some $\alpha > 0$ with $\frac{\alpha}{d} > \frac1p-\frac1q$. If $J \co \W^{\alpha,p}(\T^d) \to \L^q(\T^d)$ is the canonical Sobolev embedding then
\begin{equation}
\label{approx-tori-Sobolev}
a_n(J) 
\lesssim \frac{1}{n^{\frac{\alpha}{d}}}, \qquad n \geq 1.
\end{equation}
\end{prop}

\begin{proof}
Consider the restriction map $R_p \co \L^p_\alpha(\T^d) \to \W^{\alpha,p}((0,2\pi)^d)$, $f \mapsto f|_{(0,2\pi)^d}$ and the periodization $\cal{P}_q \co \L^q((0,2\pi)^d) \to \L^q(\T^d)$. 
Both maps are bounded. The range $X_p\ov{\mathrm{def}}{=} \Ran R_p$ is a closed subspace of $\W^{\alpha,p}((0,2\pi)^d)$.
% consisting of functions with the appropriate periodic trace conditions (when $\alpha$ is not an integer, this is the usual fractional periodicity condition). 
Consider the inclusion $j \co X_p \hookrightarrow \L^q((0,2\pi)^d)$. By construction, the Sobolev embedding on $\T^d$ factorizes as
\[
J 
= \cal{P}_q \circ j \circ R_p.
\]
The ideal (composition) property of approximation numbers yields
\[
a_n(J)
=a_n(\cal{P}_q \circ j \circ R_p)
\ov{\eqref{majo-sn-2}}{\leq} \norm{\cal{P}_q} \norm{R_p} a_n(j),\quad n \geq 1.
\]
Since $X_p$ is a closed subspace of $\W^{\alpha,p}((0,2\pi)^d)$ and restriction of the domain can only decrease approximation numbers, we have
\[
a_n(j)
\leq a_n(J_0),
\]
where $J_0 \co \W^{\alpha,p}((0,2\pi)^d) \hookrightarrow \L^q((0,2\pi)^d)$ denotes the Sobolev embedding. By  \cite[Theorem pp.~186–187]{Kon86}, we have $a_n(J_0) \lesssim \frac{1}{n^{\frac{\alpha}{d}}}$. We conclude that \eqref{approx-tori-Sobolev} is true.
\end{proof}

%\begin{lemma}[Periodic restriction is an isomorphism onto a closed subspace]
%\label{lem:restriction-closed}
%Let $d\in\mathbb N$, $1<p<\infty$, $\alpha>0$, and let
%\[
%R_p:\ L^p_\alpha(\T^d)\longrightarrow W^{\alpha,p}(I^d),\qquad
%R_p f=f|_{I^d},\quad I^d=(0,2\pi)^d.
%\]
%Then $R_p$ is bounded and a topological isomorphism onto its range
%\[
%X_p:=\Ran(R_p)
%=\bigl\{u\in W^{\alpha,p}(I^d):\ u \text{ has periodic traces on }\partial I^d\bigr\},
%\]
%in particular $X_p$ is a closed subspace of $W^{\alpha,p}(I^d)$.
%\end{lemma}
%
%\begin{proof}
%Boundedness of $R_p$ is clear. Define the \emph{periodic extension} operator
%$E_p:X_p\to L^p_\alpha(\T^d)$ by tiling $I^d$ with translates of $u$; this is well
%defined on $X_p$ (the periodic trace conditions ensure that the tiling produces a
%$2\pi\Z^d$-periodic function). Standard Sobolev estimates yield
%\[
%\|E_p u\|_{L^p_\alpha(\T^d)}\ \lesssim\ \|u\|_{W^{\alpha,p}(I^d)},\qquad u\in X_p,
%\]
%and $R_p\circ E_p=\Id_{X_p}$. Hence $R_p$ is a Banach space isomorphism of
%$L^p_\alpha(\T^d)$ onto $X_p$ with bounded inverse $E_p$. In particular, $X_p$ is
%the range of a bounded operator with a bounded inverse, thus it is closed.
%\end{proof}

We also need the next result.

\begin{prop}
\label{prop-group-isometries}
Let $(T_t)_{t \in \R}$ be a strongly continuous group of isometries on a Banach space $X$ with generator $-\i D$, i.e., $T_t =\e^{\i t D}$ for any $t \in \R$. Consider a bounded operator $R \co X \to X$ satisfying $R(\dom D) \subset \dom D$ such that $[D,R]$ induces a bounded operator on $X$. Then the function $F \co \R \to \B(X)$ defined by 
\[
F(t)
= T_t R T_{-t},\quad t \in \R.
\]
is strongly $\C^1$ and
\begin{equation}
\label{eq:deriv}
F'(t)
=\i T_t [D,R] T_{-t}\qquad\text{(strongly on $X$)}
\end{equation}
and
\begin{equation}
\label{inter-formula-complex}
\frac{T_t R T_{-t}-R}{t} 
= \int_0^1 T_{s t} (\i [D,R]) T_{-s t} \d s.
\end{equation}
\end{prop}

\begin{proof}
Fix $x \in \dom D$. Using the generator identities \cite[Lemma 1.3 p.~50]{EnN00}
\[
\frac{\d}{\d t}T_t x
=\i D T_t x,\qquad 
\frac{\d}{\d t}T_{-t}x
=-\i T_{-t}D x
\quad(\text{derivatives in $X$}),
\]
and the product rule in the strong sense, we get
\begin{equation}
\label{inter-654}
\frac{\d}{\d t}\big(T_t R T_{-t}x\big)
= \i D T_t R T_{-t} x + T_t R(-\i)T_{-t}Dx
= \i T_t\big(DR-RD\big)T_{-t}x
= \i T_t [D,R] T_{-t}x.
\end{equation}
By density of $\dom D$ and boundedness of $[D,R]$, this extends to all $x \in X$, proving \eqref{eq:deriv}.

Now fix $t\in\R$ and $x \in X$. The map $u \mapsto T_u [D,R] T_{-u}x$ is continuous (as a vector-valued function) and bounded by $\norm{[D,R]} \norm{x}$, hence Bochner integrable on $[0,t]$. The Banach-valued fundamental theorem of calculus yields
\[
T_t R T_{-t}x - Rx 
= \int_0^t \frac{\d}{\d u}\big(T_u R T_{-u}x\big) \d u
\ov{\eqref{inter-654}}{=}  \int_0^t \i T_u [D,R] T_{-u}x \d u.
\]
Dividing by $t \neq 0$ and using the change variables $u= st$, we obtain
\[
\frac{T_t R T_{-t}-R}{t} x
= \int_0^1 T_{s t} (\i [D,R]) T_{-s t}x \d s.
\]
Since this holds for every $x \in X$, we have in the strong operator topology the operator identity
\[
\frac{T_t R T_{-t}-R}{t} 
= \int_0^1 T_{s t} (\i [D,R]) T_{-s t} \d s.
\]
\end{proof}

Now, we can determine the Lipschitz algebra of the operator $\frac{1}{\i}\frac{\d}{\d \theta}$. This result is new even for $p=2$.

\begin{prop}
\label{prop:Lip-alg-T}
Suppose that $1 < p <\infty$. Consider the operator $D \ov{\mathrm{def}}{=} \frac{1}{\i}\frac{\d}{\d \theta}$ with domain  $\dom D=\W^{1,p}(\T)$. Then
\begin{equation}
\label{eq:Lip-equals-W1infty}
\W^{1,\infty}(\T) 
= \Lip_{D}(\L^\infty(\T)).
\end{equation}
Moreover, for any function $f \in \W^{1,\infty}(\T)$ we have
\begin{equation}
\label{eq:commutator-formula-R}
[D,M_{f}]
=\frac{1}{\i} M_{f'} 
\quad \text{and} \quad
\norm{[D,M_f]}_{\L^p(\T) \to \L^p(\T)}
=\norm{f'}_{\L^\infty(\T)}.
\end{equation}
\end{prop}

\begin{proof}
If $f \in \W^{1,\infty}(\T)$ then $M_f(\W^{1,p}(\T))\subset \W^{1,p}(\T)$. Moreover, for any function $u \in \C^\infty(\T)$ we have
\[
[D,M_f]u
=(DM_f-M_fD)u
=\tfrac{1}{\i} \big((fu)'-f u'\big)
=\tfrac{1}{\i}f' u.
\]
Hence $[D,M_f]=\tfrac{1}{\i}M_{f'}$ extends boundedly on $\L^p(\T)$ with $\norm{[D,M_f]}_{\L^p(\T) \to \L^p(\T)} =\norm{f'}_{\L^\infty(\T)}$. In particular, we obtained $\W^{1,\infty}(\T) \subset \Lip_{D}(\L^\infty(\T))$. 

Note that it is well known \cite[pp.~10-11]{AGGG86} that the operator $\frac{\d}{\d \theta}$ generates a strongly continuous group $(T_t)_{t \in \R}$ of operators acting on $\L^p(\T)$, namely the group of translations. For any $t \in \R$ we have $T_t=\e^{\i t D}$
and
\begin{equation}
\label{eq:conj-torus}
T_t M_f T_{-t}
=M_{f(\cdot+t)}, \qquad t \in \R.
\end{equation}
Let $f \in \Lip_{D}(\L^\infty(\T))$. Since $[D,M_f] \in \B(\L^p(\T))$ by Proposition \ref{prop-group-isometries} we deduce that
\begin{equation}
\label{eq:FTC}
M_{\Delta_t f}
\ov{\mathrm{def}}{=} \frac{M_{f(\cdot+t)}-M_f}{t}
\ov{\eqref{eq:conj-torus}}{=} \frac{T_t M_f T_{-t}-M_f}{t}
\ov{\eqref{inter-formula-complex}}{=} \int_0^1 T_{st} (\i [D,M_f]) T_{-st} \d s,
\quad t\neq 0.
\end{equation}
If $|t|$ is small, we infer that
\begin{align*}
\MoveEqLeft
\label{eq:diffquot-bound}
\norm{\Delta_t f}_{\L^\infty(\T)}
=\norm{M_{\Delta_t f}}_{\B(\L^p(\T))} 
\ov{\eqref{eq:FTC}}{=} \norm{\int_0^1 T_{st} (\i [D,M_f]) T_{-st} \d s}_{\B(\L^p(\T))} \\
&\leq \int_0^1 \norm{T_{st} (\i [D,M_f]) T_{-st}}_{\B(\L^p(\T))} \d s
\leq \norm{[D,M_f]}_{\B(\L^p(\T))}.         
\end{align*}
By Banach–Alaoglu theorem, there exist $t_\alpha \to 0$ and $g \in \L^\infty(\T)$ such that $\Delta_{t_\alpha} f \overset{}{\longrightarrow} g$ in the weak* topology of the dual space $\L^\infty(\T)$, with $\norm{g}_{\L^\infty(\T)} \leq \norm{[D,M_f]}_{\B(\L^p(\T))}$. For any function $\varphi \in \C^\infty(\T)$, we have with a change of variables
\[
\int_\T (\Delta_{t_\alpha} f) \varphi
=\int_\T f \frac{\varphi(\cdot-t_\alpha)-\varphi}{t_\alpha}
\xra[\alpha]{} -\int_\T f\,\varphi'
\]
since $\frac{\varphi(\cdot-t)-\varphi}{t} \to -\varphi'$ in $\L^1(\T)$ as $t \to 0$ (since $\i D$ is the generator of $(T_t)_{t \in \R}$ on $\L^1(\T)$). Passing to the weak* limit gives
\[
\int_\R g \varphi
=-\int_\R f \varphi', \quad \varphi \in \C^\infty(\T),
\]
i.e., $g$ is the weak derivative of $f$. Hence $f$ belongs to $\W^{1,\infty}(\T)$.% and $f'=g \in \L^\infty(\T)$.
\end{proof}

The key point of the following result is the novel connection established in the proof between Sobolev embeddings and the summability of spectral triples.

\begin{thm}
\label{Th-dtheta}
Suppose that $1 < p < \infty$. Then $(\C(\T),\L^p(\T),\frac{1}{\i}\frac{\d}{\d \theta})$ is a Banach compact spectral triple which is $1^+$-summable, in particular $q$-summable for any $q > 1$. More precisely, we have
$$
\dim \big(\C(\T),\L^p(\T),\tfrac{1}{\i}\tfrac{\d}{\d \theta}\big) 
= 1. 
$$
%Moreover, for any function $f \in \C^\infty(\T)$, we have
%\begin{equation}
%\label{estim-commutator-D-torus}
%\bnorm{[D,f]}_{\L^p(\T) \to \L^p(\T)}
%= \norm{f'}_{\L^\infty(\T)}.
%\end{equation}
\end{thm}

\begin{proof}
As we said the operator $\frac{\d}{\d \theta}$ generates a strongly continuous group of operators acting on $\L^p(\T)$, namely the group of translations. By Example \ref{generators-bisectorial} and Example \ref{ex-bound-Hinfty}, we deduce that the unbounded operator $D \ov{\mathrm{def}}{=} \frac{1}{\i}\frac{\d}{\d \theta}$ is bisectorial with $\omega_\bi(D)=0$ and admits a bounded $\H^\infty(\Sigma_\omega^\bi)$ functional calculus for any angle $\omega > 0$ on the Banach space $\L^p(\T)$. By Proposition \ref{prop:Lip-alg-T}, we have $\Lip_{D}(\C(\T)) = \Lip_{D}(\L^\infty(\T)) \cap \C(\T) =\W^{1,\infty}(\T) \cap \C(\T)$. Since this last space contains $\C^\infty(\T)$, it is dense in $\C(\T)$. 

%For any function $f \in \C^\infty(\T)$, we have $M_f \cdot \dom \tfrac{\d}{\d \theta} \subset \dom \tfrac{\d}{\d \theta}$ since $\dom \tfrac{\d}{\d \theta}=\W^{1,p}(\T)$ and an elementary computation reveals that
%\begin{equation}
%\label{commutator-3}
%\big[\tfrac{1}{\i}\tfrac{\d}{\d \theta},f\big]
%=\tfrac{1}{\i}\big(\tfrac{\d}{\d \theta}M_f-M_f\tfrac{\d}{\d \theta}\big)
%=\frac{1}{\i}M_{f'}.
%\end{equation}
%%\begin{equation}
%%\label{commutator-1}
%%\big[\tfrac{\d}{\d \theta},f\big]g
%%=\tfrac{\d}{\d \theta}M_fg-M_f \tfrac{\d}{\d \theta}g
%%=\tfrac{\d}{\d \theta}(fg)-f\tfrac{\d}{\d \theta}g
%%=f' g+g'f-fg'
%%=M_{f'}g.
%%\end{equation}
%So the commutator $[\frac{1}{\i}\tfrac{\d}{\d \theta},f]$ induces a bounded operator on the Banach space $\L^p(\T)$ and we have \eqref{estim-commutator-D-torus}. 
Finally, it is stated without proof in \cite[Example 6.31 p.~187]{Kat76} that the operator $D=\frac{1}{\i}\tfrac{\d}{\d \theta}$ has compact resolvent. Actually, below we prove a stronger fact than the stated result in \cite[Example 6.31 p.~187]{Kat76}.

For any function $f \in \L^p(\T)$, according to \cite[Lemma 2 p.~24]{BeS22} we have
$$
\norm{(\Id+\Delta)^{-1}(f)}_{\W^{2,p}(\T)}
\lesssim \norm{f}_{\L^p(\T)}.
$$
Note that $(D+\i)^{-1}=(D-\i)(\Id+\Delta)^{-1}$. If $J \co \W^{1,p}(\T) \to \L^p(\T)$ is the canonical embedding then
\begin{align}
\MoveEqLeft
\label{an-inter-AZETY}
a_{n}((D+\i)^{-1})         
\leq a_{n}\big(J \circ (D-\i) \circ (\Id+\Delta)^{-1}\big) \\
&\ov{\eqref{majo-sn-2}}{\leq} \norm{D-\i}_{\W^{2,p}(\T) \to \W^{1,p}(\T)} \norm{(\Id+\Delta)^{-1}}_{\L^p(\T) \to \W^{2,p}(\T)} a_n(J)
\ov{\eqref{approx-tori-Sobolev}}{\lesssim} \frac{1}{n}. \nonumber
\end{align}
So, the quasi-norm can be estimated by
\[
\norm{(D+\i)^{-1}}_{p,\infty}
\ov{\eqref{def-quasi-norm-Spq}}{=} \sup_{n \geq 1}\, n^{\frac{1}{p}} s_n((D+\i)^{-1})
\ov{\eqref{an-inter-AZETY}}{\leq} \sup_{n \geq 1}\, n^{\frac{1}{p}-1}.
\]
Hence $\norm{(D+\i)^{-1}}_{p,\infty} < \infty$ for every $p \geq 1$. 
%So
%\[
%(\tfrac{1}{n})_{n \geq 1}\in \ell^{p,\infty}\quad \text{for all }p \geq 1.
%\]
%Moreover, since $\sum_{n=1}^\infty n^{-p}<\infty$ for $p>1$, we also have
%\[
%(\tfrac{1}{n})_{n\ge1}\in \ell^{p}\quad \text{for all }p>1,
%\]
%while $(\tfrac{1}{n})\notin \ell^{1}$.
We deduce that the resolvent operator $(D+\i)^{-1} \co \L^p(\T) \to \L^p(\T)$ belongs to the space $S^{1,\infty}_\app(\L^p(\T))$, defined in \eqref{def-Sqs-bis}. 
%Suppose that $1 < p < 2$. Consider some $q \in (1,p)$ and some $\alpha > 0$. For any function $f \in \L^p(\T)$, we have
%$$
%\norm{(\Id+\Delta)^{-\frac{\alpha}{2}}(f)}_{\L^q_\alpha(\T)}
%\ov{\eqref{Def-Lpalpha-bis}}{=} \norm{(\Id+\Delta)^{\frac{\alpha}{2}}(\Id+\Delta)^{-\frac{\alpha}{2}}f}_{\L^q(\T)} 
%=\norm{f}_{\L^q(\T)} 
%\lesssim \norm{f}_{\L^p(\T)}.
%$$
%If $\alpha > \frac{1}{q}-\frac{1}{p}$, using the Sobolev inequality \eqref{Sobolev-inequality-torus}, we deduce the factorization 
%\begin{equation}
%\label{facto-1}
%(\Id+\Delta)^{-\frac{\alpha}{2}} \co \L^p(\T) \xra{(\Id+\Delta)^{-\frac{\alpha}{2}}} \L^q_\alpha (\T) \xra{J} \L^p(\T).
%\end{equation}
%For any integer $n \geq 1$, we obtain
%\begin{align*}
%\MoveEqLeft         
%a_n((\Id+\Delta)^{-\frac{\alpha}{2}})
%\ov{\eqref{facto-1}}{=} a_n(J(\Id+\Delta)^{-\frac{\alpha}{2}}) 
%\ov{\eqref{majo-sn-2}}{\leq} \norm{(\Id+\Delta)^{-\frac{\alpha}{2}}}a_n(J)
%\ov{\eqref{sn-de-Sobolev}}{\lesssim} \frac{1}{n^{\alpha}}.
%\end{align*}
%Consequently, the series $\sum a_n((\Id+\Delta)^{-\frac{\alpha}{2}})$ converges if $\alpha > 1$. We deduce that the linear operator $(\Id+\Delta)^{-\frac{\alpha}{2}} \co \L^p(\T) \to \L^p(\T)$ belongs to the space $S^1_\app(\L^p(\T))$, defined in \eqref{def-Sqs-bis}. 
Thus the triple $(\C(\T),\L^p(\T),\frac{1}{\i}\frac{\d}{\d \theta})$ is $1^+$-summable.% for any $q > 1$. If $p >2$, we reason by duality using \eqref{duality-an}. The case $p=2$ is well known.

Now, we prove that this result is optimal. When $p = 2$, the statement is well known. The Banach compact spectral triple $(\C(\T),\L^p(\T),\frac{1}{\i}\frac{\d}{\d \theta})$ is not $1$-summable. Indeed, by \cite[Remark B.7 p.~127]{AmG16}, the spectrum of the Dirac operator $D \ov{\mathrm{def}}{=} \frac{1}{\i}\frac{\d}{\d \theta}$ is independent of $p$. 
% the same holds for the Laplacian by \cite[Theorem 1.6.3 p.~36]{Dav89}. 
In the case $p=2$, it is known that $\Sp D=\Z$ and that each eigenvalue is simple, see e.g., \cite[Exercise 8.1]{Var10} or \cite[Notes 2.1.2 p.~31]{Gin09}. Using Weyl's inequality \eqref{Weyl-inequality} and the spectral theorem \cite[Proposition A.3.1 p.~276]{Haa06} for the resolvent operator $R_z(D) \ov{\mathrm{def}}{=} (z-D)^{-1}$ , we see that for any $z \not\in \Sp D$ we have
\begin{align*}
\MoveEqLeft
\infty
=\sum_{n \in \Z} \frac{1}{|z-n|}
%=\norm{\big(\lambda_n(R_\mu((-\Delta)^{\frac{1}{2}}))\big)}_{\ell^1}
=\sum_{n \in \Z} \lambda_n(R_z(D))
=\norm{\big(\lambda_n(R_z(D))\big)}_{\ell^1} \\
&\ov{\eqref{Weyl-inequality}}{\lesssim}  \norm{R_z(D)}_{S^1_\weyl(\L^p(\T))} 
\ov{\eqref{order-xn-an}\eqref{def-Sqs-bis}}{\leq} \norm{R_z(D)}_{S^1_\app(\L^p(\T))}.
\end{align*}
Consequently the resolvent operator $R_z(D)$ does not belong to the space $S^1_\app(\L^p(\T))$. We conclude that the triple is not $1$-summable according to Definition \ref{summability-spectral-triple}. Since $S^{q,\infty}_\app(\L^p(\T))\subset S^1_\app(\L^p(\T))$ for any $0 < q < 1$ the proof is complete.
\end{proof}

Following \cite[(3.285) p.~132]{Kin09a} (see also \cite[p.~181]{Cas22}), recall that the periodic Hilbert transform $\cal{H}_\per \co \L^p(\T) \to \L^p(\T)$ is defined by the principal value 
\begin{equation}
\label{Hilbert-transform-torus}
(\mathcal{H}_\per g)(\theta)
\ov{\mathrm{def}}{=} \frac{1}{2\pi} \lim_{\epsi \to 0^+} \int_{\epsi \leq |s| \leq \pi} g(\theta-s)\cot(\tfrac{s}{2}) \d s
=\frac{1}{2\pi} \mathrm{p.v.}\int_{-\pi}^{\pi} g(\varphi) \cot\Big(\frac{\theta-\varphi}{2}\Big)\d\varphi.
\end{equation}
It can also be viewed as a Fourier multiplier with symbol $-\i\,\sign(n)$, i.e., we have
\begin{equation}
\label{symbole-Hper}
\widehat{\cal{H}_\per(g)}(n) 
= -\i \sgn(n)\hat{g}(n), \quad n \in \Z,
\end{equation}
where $\sgn(0) \ov{\mathrm{def}}{=} 0$. The boundedness of the periodic Hilbert transform $\cal{H}_\per$ on the space $\L^p(\T)$ is proved, for instance, in \cite[Section 6.17]{Kin09a} with an approach of Calder\'on and in \cite[Proposition 5.2.5 p.~391]{HvNVW16} by transference from the Hilbert transform $\cal{H}$ on $\L^p(\R)$. % and in \cite[]{CaR25} 
Another proof is provided by the next result, which simultaneously determines the index pairing of the associated Banach Fredholm module provided by Proposition \ref{prop-triple-to-Fredholm}. The case $p=2$ is folklore.

For this, we need the notion of the winding number of a continuous function $f \co \T \to \mathbb{C}-\{0\}$, which, roughly speaking, is the number of turns of the point $f(\e^{\i t})$ around the origin when $t$ runs from $0$ to $2\pi$. More precisely, consider a continuous branch of the argument $\arg_f$ of the function $[0,2\pi] \to \mathbb{C}-\{0\}$, $t \mapsto f(\e^{\i t})$, i.e., $\arg_f$ is continuous on $[0,2\pi]$ and
$$
\frac{f(\e^{\i t})}{|f(\e^{\i t})|}
=\e^{\i\arg_f(t)}, \quad t \in [0,2\pi].
$$
The winding number of $f$ is defined by
\begin{equation*}
\label{}
\wind f
\ov{\mathrm{def}}{=} \frac{1}{2\pi}(\arg_f(2\pi)-\arg_f(0)).
\end{equation*}
It is worth noting that if $f \in \C^1(\T)$ and does not vanish on $\T$ then
\begin{equation}
\label{wind-second-def}
\wind f
=\frac{1}{2\pi \i} \int_{0}^{2\pi} \frac{f'(\e^{\i \theta})}{f(\e^{\i \theta})} \d \theta.
\end{equation}
Now, we can prove the following result.
\begin{prop}
\label{prop-pairing-T}
Suppose that $1 < p < \infty$. The odd Banach Fredholm module over the algebra $\C(\T)$ associated to the Banach compact spectral triple $(\C(\T),\L^p(\T),\frac{1}{\i}\frac{\d}{\d \theta})$ is $(\L^p(\T),\pi,\i\cal{H}_\per)$. For any function $f \in \C(\T)$ which does not vanish on $\T$, we have
 \begin{equation}
\label{}
\big\la [f], (\L^p(\T),\pi,F') \big\ra_{\K_1(\C(\T)),\K^1(\C(\T),\scr{L}^p)}
=-\wind f,
\end{equation}
where $F' \ov{\mathrm{def}}{=} F+Q$ using the map $Q \co \L^p(\T) \to \L^p(\T)$, $\e^{\i n\theta} \to \delta_{n=0}$.
\end{prop}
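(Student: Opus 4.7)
The strategy has three stages: identify $F = \sgn D$ explicitly as $\i\cal{H}$ via Fourier analysis, reduce the pairing to a Toeplitz index on the Hardy space $\H^p(\T)$ by a compact perturbation, and conclude with the classical Gohberg–Krein formula.

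First, the operator $D = \frac{1}{\i}\frac{\d}{\d\theta}$ is diagonalizable on Fourier modes: the character $e_n(\theta) = \e^{\i n\theta}$ is an eigenvector with eigenvalue $n \in \Z$. By the bisectorial functional calculus built in the proof of the previous theorem, $\sgn D$ acts as the Fourier multiplier with symbol $\sgn(n)$ (with the convention $\sgn(0) = 0$, which corresponds to killing the kernel $\ker D = \Cbb \cdot 1$). Since the periodic Hilbert transform $\cal{H}$ defined in \eqref{Hilbert-transform-torus} is the Fourier multiplier with symbol $-\i\sgn(n)$, we obtain the identification $\sgn D = \i \cal{H}$ on $\L^p(\T)$. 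By the (kernel-degenerate variant of) Proposition~\ref{prop-triple-to-Fredholm}, together with a density argument extending compactness of $[\i\cal{H}, M_f]$ from $f \in \C^\infty(\T)$ (where \eqref{commutator-3} applies) to $f \in \C(\T)$ by norm approximation, the pair $(\L^p(\T), \i\cal{H})$ is indeed a (possibly kernel-degenerate) odd Banach Fredholm module over $\C(\T)$.

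Next, I compute the pairing via \eqref{pairing-odd-1}. Set $P \ov{\mathrm{def}}{=} \frac{\Id + \i\cal{H}}{2}$, the Fourier multiplier with symbol $\frac{1 + \sgn(n)}{2}$, and let $P_+$ be the Riesz (Szegő) projection onto the Hardy space $\H^p(\T)$, i.e.~the Fourier multiplier with symbol $\mathbf{1}_{n \geq 0}$. The difference $P_+ - P = \frac{1}{2} E_0$, where $E_0$ is the rank-one projection onto the constants, is in particular compact. Thus
\begin{equation*}
P M_f P - (\Id - P) \;=\; P_+ M_f P_+ - (\Id - P_+) \;+\; K
\end{equation*}
for some compact $K$, and by invariance of the Fredholm index under compact perturbations the two operators have the same index. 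Since $P_+^2 = P_+$, the summand $-(\Id - P_+)$ acts as an isomorphism on $(\Id - P_+)\L^p(\T)$ and the index reduces, via \eqref{pairing-odd-2}, to the index of the Toeplitz operator
\begin{equation*}
T_f \;\ov{\mathrm{def}}{=}\; P_+ M_f P_+ \co \H^p(\T) \to \H^p(\T).
\end{equation*}

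Finally, invoke the classical Gohberg–Krein theorem on $\H^p(\T)$ for $1 < p < \infty$: for any non-vanishing $f \in \C(\T)$, the Toeplitz operator $T_f$ is Fredholm with $\Index T_f = -\wind f$. This yields the claimed formula. The main subtlety I expect is the bookkeeping around the non-trivial kernel of $D$: the operator $F = \i\cal{H}$ does not satisfy $F^2 = \Id$ exactly (only modulo the rank-one projection onto constants), so one must carefully justify that the relevant operators are Fredholm and that switching from $P$ to the genuine projection $P_+$ is a compact perturbation, which is exactly what the kernel-degenerate framework of Definition~\ref{def-Fredholm-module} is designed to accommodate.
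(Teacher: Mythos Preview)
Your proposal is correct and follows essentially the same approach as the paper: identify $\sgn D = \i\cal{H}$ on Fourier modes, pass by a rank-one perturbation to the genuine Riesz projection $P_+$ onto $\H^p(\T)$, and invoke the Gohberg--Krein index formula for $T_f$. The only organizational difference is that the paper perturbs at the level of $F$ first (setting $F' \ov{\mathrm{def}}{=} F + E_0$, so that $(F')^2 = \Id$ exactly and $\tfrac{\Id+F'}{2} = P_+$), whereas you perturb at the level of $P$; these are equivalent.
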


\begin{proof}
The assertion is clear once the operators are viewed as Fourier multipliers. We let $F \ov{\mathrm{def}}{=} \i\cal{H}_\per$. Note that the map $Q \co \L^p(\T) \to \L^p(\T)$, $\e^{\i n\theta} \to \delta_{n=0}$ (which maps a function on the constant term in its Fourier series) is a finite-rank contractive map (it is the $\L^p$-extension of a conditional expectation). So $F' \ov{\mathrm{def}}{=} F+Q$ is clearly a Banach Fredholm module since $(F')^2=\Id_{\L^p(\T)}$ and a compact perturbation of $F$. Moreover, observe that the map $P \ov{\mathrm{def}}{=} \frac{\Id+F'}{2}$ identifies to the Riesz projection $\L^p(\T) \to \L^p(\T)$, $\e^{\i n\theta} \mapsto \delta_{n \geq 0} \e^{\i n\theta}$. The range of this map is the Hardy space $\H^p(\T)$. Let $f \in \C(\T)$  which does not vanish on $\T$. This means that $f$ is an invertible element of the unital algebra $\C(\T)$.  The operator $PM_f P \co P(\L^p(\T)) \to P(\L^p(\T))$ of \eqref{pairing-odd-2} (with $n=1$) identifies to the Toeplitz operator $T_f \co \H^p(\T) \to \H^p(\T)$, $g \mapsto P(fg)$ with symbol $f$, where we identify the map $P$ with its corestriction $P|^{\H^p(\T)} \co \L^p(\T)\to \H^p(\T)$. So the operator $T_f$ is Fredholm and
\begin{equation}
\label{inter-AETY}
\big\la [f], (\L^p(\T),\pi,F') \big\ra_{\K_1(\C(\T)),\K^1(\C(\T),\scr{L}^p)} 
=\Index PM_f P
= \Index T_f
\end{equation}
Using the classical Gohberg-Krein index theorem \cite[Theorem 2.42 p.~74]{BoS06} in the last equality, we deduce that
\begin{align*}
\MoveEqLeft
\big\la [f], (\L^p(\T),\pi,F') \big\ra_{\K_1(\C(\T)),\K^1(\C(\T),\scr{L}^p)}      
\ov{\eqref{inter-AETY}}{=} \Index T_f 
=-\wind f.
\end{align*}
\end{proof}

\begin{remark} \normalfont
Recall that the first group of $\K$-theory of the algebra $\C(\T)$ of continuous functions on the torus $\T=S^1$ is given by $\K_1(\C(\T))=\Z$. By the way, we refer to~\cite{ScS23} for a nice study of the groups of K-theory of spheres $S^n$ for $n \geq 1$. 
\end{remark}

\begin{remark} \normalfont
%In the case $p=2$, it is known that for any function $f \in \L^\infty(\T)$ the commutator $[F,f]$ is compact on $\L^2(\T)$ if and only if $f$ belongs to the space $\VMO(\T)$ of functions of vanishing mean oscillation. It is a direct consequence of \cite[Corollary 2.3.3 p.~215 and Exercises 2.5.2 p.~220]{Nik02} describing the commutators $[P,f]$ with the Riesz transform $P$ which are compact. Moreover, it is known that the commutator $[F,f]$ belongs to the Schatten space $S^q(\L^2(\T))$ if and only if the function $f$ belongs to the Besov space $\B^{\frac{1}{q}}_{q,q}(\T)$. It is a direct consequence of \cite[Theorem 7.3 p.~276]{Pel03} describing the commutators $[P,f]$ with the Riesz transform $P$ belonging to $S^q(\L^2(\T))$ and since $\L^\infty(\T)$ is a subspace of $\BMO(\T)$.
In the case $p = 2$, it is known that for any function $f \in \L^\infty(\T)$, the commutator $[F,f]$ is compact on $\L^2(\T)$ if and only if $f$ belongs to the space $\VMO(\T)$ of functions of vanishing mean oscillation.
This follows directly from \cite[Corollary 2.3.3, p.~215 and Exercises 2.5.2, p.~220]{Nik02}, which describe the compactness of the commutators $[P,f]$ with the Riesz projection $P$.
Moreover, the commutator $[F,f]$ belongs to the Schatten class $S^q(\L^2(\T))$ if and only if $f$ lies in the Besov space $\B^{\frac{1}{q}}_{q,q}(\T)$.
This is a direct consequence of \cite[Theorem 7.3, p.~276]{Pel03}, which characterizes the commutators $[P,f]$ with the Riesz projection $P$ belonging to $S^q(\L^2(\T))$, together with the inclusion $\L^\infty(\T)\subset \BMO(\T)$. 
\end{remark}

\begin{prop}
\label{prop-|D|-T}
Suppose that $1<p<\infty$. Consider the unbounded operator $D=\frac{1}{\i}\frac{\d}{\d \theta}$ acting on the space $\L^p(\mathbb{T})$. We have $|D|=\cal{H}_\per \frac{\d}{\d \theta}$. Moreover, for any function $f \in \C^\infty(\T)$, the commutator $[|D|,f]$ induces a bounded operator on the Banach space $\L^p(\mathbb{T})$ with
\begin{equation}
\label{estim-commutator-|D|}
\bnorm{[|D|,f]}_{\L^p(\T) \to \L^p(\T)}
\lesssim_p \norm{f'}_{\L^\infty(\T)}.
\end{equation}
\end{prop}

\begin{proof} 
\textit{First proof.} The operator $|D|$ is the unbounded Fourier multiplier defined by the symbol $(|n|)_{n \in \Z}$ and $D$ by $(n)_{n\in \Z}$. The first assertion is then a consequence of \eqref{symbole-Hper}. Hence, for any function $g \in \C^\infty(\T)$, we have
\begin{align*}
\label{}
\MoveEqLeft
\big[|D|,M_f\big]g
= \mathcal{H}_{\per} \tfrac{\d}{\d \theta}(fg)-f\cal{H}_\per \tfrac{\d}{\d \theta} g
= \cal{H}_\per(f'g)+\cal{H}_\per(fg')-f\cal{H}_\per g' \\
&= \cal{H}_\per(f'g)+[\cal{H}_\per,M_f] g'.
\end{align*}
Observe that we have
\begin{align*}
\MoveEqLeft
\big([\cal{H}_\per,M_f]g'\big)(\theta)
=\big(\cal{H}_\per (fg')\big)(\theta)-f(\theta)(\cal{H}_\per g')(\theta) \\
&\ov{\eqref{Hilbert-transform-torus}}{=}\frac{1}{2\pi} \mathrm{p.v.}\int_0^{2\pi} \big(f(\varphi)-f(\theta)\big) g'(\varphi) \cot\Big(\frac{\theta-\varphi}{2}\Big) \d\varphi.
\end{align*}
Integrating by parts on the circle (no boundary term) and using the equality (recall that $\cot' x = -\csc^2 x$ for any $x \not\in \pi\Z$, where $\csc x=\frac{1}{\sin x}$)
\begin{equation}
\label{inter-L987}
\frac{\d}{\d \varphi}\cot\Big(\frac{\theta-\varphi}{2}\Big)
=\frac{1}{2}\csc^2\Big(\frac{\theta-\varphi}{2}\Big),
\end{equation}
we obtain
\begin{align*}
\big([\cal{H}_\per,M_f]g'\big)(\theta)
&= -\frac{1}{2\pi} \mathrm{p.v.}\int_0^{2\pi} \frac{\d}{\d \varphi}\Big[\big(f(\varphi)-f(\theta)\big)\cot\Big(\frac{\theta-\varphi}{2}\Big)\Big] g(\varphi) \d\varphi\\
&\ov{\eqref{inter-L987}}{=} -\frac{1}{2\pi} \mathrm{p.v.}\int_0^{2\pi} \Big(f'(\varphi) \cot\Big(\frac{\theta-\varphi}{2}\Big)
+\frac{1}{2}\big(f(\varphi)-f(\theta)\big)\csc^2\Big(\frac{\theta-\varphi}{2}\Big)\Big) g(\varphi)\d\varphi.
\end{align*}
On the other hand, we have
\[
\big(\cal{H}_\per(f'g)\big)(\theta)
\ov{\eqref{Hilbert-transform-torus}}{=} \frac{1}{2\pi} \mathrm{p.v.}\int_0^{2\pi} f'(\varphi)g(\varphi)\cot\Big(\frac{\theta-\varphi}{2}\Big) \d\varphi.
\]
Adding the two identities, the terms with $f'(\varphi)\cot(\frac{\theta-\varphi}{2})$ cancel, and we arrive at the formula
\begin{equation}
\label{eq:kernel}
[|D|,M_f]g(\theta)
=\frac{1}{4\pi} \mathrm{p.v.}\int_0^{2\pi}
\frac{f(\theta)-f(\varphi)}{\sin^2\big(\frac{\theta-\varphi}{2}\big)} g(\varphi) \d\varphi.
\end{equation}
Thus $[|D|,M_f]$ is an integral operator with kernel
\[
K_f(\theta,\varphi)
\ov{\mathrm{def}}{=} \frac{1}{4\pi}\,\frac{f(\theta)-f(\varphi)}{\sin^2\big(\frac{\theta-\varphi}{2}\big)}
\]
in the principal value sense. By the mean value theorem, we have $
|f(\theta)-f(\varphi)|
\leq \norm{f'}_{\L^\infty(\T)} |\theta-\varphi|$. 
Using $\sin t\sim t$ as $t \to 0$, there exists a constant $c > 0$ such that
\[
|K_f(\theta,\varphi)|
\leq C\frac{\norm{f'}_{\L^\infty(\T)}}{|\theta-\varphi|}, \quad \theta \neq \varphi.
\]
%Let $f\in C^\infty(\T)$ and
%\[
%K_f(\theta,\varphi)=\frac{1}{4\pi}\,\frac{f(\theta)-f(\varphi)}{\sin^2\big(\frac{\theta-\varphi}{2}\big)}\qquad (\theta\ne\varphi).
%\]
%Then there exists a constant $C>0$ such that for all $\theta,\theta',\varphi,\varphi'\in[0,2\pi]$,
%\[
%|K_f(\theta,\varphi)-K_f(\theta',\varphi)|+|K_f(\theta,\varphi)-K_f(\theta,\varphi')|
%\le C \|f'\|_{L^\infty(\T)} \frac{|\theta-\theta'|+|\varphi-\varphi'|}{|\theta-\varphi|^2},
%\]
%whenever $|\theta-\theta'|\le \tfrac12|\theta-\varphi|$ and $|\varphi-\varphi'|\le \tfrac12|\theta-\varphi|$.
Set $s=\frac{\theta-\varphi}{2}$. Write
\[
K_f(\theta,\varphi)=\frac{1}{4\pi}\,(f(\theta)-f(\varphi))\,\sin^{-2}(s).
\]
We first estimate the derivatives of $K_f$. Using the chain rule and
\[
\frac{\d}{\d s}\sin^{-2}(s)
=-2\sin^{-3}(s)\cos(s),\qquad 
\frac{\partial}{\partial \theta} s
=\tfrac12,\quad \frac{\partial}{\partial \varphi} s
=-\tfrac12,
\]
we get
\[
\frac{\partial K_f}{\partial \theta} (\theta,\varphi)=\frac{1}{4\pi}\Big(\frac{f'(\theta)}{\sin^{2}(s)}-(f(\theta)-f(\varphi))\frac{\cos(s)}{\sin^{3}(s)}\Big),
\]
and similarly
\[
\frac{\partial K_f}{\partial \varphi} (\theta,\varphi)=\frac{1}{4\pi}\Big(-\frac{f'(\varphi)}{\sin^{2}(s)}-(f(\theta)-f(\varphi))\frac{\cos(s)}{\sin^{3}(s)}\Big).
\]
By the mean value theorem, we have $|f(\theta)-f(\varphi)|\leq \norm{f'}_{\L^\infty(\T)}|\theta-\varphi|=2\norm{f'}_{\L^\infty(\T)}|s|$. Since $|\sin s| \sim_0 |s|$ and $|\cos s| \leq 1$ for $|s| \leq \frac{\pi}{2}$, there exists constant $C$ such that
\[
\Big|\frac{1}{\sin^{2}(s)}\Big|
\leq \frac{C}{|s|^{2}},\qquad \Big|\frac{\cos(s)}{\sin^{3}(s)}\Big|
\leq \frac{C}{|s|^{3}}.
\]
Hence
\[
\left|\tfrac{\partial K_f}{\partial \theta}(\theta,\varphi)\right|
\leq C \norm{f'}_{\L^\infty(\T)}\Big(\frac{1}{|s|^{2}}+\frac{|s|}{|s|^{3}}\Big)
\lesssim \frac{ \norm{f'}_{\L^\infty(\T)}}{|s|^{2}}
= \frac{4 \norm{f'}_{\L^\infty(\T)}}{|\theta-\varphi|^{2}}.
\]
The same bound holds for $\tfrac{\partial K_f}{\partial \theta} K_f(\theta,\varphi)$.

We now estimate the differences. For the first one, by the fundamental theorem of calculus,
\[
K_f(\theta,\varphi)-K_f(\theta',\varphi)
=\int_{0}^{1}\partial_\theta K_f\big(\theta+t(\theta'-\theta),\varphi\big)\,(\theta-\theta')\,dt.
\]
If $|\theta-\theta'|\leq \tfrac12|\theta-\varphi|$, then for all $t\in[0,1]$,
\[
|\theta+t(\theta'-\theta)-\varphi|
\geq |\theta-\varphi|-|\theta-\theta'|
\geq \tfrac12|\theta-\varphi|.
\]
Therefore,
\[
|K_f(\theta,\varphi)-K_f(\theta',\varphi)|
\leq |\theta-\theta'|\sup_{t\in[0,1]}\frac{C \norm{f'}_{\L^\infty(\T)}}{|\theta+t(\theta'-\theta)-\varphi|^{2}}
\leq C \norm{f'}_{\L^\infty(\T)}\frac{|\theta-\theta'|}{|\theta-\varphi|^{2}}.
\]
The estimate
\[
|K_f(\theta,\varphi)-K_f(\theta,\varphi')|
\leq C \|f'\|_{\L^\infty(\T)}\frac{|\varphi-\varphi'|}{|\theta-\varphi|^{2}}
\]
is obtained in exactly the same way from the bound on $\partial_\varphi K_f$, under the condition $|\varphi-\varphi'|\leq \tfrac12|\theta-\varphi|$. Combining the two bounds completes the proof.
%Differentiating $K_f$ in either variable and using that $f \in \C^\infty(\T)$, \textbf{one also gets} the standard smoothness estimates for any angles $\theta,\theta',\varphi,\varphi' \in [0,2\pi]$
%\[
%|K_f(\theta,\varphi)-K_f(\theta',\varphi)|+|K_f(\theta,\varphi)-K_f(\theta,\varphi')|
%\leq C \norm{f'}_{\L^\infty(\T)} \frac{|\theta-\theta'|+|\varphi-\varphi'|}{|\theta-\varphi|^2},
%\]
%whenever $|\theta-\theta'|
%\leq \tfrac12|\theta-\varphi|$ and $|\varphi-\varphi'|
%\leq \tfrac12|\theta-\varphi|$. 
Therefore $K_f$ is a standard Calder\'on–Zygmund kernel on the circle, with constant controlled by $\norm{f'}_{\L^\infty(\T)}$. %Singular Integral Operators
%Damian Dąbrowski
%(and, for the smoothness, by a Lipschitz bound of $f'$ which is finite since $f \in \C^\infty$). 
By the Calderon–Zygmund theory on the circle, we conclude that the operator extends to a bounded operator on $\L^p(\T)$, with the estimate \eqref{estim-commutator-|D|}.

\textit{Second proof of the first assertion.} Observe that $|D|$ is a pseudo-differential operator of order $1$. Moreover, recall that if $f \in \C^\infty(\T)$ then $M_f$ is a differential operator of order 0, hence a pseudo-differential operator of order 0 by \cite[Remark 23.26.5]{Dieu88}. Consequently, according to \cite[Remark 5.3.3 p.~423]{RuT10} the commutator $[|D|,M_f]$ is a pseudo-differential operator of order $0$. Such operator induces a bounded operator on the Banach space $\L^p(\mathbb{T})$ by \cite[Theorem 5.2.23 p.~420]{RuT10}.
\end{proof}

\begin{remark} \normalfont
We can probably replace $\C^\infty(\T)$ by the space $\W^{1,\infty}(\T)$ in the previous result.
\end{remark}

So, we can use Proposition \ref{prop-q-summable-Fredhom-123} which provides the next result.

\begin{cor}
The Fredholm module $(\L^p(\T),\pi,\i\cal{H}_\per)$ over the algebra $\C^\infty(\T)$ is $q$-summable for any $q > 1$.
\end{cor}

The previous result allows us to define and compute the Chern character. The case $p=2$ is folklore.

\begin{prop}
Consider the Fredholm module $(\L^p(\T),\pi,F')$ where $F' \ov{\mathrm{def}}{=}\i\cal{H}_\per+ Q$ where $Q \co \L^p(\T) \to \L^p(\T)$, $\e^{\i n\theta} \to \delta_{n=0}$. We have
\begin{equation}
\label{descrip-Chern-Torus}
\Ch_{1}^{F'}(f_0,f_1)
=\frac{2}{\pi \i} \int_\T f_0 \d f_1, \quad f_0,f_1 \in \C^\infty(\T).
\end{equation}
\end{prop}

\begin{proof}
For any functions $f_0,f_1 \in \C^\infty(\T)$, we have
\begin{align*}
\MoveEqLeft
\Ch_{1}^{F'}(f_0,f_1)
\ov{\eqref{Chern-odd}}{=} \tr(F'[F',f_0][F',f_1]).
\end{align*}
Using the computation of \cite[p.~183]{Kha13}, we obtain that $\tr(F'[F',f_0][F',f_1])=\frac{2}{\pi \i} \int_\T f_0 \d f_1$.
\end{proof}

Using Theorem \ref{th-Chern-coupling-odd}, we obtain a new proof of the Gohberg-Krein index theorem in the case $p \not=2$  since for any function $f \in \C^\infty(\T)$ which does not vanish on $\T$, we have
\begin{align}
\MoveEqLeft
\label{Gohberg-Krein}
\Index T_f \ov{\eqref{inter-AETY}}{=} \big\la [f], [(\L^p(\T),\pi, F')] \big\ra_{\K_1(\cal{A}),\K^1(\cal{A},\scr{B})} 
\ov{\eqref{Chern-odd-123}}{=}-\frac{1}{4c_1}  \Ch_1^{F'}(f^{-1},f) \\      
&\ov{\eqref{descrip-Chern-Torus}}{=} -\frac{1}{4c_1} \frac{2}{\pi \i} \int_\T f \d f
=-\frac{1}{2\pi \i} \int_{\T} \frac{f'}{f} 
\ov{\eqref{wind-second-def}}{=} -\wind f.
\end{align}
This method for the case $p=2$ was already known.

%%%%%%%%%%%%%%%%%%%%%%%%%%%%%%%%%%%%%%%%%%%%%%%%%%%%%%%%%%%%%%%%%%%%%%%%%%%%%%%%%%%%%%%%%%%%%%%%%%%%%%%%%%%
\subsection{The Dirac operator on $\ell^p_N(\L^p(\R^d))$}
\label{sec-Dirac-Rn}

The Dirac operator $D$ is defined in terms of $\gamma$ matrices. Define $N \ov{\mathrm{def}}{=} 2^{\lfloor \frac{d}{2}\rfloor}$ and select $N \times N$ complex selfadjoint matrices $(\gamma_1,\ldots,\gamma_d)$ satisfying 
\begin{equation}
\label{Clifford-Rn}
\gamma_j\gamma_k +\gamma_k\gamma_j 
= 2\delta_{j,k}1.
\end{equation}
Following \cite[p.~XXXI]{LMSZ23} (see also \cite[p.~118]{LaM89}), we define the unbounded densely defined linear operator
\begin{equation}
\label{Dirac-operator-Rd}
D 
\ov{\mathrm{def}}{=} -\i\sum_{j=1}^d \gamma_j \ot \partial_j
\end{equation}
with domain $\ell^p_N(\W^{1,p}(\R^d))$ acting on Banach space $\ell^p_N(\L^p(\R^d))$. The operator $D$ is selfadjoint if $p=2$. By \cite[p.~118]{LaM89}, we have
\begin{equation}
\label{square-of-Dirac}
D^2
=-\Id_{\ell^p_{N}} \ot \Delta.
\end{equation}
Recall that by \cite[Proposition 4.10 p.~31]{EnN00} any multiplication operator $M_f \co \L^p(\R^d) \to \L^p(\R^d)$, $g \mapsto fg$ by a measurable function $f \co \R^d \to \mathbb{C}$ is bounded if and only if $f$ is essentially bounded. In this case, we have $\norm{M_f}_{\L^p(\R^d) \to \L^p(\R^d)}=\norm{f}_{\L^\infty(\R^d)}$. Consequently, we can consider the isometric homomorphism 
\begin{equation}
\label{isometric-homomorphism}
\pi \co \L^\infty(\R^d) \to \B(\ell^p_N(\L^p(\R^d))), f \mapsto M_f \oplus \cdots \oplus M_f=\Id_{\ell^p_N} \ot M_f.
\end{equation}

\begin{lemma}
\label{lem:matrix-multiplication-Lp}
Let $(\Omega,\mu)$ be a measure space and let $1 \leq p < \infty$.  
Let $A \colon \Omega \to \M_N$ be a strongly measurable function such that
\[
\alpha
\ov{\mathrm{def}}{=}
\esssup_{x \in \Omega} \norm{A(x)}_{\M_N}
< \infty.
\]
Define the multiplication operator
\[
M_A \colon \L^p(\Omega,\mathbb{C}^N) \to \L^p(\Omega,\mathbb{C}^N), 
\qquad
(M_A u)(x) \ov{\mathrm{def}}{=} A(x)u(x).
\]
Then $M_A$ is bounded and its operator norm is given by
\begin{equation}
\label{eq:norm-matrix-mult}
\norm{M_A}_{\L^p(\Omega,\mathbb{C}^N)\to \L^p(\Omega,\mathbb{C}^N)}
=
\esssup_{x \in \Omega} \norm{A(x)}_{\M_N}.
\end{equation}
\end{lemma}

\begin{proof}
If $1 \leq p < \infty$, we obtain
\begin{align*}
\MoveEqLeft
\norm{M_A u}_{\L^p(\Omega,\mathbb{C}^N)}
=\bigg(\int_X \norm{A(x)u(x)}_{\mathbb{C}^N}^p \d\mu(x)\bigg)^{\frac{1}{p}} \\
&\leq \bigg(\int_X \norm{A(x)}_{\M_N}^p \norm{u(x)}_{\mathbb{C}^N}^p \d\mu(x) \bigg)^{\frac{1}{p}}
\leq
\alpha \int_X \norm{u(x)}_{\mathbb{C}^N} \d\mu(x)
=\alpha \norm{u}_{\L^p(\Omega,\mathbb{C}^N)}.
\end{align*}
Hence
\[
\norm{M_A}_{\L^p(\Omega,\mathbb{C}^N) \to \L^p(\Omega,\mathbb{C}^N)}
\leq \alpha
=\esssup_{x \in \Omega} \norm{A(x)}_{\M_N}.
\]
For the reverse inequality, let $\epsi > 0$. By definition of the essential supremum, there exists a measurable set
$E \subset \Omega$ with $\mu(E) > 0$ such that $
\norm{A(x)}_{\M_N}
\geq
\alpha - \epsi$ for almost all $x \in E$. Let $S^{N-1} \subset \mathbb{C}^N$ be the unit sphere and choose a countable dense subset $\{w_j\}_{j \geq 1} \subset S^{N-1}$ for example vectors with rational coordinates. For each $j \geq 1$, the map
\[
x \mapsto \norm{A(x)w_j}_{\mathbb{C}^N}
\]
is measurable, since $A$ is strongly measurable and the matrix norm is continuous.

For each $x \in \Omega$ we know that
\[
\norm{A(x)}_{\M_N}
= \sup_{\norm{v}_{\mathbb{C}^N} = 1} \norm{A(x)v}_{\mathbb{C}^N}
= \sup_{j \geq 1} \norm{A(x)w_j}_{\mathbb{C}^N},
\]
because $\{w_j\}$ is dense in $S^{N-1}$. Fix $\epsi > 0$ as above. For each $x \in E$, there exists some $j$ such that
\[
\norm{A(x)w_j}_{\mathbb{C}^N} \geq \norm{A(x)}_{\M_N} - \epsi.
\]
Define measurable sets $E_j \subset E$ by
\[
E_1 \ov{\mathrm{def}}{=} \bigl\{x \in E : \norm{A(x)w_1}_{\mathbb{C}^N} \geq \norm{A(x)}_{\M_N} - \epsi\bigr\},
\]
and for $j \geq 2$,
\[
E_j 
\ov{\mathrm{def}}{=} \bigl\{x \in E : \norm{A(x)w_j}_{\mathbb{C}^N} \geq \norm{A(x)}_{\M_N} - \epsi
\text{ and } \norm{A(x)w_i}_{\mathbb{C}^N} < \norm{A(x)}_{\M_N} - \epsi \text{ for all } 1 \leq i < j\bigr\}.
\]
Each $E_j$ is measurable since it is defined by inequalities involving measurable functions. By construction, the sets $(E_j)_{j \geq 1}$ are pairwise disjoint, and their union is exactly $E$.

Now define a vector field $v \co \Omega \to \mathbb{C}^N$ by
\[
v(x) 
\ov{\mathrm{def}}{=} 
\begin{cases}
w_j & \text{if } x \in E_j \text{ for some } j \geq 1,\\
0   & \text{if } x \in \Omega \setminus E.
\end{cases}
\]
This is a simple function, hence strongly measurable. Moreover, for almost all $x \in E$ we have
\[
\norm{v(x)}_{\mathbb{C}^N} = 1
\quad \text{and} \quad
\norm{A(x)v(x)}_{\mathbb{C}^N} \geq \norm{A(x)}_{\M_N} - \epsi.
\]
Define
\[
u(x) \ov{\mathrm{def}}{=} v(x).
\]
Then $u \in \L^p(\Omega,\mathbb{C}^N)$ and
\[
\norm{u}_{\L^p(\Omega,\mathbb{C}^N)}^p
= \int_E \norm{v(x)}_{\mathbb{C}^N}^p \d\mu(x)
= \mu(E).
\]
On the other hand,
\begin{align*}
\MoveEqLeft
\norm{M_A u}_{\L^p(\Omega,\mathbb{C}^N)}^p
= \int_E \norm{A(x)v(x)}_{\mathbb{C}^N}^p \d\mu(x) 
\geq \int_E \bigl(\norm{A(x)}_{\M_N} - \epsi\bigr)^p \d\mu(x) 
\geq (\alpha - 2\epsi)^p \mu(E)
\end{align*}
for $\epsi$ small enough, using that $\norm{A(x)}_{\M_N} \geq \alpha - \epsi$ almost everywhere on $E$. Therefore
\[
\norm{M_A u}_{\L^p(\Omega,\mathbb{C}^N)}
\geq (\alpha - 2\epsi) \mu(E)^{1/p}
= (\alpha - 2\epsi) \norm{u}_{\L^p(\Omega,\mathbb{C}^N)}.
\]
Hence
\[
\frac{\norm{M_A u}_{\L^p(\Omega,\mathbb{C}^N)}}{\norm{u}_{\L^p(\Omega,\mathbb{C}^N)}}
\geq \alpha - 2\epsi.
\]
Taking the supremum over all nonzero $u \in \L^p(\Omega,\mathbb{C}^N)$ shows
\[
\norm{M_A}_{\L^p(\Omega,\mathbb{C}^N)\to \L^p(\Omega,\mathbb{C}^N)}
\geq \alpha - 2\epsi.
\]
Since $\epsi > 0$ is arbitrary, we conclude
\[
\norm{M_A}_{\L^p(\Omega,\mathbb{C}^N)\to \L^p(\Omega,\mathbb{C}^N)} 
\geq \alpha.
\]
Combining this with the upper bound obtained at the beginning of the proof gives \eqref{eq:norm-matrix-mult}.

Using a standard measurable selection argument (or by first treating simple matrix-valued functions and then approximating $A$ in $\L^\infty$), we may assume that $x \mapsto v(x)$ is measurable on $E$. Define
\[
u(x)
\ov{\mathrm{def}}{=}
\mathbf 1_E(x) v(x).
\]
Then $u \in \L^p(\Omega,\mathbb{C}^N)$ and $
\norm{u}_{\L^p(\Omega,\mathbb{C}^N)}
=
\mu(E)^{1/p}$. Moreover,
\begin{align*}
\MoveEqLeft
\norm{M_A u}_{\L^p(\Omega,\mathbb{C}^N)}^p
=\int_E \norm{A(x)v(x)}_{\mathbb{C}^N}^p \d\mu(x) 
\geq \int_E \big(\norm{A(x)}_{\M_N}-\epsi\big)^p \d\mu(x) 
\geq (\alpha - 2\epsi)^p \mu(E),
\end{align*}
for $\epsi$ small enough. Hence
\[
\norm{M_A u}_{\L^p(\Omega,\mathbb{C}^N)}
\geq
(\alpha - 2\epsi) \mu(E)^{1/p}.
\]
Dividing by $\norm{u}_{\L^p(\Omega,\mathbb{C}^N)} = \mu(E)^{1/p}$, we obtain
\[
\frac{\norm{M_A u}_{\L^p(\Omega,\mathbb{C}^N)}}{\norm{u}_{\L^p(\Omega,\mathbb{C}^N)}}
\geq
\alpha - 2\epsi.
\]
Taking the supremum over all nonzero $u$ shows $
\norm{M_A}_{\L^p(\Omega,\mathbb{C}^N)\to \L^p(\Omega,\mathbb{C}^N)}
\geq
\alpha - 2\epsi$. Since $\epsi>0$ is arbitrary, we get
\[
\norm{M_A}_{\L^p(\Omega,\mathbb{C}^N)\to \L^p(\Omega,\mathbb{C}^N)}
\geq
\alpha.
\]
Together with the upper bound this proves \eqref{eq:norm-matrix-mult}.
\end{proof}

\begin{prop}
\label{prop:Lip-alg-Rd-Dirac}
Suppose that $1<p<\infty$. Then
\begin{equation}
\label{eq:Lip-equals-W1infty-Rd}
\Lip_{D}(\L^\infty(\R^d))
=\W^{1,\infty}(\R^d).
\end{equation}
Moreover, for any function $f \in \W^{1,\infty}(\R^d)$ we have, on the core $\C_c^\infty(\R^d,\mathbb{C}^N)$,
\begin{equation}
\label{eq:commutator-Rd-Dirac}
[D,f]
=-\i\sum_{j=1}^d \gamma_j \ot M_{\partial_{x_j} f}
=\i \d f \cdot,
\end{equation}
where $\d f \cdot$ denotes Clifford multiplication by the $1$-form $\d f$, and
\begin{equation}
\label{eq:norm-comm-Rd-Dirac}
\bnorm{[D,\pi(f)]}_{\ell^p_N(\L^p(\R^d)) \to \ell^p_N(\L^p(\R^d))}
=\esssup_{x \in \R^d} \Bgnorm{\sum_{j=1}^d \gamma_j \partial_{x_j} f(x)}_{\M_N}
\approx \norm{\nabla f}_{\L^\infty(\R^d)},
\end{equation}
where the equivalence constants depend only on the family $(\gamma_j)_{1\leq j\leq d}$.
\end{prop}

\begin{proof}
Let $f \in \W^{1,\infty}(\R^d)$. By a standard multiplication result for Sobolev spaces on $\R^d$ (see \cite[p.~Corollary 4.1.18 p.~24]{Web18}), the operator of multiplication $M_f\co \W^{1,p}(\R^d)\to \W^{1,p}(\R^d)$ is bounded. Hence $\Id_N \ot M_f$ leaves $\ell^p_N(\W^{1,p}(\R^d))$ invariant. Thus $\pi(f)\cdot \dom D \subset \dom D$. Let $u \in \C_c^\infty(\R^d,\mathbb{C}^N)$. Writing $u(x) \in \mathbb{C}^N$ and using the Leibniz rule, we compute for any $x \in \R^d$
\begin{align*}
\MoveEqLeft
\big[ D,\pi(f)\big]u
=D(\pi(f)u)-\pi(f)Du 
\ov{\eqref{Dirac-operator-Rd}}{=} -\i\sum_{j=1}^d \gamma_j \partial_j(f u)-\i\sum_{j=1}^d f\gamma_j \partial_j u \\
&=-\i\sum_{j=1}^d \gamma_j\big( (\partial_j f) u+ f\partial_j u\big)
-\i\sum_{j=1}^d f\gamma_j \partial_j u
=-\i\sum_{j=1}^d \gamma_j (\partial_j f) u.        
\end{align*}
In other words, we have
\begin{equation}
\label{eq:comm-explicit-pointwise}
[D,f]
=-\i\sum_{j=1}^d \gamma_j \ot M_{\partial_j f},
\end{equation}
which is exactly \eqref{eq:commutator-Rd-Dirac} on the core, if we define Clifford multiplication of the $1$-form $\d f=\sum_j (\partial_j f) \d x_j$ by
\[
\d f \cdot u(x)
\ov{\mathrm{def}}{=}
-\sum_{j=1}^d \gamma_j (\partial_j f)(x) u(x).
\]
Since $\partial_j f \in \L^\infty(\R^d)$ for each $j$, the operator $M_{\partial_j f}$ is bounded on $\L^p(\R^d)$ with
\[
\norm{M_{\partial_j f}}_{\L^p(\R^d)\to \L^p(\R^d)}
=\norm{\partial_j f}_{\L^\infty(\R^d)}.
\]
Consequently, each $\gamma_j \ot M_{\partial_j f}$ is bounded on $\ell^p_N(\L^p(\R^d))$, hence the right hand side of \eqref{eq:commutator-Rd-Dirac} extends to a bounded operator on $\ell^p_N(\L^p(\R^d))$. By density of the core $\C_c^\infty(\R^d,\mathbb{C}^N)$ in $\dom D$ and continuity, \eqref{eq:commutator-Rd-Dirac} holds on all of $\ell^p_N(\L^p(\R^d))$, and we deduce that $f \in \Lip_D(\L^\infty(\R^d))$.

We still assume $f \in \W^{1,\infty}(\R^d)$. By \eqref{eq:commutator-Rd-Dirac}, the commutator $[D,\pi(f)]$ is the multiplication operator on $\L^p(\R^d,\mathbb{C}^N)$ given by
\[
([D,\pi(f)]u)(x)
=A(x)u(x),
\qquad
A(x)
\ov{\mathrm{def}}{=} -\i\sum_{j=1}^d \gamma_j \partial_j f(x)
\in \M_N.
\]
It is standard that for such a multiplication operator we have
\[
\bnorm{[D,\pi(f)]}_{\ell^p_N(\L^p(\R^d)) \to \ell^p_N(\L^p(\R^d))}
\ov{\eqref{eq:norm-matrix-mult}}{=} \esssup_{x \in \R^d} \norm{A(x)}_{\M_N}
=\esssup_{x \in \R^d} \norm{ \sum_{j=1}^d \gamma_j \partial_j f(x) }_{\M_N},
\]
which is the first equality in \eqref{eq:norm-comm-Rd-Dirac}. 

Next, since the matrices $\gamma_j$ satisfy the Clifford relations, the linear map $
\Lambda \co \R^d \to \M_N$, $a \mapsto \sum_{j=1}^d a_j \gamma_j$ is an injective linear map from the finite-dimensional space $(\R^d,\lvert\cdot\rvert)$ into $(\M_N,\norm{\cdot}_{\M_N})$. Hence there exist constants $c_1,c_2>0$ depending only on the family $(\gamma_j)$ such that
\[
c_1 \lvert a\rvert
\leq \norm{\Lambda(a)}_{\M_N}
\leq c_2 \lvert a\rvert,
\qquad a\in\R^d.
\]
Applying this pointwise with $a=\nabla f(x)$ and taking the essential supremum over $x\in\R^d$, we obtain
\[
c_1 \norm{\nabla f}_{\L^\infty(\R^d)}
\leq \bnorm{[D,\pi(f)]}
\leq c_2 \norm{\nabla f}_{\L^\infty(\R^d)}.
\]
This proves the equivalence stated in \eqref{eq:norm-comm-Rd-Dirac}. Now suppose that $f \in \Lip_D(\L^\infty(\R^d))$. By definition, the operator $\pi(f)$ leaves $\dom D=\ell^p_N(\W^{1,p}(\R^d))$ invariant and the unbounded commutator
\[
[D,\pi(f)] \co \dom D \subset \ell^p_N(\L^p(\R^d)) \to \ell^p_N(\L^p(\R^d))
\]
extends to a bounded operator on $\ell^p_N(\L^p(\R^d))$. On the core $\C_c^\infty(\R^d,\mathbb{C}^N)$ we have, as in \eqref{eq:comm-explicit-pointwise},
\begin{equation}
\label{eq:comm-splitting-Rd}
[D,\pi(f)]
=-\i\sum_{j=1}^d \gamma_j \ot [\partial_j,M_f].
\end{equation}
Since the sum on the right-hand side is finite and $[D,\pi(f)]$ is bounded on $\ell^p_N(\L^p(\R^d))$, we will show that each commutator $[\partial_j,M_f]$ extends to a bounded operator on $\L^p(\R^d)$.

For each $k\in\{1,\dots,d\}$, choose a bounded linear functional $\omega_k$ on $\M_N$ such that $\omega_k(\gamma_\ell)=\delta_{k\ell}$ for all $\ell$. This is possible since the $\gamma_\ell$ are linearly independent. Consider the bounded operator
\[
T_k \ov{\mathrm{def}}{=}(\omega_k\ot \Id_{\L^p(\R^d)})([D,\pi(f)])\co \L^p(\R^d)\to \L^p(\R^d),
\]
where we use the identification $\ell^p_N(\L^p(\R^d))\simeq \L^p(\R^d,\mathbb{C}^N)$. Applying $\omega_k\ot \Id$ to \eqref{eq:comm-splitting-Rd} we obtain, on the core,
\[
T_k
=(\omega_k\ot \Id)\Big(-\i\sum_{j=1}^d \gamma_j \ot [\partial_j,M_f]\Big)
=-\i [\partial_k,M_f].
\]
Since the core is dense in $\L^p(\R^d)$ and both sides extend boundedly, we deduce that $[\partial_k,M_f]$ extends to a bounded operator on $\L^p(\R^d)$ and
\[
\norm{[\partial_k,M_f]}_{\L^p(\R^d)\to \L^p(\R^d)}
\leq \norm{\omega_k}\bnorm{[D,\pi(f)]}_{\ell^p_N(\L^p)\to \ell^p_N(\L^p)}.
\]
Fix $j\in\{1,\dots,d\}$. Let $(\tau_{t e_j})_{t\in\R}$ be the translation group on $\L^p(\R^d)$ in the $e_j$-direction, given by
\[
(\tau_{t e_j}g)(x)
\ov{\mathrm{def}}{=}g(x+t e_j),
\qquad g\in \L^p(\R^d),\ t\in\R.
\]
It is well known that $(\tau_{t e_j})_{t\in\R}$ is a strongly continuous group of isometries on $\L^p(\R^d)$ with generator $\partial_j$. Moreover, for any $t\in\R$ we have
\[
\tau_{t e_j} M_f \tau_{-t e_j}
=M_{f(\cdot+ t e_j)}.
\]
Let $t \ne 0$ and define the difference quotient
\[
\Delta_t^{(j)} f
\ov{\mathrm{def}}{=} \frac{f(\cdot+ t e_j)-f}{t} \in \L^\infty(\R^d).
\]
By Proposition~\ref{prop-group-isometries} (applied with $X=\L^p(\R^d)$, $U_t=\tau_{t e_j}$, $A=\partial_j$ and $R=M_f$), since $[\partial_j,M_f]$ is bounded on $\L^p(\R^d)$, we obtain for all small $t\ne 0$,
\begin{equation}
\label{eq:FTC-Rd}
M_{\Delta_t^{(j)} f}
\ov{\mathrm{def}}{=}\frac{M_{f(\cdot+t e_j)}-M_f}{t}
=\frac{\tau_{t e_j} M_f \tau_{-t e_j}-M_f}{t}
=\int_0^1 \tau_{s t e_j} [\partial_j,M_f] \tau_{-s t e_j} \d s.
\end{equation}
Since each $\tau_{s t e_j}$ is an isometry, we obtain that
\begin{align*}
\MoveEqLeft
\bnorm{\Delta_t^{(j)} f}_{\L^\infty(\R^d)}
=\bnorm{M_{\Delta_t^{(j)} f}}_{\L^p(\R^d)\to \L^p(\R^d)} 
=\norm{\int_0^1 \tau_{s t e_j} [\partial_j,M_f] \tau_{-s t e_j} \d s}_{\L^p(\R^d)\to \L^p(\R^d)} \\
&\leq \int_0^1 \bnorm{\tau_{s t e_j} [\partial_j,M_f] \tau_{-s t e_j}}_{\L^p(\R^d)\to \L^p(\R^d)} \d s 
=\bnorm{[\partial_j,M_f]}_{\L^p(\R^d)\to \L^p(\R^d)}.
\end{align*}
Thus, for each fixed $j$, the family $(\Delta_t^{(j)} f)_{t\ne 0}$ is bounded in $\L^\infty(\R^d)$. By the Banach–Alaoglu theorem, there exist $t_\alpha\to 0$ and $g_j\in \L^\infty(\R^d)$ such that $
\Delta_{t_\alpha}^{(j)} f
\overset{w^\ast}{\longrightarrow}
g_j$ in $\L^\infty(\R^d)$. Let $\varphi\in \C_c^\infty(\R^d)$. Then
\[
\int_{\R^d} (\Delta_t^{(j)} f)(x)\varphi(x)\,\d x
=\int_{\R^d} f(x) \frac{\varphi(x- t e_j)-\varphi(x)}{t}\,\d x.
\]
Since $\frac{\varphi(\cdot- t e_j)-\varphi}{t}\to -\partial_j\varphi$ in $\L^1(\R^d)$ as $t\to 0$, we deduce
\[
\int_{\R^d} (\Delta_t^{(j)} f)\,\varphi
\xra[t \to 0]{}  -\int_{\R^d} f\,\partial_j\varphi.
\]
Passing to the weak* limit along $(t_\alpha)$ yields
\[
\int_{\R^d} g_j\,\varphi
=-\int_{\R^d} f\,\partial_j\varphi,
\qquad \varphi\in \C_c^\infty(\R^d),
\]
that is, $g_j$ is the distributional derivative $\partial_j f$. Since $g_j\in \L^\infty(\R^d)$, we conclude that $\partial_j f\in \L^\infty(\R^d)$ for every $j\in\{1,\dots,d\}$. Hence $f\in \W^{1,\infty}(\R^d)$.
\end{proof}

\begin{prop}
\label{prop-fuctional-calculus-Rd}
Suppose that $1 < p < \infty$. The operator $D$ is bisectorial of angle $0$, that is $\omega_\bi(D)=0$, and admits a bounded $\H^\infty(\Sigma_\theta^\bi)$ functional calculus on $\ell^p_N(\L^p(\R^d))$ for every angle $\theta>0$.
\end{prop}

\begin{proof}
%\begin{itemize}
%\item[(D1)] (oercivity on the range) There exists $\kappa>0$ such that for all $\xi\in \R^d$ and all $e\in \Ran \widehat{D_0}(\xi)$ we have
%\[
%\kappa \vert\xi\vert \vert e\vert
%\leq \vert \widehat{D_0}(\xi) e\vert.
%\]
%\item[(D2)] (Bisectoriality of the symbol) There exists an angle $\omega \in [0,\frac{\pi}{2})$ such that for every $\xi\in \R^d$ the spectrum of $\widehat{D_0}(\xi)$ is contained in the double sector $S_\omega$ and the resolvents satisfy a uniform estimate
%\[
%\sup_{\xi\in\R^d}
%\sup_{\lambda\notin S_\omega}
%\Big\Vert \lambda(\lambda-\widehat{D_0}(\xi))^{-1}\Big\Vert_{\B(\mathbb{C}^N)}
%<\infty.
%\]
%\end{itemize}
%Under these assumptions, \cite[Theorem 3.2]{HMP11} states that for every $1<p<\infty$ the realization of $D_0$ on $\L^p(\R^d,\mathbb{C}^N)$ with domain $\W^{1,p}(\R^d,\mathbb{C}^N)$ is $R$-bisectorial of angle $\omega$ and admits a bounded $\H^\infty(\Sigma_\theta^\bi)$ functional calculus on $\L^p(\R^d,\mathbb{C}^N)$ for every angle $\theta>\omega$.
The symbol in the sense of \cite[p.~75]{HMP11} of the differential operator $D$ defined in \eqref{Dirac-operator-Rd} is
\begin{equation}
\label{symbol-Dirac}
\symb_D(\xi)
=\sum_{j=1}^d \gamma_j \xi_j,
\qquad \xi \in \R^d.
\end{equation}
%For any $\xi \in \R^d$, we consider the element $
%\Gamma(\xi)
%\ov{\mathrm{def}}{=} \sum_{j=1}^d \gamma_j \xi_j$ of the matrix algebra $\M_N$. 
A straightforward computation using the Clifford relations gives
\begin{align}
\MoveEqLeft
\label{eq:Dirac-symbol-square}
\symb_D(\xi)^2
\ov{\eqref{symbol-Dirac}}{=} \sum_{j,k=1}^d \gamma_j\gamma_k\,\xi_j\xi_k
=\frac12\sum_{j,k=1}^d (\gamma_j\gamma_k+\gamma_k\gamma_j) \xi_j\xi_k
\ov{\eqref{Clifford-Rn}}{=} \frac12\sum_{j,k=1}^d 2\delta_{j,k}\Id_{\mathbb{C}^N} \xi_j\xi_k \\
&=\sum_{j=1}^d \xi_j^2 \Id_{\mathbb{C}^N}
=\norm{\xi}_{\ell^2_d}^2 \Id_{\mathbb{C}^N}. \nonumber
\end{align}
%Since $\symb_D(\xi)=-\i\Gamma(\xi)$, we obtain
%\begin{equation}
%\label{eq:Dirac-symbol-square}
%\symb_D(\xi)^2
%=(-\i)^2 \Gamma(\xi)^2
%=-\norm{\xi}_{\ell^2_d}^2 \Id_{\mathbb{C}^N},
%\qquad \xi \in \R^d.
%\end{equation}
For each fixed $\xi \in \R^d$, the matrix $\symb_D(\xi)$ is selfadjoint, so its spectrum is contained in $\R$. 
%Moreover, the identity $\Gamma(\xi)^2=\vert\xi\vert^2\Id_{\mathbb{C}^N}$ implies that every eigenvalue $\lambda$ of $\Gamma(\xi)$ satisfies $\lambda^2=\vert\xi\vert^2$, hence
%\[
%\sigma(\Gamma(\xi))
%=\{-\vert\xi\vert,\vert\xi\vert\}.
%\]
%Consequently, the spectrum of $\symb_D(\xi)=-\i\Gamma(\xi)$ is contained in the imaginary axis.
%\[
%\sigma(\symb_D(\xi))
%=-\i\sigma(\Gamma(\xi))
%=\{-\i\vert\xi\vert,\i\vert\xi\vert\}
%\subset \Sigma_0^\bi.
%\]
%To check the uniform resolvent estimate, let $\lambda\notin S_0$, so $\lambda$ does not lie on the imaginary axis. Since $\sigma_D(\xi)(\xi)$ is diagonalizable with eigenvalues $\pm \i\vert\xi\vert$, the spectral theorem on $\mathbb{C}^N$ yields
%\[
%\Big\Vert \lambda(\lambda-\symb_D(\xi))^{-1}\Big\Vert_{\B(\mathbb{C}^N)}
%=\sup_{t \in \{\pm\vert\xi\vert\}} \left\vert \frac{\lambda}{\lambda-\i t}\right\vert.
%\]
%For such $\lambda$, the denominator never vanishes and the above supremum is bounded by a constant depending only on the distance of $\lambda$ to the imaginary axis, hence only on the angle of $\lambda$ with respect to that axis. In particular, for any fixed open double sector $\Sigma_\theta^\bi$ with $\theta>0$, the quantity
%\[
%\sup_{\xi \in \R^d}
%\sup_{\lambda \notin \Sigma_\theta^\bi}
%\Big\Vert \lambda(\lambda-\widehat D(\xi))^{-1}\Big\Vert_{\B(\mathbb{C}^N)}
%\]
%is finite. 
This shows that the condition (D2) of \cite[p.~75]{HMP11} holds with the angle $\omega=0$. Moreover, for any $\xi \in \R^d$ and $u \in \mathbb{C}^N$, we have
\begin{align*}
\norm{\symb_D(\xi) u}_{\ell^2_N}^2
&=\big\la \symb_D(\xi)u,\symb_D(\xi)u \big\ra_{\ell^2_N}
\ov{\eqref{eq:Dirac-symbol-square}}{=} \big\la \symb_D(\xi)^2 u,u \big\ra_{\ell^2_N} \\
&=\big\la \norm{\xi}_{\ell^2_d}^2 \Id_{\mathbb{C}^N} u,u \big\ra_{\ell^2_N}
=\norm{\xi}_{\ell^2_d}^2 \norm{u}_{\ell^2_N}^2.
\end{align*}
Taking square roots, we obtain the exact identity
\[
\bnorm{\symb_D(\xi)u}_{\ell^2_N}
=\norm{\xi}_{\ell^2_d} \norm{u}_{\ell^2_N},
\qquad \xi \in \R^d, u \in \mathbb{C}^N.
\]
Hence the condition (D1) of \cite[p.~75]{HMP11} holds with $\kappa=1$. We conclude with \cite[Theorem 3.2 p.~75]{HMP11} that the Dirac operator $D$ is bisectorial of angle $0$ and that it admits a bounded $\H^\infty(\Sigma_\theta^\bi)$ functional calculus on $\L^p(\R^d,\mathbb{C}^N)$ for every $\theta > 0$.
\end{proof}

\begin{prop}
\label{prop:Ma-iD-compact}
Suppose $1 < p < \infty$. For any function $f \in \C_c^\infty(\R^d)$ the operator $M_f(\i + D)^{-1} \co \ell^p_N(\W^{1,p}(\R^d)) \to \ell^p_N(\W^{1,p}(\R^d))$ is compact. 
\end{prop}

\begin{proof}
The resolvent operator $(\i+D)^{-1} \co \ell^p_N(\L^{p}(\R^d)) \to \ell^p_N(\L^{p}(\R^d))$ is bounded by definition. Let $f \in \C_c^\infty(\R)$. Consider a bounded open subset $\Omega \subset \R^d$ such that $\supp f\subset \Omega$. Note that we have a bounded operator $M_f \co \W^{1,p}(\R^d) \to \W^{1,p}(\Omega)$. We denote by $J \co \W^{1,p}(\Omega) \to \L^p(\Omega)$ the canonical injection, which is compact by \cite[Remark p.~289]{Eva10} or \cite[Theorem 9.16 p.~285]{Bre11}. If $\cal{E} \co \L^p(\Omega) \to \L^p(\R^d)$ is the extension map by 0, then the operator $M_f(\i+D)^{-1}$ admits the factorization
\[
M_f(\i+D)^{-1}
= E\circ J \circ M_f \circ (\i+D)^{-1},
\]
Since this factorization contains a compact operator, it is also a compact operator.
\end{proof}

We will give a quantitative compactness result in Proposition \ref{prop:Sappq-Dirac-Rd}. We need the following estimate on approximation numbers of the Sobolev embedding is a particular case of general results by K\"onig on $s$-numbers of Sobolev embeddings. For the reader's convenience, we give a short self-contained proof based only on a simple dyadic partition argument and Poincar\'e inequality \cite[Theorem 2.4 p.~21]{EdE23}
\begin{equation}
\label{Poincare-inequality}
\norm{u-\frac{1}{|\Omega|}\int_\Omega u}_{\L^p(\Omega)}
\leq \bigg(\frac{\omega_d}{|\Omega|}\bigg)^{1-\frac{1}{d}} \diam(\Omega)^d\norm{\nabla u}_{\L^p(\Omega,\ell^2_d)}
\end{equation}
for a bounded convex subset $\Omega$ of $\R^d$.

\begin{prop}
\label{prop:approx-numbers-W1p-into-Lp-d}
Suppose that $1 < p < \infty$. Let $d\geq 1$. Consider a closed cube $Q \subset \R^d$. Denote by $J \co \W^{1,p}(Q) \to \L^p(Q)$ the canonical embedding. For any integer $n \geq 1$, we have
\begin{equation}
\label{eq:an-general-d}
a_{n}(J)
\lesssim_d |Q|^{\frac{1}{d}} n^{-\frac{1}{d}}.
\end{equation}
\end{prop}

\begin{proof}
Fix an integer $m \in \N$. Partition $Q$ into $m^d$ pairwise disjoint closed subcubes $Q_1,\dots,Q_{m^d}$ of equal measure $|Q_k|=\frac{|Q|}{m^d}$ and side length
\begin{equation}
\label{def-ell-m-Rd}
\ell_m 
\ov{\mathrm{def}}{=} \Big(\frac{|Q|}{m^d}\Big)^{\frac{1}{d}}
= |Q|^{\frac{1}{d}} m^{-1}.
\end{equation}
For any function $g \in \W^{1,p}(Q)$, define the average
\[
(P_m g)(x) 
\ov{\mathrm{def}}{=} \sum_{k=1}^{m^d}
\Big(\frac{1}{|Q_k|}\int_{Q_k} g\Big)\, 1_{Q_k}(x).
\]
Clearly, we have $\rank P_m \leq m^d$.  On each $Q_k$, the Poincar\'e inequality \eqref{Poincare-inequality} applied to $\Omega=Q_k$ yields
\begin{equation}
\label{eq:poincare-local-d}
\norm{g|_{Q_k} - g_{Q_k}}_{\L^p(Q_k)}
\lesssim_d \ell_m \norm{\nabla g}_{\L^p(Q_k)},
\qquad 
g_{Q_k}\ov{\mathrm{def}}{=} \frac{1}{|Q_k|} \int_{Q_k} g.
\end{equation}
Summing over $k$ (the $Q_k$ are disjoint), we obtain
\begin{align*}
\norm{g-P_m(g)}_{\L^p(Q)}^p
&= \sum_{k=1}^{m^d} \norm{g|_{Q_k}-g_{Q_k}}_{\L^p(Q_k)}^p 
\ov{\eqref{eq:poincare-local-d}}{\lesssim_d} \sum_{k=1}^{m^d}(\ell_m\norm{\nabla g}_{\L^p(Q_k)})^p
= \ell_m^p \norm{\nabla g}_{\L^p(Q)}^p.
\end{align*}
Taking the $p$-th root gives
\[
\norm{g-P_m g}_{\L^p(Q)}
\lesssim_d \ell_m \norm{\nabla g}_{\L^p(Q)}
\lesssim_d \ell_m \norm{g}_{\W^{1,p}(Q)}.
\]
Hence
\begin{equation}
\label{eq:J-Pm-bound-d}
\norm{J-P_m}_{\W^{1,p}(Q)\to \L^p(Q)}
\lesssim_d \ell_m
\ov{\eqref{def-ell-m-Rd}}{=} |Q|^{\frac{1}{d}} m^{-1}.
\end{equation}
Since $\rank P_m \leq m^d$, the definition of approximation numbers gives
\[
a_{m^d+1}(J)
\ov{\eqref{def-approximation-numbers}}{\leq} \norm{J-P_m}
\ov{\eqref{eq:J-Pm-bound-d}}{\lesssim_d}
|Q|^{\frac{1}{d}} m^{-1}.
\]
For an arbitrary integer $n \geq 1$, choose $m=\lfloor (n-1)^{\frac{1}{d}}\rfloor$, so that $m^d+1 \leq n$. By monotonicity of the sequence $(a_n(J))$, we have
\[
a_n(J) 
\leq a_{m^d+1}(J)
\lesssim_d |Q|^{\frac{1}{d}} m^{-1}
\lesssim_d |Q|^{\frac{1}{d}} n^{-\frac{1}{d}}.
\]
\end{proof}

\begin{prop} 
\label{prop:Sappq-Dirac-Rd}
Suppose that $1 < p < \infty$. For any function $f \in \C_c^\infty(\R^d)$ the operator $\pi(f)(D+\i)^{-1}$ belongs to $S_{\app}^{d,\infty}\big(\ell^p_N(\L^{p}(\R^d))\big)$.
\end{prop}

\begin{proof}
As in \eqref{square-of-Dirac}, the Clifford relations imply that $
D^2
=-\Id_{\ell^p_N} \ot \Delta$, 
where $\Delta$ denotes the Laplacian on $\R^d$. Hence we have the algebraic identity
\[
(\i + D)^{-1}
=(\i - D)(1+D^2)^{-1}
=(\i - D)\big(1-\Delta\big)^{-1} \ot \I_N.
\]
The Bessel potential operator $(1-\Delta)^{-1} \co \L^p(\R^d) \to \W^{2,p}(\R^d)$ is bounded, and the first order operator $\i - D$ maps $\ell^p_N(\W^{2,p}(\R^d))$ boundedly into $\ell^p_N(\W^{1,p}(\R^d))$. Set
\[
S \ov{\mathrm{def}}{=} (\i - D)\big(1-\Delta\big)^{-1} \ot \I_N
\co \ell^p_N(\L^p(\R^d)) \to \ell^p_N(\W^{1,p}(\R^d)).
\]
Let $f \in \C_c^\infty(\R^d)$. Fix a closed cube $Q \subset \R^d$ such that $\supp f \subset Q$. We still write $M_f$ for the diagonal multiplication operator on $\ell^p_N(\L^p(\R^d))$ and on $\ell^p_N(\L^p(Q))$. Consider the restriction operator $
R_Q \co \ell^p_N(\W^{1,p}(\R^d)) \to \ell^p_N(\W^{1,p}(Q))$ and the extension by zero operator $
E_Q \co \ell^p_N(\L^p(Q)) \to \ell^p_N(\L^p(\R^d))$. 
Let $
J^{\oplus N} \co \ell^p_N(\W^{1,p}(Q)) \hookrightarrow \ell^p_N(\L^p(Q))$ 
be the canonical Sobolev embedding on the vector-valued space, that is, the direct sum of $N$ copies of the embedding $
J \co \W^{1,p}(Q) \hookrightarrow \L^p(Q)$. 
Then we have the factorization
\begin{equation}
\label{eq:factorization-dD}
\pi(f)(\i + D)^{-1}
= E_Q \underbrace{\pi(f)|_{\ell^p_N(\L^p(Q))}}_{M_f^Q} J^{\oplus N} \underbrace{R_Q S}_{S_Q}.
\end{equation}
We now estimate the approximation numbers of $J^{\oplus N}$. Applying Lemma \ref{lemma:approx-finite-direct-sum} with $T_k=J$ for all $k \in \{1,,\ldots,N\}$ yields for any integer $n \geq 1$,
\begin{equation}
\label{eq:an-JN-step1}
a_{Nn}(J^{\oplus N})
\leq \max_{1 \leq k \leq N} a_n(J)
= a_n(J).
\end{equation}
Let $m \geq 1$. Choose $n \geq 1$ such that $
N(n-1) < m \leq Nn$. Since $(a_m(J^{\oplus N}))_{m \geq 1}$ is decreasing, we deduce that
\begin{equation}
\label{eq:an-JN-step2}
a_m(J^{\oplus N})
\leq a_{Nn}(J^{\oplus N})
\ov{\eqref{eq:an-JN-step1}}{\leq} a_n(J)
\ov{\eqref{eq:an-general-d}}{\lesssim} C_Q n^{-\frac{1}{d}}, 
\end{equation}
where we use Proposition \ref{prop:approx-numbers-W1p-into-Lp-d}. Using $n \leq \frac{m}{N}+1$, we obtain a constant $C'_Q>0$ with
\begin{equation}
\label{eq:an-vector-Sobolev}
a_m(J^{\oplus N})
\leq C'_Q m^{-\frac{1}{d}},
\qquad m \geq 1.
\end{equation}
Finally, we estimate the approximation numbers of $\pi(f)(\i + D)^{-1}$. Using the ideal property of approximation numbers together with the factorization \eqref{eq:factorization-dD}, we get for all integers $m \geq 1$,
\begin{align*}
a_m\big(\pi(f)(\i + D)^{-1}\big)
&\leq \norm{E_Q} \bnorm{M_f^Q} \norm{S_Q} a_m(J^{\oplus N}) 
\ov{\eqref{eq:an-vector-Sobolev}}{\lesssim} m^{-\frac{1}{d}}.
\end{align*}
The implicit constant depends only on $p$, $d$, $N$, $f$ and $Q$. Therefore
\[
\sup_{m \geq 1} m^{\frac{1}{d}} a_m\big(\pi(f)(\i + D)^{-1}\big)
< \infty,
\]
which means exactly that $\pi(f)(\i + D)^{-1} \in S_{\app}^{d,\infty}\big(\ell^p_N(\L^p(\R^d))\big)$ in the sense of \eqref{def-quasi-norm-Spq}.
\end{proof}

The previous results allows us to state the following result.

\begin{thm}
\label{thm-spectral-triple-Rd}
Suppose that $1 < p < \infty$. Then $(\C_0(\R^d),\ell^p_N(\W^{1,p}(\R^d)),D)$ is a locally compact spectral triple which is $d^+$-summable, hence $q$-summable for any $q > d$. 
\end{thm}

\newpage

%%%%%%%%%%%%%%%%%%%%%%%%%%%%%%%%%%%%%%%%%%%%%%%%%%%%%%%%%%%%%%%%%%%%%%%%%%%%%%%%%%%%%%%%%%%%%%%%%%%%%%%%
%%%%%%%%%%%%%%%%%%%%%%%%%%%%%%%%%%%%%%%%%%%%%%%%%%%%%%%%%%%%%%%%%%%%%%%%%%%
\subsection{The Dirac operator $\frac{1}{\i}\frac{\d}{\d x}$ on the space $\L^p(\R)$ and the Hilbert transform}
%\textbf{METTRE $\frac{1}{2\pi \i}$}

Assume that $d=1$. We have $N=2^{\lfloor \frac{d}{2}\rfloor}=1$. Then the Dirac operator is given by
\[
D
=-\i \gamma_1 \ot \partial_{x}
=\frac{1}{\i} \frac{\d}{\d x},
\]
with domain $\W^{1,p}(\R)$ acting on $\L^p(\R))$. Here \(\gamma_1=[1]\) is an \(N\times N\) selfadjoint matrix satisfying the Clifford relation \(\gamma_1^2=\Id_{\mathbb{C}^N}\). In the scalar one-dimensional case we take \(N=1\) and choose the representation of the Clifford algebra given by
\[
\gamma_1=1 \in \M_1(\C).
\]

Suppose that $1 < p < \infty$. Note that by \cite[Proposition 4.10 p.~31]{EnN00} any multiplication operator $M_f \co \L^p(\R) \to \L^p(\R)$, $g \mapsto fg$ by a measurable function $f \co \R \to \mathbb{C}$ is bounded if and only if $f$ is essentially bounded. In this case, we have $\norm{M_f}_{\L^p(\R) \to \L^p(\R)}=\norm{f}_{\L^\infty(\R)}$. Consequently, we can consider the isometric homomorphism $\pi \co \L^\infty(\R) \to \B(\L^p(\R))$, $f \mapsto M_f$. It is well known \cite[Corollary 14 p.~92]{Ren04} \cite[Remark 7.14]{Var10} that $(\C_0(\R),\L^2(\R),\frac{1}{\i}\frac{\d}{\d x})$ is a locally compact spectral triple, where the used homomorphism is the restriction of $\pi$ on the $\C^*$-algebra $\C_0(\R)$. We begin to prove a an $\L^p$-variant of this classical fact. Here, we consider the closure of the closable unbounded operator $\frac{\d}{\d x} \co \C^\infty_0(\R) \subset\L^p(\R) \to \L^p(\R)$. We denote again $\frac{\d}{\d x}$ its closure whose the domain $\dom \frac{\d}{\d x}$ is the Sobolev space 
$$
\W^{1,p}(\R)
\ov{\mathrm{def}}{=}\{f \in \L^p(\R) : f \text{ is absolutely continuous}, f' \in \L^p(\R) \},
$$  
see, e.g., \cite[Proposition 8.4.1 p.~228]{Haa06}. The following result is a consequence of Proposition \ref{prop:Lip-alg-Rd-Dirac}.

\begin{prop}
\label{prop:Lip-alg-R}
Suppose that $1 < p < \infty$. Let $D \ov{\mathrm{def}}{=} \frac{1}{\i}\frac{\d}{\d x}$ with domain  $\dom D=\W^{1,p}(\R)$. Then
\begin{equation}
\label{eq:Lip-equals-W1infty}
\W^{1,\infty}(\R) 
= \Lip_{D}(\L^\infty(\R)).
\end{equation}
Moreover, for any function $f \in \W^{1,\infty}(\R)$ we have
\begin{equation}
\label{eq:commutator-formula-R}
[D,M_{f}]
=\frac{1}{\i} M_{f'} 
\quad \text{and} \quad
\norm{[D,M_f]}_{\L^p(\R) \to \L^p(\R)}
=\norm{f'}_{\L^\infty(\R)}.
\end{equation}
\end{prop}

The following result is a particular case of Proposition \ref{prop-fuctional-calculus-Rd}. We give an alternative proof.

\begin{prop}
\label{prop-fuctional-calculus-R}
Suppose $1 < p < \infty$. Then the operator $D=\frac{1}{\i}\frac{\d}{\d x}$ on $\L^p(\R)$ with domain $\W^{1,p}(\R)$ is bisectorial of angle 0, i.e., $\omega_\bi(D)=0$, and admits a bounded $\H^\infty(\Sigma_\theta^\bi)$ functional calculus for any angle $\theta > 0$ on the Banach space $\L^p(\R)$. 
\end{prop}

\begin{proof}
The operator $\frac{\d}{\d x}$ generates a strongly continuous group of operators acting on the Banach space $\L^p(\R)$, namely the left translation group, defined by $(T_t g)(x)=g(x+t)$ where $t,x \in \R$ (see \cite[Proposition 1 p.~66]{EnN00} or \cite[Proposition 8.4.2 p.~229]{Haa06}). By Example \ref{generators-bisectorial} and Example \ref{ex-bound-Hinfty}, we deduce that the unbounded operator $D\ov{\mathrm{def}}{=}\frac{1}{\i}\frac{\d}{\d x}$ is bisectorial with $\omega_\bi(D)=0$ and admits a bounded $\H^\infty(\Sigma_\theta^\bi)$ functional calculus for any angle $\theta > 0$ on the Banach space $\L^p(\R)$. 
\end{proof}

The next result is a particular case of Proposition \ref{prop:Ma-iD-compact}. We give another argument.

\begin{prop}
\label{prop:Ma-iD-compact}
Suppose $1 < p < \infty$. Consider the operator $D=\frac{1}{\i}\frac{\d}{\d x}$ on $\L^p(\R)$ with domain $\W^{1,p}(\R)$. For any function $f \in \C_c^\infty(\R)$ the operator $M_f(\i + D)^{-1} \co \L^p(\R)\to \L^p(\R)$ is compact. 
%Moreover,
%\[
%\|T_a\|_{L^p\to L^p}
%\leq \|a\|_{L^p(\R)}\,\|k_0\|_{\L^q(\R)}=\frac{\|a\|_{\L^p(\R)}}{q^{1/q}},
%\qquad k_0(t)= e^{t}\,\mathbf 1_{(-\infty,0]}(t),
%\]
%where $q$ is the conjugate exponent of $p$.
\end{prop}

\begin{proof}
%\textit{First proof.} 
Consider the infinitesimal generator $A \ov{\mathrm{def}}{=} \frac{\d}{\d x}$ of the left translation group $(T_t)_{t \in \R}$ on the Banach space $\L^p(\R)$. By the Laplace formula \cite[Proposition G.4.1 p.~532]{HvNVW18} for the resolvent, we have for $\Re\lambda > 0$ the equality
\begin{equation}
\label{Laplace-formula}
(\lambda - A)^{-1} g 
= \int_0^\infty \e^{-\lambda t} T_t g \d t, \quad g \in \L^p(\R).
\end{equation}
Taking $\lambda=1$ and using $\i + D=\i(1 - A)$, we obtain for almost all $x \in \R$
\begin{align}
\MoveEqLeft
\label{inter-AZERTY-55}
\big((\i + D)^{-1}g\big)(x) 
=-\i\big((1-A)^{-1}g\big)(x)
\ov{\eqref{Laplace-formula}}{=} -\i\int_0^\infty \e^{-t} g(x+t) \d t \\
&\ov{t=y-x}{=}-\i \int_x^\infty \e^{x-y} g(y) \d y 
=-\i \int_\R k(x-y) g(y) \d y, \nonumber
\end{align}
with $k(t) \ov{\mathrm{def}}{=} \e^{t}1_{(-\infty,0]}(t)$. Hence $M_f(\i+D)^{-1}$ is an integral operator with kernel
\[
K(x,y)
\ov{\mathrm{def}}{=} -\i f(x) k(x-y).
\]
Note that $k \in \L^{p^*}(\R)$, where $p^\ast$ is the conjugate exponent defined by $\frac{1}{p}+\frac{1}{p^*}=1$. 
% and $\norm{k}_{\L^q}=(\int_{-\infty}^0 \e^{qt}\d t)^{\frac{1}{q}}=q^{-\frac{1}{q}}$.
For almost all $x \in \R$, we have
\begin{equation}
\label{inter-3567}
\norm{y \mapsto K(x,y)}_{\L^{p^*}(\R)} 
=|f(x)| \norm{y \mapsto k(x-y)}_{\L^{p^*}(\R)}
= |f(x)| \norm{k}_{\L^{p^*}(\R)},
\end{equation}
Consequently, we have
\[
\int_{\R} \norm{y \mapsto K(x,y)}_{\L^{p^*}(\R)}^{p} \d x
\ov{\eqref{inter-3567}}{=} \norm{k}_{\L^{p^*}}^{p} \int_{\R} |f(x)|^p \d x
= \norm{k}_{\L^{p^*}(\R)}^{p} \norm{f}_{\L^p(\R)}^{p}
< \infty,
\]
since $f \in \C_c^\infty(\R)$. The condition $K \in \L^p(\R,\L^{p^*}(\R))$ implies that $M_f(\i+D)^{-1}$ is a Hille-Tamarkin operator, hence compact on $\L^p(\R)$ by \cite[Theorem p.~446]{Jor82} or \cite[Theorem 11.6 p.~275]{HiT34}.
%\textit{Second proof.}
%The resolvent operator $(\i+D)^{-1} \co \L^p(\R) \to \W^{1,p}(\R)$ is bounded by definition. Let $f \in \C_c^\infty(\R)$. Consider a bounded open subset $\Omega \subset \R$ such that $\supp f\subset \Omega$. Note that by \textbf{REF} we have a bounded operator $M_f \co \W^{1,p}(\R) \to \W^{1,p}(\Omega)$. We denote by $J \co \W^{1,p}(\Omega) \to \L^p(\Omega)$ the canonical injection, which is compact by \cite[Remark p.~289]{Eva10}. If $
%\cal{E} \co \L^p(\Omega)\to \L^p(\R)$ 
%is the extension map by 0, then the operator $M_f(\i+D)^{-1}$ admits the factorization
%\[
%M_f(\i+D)^{-1}
%= E\circ J \circ M_f \circ (\i+D)^{-1},
%\]
%Since this factorization contains a compact operator, it is also a compact operator.
\end{proof}

\begin{thm}
\label{thm-spectral-triple-R}
Suppose that $1 < p < \infty$. Then $(\C_0(\R),\L^p(\R),\frac{1}{\i}\frac{\d}{\d x})$ is a locally compact spectral triple which is $1^+$-summable, hence $q$-summable for any $q > 1$. More precisely, we have
$$
\dim \big(\C_0(\R),\L^p(\R),\tfrac{1}{\i}\tfrac{\d}{\d x}\big) 
= 1. 
$$
\end{thm}

\begin{proof}
By Proposition \ref{prop-fuctional-calculus-R}, the unbounded operator $D\ov{\mathrm{def}}{=}\frac{1}{\i}\frac{\d}{\d x}$ is bisectorial and admits a bounded $\H^\infty(\Sigma_\theta^\bi)$ functional calculus for any angle $\theta > 0$ on the Banach space $\L^p(\R)$. 

By Proposition \ref{prop:Lip-alg-R}, the Lipschitz algebra $\Lip_D(\C_0(\R))=\Lip_D(\L^\infty(\R)) \cap \C_0(\R)$ contains $\C_c^\infty(\R)$. Hence $\Lip_D(\C_0(\R))$ is dense in $\C_0(\R)$. Finally, Proposition \ref{prop:Ma-iD-compact} gives the compactness condition. The spectral triple is $1^+$-summable by Proposition \ref{prop:Sappq-Dirac-Rd}.
%For any function $f \in \C_c^\infty(\R)$, we have $M_f \cdot \dom \tfrac{\d}{\d \theta} \subset \dom \tfrac{\d}{\d \theta}$, by the stability of the Sobolev space $\dom \tfrac{\d}{\d \theta}=\W^{1,p}(\R)$ under multiplication operators by compactly supported smooth functions, see \cite[Theorem 1 (iv), p.~261]{Eva10}. Moreover, a simple calculus shows that
%\begin{equation}
%\label{commutator-3}
%\big[\tfrac{1}{\i}\tfrac{\d}{\d x},f\big]
%=\tfrac{1}{\i}\big(\tfrac{\d}{\d x}M_f-M_f\tfrac{\d}{\d x}\big)
%=\frac{1}{\i}M_{f'}.
%\end{equation}
%\begin{equation}
%\label{commutator-1}
%\big[\tfrac{\d}{\d x},f\big]g
%=\tfrac{\d}{\d x}M_fg-M_f \tfrac{\d}{\d x}g
%=\tfrac{\d}{\d x}(fg)-f\tfrac{\d}{\d x}g
%=f' g+g'f-fg'
%=M_{f'}g.
%\end{equation}
%So $[\frac{1}{\i}\tfrac{\d}{\d x},f]$ defines a bounded operator on the Banach space $\L^p(\R)$ and we obtain \eqref{norm-commutator-R}. 
%For any function $f \in \C_c^\infty(\R)$ and if $q >p$ we have
%%$$
%%M_f R_{\i}(\tfrac{1}{\i}\tfrac{\d}{\d x})R_{\i}(\tfrac{1}{\i}\tfrac{\d}{\d x})^*
%%=
%%=.
%%$$
%\begin{align*}
%\MoveEqLeft
%\norm{M_f R_\i(\tfrac{1}{\i}\tfrac{\d}{\d x})}_{S^{q}_\app(\L^p(\R))}         
%\ov{\eqref{My-Cwikel-1}}{\lesssim_p} \norm{f}_{\L^{p}(\R)} \norm{\widehat{x \mapsto \frac{1}{x+\i }}}_{\L^{p^*}(\R)}\\
%&=\norm{f}_{\L^{p}(\R)} \norm{\e^{-2\pi |\xi|}}_{\L^{p^*}(\R)}
%%&=\norm{f}_{\L^{p}(\R)} \bigg(\int_\R \frac{\d x}{(x^2+\lambda)^{p^*}}\bigg)^{\frac{1}{p^*}} 
%<\infty.
%\end{align*}
%We conclude that $M_f|D|^{-1}$ is a compact operator on $\ovl{\Ran D}$. So Definition \ref{Def-locally-compact-spectral-triple} is satisfied.

Let $f \in \C_c^\infty(\R)$ such that $f = 1$ on a closed interval $I=[a,b]$. Consider the restriction operator $R_I\co \L^p(\R) \to \L^p([a,b])$ and the extension by zero operator $E_I \co \L^p([a,b]) \to \L^p(\R)$. 
For almost all $x \in \R$ and any $g \in \L^p(\R)$, we have seen that
\[
\big((\i+D)^{-1}g\big)(x)
\ov{\eqref{inter-AZERTY-55}}{=} -\i \int_x^\infty \e^{x-y} g(y) \d y,\quad x \in \R.
\]
Hence for almost all $x \in [a,b]$ and $g \in \L^p([a,b])$,
\[
\big(R_I M_f(\i+D)^{-1}E_I g\big)(x)
= -\i \int_x^{b} e^{x-y} g(y)\d y.
\]
Consider the Hardy operator $H \co \L^p([a,b]) \to \L^p([a,b])$ defined by $
(Vh)(x)
\ov{\mathrm{def}}{=}\int_x^{b} h(y) \d y$ and the multiplication operators $U \co \L^p([a,b]) \to \L^p([a,b])$, $g \mapsto( y \mapsto \e^{-y}g(y))$, and $W \co \L^p([a,b]) \to \L^p([a,b])$, $x \mapsto (x \mapsto \e^{x}h(x))$. Then
\begin{equation}
\label{inter-OPJJ89}
R_I M_f(\i+D)^{-1}E_I
= -\i WHU.
\end{equation}
Since $U$ and $W$ are bounded isomorphisms on the Banach space $\L^p([a,b])$, the ideal property of approximation numbers yields
\[
a_n(H)
\ov{\eqref{majo-sn-2}\eqref{inter-OPJJ89}}{\leq} \bnorm{W^{-1}} \bnorm{U^{-1}} a_n\big(R_I M_f(D+\i)^{-1}E_I\big)
\lesssim a_n\big(M_f(D+\i)^{-1}\big).
\]
Thus it suffices to use the estimate $a_n(H) \gtrsim \frac{1}{n}$ of \cite[Theorem 3.3 p.~25]{EdL07}. By \eqref{def-quasi-norm-Spq}, we conclude that $(\C_0(\R),\L^p(\R),\frac{1}{\i}\frac{\d}{\d x})$ is not 1-summable.
%\cite{EEH97} \cite{EdL06} \cite{Lil02} \cite{EHL01}
%Partition $I=[a,b]$ into $m$ equal subintervals $I_k=[x_{k-1},x_k]$ of length $\ell=(b-a)/m$. Let $E_m \subset \L^p(I)$ be the $m$-dimensional space of step functions constant on each $I_k$. 
%For $g=\sum_{k=1}^m c_k \mathbf 1_{I_k} \in E_m$ we have
%\[
%(Hg)(x)
%=\sum_{k=j}^m c_k \ell,\qquad\text{for }x\in I_j.
%\]
%Therefore
%\[
%\|Hg\|_{\L^p(I)}^p
%= \sum_{j=1}^m \int_{I_j}\Big|\sum_{k=j}^m c_k\,\ell\Big|^p \d x
%= \ell \sum_{j=1}^m \Big|\sum_{k=j}^m c_k\Big|^p.
%\]
%By the discrete Hardy inequality on $\ell^p$ (monotone tail-sum operator), there is $C_p>0$ such that
%\[
%\sum_{j=1}^m \Big|\sum_{k=j}^m c_k\Big|^p 
%\geq C_p m^{p-1}\sum_{k=1}^m |c_k|^p.
%\]
%Hence
%\[
%\|Hg\|_{\L^p(I)} 
%\geq C_p^{1/p}\ell^{1/p}\, m^{1-1/p}\,\Big(\sum_{k=1}^m |c_k|^p\Big)^{1/p}
%= C_p^{1/p}(b-a) m^{-1} \|g\|_{\L^p(I)}.
%\]
%Thus the operator norm of $H$ restricted to $E_m$ is bounded below by $c_p(b-a)\,m^{-1}$.  By the definition of approximation numbers, this implies
%\[
%a_m(H)
%\geq c_p(b-a) m^{-1},\quad m \geq 1.
%\]
%Relabelling $n=m$ and combining with the reduction above gives the claimed lower bound for $a_n\big(M_f(D+\mathrm i)^{-1}\big)$.
%
%Since $\sum_{n\ge1} n^{-1}=\infty$, the series $\sum_n a_n\big(M_f(D+\i)^{-1}\big)$ diverges for a generic $f$ with $f\equiv 1$ on some interval, hence $M_f(D+\i)^{-1}\notin S_{\mathrm{app}}^{1}(\L^p(\R))$ in general, while $a_n\lesssim n^{-1}$ from your upper estimate shows membership in $S_{\mathrm{app}}^{1,\infty}(\L^p(\R))$. This proves sharpness.in general, $M_f(D+\i)^{-1}\notin S_{\mathrm{app}}^{1}(\L^p(\R))$.
\end{proof}

Now, we consider the Hilbert transform $\cal{H} \co \L^p(\R) \to \L^p(\R)$. This transformation is a bounded operator by \cite[Theorem  5.1.1 p.~374]{HvNVW16} defined by the principal value
\begin{equation}
\label{Hilbert-transform-Riesz}
(\cal{H}f)(x)
\ov{\mathrm{def}}{=} \frac{1}{\pi}\pv \int_\R \frac{f(y)}{x-y} \d y, \quad f \in \cal{S}(\R), \text{ for almost every } x \in \R,
\end{equation}
and can be seen by \cite[Proposition 5.2.2 p.~389]{HvNVW16} as a Fourier multiplier with symbol $-\i\,\sign(\xi)$, i.e., we have
\begin{equation}
\label{Hilbert-trans-Fourier}
\widehat{\cal{H}(f)}(\xi)
=-\i\, \sign(\xi)\hat{f}(\xi), \quad f \in \cal{S}(\R), \xi \in \R.
\end{equation}
Note that it was observed in \cite[p.~314]{Con94} (see also \cite[p.~330]{GVF01} and \cite[p.~349]{LMSZ23}), that $F \ov{\mathrm{def}}{=} \i \cal{H}$ is a Fredholm module on the Hilbert space $\L^2(\R)$. As observed in \cite[p.~473]{GaR85}, the commutator is defined by the formula \eqref{commutator-Hilbert-R}.
\begin{remark} \normalfont
A famous result of Coifman, Rochberg and Weiss \cite{CRW76}, described in \cite[p.~473]{GaR85} and \cite[Theorem p.~2]{Wic20}, says that the formula \eqref{commutator-Hilbert-R} induces a bounded operator on the Banach space $\L^p(\R)$ if and only if the function $f$ belongs to the space $\BMO(\R)$.
\end{remark}

\begin{remark} \normalfont
If $0 < q < \infty$, a classical result of Peller (and Semmes for the case $q < 1$), described in \cite[p.~314]{Con94}, says that the commutator $[F, M_f]$ is in the Schatten class $S^q(\L^2(\R))$ if and only if the function $f$ belongs to the Besov space $\B^q_{q,q}(\R)$. 
\end{remark} 

%[Fractional Differentiation of the Commutator of the Hilbert Transform] \textbf{contient la formule pour le commutator}
%\begin{equation}
%\label{commutator-Hilbert-R}
%\big([M_a,\cal{H}]f\big)(x)
%=\pv\int_{-\infty}^{\infty}\frac{a(x)-a(y)}{x-y}f(y) \d y.
%\end{equation}
%Fractional Differentiation of the Commutator of the Hilbert Transform*
%
%
%for all $y>0$, $|f(x+\i y)|^p$ is integrable over $\mathbb R$  and there is a constant $M \in \R^+$ such that
%\[
%\int_\R |f(x+\i y)|^p \d x
%< M
%,\quad, y > 0,
%\]
%with the norm
%
%\cite{Dur70}  \cite{Gar81}  \cite{BoS06} 

For the computation of the index pairing, we need some information on Hardy spaces. Recall the notation $\mathbb{C}_+ \ov{\mathrm{def}}{=} \{z \in \mathbb{C} : \Im z > 0\}$ for the open upper half plane.  Following \cite[Definition 6.3.2 p.~145]{Nik02} and \cite[p.~247]{Mas09}, we denote by $\H^p(\mathbb{C}_+)$ the space of  all functions $F$ which are holomorphic in the upper half plane $\mathbb{C}_+$ such that 
\[
\norm{F}_{\H^p(\mathbb{C}_+)} 
\ov{\mathrm{def}}{=} \sup_{y >0} \left (\int_\R |F(x+\i y)|^p \d x \right )^{\frac{1}{p}} 
<\infty,
\]
and $\sup_{z \in \mathbb{C}_+} |F(z)| < \infty$ for $p=\infty$. We also introduce the closed subspace 
$$  
\H^{p}(\R)
=\left\{ f \in \L^{p}(\R) : \int_{\R}\frac{f(t)}{t-\overline{z}}\d t=0 \text{ for all }z \in \mathbb{C}_{+} \right\}
$$
of the Banach space $\L^{p}(\R)$, considered in \cite[(13.4) p.~282]{Mas09}. Cauchy's representation theorem in Hardy spaces, e.g., \cite[Chapter 13]{Mas09}, can be formulated as follows. For any holomorphic function $F \co \mathbb{C}_{+} \to \mathbb{C}$, the following assertions are equivalent:
\begin{itemize}
\item[(i)]  the function $F$ belongs to the space $\H^{p}(\mathbb{C}_{+})$,
\item[(ii)] there exists a function $f \in \H^{p}(\R)$ such that
\begin{equation}
\label{eq:luxboqaurg}
F(z)
= \frac{1}{2\pi \i}\int_{\R}\frac{f(t)}{t-z} \d t, \quad z \in \mathbb{C_{+}}.
\end{equation}
\end{itemize}
In this case, $f$ is unique, we have  
$  
\norm{F}_{\H^{p}(\mathbb{C}_{+})}  
=\norm{f}_{\L^{p}(\R)}    
$ 
and the non-tangential boundary function of $F$ is equal to $f$, i.e., $f(x)
=\lim_{z \to x,z \in \mathbb{C}_+} F(z)$. Consequently, we have an isometric isomorphism of the Hardy space $\H^{p}(\mathbb{C}_{+})$ onto the space $\H^{p}(\R)$. Thus the Hardy space $\H^{p}(\mathbb{C}_{+})$ can be identified with a closed subspace of the Banach space $\L^{p}(\R)$. Recall that $P \ov{\mathrm{def}}{=} \frac{\Id+F}{2} \co \L^{p}(\R) \to \L^{p}(\R)$ is a bounded projection on the subspace $\H^p(\R)$. %If $\frac{1}{s} =\frac{1}{p}+\frac{1}{r}$ it is known that for any $f \in \H^p(\mathbb{C}_{+})$ and any $g \in \H^r(\mathbb{C}_{+})$ we have $fg \in \H^s(\mathbb{C}_{+})$.

Let $f \co \R \cup \{\infty\} \to \mathbb{C}$ be a continuous function satisfying
\begin{equation}
\label{condition-de-positivite}
\inf_{x \in \R \cup \{\infty\}} |f(x)| > 0.
\end{equation}
% \qquad \Phi(x) = \frac{x-\i}{x+\i}.
%\]
Using the identification between one-point compactification $\R \cup \{\infty\}$ of $\R$ with the unit circle $\T$ via the standard homeomorphism $
\Phi \colon \R \cup \{\infty\} \to \T$, $x \mapsto \frac{x - \i}{x + \i}$, we then define the winding number of $f$ by $\wind f \, \ov{\mathrm{def}}{=}\, \wind (f \circ \Phi^{-1})$. Note that condition \eqref{condition-de-positivite} means that $f$ is invertible in the commutative Banach algebra $\C(\R \cup \{\infty\})$.

%obtained by adding to the image of $g(x)$, with $x\in \mathbb R_\infty$ such that $g$ is continuous at $x$, certain arcs of a circle (or line segments, when $p=2$) connecting the points corresponding to $g(x^-)$ and $g(x^+)$ whenever these two values are different. 

%then we can associate with $g_p$ an integer, designated by $ind\,g_p$, defined as the winding number of $\Gamma$ around the origin. 

%If $g\in \C(\R \cup\{\infty\})$ then condition \eqref{6.3} means that $g\in \mathcal G C(\R \cup\{\infty\})$ and in that case $ind\,g_p=ind\,g$.

%\begin{defi}
%\label {6.1}
%We say that $g\in PC(\mathbb R_\infty)$ is $p$-nonsingular, or $p$-regular, if and only if \eqref{6.3} holds.
%\end{defi}

%Note that the product of two $p$-regular functions may be $p$-singular, as shown in an example presented below in this section. However, if two functions $g\,,\,h\in PC(\mathbb R_\infty)$ have no common points of discontinuity, then $(gh)_p(x,w)=g_p(x,w)\,h_p(x,w)$ and, if $gh$ is $p$-regular, then $ind\,(gh)_p=ind\,g_p\,ind\,h_p$ (\cite{Du}).
We will use the following result \cite[Theorem 6.2 p.~141]{Cam17}.
\begin{thm}
\label{th-index-LpR}
Let $f \in \C(\R \cup\{\infty\})$. The Toeplitz operator $T_f \co \H^p(\R) \to \H^p(\R)$, $g \mapsto P(fg)$  with symbol $f$ has closed range if and only if \eqref{condition-de-positivite} is satisfied. In this case, the operator $T_f$ is Fredholm and its Fredholm index is $\Index T_f=-\wind f$.
\end{thm}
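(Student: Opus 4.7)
The plan is to reduce the statement to the circle version already established in Proposition~\ref{prop-pairing-T} via the Cayley transform. Define $\phi \co \mathbb{C}_+ \to \mathbb{D}$ by $\phi(z) \ov{\mathrm{def}}{=} \frac{z-\i}{z+\i}$, which restricts on boundary values to an orientation-preserving homeomorphism $\R \cup \{\infty\} \to \T$ sending $\infty$ to $1$. I would introduce the isometric pullback
\[
U \co \L^p(\T) \to \L^p(\R), \quad (Ug)(x) \ov{\mathrm{def}}{=} \left(\frac{2}{\pi(x^2+1)}\right)^{1/p} g(\phi(x)),
\]
and verify by change of variables that $U$ is an isometric isomorphism of $\L^p$-spaces. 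Cauchy's representation \eqref{eq:luxboqaurg} then shows that $U$ restricts to an isometric isomorphism $\H^p(\T) \to \H^p(\R)$, and hence intertwines the Riesz projection on $\T$ with the Riesz projection on $\R$.

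Setting $\tilde f \ov{\mathrm{def}}{=} f \circ \phi^{-1} \in \C(\T)$, the weight factors in $U$ cancel in the conjugation of multiplication operators, yielding $U^{-1} M_f U = M_{\tilde f}$ and thus $U^{-1} T_f U = T_{\tilde f}$. Therefore $T_f$ has closed range (respectively is Fredholm with a given index) if and only if $T_{\tilde f}$ does; condition~\eqref{condition-de-positivite} on $f$ translates into $\tilde f$ being invertible in $\C(\T)$; and the winding numbers coincide, $\wind \tilde f = \wind f$, by the very definition of $\wind$ applied to the closed curves $f(\R \cup \{\infty\})$ and $\tilde f(\T)$. The Fredholm half of the assertion together with the identity $\Index T_{\tilde f} = -\wind \tilde f$ is then precisely the classical Gohberg-Krein index theorem invoked in the proof of Proposition~\ref{prop-pairing-T}.

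The main obstacle is the remaining direction: closed range of $T_{\tilde f}$ should imply that $\tilde f$ is nowhere zero on $\T$. I would argue by contradiction using a Weyl sequence. If $\tilde f(\zeta_0) = 0$, pick $z_n \in \mathbb{D}$ with $z_n \to \zeta_0$ and set
\[
h_n(\zeta) \ov{\mathrm{def}}{=} c_n\,(1 - \bar z_n \zeta)^{-2/p}, \qquad c_n > 0, \quad \norm{h_n}_{\L^p(\T)} = 1,
\]
which lies in $\H^p(\T)$ because $\zeta \mapsto (1 - \bar z_n \zeta)^{-2/p}$ extends holomorphically to a neighbourhood of $\overline{\mathbb{D}}$. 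The scaling $\int_\T |1 - \bar z_n \zeta|^{-2} \d m(\zeta) \approx (1 - |z_n|^2)^{-1}$ furnishes both the normalization constant $c_n$ and the fact that $|h_n|^p \d m$ is a probability measure concentrating at $\zeta_0$; testing against trigonometric polynomials shows moreover that $h_n \rightharpoonup 0$ weakly. By continuity of $\tilde f$ one obtains $\norm{M_{\tilde f} h_n}_{\L^p(\T)} \to |\tilde f(\zeta_0)| = 0$, and hence $\norm{T_{\tilde f} h_n}_{\L^p(\T)} \to 0$ since the Riesz projection is bounded. If $T_{\tilde f}$ had closed range, the induced map $\H^p(\T)/\ker T_{\tilde f} \to \Ran T_{\tilde f}$ would be a bounded isomorphism, forcing vectors $k_n \in \ker T_{\tilde f}$ with $h_n - k_n \to 0$; since $(h_n)$ is weakly null this produces a weakly null unit-norm sequence in $\ker T_{\tilde f}$, contradicting finite-dimensionality of the kernel --- a fact which in this setting follows either from a Coburn-type alternative for Toeplitz operators with continuous symbol or, failing that, from running the same concentration argument on the adjoint $T_{\tilde f}^*$ to preclude closed range from the dual side. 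Executing these dichotomy arguments carefully for every $1 < p < \infty$, rather than just $p = 2$ where the reproducing-kernel Hilbert-space structure trivialises the estimates, is the technical heart of the matter.
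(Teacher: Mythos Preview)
The paper does not prove this theorem at all; it is simply quoted from \cite[Theorem~6.2]{Cam17} and used as a black box in the subsequent index computation. Your reduction to the circle via the Cayley transform is the standard route by which such results are actually established, so there is no ``paper's own proof'' to compare against.

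That said, two technical slips need fixing. First, the map $U$ as you wrote it, with the \emph{real} weight $\big(2/\pi(x^2+1)\big)^{1/p}$, is an $\L^p$-isometry but does not carry $\H^p(\T)$ into $\H^p(\R)$ for $p\neq 2$: you must use the \emph{holomorphic} weight $c_p\,(x+\i)^{-2/p}$, which has the same modulus on $\R$ and whose analytic extension to $\mathbb{C}_+$ is nonvanishing, so that $Ug\in\H^p(\R)$ whenever $g\in\H^p(\T)$. The conjugation $U^{-1}M_fU=M_{\tilde f}$ survives this change unaltered, since the weight factor cancels regardless of its phase. Second, with $\phi(z)=(z-\i)/(z+\i)$ the boundary map $\R\cup\{\infty\}\to\T$ is orientation-\emph{reversing} (e.g.\ $0\mapsto-1$, $1\mapsto-\i$), so either switch to $\phi(z)=(\i-z)/(\i+z)$ or track an extra sign when matching $\wind f$ with $\wind\tilde f$.

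Your Weyl-sequence argument for the direction ``closed range $\Rightarrow$ nonvanishing symbol'' is correct once you invoke the Coburn alternative (for a nonzero continuous symbol at most one of $\ker T_{\tilde f}$, $\ker T_{\tilde f}^*=\ker T_{\overline{\tilde f}}$ is nontrivial), whose F.~and M.~Riesz--based proof goes through verbatim in $\H^p(\T)$ for all $1<p<\infty$; the dichotomy you already sketch then closes the argument in either branch.
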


%\begin{remark} \normalfont
%It is known that the condition \eqref{condition-de-positivite} is equivalent to the existence of a Wiener-Hopf $p$-factorization of $f$. In this case, $f=f_- r^k f_+$ with $k=\wind f$.
%\end{remark}

%
%It is known that the bounded projection $\frac{\Id+\cal{H}}{2}$ has range $\H^{p}(\mathbb{C}_{+})$ and kernel $\H^{p}(\mathbb{C}_{-})$.

%If $g\in PC(\mathbb R_\infty)$, then $g$ admits a WH $p$-factorisation if and only if $g$ is $p$-regular and, in this case, $g=g_-r^kg_+$ with $k=ind\,g_p$.
%\end{cor}

The following observation is new even if $p=2$. Recall that $\K_1(\C_0(\R))=\Z$, see for instance \cite[p.~123]{WeO93}. 

\begin{prop}
Suppose that $1 < p < \infty$. The triple $(\L^p(\R),\pi,\i\cal{H})$ is an odd Banach Fredholm module over the algebra $\C_0(\R)$.  
% associated to the triple $(\C_0(\R),\L^p(\R),\frac{1}{\i}\frac{\d}{\d x})$is $(\L^p(\R),\pi,\i\cal{H})$. 
For any function $f \in \C_0^\infty(\R)$ satisfying \eqref{condition-de-positivite}, we have
 \begin{equation}
\label{}
\big\la [f], (\L^p(\R),\pi,\i\cal{H}) \big\ra_{\K_1(\C_0(\R)),\K^1(\C_0(\R),\scr{L}^p)}
=-\wind f.
\end{equation}
\end{prop}

\begin{proof}
We let $F \ov{\mathrm{def}}{=} \i\cal{H}$. We have $F^2=\Id$. It is known \cite[Theorem 2 p.~17]{Uch78} that a function $f \in \L^\infty(\R)$ belongs to the space $\VMO(\R)$ of functions of vanishing mean oscillation if and only if the commutator $[F,M_f]$ is a compact operator acting on the Banach space $\L^p(\R)$. Since $\C_0(\R)$ is a subspace of $\VMO(\R)$ by \cite[6.8 (a) p.~180]{Ste93}, we infer that $(\L^p(\R),\pi,F)$ is a odd Fredholm module over the algebra $\C_0(\R)$ of continuous functions that vanish at infinity.

Let $f \in \C_0(\R)$  satisfying \eqref{condition-de-positivite}. This means that $f$ is an invertible element of the unitization $\C(X \cup\{\infty\})$ of the algebra $\C_0(X)$. The operator $PM_fP \co P(\L^p(\R)) \to P(\L^p(\R))$ of \eqref{pairing-odd-2} (with $n=1$) identifies to the Toeplitz operator $T_f \co \H^p(\R) \to \H^p(\R)$, $g \mapsto P(fg)$ with symbol $f$, where we identify the map $P$ with its corestriction $P|^{\H^p(\R)}$. So the operator $T_f$ is Fredholm. Using Theorem \ref{th-index-LpR}, we deduce that
\begin{align*}
\MoveEqLeft
\big\la [f], (\L^p(\R),\pi,\i\cal{H}) \big\ra_{\K_1(\C_0(\R)),\K^1(\C_0(\R),\scr{L}^p)}         
\ov{\eqref{pairing-odd-2}}{=} \Index P M_f P
=\Index T_f 
=-\wind f.
\end{align*}
%Note that the projection $P \ov{\mathrm{def}}{=} \frac{\Id+F}{2}$ identifies to the Riesz projection $\L^p(\R) \to \L^p(\R)$, $\e^{\i n\theta} \mapsto \delta_{n \geq 0} \e^{\i n\theta}$. The range of this map is the Hardy space $\H^p(\R)$. Let $f \in \C_0(\T)$  which does not vanish on $\R$. This means that $f$ is an invertible element of the unital algebra $\C_0(\R)$. The operator $PM_fP \co P(\L^p(\R)) \to P(\L^p(\R))$ of \eqref{PuP-98} (with $n=1$) identifies to the Toeplitz operator $T_f \co \H^p(\T) \to \H^p(\T)$, $g \mapsto P(fg)$ with symbol $f \in \L^\infty(\T)$, where we identify the map $P$ with its corestriction $P|^{\H^p(\T)}$. So the operator $T_f$ is Fredholm. Using the classical Gohberg-Krein index theorem \cite[Theorem 2.42 p.~74]{BoS06}, we deduce that
%\begin{align*}
%\MoveEqLeft
%\big\la [f], (\L^p(\R),\i\cal{H}) \big\ra_{\K_1(\C_0(\R)),\K^1(\C_0(\R),\scr{L}^p)}         
%=\Index T_f 
%=-\ind f.
%\end{align*}
\end{proof}

\begin{remark} \normalfont 
Our approach is flexible. We can state variations with a possible weight \cite{HMW73}, matricial versions and $\UMD$-vector-valued variants. 
\end{remark}

\begin{remark} \normalfont
It should be possible to prove the case $p=2$ of Theorem \ref{th-index-LpR} using Connes character formula for locally compact spectral triples. It suffices to find the result in the literature on the noncommutative measure (using the Dixmier trace) and to check the assumptions of \cite[Theorem 1.2.5]{SuZ23}.
\end{remark}

%\begin{quest}
%Utiliser Connes-Chern theorem si \textbf{$p=2$}, le probleme c'est de trouver le resultat sur la trace de Dixmier
%\end{quest}

The following result is an analogue of Proposition \ref{prop-|D|-T}.

\begin{prop}
\label{prop-|D|-commutator-R}
Suppose that $1 < p < \infty$. Consider the unbounded operator $D \ov{\mathrm{def}}{=} \frac{1}{\i}\frac{\d}{\d x}$ on $\L^p(\R)$. We have $|D|=\cal{H}\,\frac{\d}{\d x}$. Moreover, for any $f \in \C^\infty_c(\R)$, the commutator $[|D|,f]$ induces a bounded operator on $\L^p(\R)$ and
\begin{equation}
\label{eq:R-kernel-absD}
\big([|D|,f]g\big)(x)
=\frac{1}{\pi} \pv \int_\R \frac{f(x)-f(y)}{(x-y)^2}\,g(y)\,\mathrm dy,
\qquad g\in\cal{S}(\R).
\end{equation}
Finally, we have
\begin{equation}
\label{eq:absD-commutator-bound-R}
\big\|[|D|,f]\big\|_{\L^p(\R)\to\L^p(\R)}
\lesssim_p \|f'\|_{\L^\infty(\R)}.
\end{equation}
\end{prop}

\begin{proof}
\textit{First proof.} The operator $|D|$ is the unbounded Fourier multiplier defined by the symbol $|\xi|$. The first assertion is a consequence of \eqref{Hilbert-trans-Fourier}. 
%With our convention, $\widehat{Du}(\xi)=\xi\,\hat u(\xi)$, hence for $u\in\cal{S}(\R)$
%\[
%\widehat{\cal{H}\,u'}(\xi)
%=\big(-\mathrm i\,\mathrm{sign}(\xi)\big)\,\big(\mathrm i\xi\big)\hat u(\xi)
%=|\xi|\,\hat u(\xi)
%=\widehat{|D|\,u}(\xi),
%\]
%so $|D|=\cal{H}\,\frac{\mathrm d}{\mathrm dx}$ on $\cal{S}(\R)$.
Fix $f \in \C^\infty_c(\R)$ and $g\in\cal{S}(\R)$. Using $|D|=\cal{H}\,\frac{\mathrm d}{\mathrm dx}$,
\[
[|D|,M_f]g
=\cal{H}\,(fg)'-f\,\cal{H}\,g'
=\cal{H}(f'g)+\big(\cal{H}(fg')-f\,\cal{H}g'\big)
=\cal{H}(f'g)+[\cal{H},M_f]g'.
\]
Write both terms using the principal value representation of $\cal{H}$ and integrate by parts in the second one. More precisely, we have
\begin{align*}
\MoveEqLeft
\big([\cal{H},M_f]g'\big)(x)
\ov{\eqref{Hilbert-transform-Riesz}}{=} \frac{1}{\pi} \pv\int_\R \frac{f(y)g'(y)}{x-y} \d y
-\frac{f(x)}{\pi} \pv\int_\R \frac{g'(y)}{x-y} \d y
\\
&=\frac{1}{\pi} \pv\int_\R \frac{f(y)-f(x)}{x-y}g'(y) \d y 
=-\frac{1}{\pi} \pv \int_\R\Big(\frac{f'(y)}{x-y}+\frac{f(y)-f(x)}{(x-y)^2}\Big)g(y) \d y, 
\end{align*}
where we used that $\frac{\d}{\d y} \big(\frac{1}{x-y}\big)=\frac{1}{(x-y)^2}$. Adding $\cal{H}(f'g) \ov{\eqref{Hilbert-transform-Riesz}}{=} \frac{1}{\pi} \pv\int_\R \frac{f'(y)g(y)}{x-y} \d y$ cancels the $\frac{f'(y)}{x-y}$ part and yields the kernel formula \eqref{eq:R-kernel-absD}. Set
\begin{equation}
\label{def-inter-AZERT}
K_f(x,y)
\ov{\mathrm{def}}{=}\frac{1}{\pi} \frac{f(x)-f(y)}{(x-y)^2}, \quad x \ne y.
\end{equation}
By the mean value theorem, we see that
\[
|K_f(x,y)|
\ov{\eqref{def-inter-AZERT}}{=} \frac{1}{\pi} \frac{|f(x)-f(y)|}{(x-y)^2}
\leq \frac{1}{\pi} \frac{\norm{f'}_{\L^\infty(\R)}|x-y|}{|x-y|^2}
\lesssim \frac{\norm{f'}_{\L^\infty(\R)}}{|x-y|}.
\]
Differentiating with respect to $x$ and using $\partial_x (x-y)^{-2}=-2(x-y)^{-3}$, we get
\[
\partial_x K_f(x,y)
=\frac{1}{\pi}\left(\frac{f'(x)}{(x-y)^2}
-2\,\frac{f(x)-f(y)}{(x-y)^3}\right).
\]
Similarly, since $\partial_y (x-y)^{-2}=2(x-y)^{-3}$, we have
\[
\partial_y K_f(x,y)
=\frac{1}{\pi}\left(-\frac{f'(y)}{(x-y)^2}
+2\,\frac{f(x)-f(y)}{(x-y)^3}\right).
\]
By the mean value theorem, we see that
\[
\left|2\,\frac{f(x)-f(y)}{(x-y)^3}\right|
\leq 2\frac{\norm{f'}_{\L^\infty(\R)}|x-y|}{|x-y|^3}
=\frac{2\norm{f'}_{\L^\infty(\R)}}{|x-y|^2}.
\]
It follows that
\[
|\partial_x K_f(x,y)|
\le \frac{1}{\pi}\left(\frac{\norm{f'}_{\L^\infty(\R)}}{|x-y|^2}
+\frac{2\norm{f'}_{\L^\infty(\R)}}{|x-y|^2}\right)
\lesssim \frac{\norm{f'}_{\L^\infty(\R)}}{|x-y|^2},
\]
and similarly
\[
|\partial_y K_f(x,y)|
\lesssim \frac{\norm{f'}_{\L^\infty(\R)}}{|x-y|^2}.
\]
Adding the two bounds yields
\[
|\partial_x K_f(x,y)|+|\partial_y K_f(x,y)|
\lesssim \frac{\norm{f'}_{\L^\infty(\R)}}{|x-y|^2}.
\]
Hence $K_f$ is a Calder\'on–Zygmund kernel with constants $\lesssim \|f'\|_{\L^\infty}$, so the principal value operator \eqref{eq:R-kernel-absD} extends boundedly to $\L^p(\R)$ with the estimate \eqref{eq:absD-commutator-bound-R} by the classical Calder\'on–Zygmund theory. 

\textit{Second proof of the first assertion.} Observe that $|D|$ is a pseudo-differential operator of order $1$. Moreover, recall that if $f \in \C_c^\infty(\R)$ then $M_f$ is a differential operator of order 0, hence a pseudo-differential operator of order 0 by \cite[Remark 23.26.5]{Dieu88}. Consequently, according to \cite[Theorem H.9 p.~241]{ADV25} the commutator $[|D|,M_f]$ is a pseudo-differential operator of order $0$. Such operator induces a bounded operator on the Banach space $\L^p(\mathbb{R})$ by \cite[Theorem 2.6.22 p.~296]{RuT10} or \cite[Theorem H.10 p.~242]{ADV25}.
\end{proof}

%%%%%%%%%%%%%%%%%%%%%%%%%%%%%%%%%%%%%%%%%%%%%%%%%%%%%%%%%%%%%%%%%%%%%%%%%%
\subsection{The Dirac operator on the space $\L^p(\mathbb{C}) \oplus \L^p(\mathbb{C})$ and the complex Riesz transform}
%[MANUSCRIPTS OF HARDY, LITTLEWOOD, MARCEL RIESZ AND TITCHMARSH]
%
%[Dirac Operators Maria Castillo Perez, Slides]
%
%
%SHARP Lp ESTIMATES OF POWERS OF THE COMPLEX RIESZ
%TRANSFORM
%
%[OLIVER DRAGICEVIC, Analysis of the Ahlfors-Beurling operator]  \cite{CDK23}
%
%\cite{IwG96} \cite{IwM93}

In dimension $d=2$, we have $N=2^{\lfloor \frac{d}{2}\rfloor}=2$ and we can consider the Pauli matrices 
$
\sigma_1
\ov{\mathrm{def}}{=}
\begin{bmatrix}
0 & 1\\
1 & 0
\end{bmatrix}
$ and $
\sigma_2
\ov{\mathrm{def}}{=}
\begin{bmatrix}
0 & -\i\\
\i & 0
\end{bmatrix}$. Then we can write the Euclidean Dirac operator on $\R^2$ as 
\begin{equation}
\label{Dirac-operator-complex}
D
\ov{\eqref{Dirac-operator-Rd}}{=} \frac{1}{\i} \big( \sigma_1 \partial_x + \sigma_2 \partial_y \big)
=\frac{1}{\i}
\begin{bmatrix}
0 & \partial_x - \i \partial_y\\
\partial_x + \i \partial_y & 0
\end{bmatrix}
=\frac{2}{\i}
\begin{bmatrix}
0 &  \partial_{z}\\
\partial_{\ovl{z}} & 0
\end{bmatrix},
\end{equation}
where we use the classical operators
$$
\partial_{\ovl{z}}
\ov{\mathrm{def}}{=}\frac{1}{2}\big(\partial_x+\i \partial_y\big) 
\quad \text{and} \quad
\partial_{z}
\ov{\mathrm{def}}{=}\frac{1}{2}(\partial_x-\i \partial_y).
$$
See \cite[p.~119]{LaM89} for more information on this operator. In the sequel, we consider the domain $\dom D=\W^{1,p}(\mathbb{C})\oplus \W^{1,p}(\mathbb{C})$ of the Banach space $\L^p(\mathbb{C}) \oplus \L^p(\mathbb{C})$.  The square of this operator is given by 
\begin{equation}
\label{square-Dirac-complex}
D^2
\ov{\eqref{square-of-Dirac}}{=}
\begin{bmatrix}
  -\Delta   &  0 \\
  0   &  -\Delta \\
\end{bmatrix}.
\end{equation}
Following \eqref{isometric-homomorphism}, we can introduce the isometric homomorphism $\pi \co \L^\infty(\mathbb{C}) \to \B(\L^p(\mathbb{C}) \oplus \L^p(\mathbb{C}))$, $f \mapsto \begin{bmatrix}
   M_f  & 0  \\
   0  &  M_f \\
\end{bmatrix}$.

\begin{remark} \normalfont
\label{derivation-en Fourier-multipliers}
Recall that $\Delta=4\partial_{\ovl{z}}\partial_{z}$ %Yger p 81. 
The operators $\partial_{\ovl{z}}$, $\partial_{z}$ and $\Delta$ can be seen as unbounded Fourier multipliers on $\L^p(\mathbb{C})$ with symbol $-\i\pi \zeta$, $-\i\pi \ovl{\zeta}$ and $|\zeta|^2$, see \cite[p.~99]{AIM09}.
\end{remark}

Proposition \ref{prop:Lip-alg-Rd-Dirac} gives the following result.

\begin{prop}
\label{prop:Lip-alg-C}
Suppose that $1<p<\infty$. Then
\begin{equation}
\label{eq:Lip-equals-W1infty-C}
\Lip_{D}(\L^\infty(\mathbb{C}))
=\W^{1,\infty}(\mathbb{C}).
\end{equation}
Moreover, for any function $f \in \W^{1,\infty}(\mathbb{C})$ we have
\begin{equation}
\label{eq:commutator-C-Linfty}
[D,f]
=\frac{2}{\i}\begin{bmatrix}
0 & - M_{\partial_{\ovl z} f}\\
 M_{\partial_z f} & 0
\end{bmatrix}
\end{equation}
and
\begin{equation}
\label{eq:norm-comm-C-Linfty}
\bnorm{[D,\pi(f)]}_{\L^p(\mathbb{C})\oplus \L^p(\mathbb{C}) \to \L^p(\mathbb{C})\oplus \L^p(\mathbb{C})} 
= 2 \max\Big\{\norm{\partial_z f}_{\L^\infty(\mathbb{C})},\norm{\partial_{\ovl z} f}_{\L^\infty(\mathbb{C})}\Big\}.
\end{equation}
\end{prop}

\begin{proof}
We have
$$
[D,f]
\ov{\eqref{eq:commutator-Rd-Dirac}}{=} \frac{1}{\i}\big(\gamma_1 \ot M_{\partial_x f}+\gamma_2 \ot M_{\partial_y f}\big)
=\frac{1}{\i}
\begin{bmatrix}
0 & \partial_x - \i \partial_y\\
\partial_x + \i \partial_y & 0
\end{bmatrix}
=\frac{2}{\i}\begin{bmatrix}
0 & - M_{\partial_{\ovl z} f}\\
 M_{\partial_z f} & 0
\end{bmatrix}.
$$
The inequality $\bnorm{[D,f]}
\leq 2 \max\Big\{\norm{\partial_z f}_{\L^\infty(\mathbb{C})},\norm{\partial_{\ovl z} f}_{\L^\infty(\mathbb{C})}\Big\}$ follows from a simple computation. Indeed, for any $u,v \in \L^p(\mathbb{C})$ we have
\begin{align*}
\MoveEqLeft
\norm{\big(-2(\partial_{\ovl z} f) v,2(\partial_z f)u\big)}_{\L^p(\mathbb{C})\oplus \L^p(\mathbb{C})}
=\big(\norm{2(\partial_{\ovl z} f) v}_{\L^p(\mathbb{C})}^p+\norm{2(\partial_z f)u}_{\L^p(\mathbb{C})}^p\big)^{\frac{1}{p}} \\
&\leq 2 \max\Big\{\norm{\partial_z f}_{\L^\infty(\mathbb{C})},\norm{\partial_{\ovl z} f}_{\L^\infty(\mathbb{C})}\Big\} \big(\norm{v}_{\L^p(\mathbb{C})}^p+\norm{u}_{\L^p(\mathbb{C})}^p\big)^{\frac{1}{p}} \\
&= 2 \max\Big\{\norm{\partial_z f}_{\L^\infty(\mathbb{C})},\norm{\partial_{\ovl z} f}_{\L^\infty(\mathbb{C})}\Big\} \norm{(u,v)}_{\L^p(\mathbb{C})\oplus \L^p(\mathbb{C})}.        
\end{align*}
By considering $(u,0)$ or $(0,u)$, we obtain $\bnorm{[D,f]} \geq 2 \max\Big\{\norm{\partial_z f}_{\L^\infty(\mathbb{C})},\norm{\partial_{\ovl z} f}_{\L^\infty(\mathbb{C})}\Big\}$.
\end{proof}

From Theorem \ref{thm-spectral-triple-Rd}, we obtain the following result.

\begin{thm}
\label{thm-spectral-triple-Dirac-C}
Suppose that $1 < p < \infty$. Then $\bigg(\C_0(\mathbb{C}),\L^p(\mathbb{C})\oplus \L^p(\mathbb{C}),\begin{bmatrix}
   0  &  -2\partial_{\ovl{z}} \\
  2\partial_{z}   &  0 \\
\end{bmatrix}\bigg)$ is a Banach locally compact spectral triple, which is $2^+$-summable, in particular $q$-summable for any $q > 2$.
\end{thm}

Note that we have
\begin{equation}
\label{sign-D-Complex}
\sgn D
\ov{\eqref{Dirac-operator-complex} \eqref{square-Dirac-complex}}{=} D|D|^{-1}
=\begin{bmatrix}
   0  &  -2 \partial_{\ovl{z}} \\
 2\partial_{z}   &  0 \\
\end{bmatrix}\begin{bmatrix}
  -\Delta   & 0  \\
   0  &  -\Delta \\
\end{bmatrix}^{-\frac{1}{2}}
=2\begin{bmatrix}
  0   &  \partial_{\ovl{z}}(-\Delta)^{-\frac{1}{2}} \\
  \partial_{z}(-\Delta)^{-\frac{1}{2}}  &  0 \\
\end{bmatrix}.
\end{equation}
Note that the operator $\partial_{z}(-\Delta)^{-\frac{1}{2}}$ is the complex Riesz transform $R_{\mathbb{C}} \co \L^p(\mathbb{C}) \to \L^p(\mathbb{C})$ (or complex Hilbert transform) considered in \cite[pp.~102-103]{AIM09}, \cite{IwG96} and \cite{CDK23}. It is a Fourier multiplier with symbol $\frac{\ovl{\zeta}}{|\zeta|}$. By \cite[Theorem 4.2.1 p.~103]{AIM09}, this operator admits the following integral representation
\begin{equation}
\label{complex-Riesz-transform}
(R_{\mathbb{C}}f)(z)
= \frac{1}{2\pi\i} \pv \int_{\mathbb{C}} \frac{\ovl{z}-\ovl{w}}{|z-w|^3} f(w) \d w
=\lim_{\epsi \to 0^{+}}\frac{1}{2 \pi \i}
\int_{w \in \mathbb{C} \setminus B(z,\epsi)}\frac{\ovl{z}-\ovl{w}}{|z-w|^{3}}
f(w) \d w
, \quad z \in \mathbb{C}.
\end{equation}
It is known \cite[Exercise 1.16 p.~7]{Dra21} that 
\begin{equation}
\label{Riesz-complex-sum}
R_{\mathbb{C}}
=R_2+\i R_1,
\end{equation}
where $R_1,R_2 \co \L^p(\R^2) \to \L^p(\R^2)$ are the classical Riesz transforms on $\R^2$ described in \cite[Section 5.1.4]{Gra14a}, which are Fourier multipliers by $\frac{-\i \xi_1}{|\xi|}$ and $\frac{-\i \xi_2}{|\xi|}$,  according to \cite[Proposition 5.1.14]{Gra14a}.  It is worth noting that by \cite[p.~102]{AIM09}, the (planar) Beurling-Ahlfors operator $\cal{S} \co \L^p(\mathbb{C}) \to \L^p(\mathbb{C})$ defined by
\begin{equation}
\label{}
(\cal{S}f)(z)
\ov{\mathrm{def}}{=} -\frac{1}{\pi} \pv\int_{\mathbb{C} }\frac{f(w)}{(w-z)^2} \d w
=-\lim_{\epsi \to 0^{+}}\frac{1}{\pi}
\int\limits_{w \in \mathbb{C} \setminus B(z,\epsi)}
\frac{f(w)}{(w-z)^2} \d w, \quad z \in \mathbb{C},
\end{equation}
is equal by \cite[p.~102]{AIM09} to the square $R^2$ of the complex Riesz transform $R$. Moreover, according to \cite[Corollary 4.1.1 p.~102]{AIM09}, it is a Fourier multiplier with symbol $\big(\frac{\ovl{\zeta}}{\zeta}\big)^2=\frac{\ovl{\zeta}}{|\zeta|}$ and by \cite[p.~7]{Dra21} it is an isometry on the Hilbert space $\L^2(\mathbb{C})$ and invertible on the Banach space $\L^p(\mathbb{C})$ by \cite[Exercise 1.16 p.~8]{Dra21}. By \cite[Lemma 4.6.12 p.~142]{AIM09}, this operator has the fundamental property that it turns $\bar z$-derivatives into $z$-derivatives, i.e., we have 
\begin{equation}
\label{Ahlfors-intertwine}
\cal{S}(\partial_{\ovl{z}} f) 
=\partial_z f.
\end{equation}
for any Schwartz function $f$. We refer to \cite{AIM09} and \cite{Dra21} for more information on this operator. The following result could be deduced from Theorem \ref{thm-spectral-triple-Dirac-C} and a locally compact analogue of Proposition \ref{prop-triple-to-Fredholm}, a statement that we have not yet developed in detail.

\begin{prop}
The quadruple $\bigg(\L^p(\mathbb{C}) \oplus \L^p(\mathbb{C}),\pi, \sgn D,\begin{bmatrix}
  -\Id   & 0  \\
    0 &  \Id \\
\end{bmatrix}\bigg)$ is an even Banach Fredholm module over the algebra $\C_0(\mathbb{C})$. 
\end{prop}

\begin{proof}
For any function $f \in \C_0(\mathbb{C})$, note that 
$
\left[\sgn D,f\right]
\ov{\eqref{sign-D-Complex}}{=}         
2\begin{bmatrix}
  0   &  \big[\partial_{\ovl{z}}(-\Delta)^{-\frac{1}{2}},M_f\big] \\
  \left[R_{\mathbb{C}},M_f\right]   &  0 \\
\end{bmatrix}$. Moreover, observe that for any function $f \in \C_0(\mathbb{C})$ we have $[R_{\mathbb{C}},M_f] \ov{\eqref{Riesz-complex-sum}}{=} [R_2,M_f]+\i [R_1,M_f]$. Recall that by \cite[p.~128]{Hao20} $R_1$ and $R_2$ are Calder\'on-Zygmund singular integral operators. Consequently, from \cite[Theorem 2 p.~92]{ClC13}, we deduce that each commutator $[R_j,M_f]$ is compact if $f \in \VMO(\R^n)$. We conclude that each commutator $\left[R_{\mathbb{C}},f\right]$ is compact, as a sum of compact operators. Note that by Remark \ref{derivation-en Fourier-multipliers}, the operator $\partial_{\ovl z}(-\Delta)^{-\frac12}$ is a Fourier multiplier with symbol $
\frac{\i \xi_1 - \xi_2}{|\xi|}$, up to a constant. We deduce that $\partial_{\ovl z}(-\Delta)^{-\frac12}= R_1 + \i R_2$. Consequently, we can prove similarly that the commutator $\big[\partial_{\ovl{z}}(-\Delta)^{-\frac{1}{2}},M_f\big]$ is compact on $\L^p(\mathbb{C})$ for any function $f \in \C_0(\mathbb{C})$. The remaining assertions are straightforward to verify.
\end{proof}

Observe that the function $p_{\Bott} \co \mathbb{C} \to \M_2(\mathbb{C})$, $z \mapsto \dsp \frac{1}{|z|^2 + 1}
\begin{bmatrix}
|z|^2 & z \\
\ovl{z} & 1
\end{bmatrix}$ satisfies $p_{\Bott}^2=p_{\Bott}$ and
$\lim_{z \to \infty} p_{\Bott}(z) = 
\begin{bmatrix}
1 & 0 \\
0 & 0
\end{bmatrix}$. That entails that $p_{\Bott}-\begin{bmatrix}
1 & 0 \\
0 & 0
\end{bmatrix}$ belongs to the algebra $\C_0(\mathbb{C},\M_2)=\M_2(\C_0(\mathbb{C}))$. It is well known that the class
\[
\beta_{\mathbb{C}} 
\ov{\mathrm{def}}{=} [p_{\Bott}] - [1]  
\]
of $\K_0(\C_0(\mathbb{C}))$ generates the infinite cyclic group $\K_0(\C_0(\mathbb{C}))  \cong \Z$, as discussed in \cite[p.~303]{Eme24}, \cite[p.~104, p.~145, p.~156, p.~177]{WeO93} and \cite[p.~122]{GVF01}.
%voir https://mathoverflow.net/questions/302216/generator-of-k-0c-0-mathbbc  ;  [Lectures on Noncommutative Geometry p223, Masoud Khalkhali] \cite[Exercise 9.C p.~172]{WeO93}
This class is called the Bott generator for $\mathbb{C}$. The pairing is described in \cite[pp.~86-86]{CGRS14} in the case $p=2$. A similar property holds for the case $p \not=2$.
%\begin{equation}
%\label{}
%\big\la [f], (\L^p(\mathbb{C}),\pi,\i\cal{H}) \big\ra_{\K_0(\C_0(\mathbb{C})),\K^0(\C_0(\mathbb{C}),\scr{L}^p)}
%=1.
%\end{equation}

Then in this context, we also have the following result.

\begin{prop}
The quadruple $\bigg(\L^p(\mathbb{C}) \oplus \L^p(\mathbb{C}),\pi,\begin{bmatrix}
   0  &  \cal{S}^{-1} \\
   \cal{S}  &  0 \\
\end{bmatrix},\begin{bmatrix}
  -\Id   & 0  \\
    0 &  \Id \\
\end{bmatrix}\bigg)$ is an even Banach Fredholm module over the algebra $\C_0(\mathbb{C})$. 
\end{prop}

\begin{proof}
Indeed, we have $
\begin{bmatrix}
   0  &  \cal{S}^{-1} \\
   \cal{S}  &  0 \\
\end{bmatrix}^2
=\begin{bmatrix}
  \cal{S}^{-1} \cal{S}  & 0  \\
   0  &  \cal{S} \cal{S}^{-1} \\
\end{bmatrix}^2
=\begin{bmatrix}
   \Id_{\L^p(\mathbb{C})}  &  0 \\
   0  &  \Id_{\L^p(\mathbb{C})} \\
\end{bmatrix}$.  
Moreover, for any $f \in \C_0(\mathbb{C})$ we have
$$
\bigg[\begin{bmatrix}
   0  &  \cal{S}^{-1} \\
   \cal{S}  &  0 \\
\end{bmatrix},f\bigg]
=\begin{bmatrix}
   0  &  \cal{S}^{-1} \\
   \cal{S}  &  0 \\
\end{bmatrix}\begin{bmatrix}
   M_f  & 0  \\
   0  &  M_f \\
\end{bmatrix}-
\begin{bmatrix}
   M_f  & 0  \\
   0  &  M_f \\
\end{bmatrix}\begin{bmatrix}
   0  &  \cal{S}^{-1} \\
   \cal{S}  &  0 \\
\end{bmatrix}
=\begin{bmatrix}
   0  &  [\cal{S}^{-1},M_f] \\
   [\cal{S},M_f]  &  0 \\
\end{bmatrix}. 
$$
According to \cite[Theorem 4.6.14 p.~145]{AIM09}, for any function $f$ in $\VMO(\mathbb{C})$, the commutator $[\cal{S},M_f]$ is compact on $\L^p(\mathbb{C})$. Hence the commutator $[\cal{S}^{-1}, M_f]= -\cal{S}^{-1} [\cal{S}, M_f] \cal{S}^{-1}$ is also compact, as the product of bounded and compact operators. It follows that $\bigg[\begin{bmatrix}
   0  &  \cal{S}^{-1} \\
   \cal{S}  &  0 \\
\end{bmatrix},f\bigg]$ 
is also compact for any function $f \in \VMO(\mathbb{C})$. Finally, we have $
F\gamma 
=\begin{bmatrix}
   0  &  \cal{S}^{-1} \\
   \cal{S}  &  0 \\
\end{bmatrix}
\begin{bmatrix}
  -\Id   & 0  \\
    0 &  \Id \\
\end{bmatrix}
=\begin{bmatrix}
   0  &  \cal{S}^{-1} \\
   -\cal{S}  & 0  \\
\end{bmatrix}
=-\begin{bmatrix}
  -\Id   & 0  \\
    0 &  \Id \\
\end{bmatrix}\begin{bmatrix}
   0  &  \cal{S}^{-1} \\
   \cal{S}  &  0 \\
\end{bmatrix}
=-\gamma F$ and the equality $ 
[\pi(f),\gamma]=\pi(f)\gamma-\gamma\pi(f) =\begin{bmatrix}
   M_f  & 0  \\
   0  &  M_f \\
\end{bmatrix}\begin{bmatrix}
  -\Id   & 0  \\
    0 &  \Id \\
\end{bmatrix}-\begin{bmatrix}
  -\Id   & 0  \\
    0 &  \Id \\
\end{bmatrix}\begin{bmatrix}
   M_f  & 0  \\
   0  &  M_f \\
\end{bmatrix} =0$.
\end{proof}

\begin{remark} \normalfont
By the way, it is worth noting that by \cite[Theorem 4.6.13 p.~143]{AIM09} the commutators $[\cal{S},M_f]$ are bounded on the Banach space $\L^p(\mathbb{C})$ for any function $f$ belonging to the space $\BMO(\mathbb{C})$.
\end{remark}
We will also use the notations $\partial \ov{\mathrm{def}}{=}2\partial_{z}$ and $\partial^*\ov{\mathrm{def}}{=}-2\partial_{\ovl{z}}$. 

\begin{prop}%[$R$-gradient bounds]
\label{Prop-R-gradient-bounds-Fourier} 
Suppose that $1 < p < \infty$. The family 
\begin{equation}
\label{R-gradient-bounds-C}
\Big\{t\partial(\Id-t^2\Delta)^{-1}: t>0 \Big\}
\end{equation} 
of operators of $\B(\L^p(\mathbb{C}))$ is $R$-bounded.
\end{prop}

\begin{proof}
Note that the Riesz transform $\partial (-\Delta)^{-\frac{1}{2}} \co \L^p(\mathbb{C}) \to \L^p(\mathbb{C})$ is a well-defined bounded operator by \cite[Corollary 4.5.1 p.~127]{AIM09}. Suppose that $t > 0$. A standard functional calculus argument gives
\begin{align}
\label{Divers-987}
\MoveEqLeft
t\partial(\Id-t^2\Delta)^{-1}            
=\partial (-\Delta)^{-\frac{1}{2}}\Big((-t^2\Delta)^{\frac{1}{2}}(\Id-t^2\Delta)^{-1} \Big).
\end{align} 
By Example \ref{Laplacian-funct}, the Laplacian $-\Delta$ admits a bounded $\H^\infty(\Sigma_\theta)$ functional calculus for any angle $\theta >0$. Moreover, the Banach space $\L^p(\mathbb{C})$ is $\UMD$ by \cite[Proposition 4.2.15 p.~291]{HvNVW16}, hence has the triangular contraction property $(\Delta)$ by \cite[Theorem 7.5.9 p.~137]{HvNVW18}. We deduce by \cite[Theorem 10.3.4 (2) p.~401]{HvNVW18} that the operator $-\Delta$ is $R$-sectorial. By \cite[Example 10.3.5 p.~402]{HvNVW18} applied with $\alpha=\frac{1}{2}$ and $\beta=1$, we infer that the set
$$
\big\{(-t^2\Delta)^{\frac{1}{2}}(\Id-t^2\Delta)^{-1}: t>0\big\}
$$
of operators of $\B(\L^p(\mathbb{C}))$ is $R$-bounded. Recalling that a singleton is $R$-bounded by \cite[Example 8.1.7 p.~170]{HvNVW18}, we obtain by composition \cite[Proposition 8.1.19 (3) p.~178]{HvNVW18} that the set
$$
\Big\{ \partial (-\Delta)^{-\frac{1}{2}}\Big((-t^2\Delta)^{\frac{1}{2}}(\Id-t^2\Delta)^{-1} \Big) : t>0 \Big\}
$$
of operators of $\B(\L^p(\mathbb{C}))$ is $R$-bounded. Hence with \eqref{Divers-987} we conclude that the subset \eqref{R-gradient-bounds-C} is $R$-bounded.
\end{proof}

\begin{thm}
Suppose that $1 < p < \infty$. The unbounded operator $D$ admits a bounded $\H^\infty(\Sigma_\theta^\bi)$ functional calculus on the Banach space $\L^p(\mathbb{C}) \oplus \L^p(\mathbb{C})$ for some angle $\theta \in (0,\frac{\pi}{2})$.
\end{thm}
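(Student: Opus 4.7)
\textbf{The plan} is to reduce the bisectorial $\H^\infty$ calculus of $D$ to the sectorial $\H^\infty$ calculus of $D^2$ together with the $R$-boundedness of an associated resolvent family for $D$, which is a standard Kalton--Weis / McIntosh-type criterion available on UMD Banach spaces.

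A direct computation gives $D^2 = \mathrm{diag}(-\Delta,-\Delta)$. Since $-\Delta$ admits a bounded $\H^\infty(\Sigma_\omega)$ functional calculus on $\L^p(\mathbb{C})$ for every $\omega > 0$ (as already invoked in the proof of Proposition \ref{Prop-R-gradient-bounds-Fourier} via \cite[Theorem 10.2.25]{HvNVW18}), the block-diagonal operator $D^2$ admits a bounded $\H^\infty(\Sigma_{2\theta})$ functional calculus on $\L^p(\mathbb{C})\oplus\L^p(\mathbb{C})$ for any $\theta \in (0,\pi/2)$. Next, I check that $D$ is $R$-bisectorial of angle $0$. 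The algebraic identity $(\lambda I - D)(\lambda I + D) = \lambda^2 I - D^2$ combined with $\sigma(D^2) \subset [0,\infty)$ forces $\sigma(D) \subset \R$, and yields the resolvent formula
\[
\lambda(\lambda I - D)^{-1} = \lambda^2(\lambda^2 I - D^2)^{-1} + \lambda D(\lambda^2 I - D^2)^{-1}.
\]
On $\lambda = \i t$, $t \in \R^*$, the first summand is $R$-bounded by the (R-)sectoriality of $D^2$. Expanding the second block-wise, one obtains an off-diagonal matrix operator whose entries are, up to a sign and the rescaling $s = 1/t$, precisely the families $\{s\partial(I - s^2\Delta)^{-1}:s>0\}$ and $\{s\partial^*(I - s^2\Delta)^{-1}:s>0\}$. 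The first is $R$-bounded by Proposition \ref{Prop-R-gradient-bounds-Fourier}; the second is $R$-bounded by the verbatim parallel argument, since $\partial_{\bar z}(-\Delta)^{-\frac{1}{2}}$ is a bounded complex Riesz transform on $\L^p(\mathbb{C})$ for the same reason as $\partial_z(-\Delta)^{-\frac{1}{2}}$. The characterization \eqref{Def-R-bisectorial} therefore delivers $R$-bisectoriality of $D$.

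Because $\L^p(\mathbb{C})\oplus\L^p(\mathbb{C})$ is UMD (hence has the triangular contraction property used in \cite[Theorem 10.3.4]{HvNVW18}), one may now invoke the standard bisectorial transfer theorem (see \cite{Ege15} and the bisectorial chapter of \cite{HvNVW18}) that upgrades a bounded sectorial $\H^\infty$ calculus for $D^2$ and the $R$-bisectoriality of $D$ into a bounded bisectorial $\H^\infty(\Sigma_\theta^{\bi})$ functional calculus for $D$ itself, valid for any $\theta \in (0,\pi/2)$. Concretely, one decomposes a test function $f \in \H^\infty_0(\Sigma_\theta^{\bi})$ into its even and odd parts $f = f_e + f_o$: the operator $f_e(D) \ov{\mathrm{def}}{=} g(D^2)$, with $g(w) \ov{\mathrm{def}}{=} f_e(\sqrt{w})$, is bounded by the first step, while the odd part $f_o(D) = D\, h(D^2)$ is handled via a Dunford integral that can be rewritten against the family $\{tD(I + t^2D^2)^{-1}\}_{t>0}$, whose $R$-boundedness coincides with the one established in the second step.

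\textbf{The main obstacle} is the last step: producing the odd part of the functional calculus in a Banach (as opposed to Hilbertian) setting. Pointwise norm estimates of the resolvent family are not enough there, and one genuinely needs the full $R$-boundedness provided by Proposition \ref{Prop-R-gradient-bounds-Fourier}, together with the UMD geometry of the ambient space, to drive the abstract square-function / Kalton--Weis multiplier machinery that closes the argument.
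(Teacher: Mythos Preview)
Your proof is correct and follows essentially the same strategy as the paper: both establish $R$-bisectoriality of $D$ by analysing the resolvent in terms of the $R$-sectoriality of $-\Delta$ and the $R$-gradient bounds of Proposition~\ref{Prop-R-gradient-bounds-Fourier}. The only cosmetic difference is that the paper writes out the $2\times2$ resolvent matrix \eqref{Resolvent-Fourier} explicitly and verifies the inverse by hand, whereas you reach the same entries via the identity $\lambda(\lambda I-D)^{-1}=\lambda^2(\lambda^2 I-D^2)^{-1}+\lambda D(\lambda^2 I-D^2)^{-1}$; after the rescaling $s=1/t$ these are literally the same operators. If anything, your write-up is more complete: the paper's proof stops at ``$D$ is $R$-bisectorial'' and leaves implicit the final transfer step ($R$-bisectoriality of $D$ together with the bounded sectorial $\H^\infty$ calculus of $D^2$ on a UMD space $\Rightarrow$ bounded bisectorial $\H^\infty$ calculus of $D$), which you spell out explicitly via the even/odd decomposition.
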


\begin{proof}
We will start by showing that the set $\{\i t : t \in \R, t \not=0\}$ is contained in the resolvent set of $D$. We will do this by showing that $\Id-\i tD$ has a two-sided bounded inverse $(\Id-\i tD)^{-1}$ given by
\begin{equation}
\label{Resolvent-Fourier}
\begin{bmatrix} 
(\Id_{}-t^2\Delta)^{-1} & \i t(\Id_{}-t^2\Delta)^{-1}\partial^* \\ 
\i t\partial(\Id_{}-t^2\Delta)^{-1} &  (\Id-t^2\Delta)^{-1} 
\end{bmatrix}
\end{equation}
acting on $\L^p(\mathbb{C}) \oplus_p \L^p(\mathbb{C})$. By Proposition \ref{Prop-R-gradient-bounds-Fourier} and since the operator $-\Delta$ satisfy the property \eqref{Def-R-bisectorial} of $R$-sectoriality, the four entries are bounded. It only remains to check that this matrix defines a two-sided inverse of $\Id-\i t D$. We have the following equalities of operators acting on $\dom D$.
\begin{align*}
\MoveEqLeft
\begin{bmatrix} 
(\Id_{}-t^2\Delta)^{-1} & \i t(\Id_{\L^p}-t^2\Delta)^{-1}\partial^* \\ 
\i t(\Id_{}-t^2\Delta)^{-1}\partial &  (\Id-t^2\Delta)^{-1} 
\end{bmatrix}(\Id-\i tD)   \\       
&=
\begin{bmatrix} 
(\Id-t^2\Delta)^{-1} & \i t(\Id_{}-t^2\Delta)^{-1}\partial^* \\ 
\i t(\Id_{}-t^2\Delta)^{-1}\partial &  (\Id-t^2\Delta)^{-1} 
\end{bmatrix}
\begin{bmatrix} 
\Id_{} & -\i t \partial^* \\ 
-\i t\partial &\Id_{}
\end{bmatrix}\\
&=\left[\begin{matrix} 
(\Id-t^2\Delta)^{-1}+t^2(\Id-t^2\Delta)^{-1}\partial^*\partial&-\i t(\Id-t^2\Delta)^{-1}\partial^*+\i t(\Id-t^2\Delta)^{-1} \partial^*\\ 
\i t(\Id-t^2\Delta)^{-1}\partial-\i t (\Id-t^2\Delta)^{-1}\partial& 
t^2(\Id-t^2\Delta)^{-1}\partial\partial^*+(\Id-t^2\Delta)^{-1}
\end{matrix}\right]\\
&=
\left[\begin{matrix} 
(\Id-t^2\Delta)^{-1}+t^2(\Id-t^2\Delta)^{-1}\Delta  & 0\\ 
\i t(\Id-t^2\Delta)^{-1}\partial-\i t\big(\Id-t^2\Delta\big)^{-1}\partial &
(\Id-t^2\Delta)^{-1}(t^2\partial\partial^*+\Id_{})
\end{matrix}\right] 
= 
\begin{bmatrix} 
\Id_{} & 0 \\ 
0 & \Id_{}
\end{bmatrix}
\end{align*} 
and similarly
\begin{align*}
\MoveEqLeft
(\Id-\i tD) 
\begin{bmatrix} 
(\Id_{}-t^2\Delta)^{-1} & \i t(\Id_{}-t^2\Delta)^{-1}\partial^* \\ 
\i t(\Id_{}-t^2\Delta)^{-1}\partial &  (\Id_{}-t^2\Delta)^{-1} 
\end{bmatrix}  \\          
&=\begin{bmatrix} 
\Id_{} & -\i t\partial^* \\ 
-\i t\partial &\Id_{}
\end{bmatrix}
\begin{bmatrix} 
(\Id_{}-t^2\Delta)^{-1} & \i t(\Id_{}-t^2\Delta)^{-1}\partial^* \\ 
\i t(\Id_{}-t^2\Delta)^{-1}\partial &  (\Id_{}-t^2\Delta)^{-1} 
\end{bmatrix} 
\\
&=\left[\begin{matrix} 
(\Id_{}-t^2\Delta)^{-1}+ t^2\partial^*(\Id_{}-t^2\Delta)^{-1}\partial &  \i t(\Id_{}-t^2\Delta)^{-1} \partial^*-\i t\partial^*\big( (\Id_{}-t^2\Delta)^{-1}\big)\\ 
-\i t\partial(\Id_{}-t^2\Delta)^{-1}+\i t\partial(\Id_{}-t^2\Delta)^{-1} &
t^2\partial(\Id_{}-t^2\Delta)^{-1}\partial^*+ (\Id_{}-t^2\Delta)^{-1}
\end{matrix}\right]\\
&= \begin{bmatrix} 
\Id_{} & 0 \\ 
0 & \Id_{}
\end{bmatrix}.
\end{align*} 
It remains to show the last condition of \eqref{Def-R-bisectorial}, i.e., that the set $\{\i t(\i t-D)^{-1} : t \not=0\}=\{(\Id -\i t D)^{-1} : t \not=0\}$ is $R$-bounded. For this, observe that the diagonal entries of \eqref{Resolvent-Fourier} are $R$-bounded by the $R$-sectoriality of $\Delta$. The $R$-boundedness of the other entries follows from the $R$-gradient bounds of Proposition \ref{Prop-R-gradient-bounds-Fourier}. Since a set of operator matrices is $R$-bounded precisely when each entry is $R$-bounded, we conclude that \eqref{Def-R-bisectorial} is satisfied, i.e.~, the operator $D$ is $R$-bisectorial.
\end{proof}
%%%%%%%%%%%%%%%%%%%%%%%%%%%%%%%%%%%%%%%%%%%%%%%%%%%%%%%%%%%%%%%%%%%%%%%%%%
\subsection{The Dirac operator on the space $\L^p(\T^2_\theta) \oplus \L^p(\T^2_\theta)$}
\label{sec-Dirac-noncommutative-tori}

\paragraph{Quantum tori} We will use standard notations and we refer to the papers \cite{CXY13}, \cite{FXZ23}, \cite{EcI18}, \cite{MSX19} and \cite{XXY18} for more information. Let $d \geq 2$. To each $d \times d$ real skew-symmetric matrix $\theta$, one may associate a 2-cocycle $\sigma_\theta \co \Z^d \times \Z^d \to \T$ of the group $\Z^d$ defined by $\sigma_\theta (m,n) \ov{\mathrm{def}}{=} \e^{\frac{\i}{2} \langle m, \theta n\rangle}$ where $m,n \in \Z^d$. We have $\sigma(m,-m) = \sigma(-m,m)$ for any $m \in \Z^d$. 

We define the $d$-dimensional noncommutative torus $\L^\infty(\T_{\theta}^d)$ as the twisted group von Neumann algebra $\VN(\Z^d,\sigma_\theta)$. One can provide a concrete realization in the following manner. 
%We define the $d$-dimensional noncommutative torus $\L^\infty(\T_{\theta}^d)$ as the von Neumann subalgebra of $\B(\ell^2_{\Z^d})$
If $(\epsi_n)_{n \in \Z^d}$ is the canonical basis of the Hilbert space $\ell^2_{\Z^d}$ and if $m \in \Z^d$, we can consider the bounded operator $U_m \co \ell^2_{\Z^d} \to \ell^2_{\Z^d}$ defined by 
\begin{equation}
\label{def-lambdas}
U_m(\epsi_n)
\ov{\mathrm{def}}{=} \sigma_\theta(m,n) \epsi_{m+n}, \quad n \in \Z.	
\end{equation}
The $d$-dimensional noncommutative torus $\L^\infty(\T_{\theta}^d)$ is the von Neumann subalgebra of $\B(\ell^2_{\Z^d})$ generated by the $*$-algebra $
\mathcal{P}_{\theta}
\ov{\mathrm{def}}{=} \mathrm{span} \big\{ U^m \ : \ m \in \Z^d \big\}$. Recall that for any $m,n \in \Z^d$ we have
\begin{equation}
\label{product-adjoint-twisted}
U_m U_n 
= \sigma_\theta(m,n) U_{m+n}
\quad \text{and} \quad 
\big(U_m \big)^* 
= \ovl{\sigma_\theta(m,-m)} U_{-m}.	
\end{equation}
The von Neumann algebra $\L^\infty(\T_{\theta}^d)$ is finite with normalized trace given by $\tau(x) \ov{\mathrm{def}}{=}\langle\epsi_{0},x(\epsi_{0})\rangle_{\ell^2_{\Z^d}}$ where $x \in \L^\infty(\T_{\theta}^d)$. In particular, we have $\tau(U_m) = \delta_{m=0}$ for any $m \in \Z^d$.

Let $\Delta$ be the unbounded operator acting on $\L^\infty(\T_{\theta}^d)$ defined on the weak* dense subspace $\mathcal{P}_{\theta}$ by $\Delta(U_m) \ov{\mathrm{def}}{=} 4\pi^2 |m|^2U_m$ where $|m| \ov{\mathrm{def}}{=} m_1^2+\cdots+m_d^2$. Then this operator is weak* closable and its weak* closure is the opposite of a weak* generator of a symmetric Markovian semigroup $(T_t)_{t \geq 0}$ of operators acting on $\L^\infty(\T_{\theta}^d)$, called the noncommutative heat semigroup on the noncommutative torus. 

%In this setting, the gradient operator $\partial$ is a closed operator from the dense subspace $\dom \Delta_2^{\frac{1}{2}}$ of the space $\L^2(\T_{\theta}^d)$ into the Hilbert space $\ell^2_d(\L^2(\T_{\theta}^d))$ satisfying
%$$
%\partial(U_m)
%=(2\pi\i\, m_1 U_m,\ldots,2\pi\i\, m_d U_m), \quad m \in \Z^d.
%$$

For any $j = 1,\ldots,d$, we may define the partial differentiation operators $\partial_j$ by
\begin{equation*}
\partial_j (U^n) 
\ov{\mathrm{def}}{=} 2\pi \i n_j U^n,\quad n = (n_1,\ldots,n_d) \in \Z^d.
\end{equation*}
Every partial derivation $\partial_j$ can be viewed a densely defined closed unbounded operator acting on the Hilbert space $\L^2(\T^d_\theta)$.%, whose adjoint is equal to $-\partial_j$.  

%Let $\Delta=\partial_1^2+\cdots+\partial_d^2$ be the Laplacian. 
%Then $\Delta = - (\partial_1^*\partial _1 + \cdots +\partial_d^* \partial_d)$, so $-\Delta$ is a positive operator on $\L^2(\T^d_\theta)$ with spectrum equal to $\{4\pi^2 |n|^2  : n \in \Z^d\}$. As in the Euclidean case, we let $D_j  = -\i \partial_j$, which is then self-adjoint. Given $n =(n_1,\ldots,n_d)\in \mathbb{N}_0^d$ ($\mathbb{N}_0$ denoting the set of nonnegative integers), the associated partial derivation $D^n$ is defined to be $D_1^{n_1}\cdots D_d^{n_d}$. The order of $D^n$ is $|n|_1=n_1+\cdots+ n_d$. 

%In this setting, the gradient operator $\partial$ is a closed operator from the dense subspace $\dom \Delta_2^{\frac{1}{2}}$ of the space $\L^2(\T_{\theta}^d)$ into the Hilbert space $\ell^2_d(\L^2(\T_{\theta}^d))$ satisfying
%\begin{equation}
%\label{}
%\partial(U_m)
%\ov{\mathrm{def}}{=} (2\pi\i\, m_1 U_m,\ldots,2\pi\i\, m_d U_m), \quad m \in \Z^d.
%\end{equation}

\paragraph{The Dirac operator} The Dirac operator $D$ is defined in terms of $\gamma$ matrices in direct analogy to commutative tori. Define $N \ov{\mathrm{def}}{=} 2^{\lfloor \frac{d}{2}\rfloor}$ and select $N \times N$ complex selfadjoint matrices $\gamma_1,\ldots,\gamma_d$ satisfying $\gamma_j\gamma_k +\gamma_k\gamma_j = 2\delta_{j,k}1$. Following \cite[(B.6) p.~147]{EcI18} and \cite[Definition 12.14 p.~545]{GVF01}, we define the unbounded densely defined linear operator
\begin{equation}
D 
\ov{\mathrm{def}}{=} -\i\sum_{j=1}^d \gamma_j \ot \partial_j
\end{equation}
with domain $\ell^p_N(\W^{1,p}(\T_\theta^d))$ acting on the complex Hilbert space $\ell^p_N(\L^p(\T^d_\theta))$. The operator $D$ is selfadjoint if $p=2$. By \cite[p.~545]{GVF01}, we have
\begin{equation}
\label{square-of-Dirac-torus-NC}
D^2
=-\Id_{\ell^2_{N}} \ot \Delta.
\end{equation}

\begin{example} \normalfont
If $d=2$ then $N=2$ and by \cite[p.~545]{GVF01} we have
\begin{equation}
\label{Dirac-dim-2}
D
=-\i\begin{bmatrix}
   0  &  \partial_1-\i\partial_2 \\
  \partial_1+\i\partial_2  &  0 \\
\end{bmatrix}.
\end{equation}
\end{example}

For any $f \in \L^\infty(\T^d_\theta)$, we denote $L_f \co \L^p(\T^d_\theta) \to \L^p(\T^d_\theta)$, $g \mapsto fg$ the operator of left multiplication on the Banach space $\L^p(\T^d_\theta)$. Consequently, we can consider the homomorphism $\pi \co \L^\infty(\T^d_\theta) \to \B(\L^p(\T^d_\theta) \oplus \cdots \oplus\L^p(\T^2_\theta))$, $f \mapsto L_f \oplus \cdots \oplus L_f=\Id_{\ell^p_N} \ot L_f$. We define
\[
\W^{1,\infty}(\T_\theta^d)
\ov{\mathrm{def}}{=} \{a \in \cal{S}'(\T_\theta^d) : \partial_j a \in \L^\infty(\T_\theta^d)\text{ for all }j\}.
\]
We need the canonical action $\alpha$ of $\R^d$ on $\L^p(\T_\theta^d)$ satisfying in particular
\[
\alpha_t(U^k)
=\e^{2\pi \i t\cdot k} U^k,\qquad t \in \R^d, k \in \Z^d
\]
See e.g.~\cite[Section 2.2]{HLP19} and \cite[Remark 2.2]{XXY18}. 
Now, we describe the Lipschitz algebra on the noncommutative tori.

\begin{prop}
\label{prop:Lip-alg-NCT}
Let $d \geq 2$. Suppose that $1 < p < \infty$. We have 
%On $X_p\defeq L^p(\T_\theta^d)\otimes \C^N$ consider the representation
%\[
%\pi(a)\defeq L_a\otimes I_N,\qquad a\in L^\infty(\T_\theta^d),
%\]
%and the Dirac operator
%\[
%D\defeq -\mathrm i\sum_{j=1}^d \gamma_j\otimes \partial_j,
%\qquad
%\dom D\defeq \Big(\bigcap_{j=1}^d \dom\partial_j\Big)\otimes \C^N,
%\]
%where $\gamma_1,\dots,\gamma_d$ are fixed selfadjoint Clifford matrices with
%$\gamma_j\gamma_k+\gamma_k\gamma_j=2\delta_{jk} I_N$ and $\partial_j$ are the canonical derivations on $L^p(\T_\theta^d)$.
%Define
%
%Then
\[
\Lip_D(\L^\infty(\T_\theta^d))
=\W^{1,\infty}(\T_\theta^d).
\]
Moreover, for $f \in \W^{1,\infty}(\T_\theta^d)$, we have
\begin{equation}
\label{eq:NC-comm-formula}
[D,f]
=-\i\sum_{j=1}^d \gamma_j \ot L_{\partial_j f}.
\end{equation}
Finally, there exist positive constants $c_1,c_2$ depending only on the chosen $\{\gamma_j\}$ such that
\begin{equation}
\label{eq:NC-comm-norm}
c_1 \max_{1 \leq j\leq d} \norm{\partial_j f}_{\L^\infty(\T_\theta^d)}
\leq
\norm{[D,f]}_{\ell^p_N(\L^{p}(\T_\theta^d)) \to \ell^p_N(\L^{p}(\T_\theta^d))}
\leq 
c_2\sum_{j=1}^d \norm{\partial_j f}_{\L^\infty(\T_\theta^d)}.
\end{equation}
\end{prop}

\begin{proof}
For any $f \in \W^{1,\infty}(\T_\theta^d)$ or any $f \in \Lip_D(\L^\infty(\T_\theta^d))$, we have 
\begin{align}
\MoveEqLeft
\label{123AZERY-bis}
[ D,f ]
=D\pi(f)-\pi(f)D
=\bigg(-\i\sum_{j=1}^d \gamma_j \ot \partial_j\bigg)(\Id_{\ell^p_N} \ot L_f)
-(\Id_{\ell^p_N} \ot L_f)\Big(-\i\sum_{j=1}^d \gamma_j \ot \partial_j\Big) \\
&=-\i\bigg(\sum_{j=1}^d \gamma_j \ot \partial_j L_f
-\sum_{j=1}^d \gamma_j \ot L_f\partial_j\bigg) 
=-\i\bigg(\sum_{j=1}^d \gamma_j \ot \big( \partial_j L_f -L_f\partial_j\big) \bigg). \nonumber
\end{align}
If $f \in \W^{1,\infty}(\T_\theta^d)$, we deduce that
\begin{equation}
\label{123AZERY}
[ D,f ]
=-\i\sum_{j=1}^d \gamma_j \ot L_{\partial_j f}.
\end{equation}
For any integer $1 \leq j \leq d$, we have $\partial_j f \in \L^\infty(\T_\theta^d)$. Hence each element $\partial_j f$ induces a bounded left multiplication operator $L_{\partial_j f}$ on the space $\L^p(\T_\theta^d)$. Consequently, the right-hand side of \eqref{123AZERY} extends to a bounded operator on the Banach space $\ell^p_N(\L^{p}(\T_\theta^d))$ and \eqref{eq:NC-comm-formula} holds by density. This gives $\W^{1,\infty}(\T_\theta^d) \subset \Lip_D(\L^\infty(\T_\theta^d))$. Moreover, we have
$$
\norm{[D,f]}
= \norm{-\i\sum_{j=1}^d \gamma_j \ot L_{\partial_j f}}
\leq \sum_{j=1}^d \norm{ \gamma_j} \norm{L_{\partial_j f}}_{\L^p(\T_\theta^d) \to \L^p(\T_\theta^d)}
= \sum_{j=1}^d \norm{ \gamma_j} \norm{\partial_j f}_{_{\L^\infty(\T_\theta^d)}}.
$$
Let $f \in \Lip_D(\L^\infty(\T_\theta^d))$. Since $\{\gamma_j\}$ are linearly independent in $\M_N$, choose bounded linear functionals $\omega_j$ on $\M_N$ with $\omega_j(\gamma_k)=\delta_{jk}$. Applying $\omega_j \ot \Id$ to \eqref{123AZERY-bis}, we obtain
$$
(\omega_j \ot \Id)\big([D,f]\big)
=-\i [\partial_j ,L_f].
$$
Hence each $[\partial_j,L_f]$ is bounded on the space $\L^p(\T_\theta^d)$. Note that for each $j$, the one-parameter group $t\mapsto \alpha_{t e_j}$ has generator $\partial_j$ and
\begin{equation}
\label{eq:conjugation-NCT}
\alpha_t L_f \alpha_{-t}
= L_{\alpha_t(f)}, \qquad t \in \R^d, f \in \L^\infty(\T_\theta^d).
\end{equation}
Indeed, for any $x \in \cal{P}_\theta$ we have
$$
\alpha_t L_f \alpha_{-t}(x)
=\alpha_t(f\alpha_{-t}(x))
=\alpha_t(f)x=L_{\alpha_t(f)}(x).
$$
Fix $j \in \{1,\ldots,d\}$ and for any $t \ne 0$, consider the difference quotient
\[
\Delta_t^{(j)} f
\ov{\mathrm{def}}{=} \frac{\alpha_{t e_j}(f)-f}{t} \in \L^\infty(\T_\theta^d).
\]
Now, fix $j \in \{1,\ldots,d\}$ and use  Proposition \ref{prop-group-isometries} with $R=L_f$ and the one-parameter group $(\alpha_{t e_j})_{t \in \R}$. For any small $t \ne 0$, we obtain
\begin{equation}
\label{inter-TGF4-prime}
L_{\Delta_t^{(j)} f}
\ov{\mathrm{def}}{=}
\frac{L_{\alpha_{t e_j}(f)}-L_f}{t}
=\frac{\alpha_{t e_j} L_f \alpha_{-t e_j}-L_f}{t}
\ov{\eqref{inter-formula-complex}}{=} \int_0^1 \alpha_{s t e_j} [\partial_j,L_f]\alpha_{-s t e_j} \d s.
\end{equation}
Consequently, we have
\begin{equation}
\label{eq:diffq-bound-NCT}
\norm{\Delta_t^{(j)} f}_{\L^\infty(\T_\theta^d)}
=\norm{L_{\Delta_t^{(j)} f}}_{\L^p(\T_\theta^d) \to \L^p(\T_\theta^d)}
\leq \int_0^1 \norm{\alpha_{s t e_j} [\partial_j,L_f]\alpha_{-s t e_j}} \d s
=\norm{[\partial_j,L_f]}.
\end{equation}
By Banach-Alaoglu theorem, there exist $t_\alpha\to 0$ and $g_j \in \L^\infty(\T_\theta^d)$ such that $\Delta_{t_\alpha}^{(j)} f \to g_j$ in the weak* topology of $\L^\infty(\T_\theta^d)$.
%, with $\norm{g_j}_{\L^\infty(\T_\theta^d)} \leq \norm{[\partial_j,L_f]}$. 
For any $h \in \mathcal P_\theta$, we have
\[
\tau\big((\Delta_t^{(j)} f) h\big)
=\tau\Big(f \frac{\alpha_{-t e_j}(h)-h}{t}\Big)
\xrightarrow[t \to 0]{}  -\tau\big(f \partial_jh\big),
\]
since $\frac{\alpha_{-t e_j}(h)-h}{t}\to -\partial_jh$ in $\L^1(\T_\theta^d)$. Passing to the weak* limit yields
\[
\tau(g_j h)
=-\tau(f\partial_jh), \qquad h \in \cal{P}_\theta,
\]
that is, $g_j$ is the distributional derivative $\partial_j f$. Thus $\partial_j f \in \L^\infty(\T_\theta^d)$.  
Hence $f \in \W^{1,\infty}(\T_\theta^d)$, proving $\Lip_D(\L^\infty(\T_\theta^d)) \subset \W^{1,\infty}(\T_\theta^d)$.

Now, we prove the lower bound in \eqref{eq:NC-comm-norm}. For the lower bound,We use again the linear functionals $\omega_j$ on $\M_N$ with $\omega_j(\gamma_k)=\delta_{jk}$. Applying $\omega_j \ot \Id$ to \eqref{eq:NC-comm-formula} gives
\begin{equation}
\label{inter-9ouy}
(\omega_j \ot \Id)\big([D,f]\big)
\ov{\eqref{eq:NC-comm-formula}}{=} -\i( \omega_j \ot \Id )\bigg(\sum_{j=1}^d \gamma_j \ot L_{\partial_j f}\bigg)
=-\i L_{\partial_j f}.
\end{equation}
We conclude that
\[
\norm{\partial_j f}_{\L^\infty(\T_\theta^d)}
=\norm{L_{\partial_j f}}_{\L^p(\T_\theta^d) \to \L^p(\T_\theta^d)}
\ov{\eqref{inter-9ouy}}{\leq} \norm{\omega_j}\norm{[D,f]}.
\]
Taking the maximum over $j$ yields the left inequality in \eqref{eq:NC-comm-norm} with $c_1=(\max_j \|\omega_j\|)^{-1}$.
\end{proof}

\paragraph{Boundedness of the functional calculus}

We will use the following result which says that some Riesz transforms are bounded. It will be proved in a companion paper \cite{Arh24d}.

\begin{thm}
\label{th-Riesz-NC-tori}
Suppose that $1 < p < \infty$. Consider an integer $d \geq 2$ and a real skew-symmetric matrix $\theta \in \M_d(\R)$. For any $1 \leq j \leq d$, the linear map $\partial_j\Delta_p^{-\frac{1}{2}}$ is bounded from the subspace $\Ran \Delta_p^{\frac{1}{2}}$ into the Banach space $\L^p(\T_\theta^d)$. 
\end{thm}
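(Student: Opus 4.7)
The plan is to identify $\partial_j \Delta_p^{-\frac{1}{2}}$ as a Fourier multiplier on the noncommutative torus whose symbol coincides with that of the classical Riesz transform on $\T^d$, and then to deduce $\L^p$-boundedness via a transference argument, with a Littlewood-Paley approach as an alternative.

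\textbf{Step 1 (identification as a Fourier multiplier).} A direct calculation on the weak$^*$-dense polynomial subalgebra $\cal{P}_\theta$ using the definitions of $\partial_j$ and $\Delta$ shows that for every $n \in \Z^d \setminus \{0\}$,
$$
\partial_j \Delta^{-\frac{1}{2}}(U_n)
= \frac{\i\, n_j}{|n|}\, U_n.
$$
Hence on $\Ran \Delta_p^{\frac{1}{2}}$ the operator $\partial_j \Delta_p^{-\frac{1}{2}}$ agrees with the Fourier multiplier $M_{\phi_j}$ whose symbol is $\phi_j(n) \ov{\mathrm{def}}{=} \i\, n_j/|n|$, with $\phi_j(0) \ov{\mathrm{def}}{=} 0$. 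It therefore suffices to show that $M_{\phi_j}$ extends to a bounded operator on $\L^p(\T^d_\theta)$.

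\textbf{Step 2 (classical boundedness).} The symbol $\phi_j$ is the restriction to $\Z^d$ of a smooth homogeneous-of-degree-zero function on $\R^d \setminus \{0\}$; in the commutative case $\theta = 0$ it is the symbol of the classical $j$-th Riesz transform on $\T^d$. By Hörmander-Mihlin theory (or equivalently the standard Calderón-Zygmund theory of the Riesz transforms), the associated multiplier is completely bounded on $\L^p(\T^d)$ and on its Schatten-valued analogues for every $1 < p < \infty$.

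\textbf{Step 3 (transference to the twisted setting).} I would then invoke a transference principle for Fourier multipliers on cocycle-twisted discrete abelian group algebras, in the spirit of Neuwirth-Ricard for discrete groups and its noncommutative refinements due to Junge, Mei, Parcet, González-Pérez and their collaborators. The outcome is that a completely bounded Fourier multiplier on $\L^p(\T^d)$ with symbol $\phi$ gives rise, via the same symbol, to a completely bounded Fourier multiplier on $\L^p(\T^d_\theta)$ with controlled norm, for every skew-symmetric $\theta$. Applied to $\phi_j$, this yields the stated boundedness of $\partial_j \Delta_p^{-\frac{1}{2}}$.

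\textbf{Main obstacle and alternative.} The delicate point will be packaging Step 3 so that it applies verbatim to the Riesz multipliers: one needs complete boundedness of the classical multiplier on the correct operator-valued $\L^p$-spaces, and the transference must match the cocycle $\sigma_\theta$ of the noncommutative torus. This is routine for Calderón-Zygmund operators with smooth kernels, but the bookkeeping is not entirely trivial. A self-contained alternative, avoiding any transference black box, is an intrinsic noncommutative Littlewood-Paley argument based on the heat semigroup representation
$$
\partial_j \Delta_p^{-\frac{1}{2}} f
= \frac{1}{\sqrt{\pi}} \int_0^\infty \partial_j T_t f\, \frac{\d t}{\sqrt{t}},
$$
combined with noncommutative $g$-function estimates (Junge-Le Merdy-Xu, Junge-Mei-Parcet) and noncommutative Khintchine inequalities with separate row and column square functions. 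This route is technically heavier but keeps everything inside the heat-semigroup formalism on $\L^p(\T^d_\theta)$ already introduced in the paper.
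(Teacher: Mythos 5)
The paper does not actually prove this theorem: the sentence immediately preceding the statement defers the proof to the companion paper \cite{Arh24d}, so there is no proof of record here to compare against. On its own merits your proposal is sound. Step 1 correctly computes the symbol $\phi_j(n) = \i\, n_j/|n|$ on the core $\cal{P}_\theta$ from the definitions $\partial_j(U^n)=2\pi\i\, n_j U^n$ and $\Delta^{\frac12}(U_n)=2\pi|n|U_n$, which is the classical Riesz multiplier. For Step 3, the transference principle you invoke is already available in exactly the generality needed: Chen, Xu and Yin \cite{CXY13} prove that the cb $\L^p$ Fourier multiplier norm on $\T^d_\theta$ is independent of $\theta$, so a symbol gives a bounded multiplier on $\L^p(\T^d_\theta)$ as soon as it is a \emph{completely} bounded multiplier on the ordinary torus $\T^d$. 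Riesz transforms are Calder\'on-Zygmund operators, hence bounded on $\L^p(\T^d;X)$ for every $\UMD$ Banach space $X$, in particular for $X = S^p$, which supplies the required complete boundedness. So the point you flag as the main obstacle --- matching the cb amplification to the cocycle $\sigma_\theta$ --- is not actually an obstacle in this abelian-with-cocycle setting; the bookkeeping is done once and for all in \cite{CXY13}, a reference the paper itself already uses for quantum tori background. Your heat-semigroup and square-function alternative (Step 3') is also viable and closer in spirit to the $\H^\infty$-calculus machinery elsewhere in the paper, but it is strictly harder than necessary given the available transference result.
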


We introduce the map $
\partial
\ov{\mathrm{def}}{=} \partial_1+\i\partial_2$. First, we prove a technical result, similar to Proposition \ref{Prop-R-gradient-bounds-Fourier}. 

\begin{prop}%[$R$-gradient bounds]
\label{Prop-R-gradient-bounds-NC-torus} 
Suppose that $1 < p < \infty$. The family 
\begin{equation}
\label{R-gradient-bounds-groups}
\Big\{t\partial(\Id-t^2\Delta)^{-1}: t > 0 \Big\}
\end{equation} 
of operators of $\B(\L^p(\T^2_\theta))$ is $R$-bounded.
\end{prop}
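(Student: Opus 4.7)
The plan is to mirror the strategy used in the commutative case (Proposition \ref{Prop-R-gradient-bounds-Fourier}), substituting the classical ingredients by their noncommutative counterparts. The starting point is the functional calculus identity
\begin{equation*}
t\partial(\Id-t^2\Delta)^{-1}
=\big(\partial (-\Delta)^{-\frac{1}{2}}\big)\big((-t^2\Delta)^{\frac{1}{2}}(\Id-t^2\Delta)^{-1}\big), \quad t>0,
\end{equation*}
which, after observing that $\partial=\partial_1+\i\partial_2$, reduces the problem to two facts: the boundedness of the Riesz transforms $\partial_j(-\Delta)^{-\frac{1}{2}}$ and the $R$-boundedness of the family $\{(-t^2\Delta)^{\frac{1}{2}}(\Id-t^2\Delta)^{-1} : t>0\}$.

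First I would invoke Theorem \ref{th-Riesz-NC-tori} with $d=2$ to get that each Riesz transform $\partial_j(-\Delta)^{-\frac{1}{2}}$, and therefore $\partial(-\Delta)^{-\frac{1}{2}}$, extends to a bounded operator on $\L^p(\T^2_\theta)$. Second, I would establish that the Laplacian $-\Delta$ on $\L^p(\T^2_\theta)$ admits a bounded $\H^\infty(\Sigma_\theta)$ functional calculus for any angle $\theta>0$; this is available through the transference/heat semigroup machinery for the noncommutative heat semigroup $(T_t)_{t\geq 0}$ introduced above (in the spirit of \cite{XXY18}, \cite{CXY13}, or via the fact that $(T_t)$ is a symmetric Markovian semigroup). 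Combined with the fact that the noncommutative $\L^p$-space $\L^p(\T^2_\theta)$ is $\mathrm{UMD}$ and has the triangular contraction property $(\Delta)$ (being a noncommutative $\L^p$-space for $1<p<\infty$), Theorem 10.3.4 of \cite{HvNVW18} upgrades this bounded $\H^\infty$ calculus to $R$-sectoriality of $-\Delta$.

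Once $-\Delta$ is known to be $R$-sectorial, I would apply the standard $R$-boundedness result \cite[Example 10.3.5 p.~402]{HvNVW18} with exponents $\alpha=\tfrac12$ and $\beta=1$ to conclude that
\begin{equation*}
\big\{(-t^2\Delta)^{\frac{1}{2}}(\Id-t^2\Delta)^{-1} : t>0\big\}
\end{equation*}
is $R$-bounded in $\B(\L^p(\T^2_\theta))$. Finally, a singleton is trivially $R$-bounded \cite[Example 8.1.7]{HvNVW18}, and composition of $R$-bounded sets is $R$-bounded by \cite[Proposition 8.1.19 (3)]{HvNVW18}, so pre-composing the $R$-bounded family above with the single operator $\partial(-\Delta)^{-\frac{1}{2}}$ yields the claimed $R$-boundedness of $\{t\partial(\Id-t^2\Delta)^{-1}:t>0\}$.

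The proof is essentially a translation of the commutative argument, and all steps are routine once the ingredients are in place. The main obstacle is the verification that $-\Delta$ on $\L^p(\T^2_\theta)$ admits a bounded $\H^\infty$ calculus of arbitrary small angle, since this is the nontrivial input that feeds the $R$-sectoriality through the $\mathrm{UMD}$ / property $(\Delta)$ argument. Everything else (boundedness of Riesz transforms, $R$-boundedness of functional calculus families, composition stability of $R$-bounds) either has been already invoked in this section or follows from cited results.
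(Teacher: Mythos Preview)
Your proposal is correct and follows essentially the same approach as the paper: the same functional calculus factorization, Theorem \ref{th-Riesz-NC-tori} for the boundedness of $\partial(-\Delta)^{-\frac{1}{2}}$, transference for the bounded $\H^\infty$ calculus of $-\Delta$, the $\UMD$/property $(\Delta)$ upgrade to $R$-sectoriality via \cite[Theorem 10.3.4 (2)]{HvNVW18}, and then \cite[Example 10.3.5]{HvNVW18} combined with the singleton and composition stability of $R$-bounds. The only cosmetic difference is that the paper cites \cite[Corollary 7.7 p.~1494]{PiX03} explicitly for the $\UMD$ property of $\L^p(\T^2_\theta)$.
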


\begin{proof}
Since we have the equality $\partial (-\Delta)^{-\frac{1}{2}}=\partial_1(-\Delta)^{-\frac{1}{2}}+\i\partial_2(-\Delta)^{-\frac{1}{2}}$, the Riesz transform $\partial (-\Delta)^{-\frac{1}{2}} \co \L^p(\T^2_\theta) \to \L^p(\T^2_\theta)$ is a well-defined bounded operator by Theorem \ref{th-Riesz-NC-tori}. Suppose that $t > 0$. A standard functional calculus argument gives
\begin{align}
\label{Divers-988}
\MoveEqLeft
t\partial(\Id-t^2\Delta)^{-1}            
=\partial (-\Delta)^{-\frac{1}{2}}\Big((-t^2\Delta)^{\frac{1}{2}}(\Id-t^2\Delta)^{-1} \Big).
\end{align} 
By transference, note that the Laplacian $-\Delta$ has a bounded $\H^\infty(\Sigma_\theta)$ functional calculus for any angle $\theta > 0$. Moreover, the noncommutative $\L^p$-space $\L^p(\T^2_\theta)$ is a $\UMD$ Banach space by \cite[Corollary 7.7 p.~1494]{PiX03}, hence has the triangular contraction property $(\Delta)$ by \cite[Theorem 7.5.9 p.~137]{HvNVW18}. We deduce by \cite[Theorem 10.3.4 (2) p.~401]{HvNVW18} that the operator $-\Delta$ is $R$-sectorial. By \cite[Example 10.3.5 p.~402]{HvNVW18} applied with $\alpha=\frac{1}{2}$ and $\beta=1$, we infer that the set
$$
\big\{(-t^2\Delta)^{\frac{1}{2}}(\Id-t^2\Delta)^{-1}: t>0\big\}
$$
of operators of $\B(\L^p(\T^2_\theta))$ is $R$-bounded. Recalling that a singleton is $R$-bounded by \cite[Example 8.1.7 p.~170]{HvNVW18}, we obtain by composition \cite[Proposition 8.1.19 (3) p.~178]{HvNVW18} that the set
$$
\Big\{ \partial (-\Delta)^{-\frac{1}{2}}\Big((-t^2\Delta)^{\frac{1}{2}}(\Id-t^2\Delta)^{-1} \Big) : t>0 \Big\}
$$
of operators of $\B(\L^p(\T^2_\theta))$ is $R$-bounded. Hence with \eqref{Divers-988} we conclude that the subset \eqref{R-gradient-bounds-groups} is $R$-bounded.
\end{proof}

\begin{prop}
Suppose that $1 < p < \infty$. The operator $D$ is bisectorial and admits a bounded $\H^\infty(\Sigma_\theta^\bi\big)$ functional calculus on the Banach space $\L^p(\T^2_\theta) \oplus_p \L^p(\T^2_\theta)$ for some angle $0 < \theta <\frac{\pi}{2}$.
\end{prop}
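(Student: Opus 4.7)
The plan is to mimic, almost line by line, the strategy used for the Dirac operator on $\L^p(\mathbb{C}) \oplus \L^p(\mathbb{C})$ in the preceding subsection. In dimension two, introduce $\partial \ov{\mathrm{def}}{=} \partial_1+\i\partial_2$ (as the paper already does) and $\partial^{\dagger} \ov{\mathrm{def}}{=} \partial_1-\i\partial_2$, so that \eqref{Dirac-dim-2} reads $D = -\i \begin{bmatrix} 0 & \partial^{\dagger} \\ \partial & 0 \end{bmatrix}$. The crucial algebraic fact is $\partial^{\dagger}\partial = \partial\partial^{\dagger} = \partial_1^2+\partial_2^2$, so that $D^2$ acts diagonally as a scalar multiple of $\Delta$, matching \eqref{square-of-Dirac}. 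Exploiting the identity $(\Id - \i tD)(\Id + \i tD) = \Id + t^2 D^2$, I would propose the explicit resolvent
\begin{equation*}
(\Id - \i t D)^{-1}
= \begin{bmatrix} (\Id + t^2\Delta)^{-1} & t\partial^{\dagger}(\Id + t^2\Delta)^{-1} \\ t\partial(\Id + t^2\Delta)^{-1} & (\Id + t^2\Delta)^{-1} \end{bmatrix}
\end{equation*}
acting on $\L^p(\T^2_\theta) \oplus_p \L^p(\T^2_\theta)$, and verify by direct block multiplication on $\dom D$ that this matrix is indeed a two-sided inverse of $\Id-\i tD$, exactly as is done for the complex plane.

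The next step is to show that every entry of this matrix is $R$-bounded as $t$ ranges over $\R^*$. For the diagonal entries $(\Id + t^2\Delta)^{-1}$, I would invoke $R$-sectoriality of $\Delta$: the Banach space $\L^p(\T^2_\theta)$ is $\UMD$ by \cite[Corollary 7.7]{PiX03}, hence has the triangular contraction property $(\Delta)$ by \cite[Theorem 7.5.9]{HvNVW18}, so the bounded $\H^\infty$ calculus of $\Delta$ (valid by noncommutative transference from $\L^p(\T^2)$) upgrades to $R$-sectoriality via \cite[Theorem 10.3.4 (2)]{HvNVW18}. For the off-diagonal entries, Proposition \ref{Prop-R-gradient-bounds-NC-torus} provides the $R$-bound for $\{t\partial(\Id + t^2\Delta)^{-1}\}$, and the companion estimate for $\partial^{\dagger}$ follows from exactly the same argument (it only uses that $\partial^{\dagger}\Delta^{-1/2}$ is a bounded linear combination of the Riesz transforms from Theorem \ref{th-Riesz-NC-tori}). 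Since $R$-boundedness of a family of $2\times 2$ operator matrices reduces to $R$-boundedness of each entry, this verifies the criterion \eqref{Def-R-bisectorial}, so that $D$ is $R$-bisectorial of angle $0$.

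The final step is to upgrade $R$-bisectoriality to a bounded $\H^\infty(\Sigma_\theta^\bi)$ functional calculus on the $\UMD$ Banach space $\L^p(\T^2_\theta) \oplus_p \L^p(\T^2_\theta)$ for some $0 < \theta < \frac{\pi}{2}$, by the same transfer principle implicitly used in the complex plane proof: once $D^2 = \Delta \otimes \Id$ carries a bounded $\H^\infty$ calculus and $D$ is $R$-bisectorial on a $\UMD$ space with property $(\Delta)$, one recovers the bounded $\H^\infty$ calculus of $D$ on a bisector via quadratic estimates of Kalton--Weis / McIntosh type, in the spirit of \cite{AKM06}. The main obstacle is entirely encapsulated in Proposition \ref{Prop-R-gradient-bounds-NC-torus}, whose genuine analytic content is the noncommutative Riesz transform boundedness of Theorem \ref{th-Riesz-NC-tori}; everything else is a structural repetition of the Euclidean argument.
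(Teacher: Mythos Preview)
Your proposal is correct and follows essentially the same route as the paper: propose an explicit block-matrix formula for $(\Id-\i tD)^{-1}$, verify it is a two-sided inverse on $\dom D$, and establish $R$-bisectoriality entry-by-entry via the $R$-sectoriality of $\Delta$ together with the $R$-gradient bounds of Proposition~\ref{Prop-R-gradient-bounds-NC-torus}. You are in fact slightly more careful than the paper on two points: your sign $(\Id+t^2\Delta)^{-1}$ is the one consistent with the paper's convention that $\Delta \geq 0$ on $\L^p(\T^2_\theta)$, and you make explicit the final passage from $R$-bisectoriality to the bounded $\H^\infty(\Sigma_\theta^\bi)$ calculus, which the paper's proof leaves tacit.
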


\begin{proof}
We will start by showing that the set $\{\i t : t \in \R, t \not=0\}$ is contained in the resolvent set of $D$. We will do this by showing that $\Id-\i tD$ has a two-sided bounded inverse $(\Id-\i tD)^{-1}$ given by
\begin{equation}
\label{Resolvent-NC-torus}
\begin{bmatrix} 
(\Id_{}-t^2\Delta)^{-1} & \i t(\Id_{}-t^2\Delta)^{-1}\partial^* \\ 
\i t\partial(\Id_{}-t^2\Delta)^{-1} &  (\Id-t^2\Delta)^{-1} 
\end{bmatrix}
\end{equation}
acting on the Banach space $\L^p(\T^2_\theta) \oplus_p \L^p(\T^2_\theta)$. By Proposition \ref{Prop-R-gradient-bounds-Fourier} and since the operator $-\Delta$ satisfy the property \eqref{Def-R-bisectorial} of $R$-sectoriality, the four entries are bounded. It only remains to check that this matrix defines a two-sided inverse of $\Id-\i t D$. We have the following equalities of operators acting on $\dom D$.
\begin{align*}
\MoveEqLeft
\begin{bmatrix} 
(\Id_{}-t^2\Delta)^{-1} & \i t(\Id_{\L^p}-t^2\Delta)^{-1}\partial^* \\ 
\i t(\Id_{}-t^2\Delta)^{-1}\partial &  (\Id-t^2\Delta)^{-1} 
\end{bmatrix}(\Id-\i tD)   \\       
&\ov{\eqref{Dirac-dim-2}}{=}
\begin{bmatrix} 
(\Id-t^2\Delta)^{-1} & \i t(\Id_{}-t^2\Delta)^{-1}\partial^* \\ 
\i t(\Id_{}-t^2\Delta)^{-1}\partial &  (\Id-t^2\Delta)^{-1} 
\end{bmatrix}
\begin{bmatrix} 
\Id_{} & -\i t \partial^* \\ 
-\i t\partial &\Id_{}
\end{bmatrix}\\
&=\left[\begin{matrix} 
(\Id-t^2\Delta)^{-1}+t^2(\Id-t^2\Delta)^{-1}\partial^*\partial&-\i t(\Id-t^2\Delta)^{-1}\partial^*+\i t(\Id-t^2\Delta)^{-1} \partial^*\\ 
\i t(\Id-t^2\Delta)^{-1}\partial-\i t (\Id-t^2\Delta)^{-1}\partial& 
t^2(\Id-t^2\Delta)^{-1}\partial\partial^*+(\Id-t^2\Delta)^{-1}
\end{matrix}\right]\\
&=
\left[\begin{matrix} 
(\Id-t^2\Delta)^{-1}+t^2(\Id-t^2\Delta)^{-1}\Delta  & 0\\ 
\i t(\Id-t^2\Delta)^{-1}\partial-\i t\big(\Id-t^2\Delta\big)^{-1}\partial &
(\Id-t^2\Delta)^{-1}(t^2\partial\partial^*+\Id_{})
\end{matrix}\right] 
= 
\begin{bmatrix} 
\Id_{} & 0 \\ 
0 & \Id_{}
\end{bmatrix}
\end{align*} 
and similarly
\begin{align*}
\MoveEqLeft
(\Id-\i tD) 
\begin{bmatrix} 
(\Id_{}-t^2\Delta)^{-1} & \i t(\Id_{}-t^2\Delta)^{-1}\partial^* \\ 
\i t(\Id_{}-t^2\Delta)^{-1}\partial &  (\Id_{}-t^2\Delta)^{-1} 
\end{bmatrix}          
%&=\begin{bmatrix} 
%\Id_{} & -\i t\partial^* \\ 
%-\i t\partial &\Id_{}
%\end{bmatrix}
%\begin{bmatrix} 
%(\Id_{}-t^2\Delta)^{-1} & \i t(\Id_{}-t^2\Delta)^{-1}\partial^* \\ 
%\i t(\Id_{}-t^2\Delta)^{-1}\partial &  (\Id_{}-t^2\Delta)^{-1} 
%\end{bmatrix} 
%\\
%&=\left[\begin{matrix} 
%(\Id_{}-t^2\Delta)^{-1}+ t^2\partial^*(\Id_{}-t^2\Delta)^{-1}\partial &  \i t(\Id_{}-t^2\Delta)^{-1} \partial^*-\i t\partial^*\big( (\Id_{}-t^2\Delta)^{-1}\big)\\ 
%-\i t\partial(\Id_{}-t^2\Delta)^{-1}+\i t\partial(\Id_{}-t^2\Delta)^{-1} &
%t^2\partial(\Id_{}-t^2\Delta)^{-1}\partial^*+ (\Id_{}-t^2\Delta)^{-1}
%\end{matrix}\right]\\
= \begin{bmatrix} 
\Id_{} & 0 \\ 
0 & \Id_{}
\end{bmatrix}.
\end{align*} 
It remains to show that the set $\{\i t(\i t-D)^{-1} : t \not=0\}=\{(\Id -\i t D)^{-1} : t \not=0\}$ is $R$-bounded. For this, observe that the diagonal entries of \eqref{Resolvent-Fourier} are $R$-bounded by the $R$-sectoriality of $\Delta$. The $R$-boundedness of the other entries follows from the $R$-gradient bounds of Proposition \ref{Prop-R-gradient-bounds-Fourier}. Since a set of operator matrices is $R$-bounded precisely when each entry is $R$-bounded, we conclude that \eqref{Def-R-bisectorial} is satisfied, i.e.~that the operator $D$ is $R$-bisectorial.
\end{proof}

\begin{thm}
\label{th-NC-torus-1}
Suppose that $1 < p < \infty$. Consider an integer $d \geq 2$ and a real skew-symmetric matrix $\theta \in \M_d(\R)$. The triple $(\cal{A}_\theta,\L^p(\T^2_\theta) \oplus \L^p(\T^2_\theta), D)$ is a compact Banach spectral triple.
\end{thm}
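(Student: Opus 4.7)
The plan is to verify each of the four conditions of Definition \ref{Def-Banach-spectral-triple}. The Banach space $X = \L^p(\T^2_\theta) \oplus_p \L^p(\T^2_\theta)$ is reflexive for $1 < p < \infty$, and the bounded $\H^\infty(\Sigma^\bi_\theta)$ functional calculus of $D$ has just been established in the preceding proposition, which takes care of condition (1) and also ensures the bisectoriality that underlies the definition.

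For condition (2) together with the finite-dimensionality of $\ker D$, I would exploit the identity \eqref{square-of-Dirac}, namely $D^2 = -\Id_{\ell^2_N} \ot \Delta$, to reduce everything to the analysis of $\Delta$ on $\L^p(\T^2_\theta)$. Since $\Delta(U_n) = 4\pi^2 |n|^2 U_n$, the kernel of $\Delta$ is the one-dimensional subspace $\Cbb U_0$, so by \eqref{Bisec-Ran-Ker} we have $\ker D = \ker D^2 = \ell^2_N \ot \ker \Delta$, which has dimension $N = 2$. Compactness of $|D|^{-1} = (-\Delta)^{-\frac{1}{2}}$ on $\ovl{\Ran D}$ would then follow from an $\L^p$-Sobolev embedding on the noncommutative torus, analogous to the argument used for $\T^d$ earlier in the paper: factor $(\Id + \Delta_p)^{-\frac{\alpha}{2}}$ through a suitable noncommutative Sobolev space $\L^q_\alpha(\T^2_\theta)$ and bound the approximation numbers of the canonical embedding into $\L^p(\T^2_\theta)$, deducing summability of $(s_n(|D|^{-1}))$ and hence compactness. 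This is the step I expect to be the main technical obstacle, since it requires the precise noncommutative Sobolev inequality together with sharp approximation-number estimates in the noncommutative $\L^p$ setting, results that are more delicate than their commutative counterparts.

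For condition (3), I would exhibit the polynomial subalgebra $\mathcal{P}_\theta = \vect\{U_m : m \in \Z^d\}$ inside $\Lip_D(\cal{A}_\theta)$. Since each $\partial_j$ is a derivation on $\mathcal{P}_\theta$ with $\partial_j(U_m) = 2\pi \i m_j U_m$, a direct computation on the dense subspace $\mathcal{P}_\theta$ shows that
$$
[\partial_j, M_{U_m}] = M_{\partial_j(U_m)} = 2\pi \i m_j M_{U_m}
$$
extends to a bounded operator on $\L^p(\T^d_\theta)$. Assembling these identities into the matrix commutator $[D, \pi(U_m)] = -\i \sum_{j=1}^d \gamma_j \ot [\partial_j, M_{U_m}]$ produces a bounded operator on $X$, so every $U_m$ lies in $\Lip_D(\cal{A}_\theta)$, and density of $\mathcal{P}_\theta$ in $\cal{A}_\theta$ yields the required density of the Lipschitz algebra.

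In summary, the bulk of the work concentrates in condition (2), specifically in the compactness of $(-\Delta)^{-\frac{1}{2}}$ on the noncommutative $\L^p$-space via Sobolev embedding; the remaining verifications reduce to formal consequences of structural facts already collected, in particular Theorem \ref{th-Riesz-NC-tori} and the preceding $\H^\infty$-calculus proposition.
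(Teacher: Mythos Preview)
Your proposal is correct and follows the same overall outline as the paper, but the two diverge substantially in the treatment of condition~(2), the compactness of $D^{-1}$ on $\ovl{\Ran D}$. You propose to establish this via a noncommutative Sobolev embedding together with approximation-number estimates, mirroring the argument used earlier for the commutative torus $\T$, and you rightly flag this as the main technical hurdle. The paper instead takes a much shorter route: it notes that $\Delta$ has compact resolvent on $\L^2(\T^2_\theta)$ (this is immediate from the explicit eigenvalue sequence $4\pi^2|n|^2$), and then invokes a standard interpolation argument, adapting \cite[Section~1.6]{Dav89}, to transfer compactness of the resolvent to $\L^p(\T^2_\theta)$ for all $1 < p < \infty$; the identity $D^2 = -\Id \ot \Delta$ from \eqref{square-of-Dirac} then finishes. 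Your approach would in fact yield more---quantitative $s$-number decay and hence summability information about the triple---but at the cost of importing the noncommutative Sobolev machinery; the paper's interpolation argument is more elementary and suffices for the statement as written. For condition~(3) the paper computes $[D,\pi(f)]$ directly for any $f \in \cal{A}_\theta$ as a block off-diagonal matrix with entries $M_{\partial f}$, which is essentially your computation on $\mathcal{P}_\theta$ carried out for the $d=2$ Dirac operator and extended by continuity.
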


\begin{proof}
For any function $f \in \cal{A}_\theta$, an elementary computation reveals that $M_f \cdot \dom D \subset \dom D$ and that
\begin{equation}
\label{commutator-3}
[D,f]
=\frac{1}{\i}\begin{bmatrix} 
0 & M_{\partial f} \\ 
M_{\partial f} & 0
\end{bmatrix} .
\end{equation}
%\begin{equation}
%\label{commutator-1}
%\big[\tfrac{\d}{\d \theta},f\big]g
%=\tfrac{\d}{\d \theta}M_fg-M_f \tfrac{\d}{\d \theta}g
%=\tfrac{\d}{\d \theta}(fg)-f\tfrac{\d}{\d \theta}g
%=f' g+g'f-fg'
%=M_{f'}g.
%\end{equation}
So the commutator $[D,f]$ defines a bounded operator on the Banach space $\L^p(\T^2_\theta) \oplus_p \L^p(\T^2_\theta)$. It is clear that $\Delta$ has compact resolvent on $\L^2(\T^2_\theta)$. By a standard interpolation argument (adapt \cite[Section 1.6]{Dav89}), we deduce that $\Delta$ has compact resolvent on $\L^p(\T^2_\theta)$ for any $1 < p < \infty$. By \eqref{square-of-Dirac}, we conclude that $D$ has compact resolvent on the space $\L^p(\T^2_\theta) \oplus \L^p(\T^2_\theta)$. So Definition \ref{Def-Banach-spectral-triple} is satisfied.
\end{proof}

%denotes the quantised differential on quantum tori. On the other hand, if $x$ is not necessarily bounded we may still define $\qd x$ on the dense subspace $\C^\infty(\T^d_\theta) \ot \mathbb{C}^N$ as follows. Suppose that $x \in \L^2(\T^d_\theta)$. Then if $\eta \in \C^\infty(\T^d_\theta) \ot \mathbb{C}^N$, we will have $(1 \ot M_x)\eta \in \L^2(\T^d_\theta) \ot \mathbb{C}^N$. Moreover, $\sgn(D)\eta$ is still in $\C^{\infty}(\T^d_\theta) \ot \mathbb{C}^N$ since by definition an element of $\C^{\infty}(\T^d_\theta)$ has Fourier coefficients of rapid decay, and $\sgn(D)$ is represented as a Fourier multiplier with bounded symbol. Thus the expression:
%\begin{equation*}
%(\qd x)\eta 
%\ov{\mathrm{def}}{=} \i\sgn(D)(1 \ot M_x)\eta - \i(1\ot M_x)\sgn(D)\eta
%\end{equation*}
%is a well-defined element of $\L^2(\T^d_\theta) \ot \mathbb{C}^N$ for all $\eta \in \C^{\infty}(\T^d_\theta) \ot \mathbb{C}^N$. 

\paragraph{Fredholm module}
Using the functional calculus, we can define the operator $\sgn D$ acting on the Banach space $\L^p(\T^2_\theta) \oplus \L^p(\T^2_\theta)$. This operator can be seen as <<Riesz-Clifford transform>> 
$$
\sgn(D)    
= \sum_{j=1}^2 \gamma_j \ot  \partial_j\Delta^{-\frac{1}{2}}.
$$
% Riesz-Clifford transform
Using Proposition \ref{prop-triple-to-Fredholm}, we obtain the following consequence of Theorem \ref{th-NC-torus-1}.

\begin{cor}
Suppose that $1 < p < \infty$. The triple $(\cal{A}_\theta,\L^p(\T^2_\theta) \oplus \L^p(\T^2_\theta),\sgn D)$ is a Banach Fredholm module. 
\end{cor}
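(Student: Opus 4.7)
The statement is a direct corollary of the general principle encoded in Proposition~\ref{prop-triple-to-Fredholm}, so the plan amounts to verifying that the hypotheses of that proposition have already been supplied by Theorem~\ref{th-NC-torus-1} and the preceding proposition on the bounded $\H^\infty(\Sigma_\theta^\bi)$ functional calculus of $D$. First I would record that Theorem~\ref{th-NC-torus-1} says precisely that $(\cal{A}_\theta,\L^p(\T^2_\theta) \oplus \L^p(\T^2_\theta), D)$ is a compact Banach spectral triple in the sense of Definition~\ref{Def-Banach-spectral-triple}: the operator $D$ is (densely defined, closed and) bisectorial with a bounded $\H^\infty(\Sigma_\theta^\bi)$ functional calculus, the identity $D^2=-\Id_{\ell^2_N}\otimes\Delta$ together with the compactness of the resolvent of $\Delta$ on $\L^p(\T^2_\theta)$ (by $\L^2$-compactness and interpolation) gives the compactness of $D^{-1}$ on $\ovl{\Ran D}$, and the Lipschitz algebra $\Lip_D(\cal{A}_\theta)$ contains $\cal{A}_\theta$ (as the commutator formula \eqref{commutator-3} produces multiplication by a polynomial symbol, which is bounded) so it is dense in $\cal{A}_\theta$.

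With these ingredients in place, I would simply invoke Proposition~\ref{prop-triple-to-Fredholm}: for any compact Banach spectral triple $(\cal{A},X,D)$ the pair $(X,\sgn D)$ is a Banach Fredholm module over $\cal{A}$ (with the same homomorphism $\pi$). Specializing to $\cal{A}=\cal{A}_\theta$ and $X=\L^p(\T^2_\theta)\oplus\L^p(\T^2_\theta)$ yields the conclusion. The operator $\sgn D$ itself is well-defined through the bounded $\H^\infty(\Sigma_\theta^\bi)$ calculus combined with the decomposition $X=\ovl{\Ran D}\oplus\ker D$ (equivalently, via the Cauchy-integral representation \eqref{div-890} for $|D|^{-1}$), and then $F\ov{\mathrm{def}}{=}\sgn D$ satisfies $F^2=\Id$ on $\ovl{\Ran D}$ while the commutators $[F,\pi(a)]$ are compact for every $a\in\cal{A}_\theta$ by the argument of Proposition~\ref{prop-triple-to-Fredholm} (strong integral of compact-valued integrands is compact, applied to the resolvent expansion of $|D|^{-1}$).

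There is essentially no obstacle: the work has been absorbed into Theorem~\ref{th-NC-torus-1} and Proposition~\ref{prop-triple-to-Fredholm}. The only mild point worth flagging is the cosmetic abuse of notation in the statement, where the algebra $\cal{A}_\theta$ is kept inside the triple $(\cal{A}_\theta,\L^p(\T^2_\theta)\oplus\L^p(\T^2_\theta),\sgn D)$; strictly speaking the Banach Fredholm module is the pair $(X,\sgn D)$ equipped with the homomorphism $\pi\co\cal{A}_\theta\to\B(X)$, $f\mapsto M_f\oplus M_f$ already used in Theorem~\ref{th-NC-torus-1}, and that is what the conclusion refers to.
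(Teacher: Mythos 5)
Your proof is correct and follows exactly the same route as the paper: the paper obtains this corollary by applying Proposition~\ref{prop-triple-to-Fredholm} to the compact Banach spectral triple established in Theorem~\ref{th-NC-torus-1}, which is precisely what you do. The extra detail you supply (the decomposition $X=\ovl{\Ran D}\oplus\ker D$, compactness via the strong-integral representation of $|D|^{-1}$, the remark about the notational abuse) is accurate and consistent with the paper's framework.
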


%\begin{proof}
%
%$\qd x=\sum_{j=1}^d \gamma_j \ot \qd x_j$
%
%$\qd x_j=\i[R_j,M_x]$
%
%$\qd x_j \in S^\infty_d$ iff $\in W^1_d(\T^d_\theta)$ (Sukochev/Xiong/Donald)
%
%
%The commutator
%$$
%\qd x \ov{\mathrm{def}}{=} \i [\sgn(\D) , 1\ot M_x],\quad x \in \L^\infty(\T^d_\theta)
%$$
%\end{proof}

\begin{remark} \normalfont
Suppose that $p=2$. If $f \in \L^2(\T^d_\theta)$ satisfies $\qd f \in S^q$, for some $q \in (0, d]$, then by \cite[Corollary 1.5]{MSX19} $f$ is a constant.  Moreover, if $f$ belongs to the noncommutative homogeneous Sobolev space $\dot{\H}^1_d(\T^d_\theta)$, then by \cite[Theorem 1.1]{MSX19} $\qd f$ admits a bounded extension, and the extension belongs to the weak Schatten space $S^{d,\infty}(\ell^2_N(\L^2(\T^2_\theta)))$. In particular, the Fredholm module $(\cal{A}_\theta,\L^2(\T^2_\theta) \oplus \L^2(\T^2_\theta),\sgn D)$ is $q$-summable for any $q > 2$.
\end{remark}

%%%%%%%%%%%%%%%%%%%%%%%%%%%%%%%%%%%%%%%%%%%%%%%%%%%%%%%%%%%%%%%%%%%%%%%%%%%%%%%%%%%%%%%%%%%%%%%%%%%%%%%%%%%%%%%%%%%%%%%%
\subsection{Perturbed Dirac operators and the Kato square root problem}
\label{Kato}

Let $A \co \R^n \to \M_n(\mathbb{C})$ be a measurable bounded function satisfying for some constant $\kappa > 0$ the ellipticity condition
$$
\Re \langle A_x\xi, \xi\rangle_{\mathbb{C}^n} 
\geq \kappa |\xi|^2
$$
for almost all $x \in \R^n$ and all $\xi \in \mathbb{C}^n$ . We can see $A$ as an element of the von Neumann algebra $\L^\infty(\R^n,\M_n(\mathbb{C}))$. We denote the angle of accretivity by $\omega \ov{\mathrm{def}}{=}
\esssup_{x,\xi} |\arg \la A_x\xi, \xi\ra|$. Consider the multiplication operator $M_A \co \L^2(\R^n, \mathbb{C}^n) \to \L^2(\R^n, \mathbb{C}^n)$, $u \mapsto A u$ and the unbounded operator
\begin{equation}
\label{}
D
\ov{\mathrm{def}}{=} \begin{bmatrix} 
0 & -\div M_A \\ 
\nabla & 0 
\end{bmatrix},
\end{equation}
acting on the complex Hilbert space $\L^2(\R) \oplus \L^2(\R, \mathbb{C}^n)$, where $\div M_A$ denotes the composition $\div \circ M_A$. This operator generalizes the one defined in \eqref{Dirac-perturbed-intro}. We can see this operator as a deformation of the <<Hodge-Dirac operator>>
\begin{equation}
\label{D-456}
\scr{D}
\ov{\mathrm{def}}{=}\begin{bmatrix} 
0 & -\div  \\ 
\nabla & 0 
\end{bmatrix}.
\end{equation}

Note the following relation between $\mathscr{D}$ and the Hodge--Dirac operator $D_g$ on $\R^n$. Let $(X,g)$ be a smooth Riemannian manifold and denote by $D_g \ov{\mathrm{def}}{=} d + d^*$ the Hodge--Dirac operator acting on the Hilbert space
$\L^2(\Omega(X,g))=\bigoplus_{k=0}^n \L^2(\Omega^k(X,g))$.
For each $k$, the exterior derivative $d$ maps $\Omega^k$ into $\Omega^{k+1}$, and
$d^*$ maps $\Omega^{k+1}$ into $\Omega^k$. In the decomposition of $\L^2(\Omega(X,g))$,
the operator $D_g$ therefore takes the block tridiagonal form
\[
D_g =
\begin{bmatrix}
0 & d^* & 0 & 0 & \cdots\\
d & 0 & d^* & 0 & \cdots\\
0 & d & 0 & d^* & \cdots\\
\vdots & & & \ddots
\end{bmatrix}.
\]
When $X=\R^n$ with the Euclidean metric $g_0$, we have $\Omega^0(\R^n)=\C^\infty(\R^n)$ and $\Omega^1(\R^n)\simeq \C^\infty(\R^n,\mathbb{C}^n)$ via the musical isomorphisms $\sharp,\flat$. Restricting $D_{g_0}$ to $\Omega^0\oplus\Omega^1$, and identifying a vector field $v$ with the $1$-form $v^\flat$, we obtain
\[
D_{g_0}\big|_{\Omega^0\oplus\Omega^1}
=
\begin{bmatrix}
0 & d^*\\[2pt]
d & 0
\end{bmatrix}.
\]
Since on $\R^n$ one has $d=\nabla$ and $d^*=-\div$, this gives
\begin{equation}
\label{Rem-Hodge-Dirac-Relation}
D_{g_0}\big|_{\Omega^0 \oplus \Omega^1}
=
\begin{bmatrix}
0 & -\div\\[2pt]
\nabla & 0
\end{bmatrix}
=\mathscr{D}
\quad\text{on } \L^2(\R^n)\oplus \L^2(\R^n,\mathbb{C}^n).
\end{equation}
%More precisely, if
%\[
%U \co \L^2(\R^n)\oplus \L^2(\R^n,\mathbb{C}^n)\longrightarrow 
%\L^2(\Omega^0(\R^n)) \oplus \L^2(\Omega^1(\R^n)),
%\qquad U(u,v)=(u,v^\flat),
%\]
%then $U\,\mathscr{D}\,U^{-1}=D_{g_0}\big|_{\L^2(\Omega^0) \oplus \L^2(\Omega^1)}$. The correspondence extends to the resolvent: since $D_g^2=\Delta_g$ (the Hodge Laplacian) preserves each degree, the operator $(D_g+\i)^{-1}$ is block diagonal with respect to the decomposition $\L^2(\Omega(X,g))=\bigoplus_{k=0}^n \L^2(\Omega^k(X,g))$. Hence, its restriction to $\Omega^0\oplus\Omega^1$ is unitarily equivalent to
%$(\mathscr{D}+\i)^{-1}$.
%
%In particular, if it is known (see Lemma~4.8 of \cite{Sukochev-Zanin}) that $M_f(D_g+\i)^{-1}$ is compact on $\L^2(\Omega(\R^n))$ for every $f\in \mathrm{C}_c^\infty(\R^n)$, then each diagonal block is also compact. Consequently, the restriction corresponding to $\Omega^0\oplus\Omega^1$--- that is, $\pi_f(\mathscr{D}+\i)^{-1}$ acting on $\L^2(\R^n)\oplus \L^2(\R^n,\mathbb{C}^n)$ --- is compact as well.
%\end{remark}

By \cite[Theorem 3.1 (i) p.~465]{AKM06},  the operator $D$ is bisectorial and admits a bounded $\H^\infty(\Sigma^\bi_\theta)$ functional calculus for any angle $\theta \in (\omega ,\frac{\pi}{2})$. So, we can consider the bounded operator $\sgn D \co \L^2(\R^n) \oplus \L^2(\R^n, \mathbb{C}^n) \to \L^2(\R^n) \oplus \L^2(\R^n, \mathbb{C}^n)$. As explained in \cite{AKM06} the boundedness of the operator $\sgn D$ allows to obtain the Kato square root estimate
$$
\bnorm{(-\div A \nabla)^{\frac{1}{2}}f}_{\L^2(\R^n)}
\approx \norm{\nabla f}_{\L^2(\R^n,\ell^2_n)}, \quad u \in \W^{1,2}(\R^n).
$$
We refer to \cite[pp.~509-513]{HvNVW23}, \cite[Section 4.7]{Gra14b} and \cite[Chapter 8]{Ouh05} for more information on this famous estimate.

Note that $\L^2(\R^n,\mathbb{C}^n)=\ell^2_n(\L^2(\R^n))$. If $f \in \L^\infty(\R^n)$, we define the bounded operator $\pi_f \co \L^2(\R^n) \oplus \L^2(\R^n,\mathbb{C}^n) \to \L^2(\R^n) \oplus \L^2(\R^n,\mathbb{C}^n)$ by
\begin{equation}
\label{Def-pi-a}
\pi_f
\ov{\mathrm{def}}{=} \begin{bmatrix}
    \M_f & 0  \\
    0 & \tilde{\M}_f  \\
\end{bmatrix}, \quad f \in \L^\infty(\R^n)
\end{equation}
where the linear map $\M_f \co \L^2(\R^n) \to \L^2(\R^n)$, $g \mapsto fg$ is the multiplication operator by the function $f$ and where 
$$
\tilde{\M}_f \ov{\mathrm{def}}{=} \Id_{\ell^2_n} \ot \M_f  \co \ell^2_n(\L^2(\R^n)) \to \ell^2_n(\L^2(\R^n)),\, (h_1,\ldots,h_n) \mapsto (fh_1,\ldots,fh_n)
$$ 
is also a multiplication operator (by the function $(f,\ldots,f)$ of $\ell^\infty_m(\L^\infty(\R^n))$). Using \cite[Proposition 4.10 p.~31]{EnN00}, it is easy to check that $\pi \co \L^\infty(\R^n) \to \B(\L^2(\R^n) \oplus \L^2(\R^n,\mathbb{C}^n))$ is an isometric homomorphism. 

\begin{prop}
The triple $(\C_0(\R),\L^2(\R) \oplus \L^2(\R, \mathbb{C}^n),D)$ is a possibly kernel-degenerate locally compact Banach spectral triple.
\end{prop}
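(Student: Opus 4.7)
My plan is to verify the three conditions in the definition of a locally compact Banach spectral triple (here $\cal{A}$ should be read as $\C_c^\infty(\R^n)$ in agreement with the setup). Condition (1), the bounded $\H^\infty(\Sigma^\bi_\theta)$ functional calculus on a bisector $\Sigma^\bi_\theta$ with $\theta \in (\omega,\pi/2)$, is given for free by the already-quoted \cite[Theorem 3.1(i)]{AKM06}, since $D$ is exactly the operator treated there.

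For condition (3), the density of $\Lip_D(\cal{A})$, I would show the stronger statement $\C_c^\infty(\R^n)\subset \Lip_D(\cal{A})$ by a direct commutator computation. Fix $f\in \C_c^\infty(\R^n)$; on the lower-left block the commutator is
\begin{equation*}
\nabla M_f - \tilde{M}_f \nabla = \tilde{M}_{\nabla f},
\end{equation*}
which is the bounded operator of pointwise multiplication by the bounded smooth vector field $\nabla f$. On the upper-right block, applied to $v\in \dom(\div M_A)$, one gets $-\div(fAv)+f\,\div(Av)=-(\nabla f)\cdot Av$, i.e.\ a bounded operator $v\mapsto -(\nabla f)\cdot Av$ involving $M_A\in \B(\L^2(\R^n,\mathbb{C}^n))$ and the bounded field $\nabla f$. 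Hence $[D,\pi_f]$ extends boundedly to $\L^2(\R^n)\oplus \L^2(\R^n,\mathbb{C}^n)$, and $\Lip_D(\cal{A})=\cal{A}$ trivially.

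The main effort is condition (2), compactness of $\pi_f |D|^{-1}|_{\ovl{\Ran D}}$. Here I would use the sectoriality of $D^2$ (inherited from the bisectoriality of $D$ by \cite[Proposition~10.6.2]{HvNVW18}) to write, as in the proof of Proposition \ref{prop-triple-to-Fredholm},
\begin{equation*}
|D|^{-1}x = \frac{1}{\pi}\int_0^\infty (t+D^2)^{-1}x\,\frac{\d t}{\sqrt{t}},\qquad x\in \Ran D.
\end{equation*}
Since $D^2=\mathrm{diag}(L_1,L_2)$ with $L_1=-\div M_A\nabla$ acting on $\L^2(\R^n)$ and $L_2=-\nabla\,\div M_A$ acting on $\L^2(\R^n,\mathbb{C}^n)$, the resolvents $(t+D^2)^{-1}$ are block-diagonal. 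For each $t>0$ the operator $(L_1+t)^{-1}$ maps $\L^2(\R^n)$ continuously into $\W^{1,2}(\R^n)$ (by the Kato square root estimate identifying $\dom L_1^{1/2}$ with $\W^{1,2}(\R^n)$), so $M_f(L_1+t)^{-1}$ factors through a Sobolev space of functions supported in a fixed compact set $K\supset \supp f$; by the Rellich--Kondrachov theorem this factor embeds compactly into $\L^2(\R^n)$, and $M_f(L_1+t)^{-1}$ is compact. For the second block I would exploit the intertwining relation $\nabla L_1^{-1/2}$ (bounded by Kato) which identifies $L_2$ on the invariant subspace $\ovl{\Ran \nabla}$ with $L_1$ up to the bounded isomorphism $\nabla L_1^{-1/2}$, reducing the compactness of $\tilde{M}_f (L_2+t)^{-1}$ restricted to the piece of $\ovl{\Ran D}$ landing in $\ovl{\Ran\nabla}$ to the first case. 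Assembling the two blocks yields that $\pi_f(t+D^2)^{-1}|_{\ovl{\Ran D}}$ is compact for every $t>0$, uniformly in norm by the $R$-sectoriality, and the strong convex compactness property recalled in Section \ref{sec-from} then passes the compactness to the integral $\pi_f |D|^{-1}|_{\ovl{\Ran D}}$.

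The delicate point is precisely this second block: $L_2=-\nabla\,\div M_A$ is genuinely degenerate on $\ker\div$, so compactness fails on the whole space and one must carefully restrict to the subspace reached by $\ovl{\Ran D}$, which is $\L^2(\R^n)\oplus \ovl{\Ran\nabla}$. Once the intertwining $\nabla L_1^{-1/2}$ is invoked, everything reduces to the elliptic case $L_1$ where Rellich--Kondrachov applies cleanly.
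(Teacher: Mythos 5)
Your treatment of conditions (1) and (3) is fine: (1) is exactly the paper's citation of \cite[Theorem 3.1(i)]{AKM06}, and the explicit block commutator computation you carry out for (3) is a correct, slightly more hands-on substitute for what the paper does (which is to observe $D=\scr{D}B$ with $B=\mathrm{diag}(\Id,M_A)$ commuting with $\pi_f$, so that $[D,\pi_f]=[\scr{D},\pi_f]B$). For condition (2), however, you have missed the decisive algebraic reduction and taken a route that, as written, has two genuine gaps.

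The paper's argument for (2) is a one-liner once one notices the factorization $D=\scr{D}B$: since $B$ is invertible and commutes with $\pi_f$, one has $\pi_f D^{-1}=\pi_f B^{-1}\scr{D}^{-1}=B^{-1}\pi_f\scr{D}^{-1}$ on $\ovl{\Ran D}=\ovl{\Ran\scr{D}}$, and the compactness of $\pi_f\scr{D}^{-1}$ for the unperturbed Hodge--Dirac operator is classical; from this $\pi_f|D|^{-1}=\pi_f D^{-1}\sgn D$ is compact. This completely decouples the compactness question from the $A$-dependent second-order operators $L_1$ and $L_2$, and in particular from the Kato square root problem.

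Your route through the integral formula for $|D|^{-1}$ has the following problems. First, Voigt's strong convex compactness theorem, as recalled in Section~\ref{sec-from}, requires $\int_0^\infty\norm{\pi_f(t+D^2)^{-1}}\frac{\d t}{\sqrt t}<\infty$; but sectoriality only yields $\norm{(t+D^2)^{-1}}\lesssim t^{-1}$, so the integrand is $O(t^{-3/2})$ near $t=0$ and the norm integral diverges there. The formula \eqref{div-890} converges only as a strong integral on $\Ran D$, which is not enough to invoke \cite[Theorem 1.3]{Voi92}; you would need an extra regularization or splitting near $t=0$. Second, your reduction of the second block to the first is not a clean conjugation argument: the similarity $V=\nabla L_1^{-1/2}$ satisfies $L_2|_{\ovl{\Ran\nabla}}=VL_1V^{-1}$, but $\tilde M_f$ does not conjugate to anything with compact-support/Rellich structure, and without $\W^{2,2}$ regularity for $L_1$ (unavailable for $\L^\infty$ coefficients $A$) the term $\nabla M_f(L_1+t)^{-1}$ is not readily seen to be compact. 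Both difficulties evaporate under the paper's factorization, which is why it is worth spotting.
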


\begin{proof}
We already said that the operator $D$ is bisectorial and admits a bounded $\H^\infty(\Sigma^\bi_\theta)$ functional calculus. Note that $D=\begin{bmatrix} 
0 & -\div  \\ 
\nabla & 0 
\end{bmatrix}\begin{bmatrix} 
\I & 0 \\ 
0 & M_A 
\end{bmatrix} \ov{\eqref{D-456}}{=} \scr{D}B$ where $B \ov{\mathrm{def}}{=} \begin{bmatrix} 
\I & 0 \\ 
0 & M_A 
\end{bmatrix}$. It is easy to see that $\pi_fB=B\pi_f$. Consequently, we have
$$
[D,\pi_f]
=D\pi_f-\pi_fD
=\scr{D}B\pi_f-\pi_f\scr{D}B
=\scr{D}\pi_f B-\pi_f\scr{D}B
=(\scr{D}\pi_f B-\pi_f\scr{D}) B
=[\scr{D},\pi_f]B.
$$
Since it is well-known that the commutator $[\scr{D},\pi_f]$ is bounded on $\L^2(\R^n) \oplus \L^2(\R^n,\mathbb{C}^n)$, we infer that the commutator $[D,\pi_f]$ is bounded. We have 
$$
\pi_f D^{-1}
=\pi_f(\scr{D}B)^{-1}
=\pi_f B^{-1}\scr{D}^{-1}
=B^{-1}\pi_f\scr{D}^{-1}.
$$
Since it is well-known (\cite{SuZ23} and Remark \ref{Rem-Hodge-Dirac-Relation} or \cite{Arh24b}) that $\pi_f\scr{D}^{-1}$ is compact, we conclude that the operator $\pi_f D^{-1} \co \ovl{\Ran D} \to X$ is also compact.
\end{proof}

\begin{remark} \normalfont
It is very easy to introduce variants or generalization of this result. We refer to \cite{AKM06} and \cite{AMN97} for the considered operators.
\end{remark}

\subsection{Perturbed Dirac operators and the Cauchy singular integral operator}

%[CAUCHY NON-INTEGRAL FORMULAS,ANDREAS ROSEN] [Cauchy’s Singular Integral Operator and Its Beautiful Spectrum]
    %\cite{BoK97}  and

Recall that by Rademacher's theorem, a function $g \co \R \to \R$ is Lipschitz if and only if $g$ is differentiable almost everywhere on $\R$ and $g' \in \L^\infty(\R)$. In this case, the Lipschitz constant $$
\Lip g
\ov{\mathrm{def}}{=} \sup \bigg\{\frac{|g(x) - g(y)|}{|x - y|} : x,y \in \R, x \not= y\bigg\}.
$$
is equal to $\norm{g'}_{\L^\infty(\R)}$. In the sequel, we fix a Lipschitz function $g \co \R \to \R$ and we consider the associated Lipschitz curve in the complex plane, which is the graph $\gamma \ov{\mathrm{def}}{=} \{z = x + \i g(x) : x \in \R \}$. 

%Suppose that $1 \leq p < \infty$. Following \cite[p.~2]{QiL19}, we introduce the Banach space $\L^p(\gamma)$ consisting of all equivalent classes of functions $u \co \gamma \to \mathbb{C}$ which are measurable for the measure $|\mathrm{d} z|$ and satisfy
%$$
%\norm{u}_{\L^p(\gamma)} 
%\ov{\mathrm{def}}{=} \bigg(\int_\gamma |u(z)|^p  |\mathrm{d}  z| \bigg)^{\frac{1}{p}}
%< \infty.
%$$
%The operator of differentiation $D_{\gamma} \co  \dom D_{\gamma} \subset \L^p(\gamma) \to \L^p(\gamma)$ on $\gamma$ is the closed unbounded operator which acts on any compactly supported Lipschitz function $u \co \gamma \to \mathbb{C}$ by
%$$
%(D_{\gamma} u)(x) 
%\ov{\mathrm{def}}{=}% M_aDu(x) 
 %(1 + \i g'(x))^{-1}u'(x+\i g(x))
%$$
%We refer to \cite[p.~5]{QiL19} for more information.

Suppose that $1 < p < \infty$. Consider the unbounded operator $D_\gamma \ov{\mathrm{def}}{=} -B\frac{1}{\i}\frac{\d}{\d x}$ acting on a suitable dense domain of $\L^p(\R)$ where $B$ is the  multiplication operator by the function $(1+\i g')^{-1}$. By \cite[Theorem 3.1 (iii) p.~~465]{AKM06} and \cite[Consequence 3.2 p.~466]{AKM06}, for any angle $\theta \in (\arctan \Lip g, \frac{\pi}{2})$, this operator is bisectorial and admits a bounded $\H^\infty(\Sigma_\theta^\bi)$ functional calculus. Note that this result is also proved in \cite[Theorem 1.4.1 p.~25]{QiL19} and \cite[Theorem 7.1 p.~159]{McQ91} using a different but equivalent formulation.

Consequently, the operator $\sgn D_\gamma \co \L^p(\R) \to \L^p(\R)$ is bounded. Moreover, according to \cite[Consequence 3.2 p.~466]{AKM06}, we have
$$
\sgn(D_\gamma)
=C_\gamma,
$$
where $C_\gamma$ is the <<Cauchy singular integral operator on the real line>> defined by
%\begin{equation}
%\label{}
%(C_\gamma u)(x) 
%\ov{\mathrm{def}}{=} \lim_{\epsi \to 0} \frac{\i}{\pi}\int_{|z-\zeta| \geq \epsi} \frac{u(\zeta)}{z-\zeta} \d \zeta
%\end{equation}
%for any $u \in \L^p(\gamma)$. This integral can be rewritten as
\begin{equation}
\label{}
(C_\gamma f)(x) 
\ov{\mathrm{def}}{=} 
\lim_{\epsi \to 0} \frac{\i}{\pi} \int_{y\in{\mathbb{R}}\setminus(x-\epsi,x+\epsi)} \frac{f(y)}{y-x + \i (g(y) -g(x))}(1 + \i g'(y)) \d y.
\end{equation}
Here the integral exists for almost all $x \in \R$ and define an element of the Banach space $\L^p(\R)$. We refer to \cite[Theorem 1 p.~142]{McQ91} and \cite[(b) p.~157]{McQ91} for a proof of this formula stated in a different language. We direct the reader to the excellent paper \cite{Ver21} for a detailed description of numerous applications of this operator and to \cite[Section 8]{ADM96}, \cite[Section 4.6]{Gra14b} and \cite[Chapter 7]{Jou83} for more information.

\begin{remark} \normalfont
If $g=0$, then we recover the operator $\frac{1}{\i}\frac{\d}{\d x}$.
\end{remark}

\begin{remark} \normalfont
Actually, the boundedness of the operator $C_\gamma$ was first proved by Calder\'on in \cite[Theorem 1, p.~1324]{Cal77} in the case where $\Lip g$ is small enough and in the general case in the famous paper \cite[Th\'eor\`eme I]{CMM82}. The used definition is  
\begin{equation}
\label{}
(\cal{C}_\gamma u)(z)
\ov{\mathrm{def}}{=} \lim_{\epsi \to 0} \frac{1}{2\pi \i} \int\limits_{\zeta \in \gamma \setminus B(z,\epsi)}\frac{u(\zeta)}{\zeta-z} \d\zeta,\qquad z \in \gamma.
%(\cal{C}_\gamma u)(x) 
%\ov{\mathrm{def}}{=} \lim_{\epsi \to 0} \frac{\i}{\pi}\int_{|z-\zeta| > \epsi} \frac{u(\zeta)}{z-\zeta} \d \zeta
\end{equation}
Making the bi-Lipschitz change of variables $\R \to \gamma$, $y \mapsto y+\i g(y)$ and 
identifying $f$ with the function $h(y) \ov{\mathrm{def}}{=} u(y+\i g(y))$ for $x \in \R$, this 
becomes  the Cauchy singular integral operator on the real line, up to a multiplicative constant.
%for any $u$ in the space $\L^p(\gamma)$ consisting of all equivalent classes of functions $u \co \gamma \to \mathbb{C}$ which are measurable for the measure $|\mathrm{d} z|$ and satisfy
%$$
%\norm{u}_{\L^p(\gamma)} 
%\ov{\mathrm{def}}{=} \bigg(\int_\gamma |u(z)|^p  |\mathrm{d}  z| \bigg)^{\frac{1}{p}}
%< \infty.
%$$ 
\end{remark}

%\begin{equation}\label{ugvVVfv-geed-2}
%Tg(x):=\lim_{\varepsilon\to 0^{+}}\frac{1}{2\pi i}
%\int\limits_{y\in{\mathbb{R}}\setminus(x-\varepsilon,x+\varepsilon)}
%\frac{(1+iA'(y))g(y)}{y-x+i(A(y)-A(x))}\,dy,\qquad x\in{\mathbb{R}}.
%\end{equation}

As noted in~\cite[p.~289]{Gra14b}, the~$\L^p$-boundedness of the operator $C_{\gamma} \co \L^p(\R) \to \L^p(\R)$ is equivalent to that of the related operator $\scr{C}_{\gamma} \co \L^p(\R) \to \L^p(\R)$ defined by
\begin{equation}
\label{eq:l3}
\scr{C}_{\gamma}(f)(x)
\ov{\mathrm{def}}{=} \lim_{\epsi \to 0} \frac{\i}{\pi}\int_{y\in{\mathbb{R}}\setminus(x-\epsi,x+\epsi)} \frac{f(y)}{y-x + \i(g(y) - g(x))} \d y, \quad f \in \L^p(\R).
\end{equation}

Suppose that $1 < p < \infty$. Recall that any multiplication operator $M_f \co \L^p(\R) \to \L^p(\R)$, $fg \mapsto fg$ by a function $f$ of $\L^\infty(\R)$ is bounded, see \cite[Proposition 4.10 p.~31]{EnN00}. Consequently, we can consider the homomorphism $\pi \co \L^\infty(\R) \to \B(\L^p(\R))$, $f \mapsto M_f$.

Consider a function $f \in \cup_{1 < q < \infty} \L^q_{\textup{loc}}(\R)$ and suppose that $1 < p < \infty$. It is proved in \cite[Theorem 1]{LNWW20} that the function $f$ belongs to the space $\BMO(\R)$ if and only if the commutator $[M_f,\scr{C}_{\gamma}]$ is bounded on the Banach space $\L^p(\R)$. In this case, we have
\begin{equation}
\label{commutator-78}
\norm{[M_f,\scr{C}_{\gamma}] }_{\L^p(\R) \to \L^p(\R)} 
\approx \norm{f}_{\BMO(\R)}. 
\end{equation}
We denote by $\VMO(\R)$ the space of functions of vanishing mean oscillation, defined to be the $\BMO(\R$)-closure of the set $\C^\infty_c(\R)$ of functions of class $\C^\infty$ with compact support.
Furthermore, if $f \in \BMO(\R)$ it is proved in \cite[Theorem 2]{LNWW20} that $f$ belongs to $\VMO(\R)$ if and only if the commutator $[M_f,\scr{C}_{\gamma}]$ is a compact operator on the Banach space $\L^p(\R)$. It is obvious that this entails similar results for the operator $C_\gamma$. 

%The relation can be seen as follows. The Cauchy singular integral on the graph of a Lipschitz function $g:\R\to \R$ is given by
%$$
%\pv \frac \i{\pi} \int_\R \frac{u(y) (1+ig'(y))dy}{(y+ig(y))-(x+ig(x))} = \sgn(BD)u(x),\qquad u\in L_2(\R).
%$$
%The operator theoretic expression $\sgn(BD)$ for the Cauchy integral on the right hand side is interpreted as follows. In $\L^2(\R)$ we have the self-adjoint differential operator $D:= i\frac{\d}{\d x}$, or equivalently the Fourier multiplier $-\xi$, and the  accretive multiplication operator $B= (1+ig'(x))^{-1}$. 

%Suppose $\Gamma$ is a curve in the complex plane~$\C$ and~$f$ is a function defined on the curve~$\Gamma$. The Cauchy integral of~$f$ is the operator $\mathcal{C}_{\Gamma}$ given
%by
%\begin{equation}
%\label{eq.11.0}
%(\mathcal{C}_{\Gamma}f)(z)
%\ov{\mathrm{def}}{=} \frac{1}{2\pi \i} \int_{\Gamma}\frac{f(\zeta)}{\zeta - z}\,d\zeta.
%\end{equation}

\begin{prop}
Then $(\C_0(\R),\L^2(\R),D_\gamma)$ is a possibly kernel-degenerate locally compact Banach spectral triple. 
\end{prop}

\begin{proof}
Note that the function $1 + \i g' \co \R \to \R$ is bounded above and below. Using the notation $D \ov{\mathrm{def}}{=} \frac{1}{\i}\frac{\d}{\d x}$, for any function $f \in \C_0^\infty(\R)$ we have
$$
[D_\gamma,\pi_f]
=D_\gamma\pi_f-\pi_fD_\gamma
=BD\pi_f-\pi_fBD
=BD\pi_f-B\pi_f D
=B(D\pi_f -\pi_f D)
=B[D,\pi_f].
$$
Since it is well-known that the commutator $[D,\pi_f]$ is bounded, we infer that the commutator $[D_\gamma,\pi_f]$ is bounded. We have 
$$
\pi_f D_\gamma^{-1}
=\pi_f(BD)^{-1}
=\pi_f D^{-1}B^{-1}.
$$
Since it is well-known that $\pi_f D^{-1}$ is compact, we conclude that the operator $\pi_f D_\gamma^{-1} \co \ovl{\Ran D} \to \L^2(\R)$ is also compact.
\end{proof}

\begin{prop}
Suppose that $1 < p < \infty$. Then $(\L^p(\R),\pi,C_\gamma)$ is an odd Banach Fredholm module over the algebra $\C_0(\R)$.
\end{prop}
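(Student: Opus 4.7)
The plan is to verify the two conditions of Definition \ref{def-Fredholm-module-odd}: that $C_\gamma^2 - \Id_{\L^p(\R)}$ is compact and that $[C_\gamma, M_a]$ is compact for every $a \in \C_0(\R)$. Boundedness of $C_\gamma$ on $\L^p(\R)$ is already in hand via $C_\gamma = \sgn(D_\gamma)$ and the bounded $\H^\infty(\Sigma_\theta^\bi)$ functional calculus of $D_\gamma$ recalled above.

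For the first condition, I would in fact establish the stronger identity $C_\gamma^2 = \Id_{\L^p(\R)}$. Write $D_\gamma = -B \tfrac{1}{\i}\tfrac{\d}{\d x}$, where $B$ is multiplication by $(1 + \i g')^{-1}$, a function bounded above and below. Hence $B$ is invertible and $\ker D_\gamma = \ker \tfrac{\d}{\d x}$ in $\L^p(\R)$, which is trivial since $\L^p(\R)$ contains no nonzero constant functions for $p < \infty$. The bisectorial decomposition \eqref{Bisec-Ran-Ker} then gives $\ovl{\Ran D_\gamma} = \L^p(\R)$. Since $\sgn^2 \equiv 1$ on $\Sigma_\theta^\bi$, the functional calculus yields $C_\gamma^2 = (\sgn^2)(D_\gamma) = \Id_{\L^p(\R)}$, so the difference is zero and hence (trivially) compact.

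For the second condition, the key is the factorization $C_\gamma = \scr{C}_\gamma M_{1+\i g'}$, which is immediate from comparing the integral representations of $C_\gamma$ and $\scr{C}_\gamma$ in \eqref{eq:l3}, bearing in mind that $1+\i g' \in \L^\infty(\R)$ because $g$ is Lipschitz. Since the multiplication operators $M_{1+\i g'}$ and $M_a$ commute, a direct computation gives
\[
[C_\gamma, M_a] = [\scr{C}_\gamma, M_a]\, M_{1+\i g'}.
\]
It therefore suffices to show that $[\scr{C}_\gamma, M_a]$ is compact for $a \in \C_0(\R)$. For this, I would show the inclusion $\C_0(\R) \subset \VMO(\R)$: the estimate $\norm{\cdot}_{\BMO(\R)} \lesssim \norm{\cdot}_{\L^\infty(\R)}$ combined with the sup-norm density of $\C_c^\infty(\R)$ in $\C_0(\R)$ places any $a \in \C_0(\R)$ inside the $\BMO$-closure of $\C_c^\infty(\R)$, which is by definition $\VMO(\R)$. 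Invoking the compactness result of Li--Nguyen--Ward--Wick \cite{LNWW20} quoted just before the statement, $[\scr{C}_\gamma, M_a]$ is compact on $\L^p(\R)$, and composition with the bounded operator $M_{1+\i g'}$ preserves compactness by the ideal property.

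The only delicate point is the justification of $C_\gamma^2 = \Id$: since $\sgn \notin \H^\infty_0(\Sigma_\theta^\bi)$, the identity must be read in the extended (McIntosh) functional calculus, or verified on a dense subspace on which $\sgn(D_\gamma)$ and $D_\gamma |D_\gamma|^{-1}$ can be computed explicitly and $\overline{\Ran D_\gamma} = \L^p(\R)$ is used. Everything else is a bookkeeping exercise on top of the already cited boundedness and compactness theorems.
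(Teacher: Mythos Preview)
Your proof is correct and follows essentially the same route as the paper: both rely on the inclusion $\C_0(\R)\subset\VMO(\R)$ and invoke \cite[Theorem~2]{LNWW20} to obtain compactness of the commutators $[\scr{C}_\gamma,M_a]$. Your argument is in fact more complete than the paper's, which only treats the commutator condition and leaves $C_\gamma^2=\Id$ implicit; you supply the missing justification via $\ker D_\gamma=\{0\}$ and the bisectorial decomposition, and you also make explicit the factorization $C_\gamma=\scr{C}_\gamma M_{1+\i g'}$ (the paper merely remarks earlier that the $\scr{C}_\gamma$ results ``obviously'' transfer to $C_\gamma$).
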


\begin{proof}
If $p=2$, we could use a possible (easy) locally compact extension of Proposition \ref{prop-triple-to-Fredholm}. We could also use the following reasoning. Recall that the algebra $\C_0(\R)$ of continuous functions that vanish at infinity, since $\C_0(\R)$ is a subspace of $\VMO(\R)$. So if $f \in \C_0(\R)$ by \cite[Theorem 2]{LNWW20} each commutator $[M_f,\scr{C}_{\gamma}]$ is a compact operator on the Banach space $\L^p(\R)$. So Definition \ref{def-Fredholm-module-odd} is satisfied.
\end{proof}

%Given a function $H$ on a Lipschitz graph $\Gamma$, we associate a function $h$ on the line by setting $h(t) = H(t +\i A(t))$. By \cite[p.~300]{Gra14b}  that for all $0 < p < \infty$ we have
%$$
%\norm{h}_{\L^p(\R)}
%\approx
%\norm{H}_{\L^p(\gamma)}.
%$$

\begin{remark} \normalfont
It is possible to make a similar analysis with Clifford-Cauchy singular integrals on Lipschitz surfaces.
\end{remark}

\begin{remark} \normalfont
If $A$ and $B$ are some bounded operators on a complex Hilbert space $H$, the previous section section and this lead us to consider <<deformed>> triples $(\cal{A},BDA,H)$ of a spectral triple $(\cal{A},D,H)$ where the selfadjoint operator $D$ is replaced by the deformed (not necessarily selfadjoint) operator $BDA$, possibly bisectorial and admitting a bounded $\H^\infty(\Sigma^\bi_\theta)$ functional calculus for some angle $\theta \in (0,\frac{\pi}{2})$.
\end{remark}

%%%%%%%%%%%%%%%%%%%%%%%%%%%%%%%%%%%%%%%%%%%%%%%%%%%%%%%%%%%%%%%%%%%%%%%%%%%%%%%%%%%%
\subsection{Banach Fredholm modules arising from pseudo-differential operators}

Let $E$ be a Hermitian complex vector bundle over a smooth closed manifold $M$. Suppose that $1 \leq p < \infty$. Using a Lebesgue measure $\mu$ on $M$, we can consider the space $\L^p(M,E)$ of sections of $\L^p$-integrable measurable sections (modulo the space of negligible sections) which does not depend on $\mu$ and the Hermitian structure on $E$, and which is a Banach space for the norm
\begin{equation}
\label{norm-a-valeurs-E}
\norm{s}_{\L^p(M,E)}
\ov{\mathrm{def}}{=} \begin{cases}
\big(\int_{M} |s(x)|_{E_x}^p \d \mu(x)\big)^{\frac{1}{p}}&\text{ if } 1 \leq p<\infty\\
\esssup_{x \in M} |s(x)|_{E_x}& \text{ if } p=\infty
\end{cases}.
\end{equation}
%Pour la def des Lp: THE L2-UNIQUE CONTINUATION PROPERTY ON MANIFOLDS
%WITH BOUNDED GEOMETRY AND THE DEFORMATION
%OPERATOR
If $p=2$, this space is a Hilbert space. For any $1 < p < \infty$, it is not difficult to prove the following isomorphic interpolation formula
\begin{equation}
\label{interpolation-formula}
\L^p(M,E)
\approx (\L^\infty(M,E),\L^1(M,E))_{\frac{1}{p}},
\end{equation}
using a suitable partition of unity and the interpolation formula described in \cite[Theorem 2.2.6 p.~91]{HvNVW16}. 

Consider a closed smooth Riemannian manifold $M$. Recall that a pseudo-differential operator $P \co \C^\infty(M,E) \to \C^\infty(M,E)$ is said to be Hermitian if for any sections $f,g \in \C^\infty(M,E)$ we have
\begin{equation}
\label{}
\la P(f), g\ra_{\L^2(M,E)}	
=\la f,P(g) \ra_{\L^2(M,E)}.
\end{equation}
We say that a pseudo-differential operator $P \co \C^\infty(M,E) \to \C^\infty(M,E)$ is elliptic if its admits a principal symbol $\sigma_P^0$ such that the map $\sigma_P^0(x,\xi) \co E_x \to E_x $ is a bijection for any $x \in M$ and any non-zero vector $\xi \in \mathrm{T}_x^*M$.  By \cite[(23.35.2)]{Dieu88}, % or \cite[Theorem 5.8p.~196]{LaM89}, 
an elliptic Hermitian pseudo-differential operator $P \co \C^\infty(M,E) \to \C^\infty(M,E)$ of order $m > 0$ induces an essentially selfadjoint operator on the Hilbert space $\L^2(M,E)$. Its unique closure is still denoted $P$. It is also proved in this  reference that the closed operator $P$ has compact resolvent, that its spectrum is a closed discrete infinite subset of $\R$ and that each eigenspace is finite-dimensional.

We will use the following folklore lemma (see \cite[Corollary 6.2 p.~51]{Shu01} for a version without vector bundle, with another argument).

\begin{lemma}
\label{Lemma-compact}
Let $E$ be a Hermitian complex vector bundle over a (smooth) closed manifold $M$, endowed with a Lebesgue measure. A pseudo-differential operator $P \co \C^\infty(M,E) \to \C^\infty(M,E)$ of order $m < 0$ induces a compact operator on the Hilbert space $\L^2(M,E)$
\end{lemma}

\begin{proof}
By \cite[Theorem 9.3 p.~235]{BlB13}, a pseudo-differential operator $P$ of order $m < 0$ induces a bounded operator from $\L^2(M,E)$ into the Sobolev space $\H^{-m}(M, E)$, since $-m > 0$. By Rellich's Theorem \cite[Theorem 7.15 p.~199]{BlB13}, the canonical inclusion $\H^s(M,E) \to \H^t(M,E)$ is compact for any $s > t \geq 0$. In particular, the map $\H^{-m}(M,E) \to \H^0(M,E)=\L^2(M,E)$ is compact.  
%\[
%\L^2(M, E) \xrightarrow{P} \H^{-m}(M, E) \xhookrightarrow{\text{compact}} \L^2(M,E).
%\]
Since the composition of a bounded operator and a compact operator is compact, we conclude that $P$ is compact on the Hilbert space $\L^2(M, E)$.
\end{proof}

An obvious <<vector bundle>> generalization of \cite[p.~267]{Tay81} and \cite[Theorem 5.2.23 p.~420]{RuT10} gives the following result.

\begin{prop}
\label{Th-order-zero-bounded}
Let $E$ be a Hermitian complex vector bundle over a (smooth) closed manifold $M$, endowed with a Lebesgue measure. A pseudo-differential operator $P \co \C^\infty(M,E) \to \C^\infty(M,E)$ of order $0$ induces a bounded operator on the Banach space $\L^p(M,E)$ for any $1 < p < \infty$.
\end{prop}
% and it has compact resolvent.%Moreover, its spectrum $\sigma(P)$ is discrete and each $\ker(P-\lambda\I)$ is finite-dimensional for any $\lambda \not\in \sigma(P)$.
%A particular case of a result stated in \cite[Section 4 p.~1651]{BSS10} says that a pseudo-differential operator $P \co \scr{C}^\infty(M,E) \to \scr{C}^\infty(M,E)$ of order zero induces a closed operator acting on the space $\L^p(M,E)$ which admits a bounded $\H^\infty(\Sigma^\bi)$.

Now, we explain how obtaining Fredholm modules from pseudo-differential operators. Here, we consider the homomorphism $\pi \co \C(M) \to\B(\L^p(M,E))$, $f\mapsto M_f$ defined by multiplication operators, where $(M_fs)(x) \ov{\mathrm{def}}{=} f(x)s(x)$.

\begin{thm}
Consider an elliptic Hermitian pseudo-differential operator $P \co \C^\infty(M,E) \to \C^\infty(M,E)$ of order $m > 0$ on a closed Riemannian manifold $M$. Suppose that $1 < p < \infty$. Assume that $P$ induces a bisectorial operator on the Banach space $\L^p(M,E)$ admitting a bounded $\H^\infty(\Sigma^\bi_\theta)$ functional calculus for some angle $0 < \theta <\frac{\pi}{2}$. Then $(\L^p(M,E),\sign P)$ is an odd Banach Fredholm module over the algebra $\C(M)$ on the Banach space $\L^p(M,E)$.
\end{thm}

\begin{proof}
By \cite[Proposition 2.4, p.~143]{BaG82b} and its proof, the operator $\sign D \co \L^2(M,E) \to \L^2(M,E)$ induces a pseudo-differential operator $\sign P \co \C^\infty(M) \to \C^\infty(M)$ of order 0. Recall that a multiplication operator $M_f \co \C^\infty(M) \to \C^\infty(M)$ is a differential operator of order 0, hence a pseudo-differential operator of order 0. It is easy to check\footnote{\thefootnote. We warn the reader that the commutator of two pseudo-differential operators on a vector bundle $E$ of order $m$ and $m'$ is not necessarily of order $m + m'- 1$.} using the formula \cite[(7.2) p.~75]{Tre80} that the commutator $[\sign P,M_f]$ is a pseudo-differential operator of order $-1$. Since it has negative order, it induces by Lemma \ref{Lemma-compact} a compact operator on the Hilbert space $\L^2(M,E)$. Moreover, it equally induces by Theorem \ref{Th-order-zero-bounded} a bounded operator on the Banach space $\L^q(M,E)$ for any $1 < q < \infty$. By interpolation \cite[Theorem 3.11 p.~58]{KZPP76} relying on the formula \eqref{interpolation-formula}, we conclude that the commutator $[\sign P,M_f]$ induces a compact operator on the Banach space $\L^p(M,E)$. By approximation, we see that it is true more generally for any function $f \in \C(M)$.

Recall that by \cite[(23.35.2)]{Dieu88} the unbounded selfadjoint operator $P$ has compact resolvent on $\L^2(M,E)$. Consequently, for any continuous function $\psi \in \C_0(\R)$ the operator $\psi(P) \co \L^2(M,E) \to \L^2(M)$ is compact by \cite[Exercise 9.5.4 p.~396]{Eme24}. Since the spectrum of $P$ on $\L^2(M,E)$ is discrete, the operator $(\sign P)^2-\Id_{\L^2(M,E)}$ can be identified to such an operator $\psi(D)$. %Higson p288
Consequently, it is a compact operator on $\L^2(M,E)$. Again by interpolation, we obtain that $(\sign P)^2-\Id_{\L^p(M,E)}$ is a compact operator on the space $\L^p(M,E)$.
% (voir Emerson p395 et Higson p288, voir Baum/Douglas p144, le noyau joue un role).
\end{proof}

\begin{example} \normalfont
%\textbf{Recall that any Dirac operator }
The previous result applies to the Dirac operator $\frac{1}{\i}\frac{\d}{\d \theta}$ of Section \ref{sec-dtheta-T}. This operator is a first-order elliptic Hermitian differential operator inducing a bisectorial operator on $\L^p(\T)$ which admits a bounded functional calculus by Theorem \ref{Th-dtheta}.
\end{example}

\begin{remark} \normalfont
If $c > 0$ and if $k \geq 1$ is an integer, we denote by $\mathbb{H}_c^{k+1}$ the $(k+1)$-dimensional hyperbolic space scaled such that its scalar curvature is $-c^2k(k+1)$. 
%We also use the notation $\mathbb{H}_0^{k+1}$ for the $(k+1)$-dimensional Euclidean space $\R^{k+1}$. 
Suppose that $1< p < \infty$ with $p \not=2$. By \cite[Theorem 1.1]{AmG16}, the $\L^p$-spectrum of the Dirac operator $D$ on $\L^p(\mathbb{H}_0^{k+1},S)$ is not contained in a bisector. Consequently, this Dirac operator $D$ is not bisectorial with sharp contrast with the case of $\mathbb{T}$.
\end{remark}

%%%%%%%%%%%%%%%%%%%%%%%%%%%%%%%%%%%%%%%%%%%%%%%%%%%%%%%%%%%%%%%%%%%%%%%%%%%%%%
\section{Banach Fredholm modules arising from noncommutative Hardy spaces}
\label{Examples-Hardy}

%%%%%%%%%%%%%%%%%%%%%%%%%%%%%%%%%%%%%%%%%%%%%%%%%%%%%%%%%%%%%%%%%%%%%%%%%%%%%%%%%%%%%%%%%%%%%
\subsection{Noncommutative Hardy spaces associated to subdiagonal algebras}
\label{sec-noncommutative-Hilbert-transform}

First, we explain how introduce Hilbert transform associated to subdiagonal algebras of von Neumann algebras. Such algebras were introduced by Arveson in \cite{Arv67} in the case of finite von Neumann algebras and in \cite{Bek15} for the semifinite case. We refer also to  \cite{BlL07}, \cite{Ji14}, \cite{LaX13}, \cite{MaW98}, \cite{Ran98}, \cite{Ran02} and \cite{PiX03} for more information on Hardy spaces associated with subdiagonal algebras of von Neumann algebras. Let $\cal{M}$ be a von Neumann algebra endowed with a normal semifinite faithful trace and let $\mathbb{E} \co \cal{M} \to \cal{M}$ be a trace preserving conditional expectation onto a von Neumann subalgebra $\D$ of $\cal{M}$. Following \cite[Definition 2.1 p.~1350]{Bek15}, a subdiagonal algebra $\H^\infty(\cal{M})$ with respect to $\mathbb{E}$ is a weak* closed subalgebra of $\cal{M}$ such that
\begin{enumerate}
	\item $\H^\infty(\cal{M})  \cap (\H^\infty(\cal{M}) )^* = \D$,
	 \item the trace preserving conditional expectation $\mathbb{E} \co \L^\infty(\cal{M}) \to \L^\infty(\cal{M})$ is multiplicative on $\H^\infty(\cal{M})$, i.e.~we have $\mathbb{E}(f  g) = \mathbb{E}(f) \mathbb{E}(g)$ for any $f,g \in \H^\infty(\cal{M})$,
  \item the sum $\H^\infty(\cal{M})+ (\H^\infty(\cal{M}) )^*$ is weak* dense in $\L^\infty(\cal{M})$.
\end{enumerate}
Here, we warn the reader that $(\H^\infty(\cal{M}) )^*$ is not the dual of $\H^\infty(\cal{M})$, the family of the adjoints of the elements of $\H^\infty(\cal{M})$. In this case, $\D$ is called the <<diagonal>> of the algebra $\H^\infty(\cal{M})$. We also define $\H_0^\infty(\cal{M}) \ov{\mathrm{def}}{=}  \{f \in \H^\infty(\cal{M}) : \mathbb{E}(f)=0 \}$. 

Suppose that $1 < p < \infty$. The closures of the subspaces $\H^\infty(\cal{M}) \cap \L^p(\cal{M})$ and $\H_0^\infty(\cal{M}) \cap \L^p(\cal{M})$ in $\L^p(\cal{M})$ will be denoted by $\H^p(\cal{M})$ and $\H^p_0(\cal{M})$. Note that $\mathbb{E}$ extends to a contractive projection $\mathbb{E} \co \L^p(\cal{M}) \to \L^p(\cal{M})$ onto the subspace $\L^p(\D)$ with kernel $\H^p_0(\cal{M})$. By \cite[Theorem 4.2 p.~1356]{Bek15}, we have a canonical topological direct sum decomposition
\begin{equation}
\label{}
\L^p(\cal{M})
=(\H_0^p(\cal{M}))^*\oplus \L^p(\D) \oplus \H_0^p(\cal{M}).
\end{equation}
In this setting any $f \in \L^p(\cal{M})$ admits a unique decomposition
\[
f 
=h^*+\delta+g, \quad \text{ with } g, h \in {\H_0^p(\cal{M})},  \delta \in \H^p(\D).
\] 
The Riesz projection $P \co \L^p(\cal{M}) \to \L^p(\cal{M})$, defined by
\begin{equation}
\label{}
%P(x)
%\ov{\mathrm{def}}{=} \frac{1}{2}\big(x+\i \ovl{\tilde{x}}+\E(x)\big), \quad x \in \L^p(\cal{M}),
P(h^*+\delta+g)
=\delta+g,
\end{equation}
is a bounded projection on the subspace $\H^p(\cal{M})$ for any $1 < p < \infty$. The Hilbert transform $\cal{H} \co \L^p(\cal{M}) \to \L^p(\cal{M})$ associated to the subdiagonal algebra $\H^\infty(\cal{M})$ is the bounded operator defined by
\begin{equation}
\label{Hilbert-transform-NC-Hardy}
\cal{H}(f) 
\ov{\mathrm{def}}{=} -\i(-h^*+g).
\end{equation}

\paragraph{Toeplitz operators} Given $a \in \L^\infty(\cal{M})$, the Toeplitz operator $T_a$  with symbol $a$ is defined  to be the map
\begin{equation}
\label{Toeplitz}
T_a \co \H^p(\cal{M})\to \H^p(\cal{M}), b \mapsto P_+(ab),
\end{equation}
where $P_+ \co \L^p(\cal{M}) \to \H^p(\cal{M})$ is the corestriction of the Riesz projection $P \co \L^p(\cal{M}) \to \L^p(\cal{M})$ onto the subspace $\H^p(\cal{M})$.

\paragraph{Banach Fredholm module and pairing with the $\K$-theory} 
Let $\cal{A}$ be a unital subalgebra of $\cal{M}$. Under suitable assumption, we introduce a Banach Fredholm module and we describe the pairing. We consider the homomorphism $\pi \co \cal{A} \to \B(\L^p(\cal{M}))$, $a \mapsto M_a$, where $M_a \co \L^p(\cal{M}) \to \L^p(\cal{M})$, $y \mapsto ay$ is the multiplication operator by $a$. We consider the bounded operator 
\begin{equation}
\label{def-de-F}
F \ov{\mathrm{def}}{=} \i\cal{H}+\mathbb{E} \co \L^p(\cal{M}) \to \L^p(\cal{M}).
\end{equation}

\begin{prop}
\label{prop-Hilbert-discrete-1}
Suppose that $1 < p < \infty$. Assume that the commutator $[\cal{H},M_a]$ is a compact operator acting on $\L^p(\cal{M})$ for any $a \in \cal{A}$ and that the algebra $\cal{D}$ is finite-dimensional. Then $(\L^p(\cal{M}),\pi,F)$ is an odd Banach Fredholm module. For any invertible element $a \in \cal{A}$, the Toeplitz operator $T_a$ is Fredholm and we have
\begin{equation}
\label{index-with-Toeplitz}
\big\la [a], (\L^p(\cal{M}),F) \big\ra_{\K_1(\cal{A}),\K^1(\cal{A},\scr{L}^p_{\nc})}
=\Index T_a.
\end{equation}
\end{prop}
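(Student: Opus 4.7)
The plan is to verify directly that $F^2 = \Id_{\L^p(\cal{M})}$, that $[F, M_a]$ is compact for every $a \in \cal{A}$, and then to apply Theorem \ref{th-pairing-odd} together with formula \eqref{pairing-odd-2}, identifying the compressed operator with the Toeplitz operator $T_a$.

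\textbf{Step 1: identify $F$ with $2P-\Id$.} For any $f \in \L^p(\cal{M})$, decomposing $f = h^* + \delta + g$ with $h,g \in \H_0^p(\cal{M})$ and $\delta \in \L^p(\D)$, I compute $\cal{H}(f) \ov{\eqref{Hilbert-transform-NC-Hardy}}{=} \i h^* - \i g$, hence $\i \cal{H}(f) = -h^* + g$, and $\E(f) = \delta$. Therefore $F(f) = -h^* + \delta + g$ and $(\Id + F)(f) = 2\delta + 2g = 2P(f)$. This gives $F = 2P - \Id$, and since $P$ is a bounded projection, $F^2 = (2P-\Id)^2 = 4P - 4P + \Id = \Id$.

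\textbf{Step 2: compactness of the commutators.} Since $\D$ is finite-dimensional, the subspace $\L^p(\D)$ of $\L^p(\cal{M})$ is finite-dimensional, so $\E \co \L^p(\cal{M}) \to \L^p(\cal{M})$ has finite rank. Writing
$$
[F, M_a] = \i[\cal{H}, M_a] + [\E, M_a],
$$
the first term is compact by hypothesis while the second is of finite rank, hence compact. Together with Step 1, this shows that $(\L^p(\cal{M}), F)$ satisfies Definition \ref{def-Fredholm-module-odd}.

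\textbf{Step 3: identification of the pairing.} Apply Theorem \ref{th-pairing-odd} with $n = 1$ to the invertible element $a \in \cal{A}$: the bounded operator $PM_aP - (\Id-P)$ is Fredholm. Since $P^2 = P$ holds exactly (not merely modulo a compact operator), formula \eqref{pairing-odd-2} applies and gives
$$
\big\la [a], [(\L^p(\cal{M}), F)] \big\ra_{\K_1(\cal{A}),\K^1(\cal{A},\scr{L}^p_{\nc})} = \Index\bigl(PM_aP \co P(\L^p(\cal{M})) \to P(\L^p(\cal{M}))\bigr).
$$
But $P(\L^p(\cal{M})) = \H^p(\cal{M})$ and the restriction of $PM_aP$ to this subspace is, by \eqref{Toeplitz}, precisely the Toeplitz operator $T_a$. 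This simultaneously shows that $T_a$ is Fredholm and yields \eqref{index-with-Toeplitz}.

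\textbf{Main obstacle.} The algebraic computation establishing $F = 2P - \Id$ is the essential step; once it is in place, $F^2 = \Id$ and the identification of the compression with $T_a$ follow immediately. The finite-dimensionality hypothesis on $\D$ is precisely what ensures that the correction term $\E$ in \eqref{def-de-F} (added to make $\frac{\Id + F}{2}$ equal the Riesz projection on the nose, rather than up to a finite-rank operator) contributes only a finite-rank perturbation to the commutators.
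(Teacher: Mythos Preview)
Your proof is correct and follows essentially the same approach as the paper's own proof. The only difference is cosmetic: you establish $F^2=\Id$ by first deriving the identity $F=2P-\Id$ explicitly from the decomposition $f=h^*+\delta+g$, whereas the paper asserts $F^2=(\i\cal{H}+\E)^2=\Id$ directly from \eqref{Hilbert-transform-NC-Hardy}; the remaining steps (compactness of $[F,M_a]$ via the finite-rank nature of $\E$, and the identification of $PM_aP$ on $\H^p(\cal{M})$ with $T_a$ via \eqref{pairing-odd-2}) are identical.
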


\begin{proof}
Since the map $\mathbb{E} \co \L^p(\cal{M}) \to \L^p(\cal{M})$ is a projection on the subspace $\L^p(\cal{D})$, we have with \eqref{Hilbert-transform-NC-Hardy}
$$
F^2
=(\i\cal{H}+\mathbb{E})^2
=\Id.
$$
Moreover, for any $a \in \cal{A}$ we see that
$$
[F,M_a]
=[\i\cal{H}+\mathbb{E},M_a]
=\i[\cal{H},M_a]+[\mathbb{E},M_a]
$$
is a compact operator, since the latter commutator is finite-rank operator. Hence $(\L^p(\cal{M}), F)$ is an odd Banach Fredholm module. 
%The operator $F' \ov{\mathrm{def}}{=} F+\E \co S^p_n \to S^p_n$ is clearly a Banach Fredholm module since $(F')^2=\Id_{S^p_n}$ and a compact perturbation of $\i \cal{H}$. 
Now, observe that the map $P \ov{\mathrm{def}}{=} \frac{\Id+F}{2}\co \L^p(\cal{M}) \to \L^p(\cal{M})$ is the Riesz projection. Let $a \in \cal{A}$ be an invertible element. We deduce that $P a P \co P(\L^p(\cal{M})) \to P(\L^p(\cal{M}))$ is a Fredholm operator. This operator identifies to the Toeplitz operator $T_a \co \H^p(\cal{M}) \to \H^p(\cal{M})$ and that 
\begin{align*}
\MoveEqLeft
  \big\la [a], (\L^p(\cal{M}),F) \big\ra_{\K_1(\cal{A}),\K^1(\cal{A},\scr{L}^p_{\nc})}       
\ov{\eqref{pairing-odd-2}}{=} \Index P a P \co P(\L^p(\cal{M})) \to P(\L^p(\cal{M}))
\ov{\eqref{Toeplitz}}{=} T_a.
\end{align*}
\end{proof}

%Because $\cal{M}$ is finite, there will for any von Neumann subalgebra $\cal{N}$ of $\cal{M}$, always exist a normal contractive projection $\Psi \co \cal{M} \to \cal{N}$ satisfying $\tau \circ \Psi = \tau$. This is the so-called normal faithful conditional expectation onto $\cal{N}$ with respect to $\tau$.

%///////////
%\textbf{ECLAICIR}
%
%Let $u \in \Re(A \cap \L^p(\cal{M}))$. So $u = \Re x$ for some $x \in A \cap \L^p(\cal{M})$. Let $a = x - \frac{1}{2}\E(x - x^*)$. Then $a \in A \cap \L^p(\cal{M})$, $u = \Re a$ and $\E(\Im a) = 0$. Thus there exists an $\tilde{u} = \Im a \in \Re(A \cap \L^p(\cal{M}))$ such that $a = u + \i\tilde{u} \in A$ and $\E(\tilde{u}) = \E(\Im a) = 0$. We now show that such an element of $\Re(A \cap \L^p(\cal{M}))$ is unique.
%
%Thus we can define $\tilde{u} = \Im a$, where $a$ is the unique element of $\cal{M}$ with $u = \Re a$ and $\E(\Im a) = 0$. It is clear that $u \mapsto \tilde{u}$ is real linear. We shall call $\tilde{u}$ the conjugate of $u$.
%
%////////////

%\begin{example} \normalfont
%Nest algebras. \cite{Davidson1988Nest} of totally ordered families of projections
%\end{example}

%%%%%%%%%%%%%%%%%%%%%%%%%%%%%%%%%%%%%%%%%%%%%%%%%%%%%%%%%%%%%%%%%%%%%%%%%%%%%%%%%%%%%%%%%%%%%
\subsection{Classical Hardy spaces}
\label{sec-classical-Hardy}

The fundamental example of subdiagonal algebras is the algebra $\H^\infty(\T)$ on the unit circle. In this case the index pairing is described in Proposition \ref{prop-pairing-T}. We can also state a similar result for the matrix-valued Hardy space $\H^\infty(\T,\M_n)$, which be of interest for the multivariate prediction theory. This algebra is a subalgebra of the von Neumann algebra $\L^\infty(\T,\M_n)$. 

%More precisely, let $\cal{M}=\L^\infty(\T)\ot \M_n= \L^\infty(\T,\M_n)$ equipped with the product trace, and let $\A=\H^\infty(\T,\M_n)$ -- the subalgebra of $\cal{M}$ consisting of $n\times n$-matrices with entries in $\H^\infty(\T)$.
%\end{example}
%%%%%%%%%%%%%%%%%%%%%%%%%%%%%%%%%%%%%%%%%%%%%%%%%%%%%%%%%%%%%%%%%%%%%%%%%%%%%%%%%%%%%%%%%%%%%
\subsection{Fredholm module associated to the discrete Schur-Hilbert transform on $S^p_n$}
\label{sec-noncommutative-Hilbert-transform}

%\textbf{Difference operator Survey Cipriani}

%Suppose that $1 < p < \infty$. Consider the discrete Schur-Hilbert transform $F \co S^p_\Z \to S^p_\Z$. This transformation is a (completely) bounded Schur multiplier defined by the symbol $[\sign(i-j)]_{i,j \in \Z}$ on the Schatten class $S^p_\Z\ov{\mathrm{def}}{=} S^p(\ell^2_\Z)$.
%%$$
%%\begin{bmatrix}
  %%\ddots & \ddots & \ddots &  &  \\
 %%\ddots  & 1 & 1 & 1 &  \\
 %%\ddots  & -1 & 1 & 1 & \ddots \\
   %%& -1 & -1 & 1 & \ddots \\
   %%&  & \ddots & \ddots &   \ddots\\
%%\end{bmatrix}.
%%$$
%We consider the representation $\pi \co \M_{\Z,\fin} \to \B(S^p_\Z)$, $a \mapsto M_a$ of the algebra $\M_{\Z,\fin}$ of matrices with a finite number of non-zero entries, where $M_a \co S^p_\Z \to S^p_\Z$, $ y\mapsto ay$ is the multiplication operator by the matrix $a$. For any integers $i,j,k,l \in \Z$, we have
%\begin{align*}
%\MoveEqLeft      
%\left[ F,M_{e_{ij}} \right] (e_{kl})
%=FM_{e_{ij}}(e_{kl})-M_{e_{ij}}F(e_{kl}) 
%=\delta_{jk}F(e_{il})-\sign(k-l)M_{e_{ij}}(e_{kl}) \\
%&=\delta_{jk}\sign(i-l) e_{il}-\sign(k-l)\delta_{jk}e_{il} 
%=\delta_{jk}[\sign(i-l) -\sign(k-l)]e_{il}.
%\end{align*}

Consider the trace preserving conditional expectation $\E \co \M_n \to \M_n$ onto the diagonal subalgebra $\D=\ell^\infty_n$ of $\M_n$, where $\M_n$ is equipped with its canonical trace. The <<upper triangle subalgebra>> $\mathrm{T}_n$ of all upper triangular matrices is a finite subdiagonal algebra of $\M_n$.
%\begin{example} \normalfont
%A third example is the upper triangle subalgebra $\mathrm{T}_n$ of $\M_n$. This example is closely related to the second one, and is a finite dimensional nest algebra. This is a finite subdiagonal algebra with respect to the diagonal subalgebra $\ell^\infty_n \subset \M_n(\mathbb{C})$.
%\end{example}
Suppose that $1 < p < \infty$. In this context, the Hilbert transform $\cal{H} \co S^p_n \to S^p_n$ is the <<discrete Schur-Hilbert transform>> defined as being the Schur multiplier with symbol $[-\i\, \sign(i-j)]$. The Riesz projection $P \co S^p_n \to S^p_n$ is the triangular projection. % and the map $F \co S^p_n \to S^p_n$ is the Schur multiplier with symbol $[\sign(i-j)]_{1 \leq i,j \leq n}$
%$$
%\begin{bmatrix}
  %\ddots & \ddots & \ddots &  &  \\
 %\ddots  & 1 & 1 & 1 &  \\
 %\ddots  & -1 & 1 & 1 & \ddots \\
   %& -1 & -1 & 1 & \ddots \\
   %&  & \ddots & \ddots &   \ddots\\
%\end{bmatrix}.
%$$
We consider the representation $\pi \co \M_{n} \to \B(S^p_n)$, $a \mapsto M_a$ of the matrix algebra $\M_{n}$, where $M_a \co S^p_n \to S^p_n$, $ y\mapsto ay$ is the multiplication operator by the matrix $a$. The couple $(S^p_n, F)$ is obviously an odd Banach Fredholm module. The assumptions of Proposition \ref{prop-Hilbert-discrete-1} are obviously satisfied for the algebra $\cal{A}=\M_n$. Since $\K_1(\M_n)=0$, we obtain the following result.

%\begin{prop}
%
%\end{prop}
%
%\begin{proof}
%For any integers $1 \leq i,j,k,l \leq n$, we have
%\begin{align*}
%\MoveEqLeft      
%\left[ F,M_{e_{ij}} \right] (e_{kl})
%=FM_{e_{ij}}(e_{kl})-M_{e_{ij}}F(e_{kl}) 
%=\delta_{jk}F(e_{il})-\sign(k-l)M_{e_{ij}}(e_{kl}) \\
%&=\delta_{jk}\sign(i-l) e_{il}-\sign(k-l)\delta_{jk}e_{il} 
%=\delta_{jk}[\sign(i-l) -\sign(k-l)]e_{il}.
%\end{align*}
%The conclusion is obvious by approximation.
%\end{proof}

\begin{prop}
\label{prop-Hilbert-discrete}
Suppose that $1 < p < \infty$. For any invertible matrix $a \in \M_n$, we have
 \begin{equation}
\label{}
\big\la [a], (S^p_n,F) \big\ra_{\K_1(\M_n),\K^1(\M_n,\scr{L}^p_{\nc})}
=0.
\end{equation}
%where $P \co S^p_n \to S^p_n$ is the triangular projection.
\end{prop}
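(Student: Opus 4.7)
The plan is to give two independent short arguments, either of which proves the statement, and I would present the more conceptual one (via $\K$-theory) together with the hands-on verification (via the Toeplitz operator).

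First I would verify that the hypotheses of Proposition~\ref{prop-Hilbert-discrete-1} are met in this finite-dimensional setting: the diagonal subalgebra $\D=\ell^\infty_n$ is finite-dimensional, and every commutator $[\cal{H},M_a]$ is a finite-rank (hence compact) operator on the finite-dimensional Banach space $S^p_n$. So \eqref{index-with-Toeplitz} applies and
$$
\big\la [a], (S^p_n,F) \big\ra_{\K_1(\M_n),\K^1(\M_n,\scr{L}^p_{\nc})}
=\Index T_a,
$$
where $T_a \co \H^p(\mathrm{T}_n) \to \H^p(\mathrm{T}_n)$ is the Toeplitz operator \eqref{Toeplitz}.

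For the conceptual argument I would invoke $\K$-theory: for every $k\ge 1$ the group $\GL_k(\M_n) = \GL_{kn}(\mathbb{C})$ is path-connected, so $\GL_\infty(\M_n)_0=\GL_\infty(\M_n)$ and consequently $\K_1(\M_n)=0$. Hence $[a]=0$ in $\K_1(\M_n)$ for every invertible $a\in\M_n$, and the bilinearity guaranteed by Theorem~\ref{th-pairing-odd} yields the result.

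Alternatively, and more concretely, I would observe that $\H^p(\mathrm{T}_n)$ identifies with the finite-dimensional space $\mathrm{T}_n$ of upper triangular matrices. Any bounded operator $T \co X \to X$ on a finite-dimensional space $X$ has $\Index T = \dim\ker T - \dim(X/\Ran T) = 0$ by the rank-nullity theorem, so $\Index T_a = 0$. No obstacle is expected here: the content of the statement is really that, in the finite-dimensional setting, the $\K^1$--$\K_1$ pairing is forced to be trivial because the $\K_1$ side vanishes.
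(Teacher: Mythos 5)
Your proposal is correct, and your primary argument is exactly the one the paper uses: it notes that the hypotheses of Proposition~\ref{prop-Hilbert-discrete-1} are trivially satisfied and then concludes immediately from $\K_1(\M_n)=0$. Your alternative direct argument — that $T_a$ acts on the finite-dimensional space $\mathrm{T}_n$ (with the $S^p_n$ norm), so its index vanishes by rank-nullity — is a nice elementary cross-check that the paper does not spell out, and it has the small advantage of not relying on the well-definedness of the pairing asserted in Theorem~\ref{th-pairing-odd}.
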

%, $a \to \frac{1}{n}(a_{11}+\cdots a_{nn})$

%\begin{proof}
%Consider the projection $Q \co S^p_n \to S^p_n$ onto the diagonal of $S^p_n$. So $F' \ov{\mathrm{def}}{=} F+Q\co S^p_n \to S^p_n$ is clearly a Banach Fredholm module since $(F')^2=\Id_{S^p_n}$ and a compact perturbation of $F$. Moreover, observe that the map $P \ov{\mathrm{def}}{=} \frac{\Id+F'}{2}\co S^p_n \to S^p_n$ is the triangular projection. The range of this map is the Hardy space $\H^p(\M_n)$. Let $a \in \M_n$ be an invertible matrix. Since $\K_1(\M_n)=0$, we deduce that
%\begin{align*}
%\MoveEqLeft
%\big\la [a], (S^p_n,F) \big\ra_{\K_1(\M_n),\K^1(\M_n,\scr{L}^p_{\nc})}         
%\ov{\eqref{pairing-odd-2}}{=} \Index P M_a P 
%=0.
%\end{align*}
%\end{proof}

%\begin{remark} \normalfont
%Recall that in the context of Hardy spaces associated to subdiagonal algebras, the Toeplitz operator with symbol $a \in \L^\infty(\cal{M})$ is defined to be the map
%$$
%T_a \co \H^p(\cal{M})\to \H^p(\cal{M}), b \mapsto P(ab),
%$$
%where $P$ denotes the projection from $\L^p(\cal{M})$ onto $\H^p(\cal{M})$. In Proposition \ref{prop-Hilbert-discrete}, the operator $P M_a P \co \Ran P \to \Ran P$ identifies to the Toeplitz operator $T_a \co \H^p(\M_n) \to \H^p(\M_n)$, $b \mapsto P(ab)$ with symbol $a \in \M_n$.
%\end{remark}

\begin{remark} \normalfont
It is possible to state in the same spirit more complicated statements for nest algebras.
\end{remark}

%%%%%%%%%%%%%%%%%%%%%%%%%%%%%%%%%%%%%%%%%%%%%%%%%%%%%%%%%%%%%%%%%%%%%%%%%%%
\subsection{Banach Fredholm modules on reduced group $\mathrm{C}^*$-algebras}
\label{Section-reduced}
%Su-Intro to Left-Orderable Groups

\paragraph{Orderable groups} An order relation $\preceq$ on a group $G$ is left-invariant (resp.~right-invariant) if for all $s, t \in G$ such that $s \preceq t$, one has $rs \preceq rt$ (resp.~$sr \preceq tr$) for all $r \in G$. The relation is bi-invariant if it is simultaneously left-invariant and right-invariant. We will use the term left-ordering (resp.~right-ordering, bi-ordering) for referring to a left-invariant (resp.~right-invariant, right-invariant and left-invariant) \textit{total} order on a group.

Following \cite[p.~2]{ClR16}, we will say that a group $G$ is left-orderable (resp.~right-orderable, bi-orderable) if it admits a total order which is invariant by the left (resp.~by the right, simultaneously by the left and right). We also refer to \cite{DNR16} for more information on these classes of groups. We will write $r \prec s$ when $r \preceq s$ with $r \neq s$. Note that by \cite[Proposition 7.5 p.~277]{KaT08} or \cite[Proposition 1.3 p.~2]{ClR16}, any orderable group $G$ is torsion-free.

%A left-orderable group is a group $G$ admitting a total order $\preceq$ that is invariant under left multiplications, i.e.~$s \preceq t$ implies $rs \preceq rt$ for any $r,s,t \in G$. In this case, we say that $\preceq$ is a left-order. We say that $\preceq$ \textbf{is} .  

It is known \cite[Proposition 1.1.8 p.~11]{DNR16} \cite[Theorem 2.23 p.~28]{ClR16} that a countable group is left-orderable if and only if it is isomorphic to a subgroup of the set of orientation-preserving homeomorphisms of $\R$.

\paragraph{Fourier multipliers} Let $G$ be a discrete group. We denote by $\VN(G)$ its group von Neumann algebra. Suppose that $1 \leq p <\infty$. We say that a complex function $\varphi \co G \to \mathbb{C}$ induces a bounded Fourier multiplier $\M_\varphi \co \L^p(\VN(G)) \to \L^p(\VN(G))$ if the linear map $\mathbb{C}[G] \to \mathbb{C}[G]$, $\lambda_s \mapsto \varphi_s \lambda_s$ extends to a bounded map $M_\varphi$ on the noncommutative $\L^p$-space $\L^p(\VN(G))$, we use the canonical normalized normal finite faithful trace $\tau$ on the algebra $\VN(G)$ for defining the noncommutative $\L^p$-space. Here $\mathbb{C}[G]$ is the group algebra of $G$. We say that $\varphi$ is the symbol of $\M_\varphi$.

\paragraph{Hilbert transforms} Assume that $G$ is a left-orderable discrete group and endowed with a left-order $\preceq$. In this context, we can introduce the function $\sign \co G \to \mathbb{C}$ defined by
$$
\sgn s
\ov{\mathrm{def}}{=} 
\begin{cases}
  1 & \text{ if } e \prec s \\
  0 & \text{ if } s = e \\
 -1 & \text{ if } s \prec e
\end{cases}.
$$
Moreover, we can consider the subalgebra $\cal{D} \ov{\mathrm{def}}{=}\mathbb{C} 1$ of the group von Neumann algebra $\VN(G)$. The linear map $\mathbb{E} \co \VN(G) \to \VN(G)$, $f \mapsto \tau(f)1$ is a trace preserving normal faithful conditional expectation onto the <<diagonal>> subalgebra $\cal{D}$. If $G$ is left-orderable and endowed with a left-order $\preceq$ then  it is essentially\footnote{\thefootnote. The proposed definition in $\H^\infty(\VN(G))$ is slightly incorrect since the <<noncommutative Fourier series>> of elements in $\VN(G)$ does not converges in general.} observed in \cite{GPX22} that
\[
\H^\infty(\VN(G))
\ov{\mathrm{def}}{=} \ovl{\bigg\{ \text{finite sums } \sum_{e \preceq s} a_s \, \lambda_s  :  a_s \in \mathbb{C} \bigg\}}^{\w*}
\]
%that there exists a function $\varphi \co G \to \C$ such that
%for any $g\in G$ we have $T(\lambda_g)=\varphi_g \lambda_g$. In this
%case, we denote $T$ by
%$$
%\begin{array}{cccc}
   %M_\varphi :   &    VN(G)      &  \longrightarrow   & VN(G)   \\
          %&    \lambda_g  &  \longmapsto       & \varphi_g\lambda_g.   \\
%\end{array}
%$$
is a subdiagonal algebra whose Hilbert transform $\cal{H} \co \L^p(\VN(G)) \to \L^p(\VN(G))$ coincides with the bounded Fourier multiplier with symbol $-\i\, \sign$. Actually the boundedness of the last operator was already known for abelian groups \cite{ABG90}.

The map $F \ov{\eqref{def-de-F}}{=} \i\cal{H}+\mathbb{E} \co \L^p(\VN(G)) \to \L^p(\VN(G))$ is the Fourier multiplier defined by the symbol $\varphi \co G \to \mathbb{C}$ defined by
$$
\varphi_s
\ov{\mathrm{def}}{=} 
\begin{cases}
  1 & \text{ if } e \preceq s \\
 -1 & \text{ if } s \prec e
\end{cases}.
$$
Finally, the linear map $P_+ \co \L^p(\cal{M}) \to \H^p(\cal{M})$ satisfies $P_+(\lambda_s)=\lambda_s=0$ for any $s \in G$ satisfying $e \preceq s$ and $P_+(\lambda_s)=0$ otherwise.

The following result says that the assumptions of Proposition \ref{prop-Hilbert-discrete-1} are satisfied under additional  condition.

\begin{prop}
Suppose that $1 < p < \infty$. Let $G$ be a left-orderable discrete group endowed with a left-order such that the set $\{t \in G : \sgn(st) \not= \sgn(t)\}$ is finite for any $s \in G$. The commutator $[\cal{H},M_a]$ is a compact operator acting on the Banach space $\L^p(\VN(G))$ for any element $a$ of the reduced group $\C^*$-algebra $\C^*_{\red}(G)$ of the group $G$. Moreover, $(\L^p(\VN(G)),\pi,F)$ is an odd Banach Fredholm module. 
\end{prop}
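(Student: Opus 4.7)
The plan is a standard density-and-reduction argument. First I would observe that the homomorphism $\pi \co \C^*_\red(G) \to \B(\L^p(\VN(G)))$, $a \mapsto M_a$, is contractive, since left multiplication by an element of $\VN(G)$ on a noncommutative $\L^p$-space has operator norm bounded by its $\C^*$-norm. Hence $a \mapsto [F, M_a]$ is norm-continuous from $\C^*_\red(G)$ into $\B(\L^p(\VN(G)))$, and the set
$$
\cal{I} \ov{\mathrm{def}}{=} \{a \in \C^*_\red(G) : [F, M_a] \text{ is compact on } \L^p(\VN(G))\}
$$
is a norm-closed linear subspace. Since the group algebra $\mathbb{C}[G]$ is dense in $\C^*_\red(G)$ and the commutator is linear in $a$, it suffices to check that $\lambda_g \in \cal{I}$ for every $g \in G$.

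The second step is a direct calculation. Since $F$ is the Fourier multiplier with symbol $\varphi_s = 1$ if $e \preceq s$ and $\varphi_s = -1$ otherwise, one computes
$$
[F, M_{\lambda_g}](\lambda_s) = (\varphi_{gs} - \varphi_s)\,\lambda_{gs}, \quad s \in G.
$$
The coefficient takes values in $\{-2, 0, 2\}$ and is nonzero exactly when $s$ and $gs$ lie on opposite sides of $e$. Left-invariance of the order then pins the support of $s \mapsto \varphi_{gs} - \varphi_s$ to the interval $\{s \in G : g^{-1} \preceq s \prec e\}$ when $e \prec g$ (symmetrically when $g \prec e$; when $g = e$ the commutator vanishes trivially). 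The main obstacle is the finiteness of this interval, which is the key structural input on the left-order that I would invoke here. Granting it, the commutator $[F, M_{\lambda_g}]$ is a finite-rank operator whose range is contained in $\vect\{\lambda_{gs} : g^{-1} \preceq s \prec e\}$, and in particular is a compact operator on $\L^p(\VN(G))$.

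To conclude that $(\L^p(\VN(G)), F)$ is an odd Banach Fredholm module over $\C^*_\red(G)$, I would note that $\varphi_s^2 = 1$ for every $s \in G$, so $F^2$ acts as the identity on each basis element $\lambda_s$; by density this gives $F^2 - \Id = 0$, which is trivially compact. Combined with the compactness of $[F, M_a]$ established above for every $a \in \C^*_\red(G)$, the axioms of Definition~\ref{def-Fredholm-module-odd} are verified.
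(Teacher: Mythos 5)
Your reduction by density and linearity, and your direct computation of the commutator, track the paper's own proof step for step: the paper also computes
$$
[\cal{H}, M_{\lambda_g}](\lambda_s) = \i\big[\sgn(s)-\sgn(gs)\big]\lambda_{gs},
$$
and then invokes density of $\mathbb{C}[G]$ in $\C^*_\red(G)$ together with continuity of $\pi$. The place where you differ from the paper is that you are \emph{more} careful: you isolate the actual requirement, namely that the support $\{s \in G : g^{-1} \preceq s \prec e\}$ (for $e \prec g$) be finite. The paper simply asserts that $[\cal{H}, M_{\lambda_g}]$ is a finite-rank operator with no comment at all; you correctly flag this as "the key structural input on the left-order."

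The problem is that this input is false for a general left-ordered group, so "granting it" is not harmless: the gap is unfillable as the proposition is stated. Take $G = \Z^2$ with the lexicographic bi-order and $g = (1,0)$, so $g^{-1} = (-1,0)$. Then $\{s : g^{-1} \preceq s \prec e\}$ contains every $(-1,b)$ with $b \geq 0$ and every $(0,b)$ with $b < 0$, hence is infinite; the commutator $[F, M_{\lambda_g}]$ sends each of these $\lambda_s$ to $2\lambda_{gs}$ with pairwise distinct $gs$, and therefore has infinitely many singular values equal to $2$ and is not compact. The same obstruction already appears for the Dehornoy ordering on $\B_3$ with $g=\sigma_1$: for every $k > 0$ one has $\sigma_1^{-1} \prec \sigma_2^{-k} \prec e$ (since $\sigma_1\sigma_2^{-k}$ is $1$-positive), giving an infinite interval. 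The paper's own subsequent remark, that the singular values of $[\cal{H}, M_{\lambda_s}]$ on $\L^2$ are $|\sgn(t_n) - \sgn(st_n)| \in \{0,1,2\}$, signals the same issue but draws too weak a conclusion: what fails when infinitely many of these equal $2$ is not merely finite summability but compactness itself. Both your argument and the paper's require an additional hypothesis on the order (that each interval $\{s : g^{-1} \preceq s \prec e\}$ be finite, as it is for $\Z$ with its usual order) for the proposition to hold.
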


\begin{proof}
For any $s,t \in G$, we have
\begin{align*}
\MoveEqLeft        
\left[\cal{H} ,M_{\lambda_s}\right](\lambda_t)
=\cal{H}M_{\lambda_s}(\lambda_t)-M_{\lambda_s}\cal{H}(\lambda_t) 
=\cal{H}(\lambda_{st})+\i \sgn(t) M_{\lambda_s} \lambda_t \\
&=-\i \sgn(st)\lambda_{st}+\i \sgn(t)  \lambda_{st}
=\i\big[- \sgn(st)+\sgn(t)\big]\lambda_{st}.
\end{align*}
If $f=\sum a_s\lambda_s$ is a finite sum belonging to the group algebra $\mathbb{C}[G]$ then we deduce by linearity that the commutator $\left[\cal{H} ,M_{f}\right]$ is a finite-rank operator. We conclude that for any $a \in \C^*_{\red}(G)$ the commutator $\left[\cal{H} ,M_{a}\right]$ is compact, as a limit of finite-rank operators, since the homomorphism $\pi \co \C^*_{\red}(G) \to \B(\L^p(\VN(G)))$, $a \mapsto M_a$ is continuous.
\end{proof}

\begin{remark} \normalfont
Suppose that $p=2$. For any $s \in G$, it is easy to check that the singular values of the operator $\left[\cal{H} ,M_{\lambda_s}\right]$ are given by
$$
s_n(\left[\cal{H} ,M_{\lambda_s}\right])
=|\sgn(t_n)- \sgn(s t_n)|,
$$
where $(t_n)$ is an enumeration of the elements of $G$. So the previous Fredholm module will rarely be finitely summable.
\end{remark}

\subsection{Banach Fredholm module associated to the free Hilbert transform}
\label{sec-free-Hilbert}

Let $\mathbb{F}_\infty$ be the free group with a countable sequence $g_1,g_2,g_3 \ldots$ of generators. We only use reduced words.  For any integer $n \geq 1$, consider the orthogonal projections $L_n^+ \co \L^2(\VN(\mathbb{F}_\infty)) \to \L^2(\VN(\mathbb{F}_\infty))$ and $L_n^- \co \L^2(\VN(\mathbb{F}_\infty)) \to \L^2(\VN(\mathbb{F}_\infty))$ defined by
$$
L_n^+(\lambda_s) 
\ov{\mathrm{def}}{=} 
\begin{cases}
\lambda_s & \text{if }s\text{ starts with the letter }g_n \\
0 & \text{otherwise}
\end{cases}, \ 
L_n^-(\lambda_s) 
\ov{\mathrm{def}}{=} 
\begin{cases}
\lambda_s & \text{if }s\text{ starts with }g_n^{-1} \\
0 & \text{otherwise}
\end{cases}.
$$
The ranges of the various projections $L_n^\pm$ are mutually orthogonal. Let us further consider signs $\epsi_n^+, \epsi_n^- \in \{-1,1\}$ for any integer $n \geq 1$. Following \cite{MeR17}, we define the free Hilbert transform associated with the family $\epsi = (\epsi_n^\pm)_{n\geq 1}$ of signs by
\begin{equation}
\label{free-Hilbert}
\cal{H}_\epsi 
\ov{\mathrm{def}}{=} \sum_{n \geq 1} \big(\epsi_n^+ L_n^+ + \epsi_n^- L_n^-\big).
\end{equation}
For any $s \neq e$, observe that
\begin{equation}
\label{free-Hilbert-action}
\cal{H}_\epsi (\lambda_s)
=\pm \lambda_s 
\quad \text{and} \quad
\cal{H}_\epsi(1)
=0.
\end{equation}
By \cite{MeR17}, the map $\cal{H}_\epsi$ extends to a completely bounded operator on the space $\L^p(\VN(\mathbb{F}_\infty))$ for any $1 < p < \infty$.

For any $a \in \VN(\mathbb{F}_\infty)$, we denote by $R_a \co \L^p(\VN(\mathbb{F}_\infty)) \to \L^p(\VN(\mathbb{F}_\infty))$, $x \mapsto xa$ the right multiplication operator. Then by H\"older's inequality we have $\norm{R_a}_{\L^p(\VN(\mathbb{F}_\infty)) \to \L^p(\VN(\mathbb{F}_\infty))} \leq \norm{a}_{\VN(\mathbb{F}_\infty)}$. So the linear map $\VN(\mathbb{F}_\infty) \to \B(\L^p(\VN(\mathbb{F}_\infty)))$, $a \mapsto R_a$ is a contractive homomorphism and restricts to a contractive homomorphism $\pi$ of $\C^*_\red(\mathbb{F}_\infty)$ into the space $\B(\L^p(\VN(\mathbb{F}_\infty)))$.

\begin{prop}
\label{prop:free-Hilbert-Fredholm-right}
Suppose that $1 < p < \infty$. Consider a family $\epsi=(\epsi_n^\pm)_{n \geq 1}$ of signs. Then $(\L^p(\VN(\mathbb{F}_\infty)),\pi,\cal{H}_\epsi)$ is a Banach Fredholm module over the reduced group $\C^*$-algebra $\C^*_{\mathrm{red}}(\mathbb{F}_\infty)$.
\end{prop}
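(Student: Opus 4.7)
The plan is to verify the two axioms of Definition \ref{def-Fredholm-module-odd} applied to $F = \cal{H}_\epsi$ on the Banach space $X = \L^p(\VN(\mathbb{F}_\infty))$. First I would dispatch the identity $\cal{H}_\epsi^2 - \Id$ compact. The projections $L_n^{\pm}$ for $n \geq 1$ have pairwise disjoint ranges (orthogonal on $\L^2$) and jointly project onto the orthogonal complement of $\mathbb{C}\lambda_e$. Each $L_n^{\pm}$ extends boundedly to $\L^p$ because it can be written as a suitable linear combination of free Hilbert transforms $\cal{H}_{\epsi'}$ (for specific sign patterns, all of which are bounded on $\L^p$ by the Mei--Ricard theorem) and of $Q \ov{\mathrm{def}}{=} \Id - P_e$, where $P_e(x) = \tau(x)\lambda_e$ is the rank-one expectation (bounded on $\L^p$ via $|\tau(x)| \leq \norm{x}_{\L^p(\VN(\mathbb{F}_\infty))}$). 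From the orthogonality relations $L_n^{\pm}L_m^{\pm'} = \delta_{(n,\pm),(m,\pm')} L_n^\pm$ combined with $(\epsi_n^\pm)^2 = 1$ one concludes
\begin{equation*}
\cal{H}_\epsi^2 = \sum_n\big((\epsi_n^+)^2 L_n^+ + (\epsi_n^-)^2 L_n^-\big) = \sum_n (L_n^+ + L_n^-) = \Id - P_e,
\end{equation*}
so that $\cal{H}_\epsi^2 - \Id = -P_e$ has rank one and is therefore compact.

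For the commutator axiom I would exploit the fact that the compact operators form a norm-closed subspace of $\B(\L^p(\VN(\mathbb{F}_\infty)))$, together with the contractivity of $\pi$ and the density of $\mathbb{C}[\mathbb{F}_\infty]$ in $\C^*_{\red}(\mathbb{F}_\infty)$, reducing the verification to showing that $[\cal{H}_\epsi, M_{\lambda_s}]$ is compact for every $s \in \mathbb{F}_\infty$. A direct computation on canonical generators gives
\begin{equation*}
[\cal{H}_\epsi, M_{\lambda_s}](\lambda_t) = \big(\epsi(st) - \epsi(t)\big)\lambda_{st}, \quad t \in \mathbb{F}_\infty,
\end{equation*}
where $\epsi(u)$ is the scalar eigenvalue of $\cal{H}_\epsi$ on $\lambda_u$ (with $\epsi(e) = 0$). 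Factoring the isometry $M_{\lambda_s}$ on $\L^p$ reduces this to showing compactness of the Fourier multiplier $D_s$ on $\L^p(\VN(\mathbb{F}_\infty))$ with symbol $\phi_s(t) = \epsi(st) - \epsi(t)$.

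The main obstacle is proving that $D_s$ is compact. The symbol $\phi_s(t)$ depends on $t$ only through its initial letters of length at most the reduced length of $s$, since the first letter of $st$ can differ from that of $t$ only via cancellation with the tail of $s$. This suggests a decomposition $D_s = \sum_j c_j \Pi_j$ into a finite sum, each $\Pi_j$ being a bounded projection onto the closed span of $\lambda_t$ for $t$ with a prescribed initial segment. The difficulty is that each such $\Pi_j$ is itself of infinite rank, so compactness cannot be read off from the discrete structure of $\phi_s$ alone: on $\L^2$ a Fourier multiplier is compact precisely when its symbol vanishes at infinity, which $\phi_s$ does not. The delicate point is therefore to exploit additional $\L^p$-specific structure, for instance by combining the Mei--Ricard complete boundedness of the family $\{\cal{H}_{\epsi'}\}$ with the Haagerup completely contractive approximation property of $\C^*_{\red}(\mathbb{F}_\infty)$, in order to approximate $D_s$ in operator norm by finite-rank operators on $\L^p(\VN(\mathbb{F}_\infty))$ and thereby conclude the proof.
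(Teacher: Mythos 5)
Your verification of $\cal{H}_\epsi^2-\Id$ is sound: since $\cal{H}_\epsi(\lambda_e)=0$ and $(\epsi_n^{\pm})^2=1$, one indeed gets $\cal{H}_\epsi^2=\Id-P_e$ with $P_e(x)=\tau(x)\lambda_e$, a rank-one operator. Your reduction of the commutator axiom to group elements $\lambda_s$ via continuity of $\pi$, density of $\mathbb{C}[\mathbb{F}_\infty]$ in $\C^*_{\red}(\mathbb{F}_\infty)$, and norm-closedness of the compacts also agrees with the paper. Both of these steps are correct.

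For the commutator itself, the paper's own proof computes $[\cal{H}_\epsi, M_{\lambda_t}](\lambda_s)=(\epsi(ts)-\epsi(s))\lambda_{ts}$ (with $\epsi(u)$ denoting the eigenvalue of $\cal{H}_\epsi$ on $\lambda_u$) and then \emph{asserts} that the resulting commutator has rank zero or one. Your suspicion is well founded: that assertion is incorrect in general, and the obstruction you identified is genuine. Take $t=g_1$ with $\epsi_1^{+}\neq\epsi_2^{+}$. Then for every $k\geq1$ one has $[\cal{H}_\epsi, M_{\lambda_{g_1}}](\lambda_{g_2^k})=(\epsi_1^{+}-\epsi_2^{+})\lambda_{g_1 g_2^k}\neq 0$, and these images are linearly independent, so the commutator has infinite rank. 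Moreover, since $(\lambda_{g_2^k})_{k\geq1}$ is a weakly null sequence of unit vectors in $\L^p(\VN(\mathbb{F}_\infty))$ for any $1<p<\infty$, its image under the commutator is weakly null of constant norm $|\epsi_1^{+}-\epsi_2^{+}|=2$, hence admits no norm-convergent subsequence; the commutator is therefore not compact on any $\L^p(\VN(\mathbb{F}_\infty))$, $1<p<\infty$. This makes your final speculative step (invoking Mei--Ricard complete boundedness plus the CCAP of $\C^*_{\red}(\mathbb{F}_\infty)$ to produce finite-rank approximations) hopeless: the failure is already present at $p=2$ and is structural, not an artefact of the choice of $p$. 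The conclusion is that the proposition cannot hold for arbitrary sign patterns $\epsi$; for instance it does hold trivially when all $\epsi_n^{\pm}$ are equal to a single constant $c$, since then $\cal{H}_\epsi=c(\Id-P_e)$ and each commutator $[\cal{H}_\epsi,M_{\lambda_t}]$ really does have rank at most two. The gap, then, is not in your analysis — it is that the statement as given requires an additional hypothesis on $\epsi$ that neither you nor the paper supplies, and the paper's rank claim is an error.
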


\begin{proof}
For any $s \not =e$, we have $\cal{H}_\epsi^2(\lambda_s) \ov{\eqref{free-Hilbert-action}}{=} \lambda_s$ and $\cal{H}_\epsi^2(1) \ov{\eqref{free-Hilbert-action}}{=} 0$. Consider the rank-one bounded operator $Q \co \L^p(\VN(\mathbb{F}_\infty))\to \L^p(\VN(\mathbb{F}_\infty))$, $x \mapsto \tau(x)\,1$ (it is the $\L^p$-extension of a conditional expectation).  Note that $Q(1)=1$ and $Q(\lambda_s)=0$ for all $s \ne e$. We deduce that $\cal{H}_\epsi^2-\Id=-Q$ on the space $\mathbb{C}[\mathbb{F}_\infty]$, hence on $\L^p(\VN(\mathbb{F}_\infty))$ by density. In particular, $\cal{H}_\epsi^2-\Id=-Q$ is a rank-one operator.

Fix an element $t \in \mathbb{F}_\infty$ and write its reduced form as
\[
t
=a_1 a_2 \cdots a_m,
\]
where each $a_j$ is one of the letters $g_n$ or $g_n^{-1}$. For any integer $r \in \{0,\ldots,m\}$, set
\[
t_{[1,r]}
\ov{\mathrm{def}}{=} a_1a_2\cdots a_r,
\]
with the convention $t_{[1,0]} \ov{\mathrm{def}}{=} e$. Define the finite subset
\[
E_t
\ov{\mathrm{def}}{=} \big\{ (t_{[1,r]})^{-1} : 0 \leq r \leq m \big\}
\]
of the group $\mathbb{F}_\infty$. Let $s \in \mathbb{F}_\infty$ and consider the reduced product $st$. Cancellation between $s$ and $t$ can only occur at the junction between the last letters of $s$ and the first letters of $t$. Hence the first letter of the reduced word of $st$ coincides with the first letter of $s$, except possibly when $s$ is entirely cancelled by an initial segment of $t$. Equivalently, the first letter of $st$ differs from that of $s$ only when $s\in E_t$. In particular, if $s \notin E_t$ we have
\begin{equation}
\label{inter-577}
\cal{H}_\epsi(\lambda_s)
=\sigma(s)\lambda_s,
\qquad
\cal{H}_\epsi(\lambda_{s t})
=\sigma(s)\lambda_{s t}
\end{equation}
for the same sign $\sigma(s) \in \{-1,1\}$ determined by the first letter of $s$. Note that $e \in E_t$, hence the case $s=e$ is included in the exceptional set and does not require a sign $\sigma(s)$. We deduce that
\begin{align*}
\MoveEqLeft
\left[\cal{H}_\epsi,R_{\lambda_t}\right](\lambda_s)
=\cal{H}_\epsi(R_{\lambda_t}\lambda_{s}) - R_{\lambda_t}\cal{H}_\epsi(\lambda_s)
=\cal{H}_\epsi(\lambda_{st}) - \cal{H}_\epsi(\lambda_s)\lambda_t \\
&\ov{\eqref{inter-577}}{=} \sigma(s)\lambda_{st} - \sigma(s)\lambda_s\lambda_t
=0.   
\end{align*}
Therefore the commutator vanishes on the linear span of the subset $\{\lambda_s : s\notin E_t\}$ and its range is
contained in the finite-dimensional space
$$
\Span\{ \lambda_{st} : s \in E_t\}.
$$
Hence the commutator $[\cal{H}_\epsi,R_{\lambda_t}]$ has finite rank on the space $\mathbb{C}[\mathbb{F}_\infty]$ and extends by continuity to a finite-rank operator on the Banach space $\L^p(\VN(\mathbb{F}_\infty))$.

Let $a \in \C^*_\red(\mathbb{F}_\infty)$ and choose a sequence $(a_k)_{k \geq 1}$ in the group algebra $\mathbb{C}[\mathbb{F}_\infty]$ such that $a_k \to a$ in $\C^*_\red(\mathbb{F}_\infty)$.  We can write each $a_k$ as a \textit{finite} sum $a_k=\sum_{t \in \mathbb{F}_\infty} a_{k,t} \lambda_t$ where $a_{k,t} \in \mathbb{C}$. Consequently, we have
\[
[\cal{H}_\epsi,R_{a_k}]
=
\sum_{t \in \mathbb{F}_\infty} a_{k,t} [\cal{H}_\epsi,R_{\lambda_t}]
\]
is a finite sum of finite-rank operators, hence finite-rank and therefore compact on the Banach space $\L^p(\VN(\mathbb{F}_\infty))$. Since the map $a \mapsto R_a$ is continuous on the space $\C^*_\red(\mathbb{F}_\infty)$, we have $R_{a_k} \to R_a$. So, we obtain
\[
\norm{[\cal{H}_\epsi,R_{a_k}]-[\cal{H}_\epsi,R_a]}
\leq
2\norm{\cal{H}_\epsi}
\norm{R_{a_k}-R_a}
\xra[k \to \infty]{} 0.
\]
We infer that the commutator $[\cal{H}_\epsi,R_a]$ is a norm-limit of compact operators, hence compact.
\end{proof}

By restriction, we can obtain a Banach Fredholm module over each reduced group $\C^*$-algebra $\C^*_{\mathrm{red}}(\mathbb{F}_n)$ for any integer $n \geq 1$. Recall that by \cite[Corollary 3.2 p.~152]{PiV82}, we have $\K_0(\C^*_\red(\mathbb{F}_n))=\Z$ and the generator of $\K_0(\C^*_\red(\mathbb{F}_n))$ is $[1]$. The same reference gives an isomorphism $\K_1(\C^*_\red(\mathbb{F}_n))=\Z^n$ and the generators of the group $\K_1(\C^*_\lambda(\mathbb{F}_n))$ are $[\lambda_{g_1}],\ldots,[\lambda_{g_n}]$. 

\begin{prop}
\label{prop:pairing-free-Hilbert-generators}
Suppose that $1 < p < \infty$. Let $\epsi=(\epsi_j^\pm)_{1\le j\le n}$ be a family of signs. Consider the odd Banach Fredholm module $(\L^p(\VN(\mathbb{F}_n)),\pi,\cal{H}_\epsi)$ over the algebra $\C^*_\red(\mathbb{F}_n)$. Then, for any integer $j \in \{1,\ldots,n\}$, the index pairing of the element $[\lambda_{g_j}]\in \K_1(\C^*_\red(\mathbb{F}_n))$ with the class $[(\L^p(\VN(\mathbb{F}_n)),\pi,\mathcal H_\epsi)]\in \K^1(\C^*_\red(\mathbb{F}_n),\scr B)$ is given by
\begin{equation}
\label{eq:pairing-free-Hilbert-generator}
\bigl\la [\lambda_{g_j}], [(\L^p(\VN(\mathbb{F}_n)),\pi,\cal{H}_\epsi)]\bigr\ra
=
\delta_{\epsi_j^- = 1}
-
\delta_{\epsi_j^+ = 1}.
\end{equation}
In particular, if $\epsi_j^+=1$ and $\epsi_j^-=-1$, then $\la [\lambda_{g_j}], [(\L^p(\VN(\mathbb{F}_n)),\pi,\cal{H}_\epsi)]\ra=-1$.
\end{prop}

\begin{proof}
We use Theorem~\ref{th-pairing-odd} with $u=\lambda_{g_j}$ and $n=1$. Set $F \ov{\mathrm{def}}{=} \mathcal H_\epsi$ and $u_1 \ov{\mathrm{def}}{=} \pi(\lambda_{g_j})=R_{\lambda_{g_j}}$. The Fredholm operator of Theorem~\ref{th-pairing-odd} is
\[
T
=
P u_1 P-(\Id-P),
\qquad
P=\frac{\Id+F}{2}.
\]
Since $F(1)=0$, we have $P(1)=\frac12 1$ and $P$ is not a projection. Now, we replace $P$ by an idempotent $P_0$ differing from $P$ by a finite-rank operator.  Define the bounded operator $P_0\co \L^p(\VN(\mathbb{F}_n))\to \L^p(\VN(\mathbb{F}_n))$ by $P_0 \ov{\mathrm{def}}{=} P-\frac{1}{2}Q$. We have
\[
P_0(1)=0,
\qquad
P_0(\lambda_s)=
\begin{cases}
\lambda_s & \text{if } s\ne e \text{ and } F(\lambda_s)=+\lambda_s,\\
0 & \text{if } s\ne e \text{ and } F(\lambda_s)=-\lambda_s.
\end{cases}
\]
Then $P_0$ is an idempotent and $P-P_0$ has rank one and in particular is compact. Set
\[
T_0
\ov{\mathrm{def}}{=} 
P_0 u_1 P_0-(\Id-P_0).
\]
Since $u_1$ is bounded, we have
\[
T-T_0
=
(P-P_0)u_1P
+
P_0u_1(P-P_0)
+
(P-P_0),
\]
which is a sum of products involving the compact operator $P_0-P$ and bounded operators. Hence $T-T_0$ is compact and therefore
\begin{equation}
\label{eq:index-stable-compact-perturbation}
\Index T
\ov{\eqref{invariance-under-compact-perturbations}}{=}
\Index T_0.
\end{equation}
Let $
X_+ \ov{\mathrm{def}}{=} \Ran P_0$ and $X_- \ov{\mathrm{def}}{=} \Ran(\Id-P_0)$. Then $\L^p(\VN(\mathbb{F}_n)) = X_+\oplus X_-$ topologically and $P_0$ is the projection onto $X_+$ along $X_-$. By construction, $X_+$ is the closed linear span of
\[
\{ \lambda_s : s\ne e,\ \text{the reduced word of } s \text{ starts with } g_k^{\pm 1}
\text{ and } \epsi_k^{\pm}=+1\}.
\]
Consider the operator
\[
S
\ov{\mathrm{def}}{=}
P_0 u_1 P_0
\co
X_+\to X_+.
\]
In the decomposition $\L^p(\VN(\mathbb{F}_n)) = X_+\oplus X_-$, the operator $T_0$ acts as $S$ on $X_+$ and as
$-\Id$ on $X_-$. Hence $T_0$ is Fredholm if and only if $S$ is Fredholm and
\begin{equation}
\label{eq:index-T0-S}
\Index T_0
=\Index S.
\end{equation}
Let $x \in X_+$. Then $Sx=0$ means $P_0(x\lambda_{g_j})=0$. Since $P_0$ is diagonal on the Fourier basis, it suffices to test basis vectors. Let $s\ne e$ with $\lambda_s\in X_+$. Then $S(\lambda_s)=0$ if and only if $\lambda_{s g_j}\in X_-$, namely if the reduced word of $s g_j$ starts with a letter $g_k^{\pm 1}$ with $\epsi_k^\pm=-1$, or if $s g_j=e$.

Now $s g_j=e$ if and only if $s=g_j^{-1}$. In that case $S(\lambda_{g_j^{-1}})=P_0(1)=0$. Moreover, $\lambda_{g_j^{-1}}\in X_+$ if and only if $\epsi_j^- = +1$. If $s\ne g_j^{-1}$, then the first letter of $s g_j$ equals the first letter of $s$, so $\lambda_{s g_j}\in X_+$ whenever $\lambda_s\in X_+$, hence $S(\lambda_s)v\ne 0$. Therefore
\[
\ker S
=
\begin{cases}
\Span \lambda_{g_j^{-1}} & \text{if } \epsi_j^-=+1,\\
\{0\} & \text{if } \epsi_j^-=-1.
\end{cases}
\]
In particular, we have
\begin{equation}
\label{eq:dim-kernel-S}
\dim \ker S
=\delta_{\epsi_j^- = +1}.
\end{equation}
Now, we claim that
\[
X_+/\Ran S
\cong
\begin{cases}
\mathbb{C} \lambda_{g_j} & \text{if } \epsi_j^+=+1,\\
\{0\} & \text{if } \epsi_j^+=-1,
\end{cases}
\]
and hence
\begin{equation}
\label{eq:codim-range-S}
\mathrm{codim}\,\Ran S
=\delta_{\epsi_j^+ = +1}.
\end{equation}
Indeed, fix a basis vector $\lambda_w \in X_+$. If $w \ne g_j$, then $w$ admits a reduced form not equal to the one-letter word $g_j$.
Let $s=w g_j^{-1}$ reduced. Then $s\ne e$ and the reduced word of $s$ starts with the same first letter as $w$. Since $\lambda_w\in X_+$, this first letter has positive sign for $\epsi$, hence $\lambda_s\in X_+$. Moreover, we have
\[
S(\lambda_s)
=P_0(\lambda_{s g_j})
=P_0(\lambda_w)
=\lambda_w,
\]
so $\lambda_w\in \Ran S$. Therefore every basis vector of $X_+$ except possibly $\lambda_{g_j}$ belongs to $\Ran S$.

Finally, consider $\lambda_{g_j}$. If $\epsi_j^+=-1$, then $\lambda_{g_j}\notin X_+$ and there is nothing to prove. If $\epsi_j^+=+1$, then $\lambda_{g_j}\in X_+$. Suppose by contradiction that $\lambda_{g_j}\in \Ran S$. Then there exists $x\in X_+$ with $Sx=\lambda_{g_j}$, meaning $P_0(x\lambda_{g_j})=\lambda_{g_j}$. Writing $x=\sum_s c_s\lambda_s$ as a finite sum in $\mathbb{C}[\mathbb{F}_n]$ first, we see that the coefficient of $\lambda_{g_j}$ in $x\lambda_{g_j}$ is exactly the coefficient of $\lambda_e$ in $x$. But $x\in X_+$ implies $P_0(x)=x$ and $P_0(1)=0$, hence the coefficient of $\lambda_e$ in $x$ is $0$. Therefore the coefficient of $\lambda_{g_j}$ in $x\lambda_{g_j}$ is $0$, a contradiction. By density, the same conclusion holds for general $x\in X_+$. Thus $\lambda_{g_j}\notin \Ran S$ when $\epsi_j^+=+1$, and \eqref{eq:codim-range-S} follows.

We conclude that
\[
\Index T
 \ov{\eqref{eq:index-stable-compact-perturbation} \eqref{eq:index-T0-S}}{=} \Index S
\ov{\eqref{Fredholm-Index}}{=}
\dim\ker S-\mathrm{codim}\,\Ran S
\ov{\eqref{eq:dim-kernel-S} \eqref{eq:codim-range-S}}{=} \delta_{\epsi_j^- = 1}
-
\delta_{\epsi_j^+ = 1}.
\]
This equals the index pairing given by \eqref{pairing-odd-1}, and \eqref{eq:pairing-free-Hilbert-generator} follows.
\end{proof}

\begin{remark} \normalfont
This result and its proof show that it provides a generalization of Gohberg-Krein index theorem, described in \eqref{Gohberg-Krein}.
\end{remark}

\section{Semigroups, Fredholm modules and vectorial Riesz transforms}
\label{sec-triples}
%%%%%%%%%%%%%%%%%%%%%%%%%%%%%%%%%%%%%%%%%%%%%%%%%

\paragraph{Hilbert bimodules} 
We start by reviewing the concept of Hilbert bimodule which is crucial for defining the derivations that allow the introduction of the Hodge-Dirac operators considered in this paper. Let $\cal{M}$ be a von Neumann algebra. A Hilbert $\cal{M}$-bimodule is a Hilbert space $\cal{H}$ together with a $*$-representation $\Phi \co \cal{M} \to \B(\cal{H})$ and a $*$-anti-representation $\Psi \co \cal{M} \to \B(\cal{H})$ such that $\Phi(x)\Psi(y)=\Psi(y)\Phi(x)$ for any $x,y \in \cal{M}$. For all $x, y \in \cal{M}$ and any $\xi \in \cal{H}$, we let $x\xi y \ov{\mathrm{def}}{=} \Phi(x)\Psi(y)\xi$.  We say that the bimodule is normal if $\Phi$ and $\Psi$ are normal, i.e.~weak* continuous. The bimodule is said to be symmetric if there exists an anti-linear involution $\J \co \cal{H} \to \cal{H}$ such that $\J (x\xi y) = y^*\J (\xi)x^*$ for any $x, y \in \cal{M}$ and any $\xi \in \cal{H}$.

%Let $\cal{M}$ be a finite von Neumann algebra equipped with a normal finite faithful trace.

\paragraph{$\mathrm{W}^*$-derivations} Now, we introduce a notion of derivation that can be viewed as an abstract version of a gradient. If $(\cal{H},\Phi,\Psi)$ is a Hilbert $\cal{M}$-bimodule, then following \cite[p.~267]{Wea96} we define a $\mathrm{W}^*$-derivation to be a weak* closed densely defined unbounded operator $\partial \co \dom \partial \subset \cal{M} \to \cal{H}$ such that the domain $\dom \partial$ is a weak* dense unital $*$-subalgebra of $\cal{M}$ and 
%\begin{equation}
%\label{Leibniz}
%\partial(xy) 
%= \sigma_{\frac{\i}{2}}^{\varphi}(x)\partial(y) + \partial(x)y, \quad x, y \in \dom \partial.
%\end{equation}
%In the particular case where the state is a trace, this is equivalent to
\begin{equation}
\label{Leibniz}
\partial(fg) 
= f\partial(g) + \partial(f)g, \quad f, g \in \dom \partial.
\end{equation}
We say that a $\mathrm{W}^*$-derivation is symmetric if the bimodule $(\cal{H},\Phi,\Psi)$ is symmetric and if we have $\J(\partial(f)) = \partial(f^*)$ for any $x \in \dom \partial$. In the sequel, we let $\cal{B} \ov{\mathrm{def}}{=} \dom \partial$.

\paragraph{The triple $(\L^\infty(\cal{M}),\L^2(\cal{M}) \oplus_2 \cal{H},D)$}
From a derivation, we will now explain how to introduce a triple in the spirit of noncommutative geometry. Let $\partial \co \dom \partial \subset \cal{M} \to \cal{H}$ be a $\mathrm{W}^*$-derivation where the von Neumann algebra $\cal{M}$ is equipped with a normal faithful finite trace $\tau$. Suppose that the operator $\partial \co \dom \partial \subset \L^2(\cal{M}) \to \cal{H}$ is closable. We denote again its closure by $\partial$. Note that the subspace $\mathcal{B}$ is a core of $\partial$. Recall that it is folklore and well-known that a weak* dense subalgebra of $\cal{M}$ is dense in the space $\L^2(\cal{M})$. As the operator $\partial$ is densely defined and closed, by \cite[Theorem 5.29 p.~168]{Kat76} the adjoint operator $\partial^*$ is densely defined and closed on $\L^2(\cal{M})$ and $\partial^{**}=\partial$. 

Following essentially \cite{HiT13b} and \cite{Cip16}, we introduce the unbounded closed operator $D$ on the Hilbert space $\L^2(\cal{M}) \oplus_2 \cal{H}$ introduced in \eqref{Hodge-Dirac-I} and defined by
\begin{equation}
\label{Def-D-psi}
D(f,g)
\ov{\mathrm{def}}{=}
\big(\partial^*(g),
\partial(f)\big), \quad f \in \dom \partial,\  g \in \dom \partial^*.
\end{equation}
We call it the Hodge-Dirac operator associated to $\partial$ and it can be written as in \eqref{Hodge-Dirac-I}. It is not difficult to check that this operator is selfadjoint. If $f \in \L^\infty(\cal{M})$, we define the bounded operator $\pi_f \co \L^2(\cal{M}) \oplus_2 \cal{H} \to \L^2(\cal{M}) \oplus_2 \cal{H}$ by
\begin{equation}
\label{Def-pi-a}
\pi_f
\ov{\mathrm{def}}{=} \begin{bmatrix}
    \M_f & 0  \\
    0 & \Phi_f  \\
\end{bmatrix}, \quad f \in \L^\infty(\cal{M})
\end{equation}
where the linear map $\M_f \co \L^2(\cal{M}) \to \L^2(\cal{M})$, $g \mapsto fg$ is the multiplication operator by $f$ and where $\Phi_f \co \cal{H} \to \cal{H}$, $h \mapsto fh$ is the left bimodule action. 

Note that completely Dirichlet forms give rise to $\mathrm{W}^*$-derivations, see \cite{CiS03}, \cite{Cip97}, \cite{Cip08}, \cite{Cip16} and \cite{Wir22} and references therein. More precisely, if $\cal{E}$ is a completely Dirichlet form on a noncommutative $\L^2$-space $\L^2(\cal{M})$ with associated semigroup $(T_t)_{t \geq 0}$ and associated operator $A_2$, there exist a symmetric Hilbert $\cal{M}$-bimodule $(\cal{H},\Phi,\Psi,\J)$ and a $\mathrm{W}^*$-derivation $\partial \co \dom \partial \subset \cal{M} \to \cal{H}$ such that $\partial \co \dom \partial \subset \L^2(\cal{M}) \to \cal{H}$ is closable with closure also denoted by $\partial$ and satisfying $A_2=\partial^*\partial$. In other words, this means that the opposite of the infinitesimal generator $-A_2$ of a symmetric sub-Markovian semigroup of operators can always be represented as the composition of a <<divergence>> $\partial^*$ with a <<gradient>> $\partial$. Note that the symmetric Hilbert $\cal{M}$-bimodule $(\cal{H},\Phi,\Psi,\J)$ is not unique since one can always artificially expand $\cal{H}$. However, if we add an additional condition, namely that <<the bimodule is generated by the derivation $\partial$>>, we obtain uniqueness, see \cite[Theorem 6.9]{Wir22} for a precise statement. In this case, the bimodule (and its associated derivation) is called the tangent bimodule associated with the completely Dirichlet form (or the associated semigroup).

%(this means that the closed span of $\{x\partial(z) y : x,y \in, z \in \dom \partial \}$ \textbf{is equal} to the Hilbert space $\cal{H}$)

This fundamental result allows anyone to introduce a triple $(\L^\infty(\Omega),\L^2(\Omega) \oplus_2 \cal{H},D)$ associated to the semigroup in the spirit of noncommutative geometry. Here $D$ is the unbounded selfadjoint operator acting on a dense subspace of the Hilbert space $\L^2(\Omega) \oplus_2 \cal{H}$ defined by 
\begin{equation}
\label{Hodge-Dirac-I}
D
\ov{\mathrm{def}}{=}
\begin{bmatrix} 
0 & \partial^* \\ 
\partial & 0 
\end{bmatrix}.
\end{equation}
The Hodge-Dirac operator $D$ of \eqref{Hodge-Dirac-I} is related to the operator $A_2$ by
\begin{equation}
\label{carre-de-D}
D^2
\ov{\eqref{Hodge-Dirac-I}}{=}
\begin{bmatrix} 
0 & \partial^* \\ 
\partial & 0 
\end{bmatrix}^2
=\begin{bmatrix} 
\partial^*\partial & 0 \\ 
0 & \partial \partial^*
\end{bmatrix}
=\begin{bmatrix} 
A_2 & 0 \\ 
0 & \partial \partial^*
\end{bmatrix}. 
\end{equation}
Now, suppose that $1 < p < \infty$. Sometimes, the map $\partial_2$ induces a closable unbounded operator $\partial \co \dom \partial \subset \L^p(\cal{M}) \to \cal{X}_p$ for some Banach space $\cal{X}_p$. Denoting by $\partial_p$ its closure, we can consider the $\L^p$-realization of the previous operator
\begin{equation}
\label{Hodge-Dirac-I}
D_p
\ov{\mathrm{def}}{=}
\begin{bmatrix} 
0 & (\partial_{p^*})^* \\ 
\partial_p & 0 
\end{bmatrix}
\end{equation}
as acting on a dense subspace of the Banach space $\L^p(\cal{M}) \oplus_p \cal{X}_p$. If the operator $D_p$ is bisectorial and admits a bounded $\H^\infty(\Sigma_\theta^\bi)$ functional calculus for some $\theta \in (0,\frac{\pi}{2})$, we have
\begin{align*}
\MoveEqLeft
\label{}
\sgn D_p
=D_p|D_p|^{-1}
=D_p(D_p^2)^{-\frac{1}{2}}
=\begin{bmatrix} 
0 & (\partial_{p^*})^* \\ 
\partial_p & 0 
\end{bmatrix}
\left(\begin{bmatrix} 
A_p & 0 \\ 
0 & \partial_p (\partial_{p^*})^*
\end{bmatrix}\right)^{-\frac{1}{2}} 
%&=\begin{bmatrix} 
%0 & \partial^* \\ 
%\partial & 0 
%\end{bmatrix}
%\begin{bmatrix} 
%A_2^{-\frac{1}{2}} & 0 \\ 
%0 & (\partial \partial^*)^{-\frac{1}{2}}
%\end{bmatrix}
=\begin{bmatrix} 
0 & * \\ 
\partial_p A_p^{-\frac{1}{2}} & 0 
\end{bmatrix}.
\end{align*}

In particular, the operator $\partial_p A_p^{-\frac{1}{2}} \co \L^p(\cal{M}) \to \cal{X}_p$ is bounded.

\begin{defi}
We say that $\cal{R} \ov{\mathrm{def}}{=} \partial_p A_p^{-\frac{1}{2}}$ is the (abstract) vectorial Riesz transform associated with the symmetric sub-Markovian semigroup $(T_t)_{t \geq 0}$.
\end{defi}

%Assume that \eqref{Def-pi-a} induces an homomorphism $\pi_f \co \L^p(\cal{M}) \oplus_p \cal{X}_p \to \L^p(\cal{M}) \oplus_2 \cal{X}_p$ by
%\begin{equation}
%\label{Def-pi-a}
%\pi_f
%\ov{\mathrm{def}}{=} \begin{bmatrix}
    %\M_f & 0  \\
    %0 & \Phi_f  \\
%\end{bmatrix}, \quad f \in \L^\infty(\cal{M})
%\end{equation}
%where the linear map $\M_f \co \L^p(\cal{M}) \to \L^p(\cal{M})$, $g \mapsto fg$ is the multiplication operator by $f$.% and where $\Phi_f \co \cal{H} \to \cal{H}$, $h \mapsto fh$ is the left bimodule action. 

From the perspective of the action of the algebra or the kernel of the Dirac operator, the Banach space $\cal{X}_p$ is sometimes too large and needs to be replaced by a smaller space. In this case, we consider the closed subspace
\begin{equation}
\label{Def-Z-q-p}
\Omega_{p}
\ov{\mathrm{def}}{=} \ovl{\mathrm{span} \big\{ g\partial_{p}(f):  f \in \dom \partial_{p}, g \in \L^\infty(\cal{M}) }\big\}
\end{equation}
of the Banach space $\cal{X}_p$. And we replace the Banach space $\cal{X}_p$ by the subspace $\Omega_{p}$. The advantage of this space is the following result.

\begin{prop}
\label{prop-bimodule}
The Banach space $\Omega_{p}$ is a $\L^\infty(\cal{M})$-bimodule. 
\end{prop}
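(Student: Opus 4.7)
The plan is to check that the closed subspace $\Omega_p$ is stable under both the left and right actions of $\L^\infty(\cal{M})$ inherited from the bimodule structure on $\cal{X}_p$. By definition, $\Omega_p$ is the closure of the linear span of generators $g\partial_p(f)$ with $g \in \L^\infty(\cal{M})$ and $f \in \dom \partial_p$; since $\cal{B} = \dom \partial$ is a core for $\partial_p$, such generators with $f \in \cal{B}$ are already dense in $\Omega_p$. It is therefore enough to verify invariance on these generators and then close up, using the fact that the left and right actions on $\cal{X}_p$ are bounded (so that $\Omega_p$ being a closed subspace is preserved under norm limits of the two actions).

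For left invariance, given any $h \in \L^\infty(\cal{M})$ one has on a generator
\begin{equation*}
h \cdot \big(g\partial_p(f)\big) = (hg)\partial_p(f) \in \Omega_p,
\end{equation*}
since $hg \in \L^\infty(\cal{M})$. Extending by linearity and continuity gives $\Phi_h(\Omega_p) \subset \Omega_p$. For right invariance, the key algebraic input is the Leibniz rule \eqref{Leibniz}, which transfers from $\partial$ on $\cal{B}$ to its $\L^p$-closure $\partial_p$. For $f, h \in \cal{B}$ we have $fh \in \cal{B} \subset \dom \partial_p$ and
\begin{equation*}
\partial_p(f)\,h = \partial_p(fh) - f\,\partial_p(h),
\end{equation*}
hence $g\partial_p(f)\,h = g\,\partial_p(fh) - (gf)\,\partial_p(h) \in \Omega_p$. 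Density of $\cal{B}$ in $\dom \partial_p$ for the graph norm, together with boundedness of right multiplication by $h$, extends this to arbitrary $f \in \dom \partial_p$, so that $\Omega_p \cdot h \subset \Omega_p$ for every $h \in \cal{B}$.

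The main obstacle is lifting the right invariance from $h \in \cal{B}$ to a general $h \in \L^\infty(\cal{M})$, since the expression on a generator no longer involves $\partial_p(h)$. My plan is to invoke the Kaplansky density theorem to produce a net $(h_i)$ in the weak* dense $*$-subalgebra $\cal{B}$ with $\|h_i\|_\infty \leq \|h\|_\infty$ and $h_i \to h$ in the strong operator topology on $\L^2(\cal{M})$, and then to use the normality of the Hilbert bimodule structure on $\cal{H}$ (together with the compatibility of the $\cal{X}_p$-action with the $\cal{H}$-action on a dense subspace) to conclude that $v \cdot h_i \to v \cdot h$ weakly in $\cal{X}_p$ for every generator $v = g\partial_p(f)$. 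Since each $v \cdot h_i$ lies in $\Omega_p$ by the previous step, and $\Omega_p$ is a closed, hence weakly closed, subspace of $\cal{X}_p$, the weak limit $v \cdot h$ belongs to $\Omega_p$. By linearity and norm continuity, $\Psi_h(\Omega_p) \subset \Omega_p$, completing the proof. The delicate technical point, and the place where one has to be careful about the precise construction of $\cal{X}_p$ from $\cal{H}$, is the separate weak continuity of the right action along norm-bounded nets, which ultimately reduces to the normality of the anti-representation $\Psi$ on $\cal{H}$.
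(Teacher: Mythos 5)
Your proof is correct and follows the same route as the paper: an easy left-module verification on generators, the Leibniz rule for right multiplication by $h \in \cal{B}=\dom\partial$, extension to $f\in\dom\partial_p$ via the core property, and finally extension to general $h\in\L^\infty(\cal{M})$ by approximating $h$ with a bounded net in $\cal{B}$ converging in the strong operator topology. Your explicit appeals to Kaplansky density, normality of the bimodule, and weak closedness of the norm-closed subspace $\Omega_p$ merely flesh out the convergence step that the paper states tersely; the underlying argument is identical.
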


\begin{proof}
For any $f \in \dom \partial$ and any $g,h \in \L^\infty(\cal{M})$, note that 
$$
g(h\partial(f))
=gh\partial(f).
$$ 
Thus by linearity and density, $\Omega_{p}$ is a right $\L^\infty(\cal{M})$-module. Moreover, for any $g \in \L^\infty(\cal{M})$ and any $f,h \in \dom \partial$, we have
\begin{align*}
\MoveEqLeft
g\partial(f)h
\ov{\eqref{Leibniz}}{=} g\big[\partial(fh)-f\partial(h)\big]
=g\partial(hf)-gf\partial(h).            
\end{align*}
Thus $g\partial(f)h$ belongs to $\Omega_{p}$. Since $\dom \partial$ is a core for $\partial_{p}$, the same holds for $f \in \dom \partial_{p}$. If $h \in \L^\infty(\cal{M})$ is a general element, we approximate it in the strong operator topology by a bounded net in $\dom \partial$ and obtain again that the same holds for $h \in \L^\infty(\cal{M})$. By linearity and density, we deduce that $\Omega_{p}$ is a left $\L^\infty(\cal{M})$-module, so finally a $\L^\infty(\cal{M})$-bimodule.
\end{proof}

\begin{remark} \normalfont
It is interesting to observe that if $\L^\infty(\cal{M})$ is commutative, i.e.~the algebra $\L^\infty(\Omega)$ of a (localizable) measure space $\Omega$, the previous bimodule is not necessarily commutative, i.e.~we does not have $g k h=h k g$ for any $g,h \in \L^\infty(\Omega)$ and $k \in \Omega_{p}$. In our previous paper, we referred to this phenomenon as <<hidden noncommutative geometry>>.
\end{remark}

If $f \in \L^\infty(\cal{M})$, we define the bounded operator $\pi_f \co \L^p(\cal{M}) \oplus_p \Omega_p \to \L^p(\cal{M}) \oplus_p \Omega_{p}$ by
\begin{equation}
\label{Def-pi-a}
\pi_f
\ov{\mathrm{def}}{=} \begin{bmatrix}
    \M_f & 0  \\
    0 & \Phi_f  \\
\end{bmatrix}, \quad f \in \L^\infty(\cal{M})
\end{equation}
where the linear map $\M_f \co \L^p(\cal{M}) \to \L^p(\cal{M})$, $g \mapsto fg$ is the multiplication operator by $f$ and where $\Phi_f \co \Omega_{p} \to \Omega_{p}$, $h \mapsto fh$ is the left bimodule action provided by Proposition \ref{prop-bimodule}.

If $\bigg(\L^p(\cal{M}) \oplus_p \Omega_{p},\sgn D_p,\begin{bmatrix}
  -\Id   & 0  \\
    0 &  \Id \\
\end{bmatrix}\bigg)$ is a (well-defined) even Banach Fredholm module over a subalgebra $\cal{A}$ of $\L^\infty(\cal{M})$, the pairing with the K-homology group $\K_0(\cal{A})$ is given by the following formula. If $e \in \M_n(\cal{A})$ is an idempotent then
\begin{equation}
\label{pairing-even-abstract}
\big\la [e], (\L^p(\cal{M}) \oplus_p \Omega_{p},\sign D) \big\ra_{\K_0(\cal{A}),\K^0(\cal{A},\mathscr{L}^p_{\nc})}         
\ov{\eqref{pairing-even-1}}{=} \Index  e_n (\Id \ot \cal{R}) e_n.
\end{equation}
%We obtain
%\begin{equation}
%\label{commutator-sgn-4}
%\left[\sgn D,f\right]
%\ov{\eqref{commutator-12}}{=}
%\begin{bmatrix}
  %0   &  F_-a_- -a_+F_+ \\
  %\cal{R}M_f-\Phi_f\cal{R}   &  0 \\
%\end{bmatrix}.
%\end{equation}

%\textbf{$n=1$ pas forcement interessant}
%
%Parler des commutateurs 
%
%It is not clear in the general case to know if we obtain a Banach Fredholm module or a possibly kernel-degenerate Banach Fredholm module. 

%\begin{remark} \normalfont
%
%\end{remark}

\begin{remark} \normalfont
Note that with Proposition \ref{prop-even-to-odd-Fredholm}, we can obtain sometimes an odd Banach Fredholm module with $\sgn D_p$, so a pairing with the group $\K_1(\cal{A})$.
\end{remark}

%%%%%%%%%%%%%%%%%%%%%%%%%%%%%%%%%%%%%%%%%%%%%%%%%
\section{Future directions}
\label{sec-future}
%%%%%%%%%%%%%%%%%%%%%%%%%%%%%%%%%%%%%%%%%%%%%%%%%

In a future update of this preliminary preprint, we will elaborate on the ideas presented in the final section. We will also expand several sections and clarify the analogies with \cite{ScS22}. Finally, we will undertake a thorough study of the groups $\K^{0}(\cal{A},\scr{B})$ and $\K^{1}(\cal{A},\scr{B})$ (and some generalizations) and their connections with \cite{Laf02} in a forthcoming paper.

%%%%%%%%%%%%%%%%%%%%%%%%%%%%%%%%%%%%%%%%%%%%%%%%%%%%%%%%%%%%%%%%%%%%%%%%%%%%%%%%%%%%%%%%%%%%%%%%%%%%%%%%%%
%\subsection{The noncommutative tori}

%\textbf{We refer to \cite{Arh24b} and \cite{Arh24c} for concrete examples.
%An example is given in \cite{CGIS14}} On prevoie d'etendre

%\begin{remark} \normalfont
%It is worth noting that the commutator $$ of the vectorial Riesz transform apear in the commutators $\left[\sgn D,f\right]$
%\end{remark}

%%%%%%%%%%%%%%%%%%%%%%%%%%%%%%%%%%%%%%%%%%%%%%%%%%%%%%%%%%%%%%%%%%%%%%%%%%%%%%%%%%%%%%%%%%%%%%%%%%%%%%%%%%%%%%%%%%%%%%%%%%%%%%%%%%%%%%%%%%%%%%%%%%%%%%
\paragraph{Declaration of interest} None.

\paragraph{Competing interests} The author declares that he has no competing interests.

\paragraph{Data availability} No data sets were generated during this study.
%s and Grants

\paragraph{Acknowledgment} The author would like to thank Hermann Schulz-Baldes for bringing his recent book \cite{ScS22} to our attention on his \textit{own} initiative and Anton Savin and Markus Haase and Edward McDonald for short discussions.

%The author gratefully acknowledges the support from the French National Research Agency grant ANR-18-CE40-0021 (project HASCON). We are thankful to Shouhei Honda, Naotaka Kajino, Jun Kigami, Bogdan Nica, Arup Kumar Pal, Jiayin Pan, El Maati Ouhabaz, Adrián González-Pérez, Sang-Gyun Youn, and Melchior Wirth for their valuable insights during brief discussions. Our appreciation extends to Li Gao and Bogdan Nica for their feedback and corrections, and to Christopher Sogge for pointing us to the reference \cite{BSS21}. Finally, a special note of gratitude goes to Bruno Iochum for the numerous discussions we had between the inception of the first version of this paper and the release of its preprint \cite{IoZ23}.

%%%%%%%%%%%%%%%%%%%%%%%%%%%%%%%%%%%%%%%%%%%%%%%%%%%%%%%%%%%%%%%%%%%%%%%%%%%%%%%%%%%%%%%%%%%%%%%%%%%%%%%%%%%%%%%%%%%%%%%%%%%%%
%%%%%%%%%%%%%%%%%%%%%%%%%%%%%%%%%%%%%%%%%%%%%%%%%%%%%%%%%%%%%%%%%%%%%%%%%%%%%%%%%%%%%%%%%%%%%%%%%%%%%%%%%%%%%%%%%%%%%%%%%%%%%
\small

{\footnotesize

\vspace{0.2cm}

\noindent C\'edric Arhancet\\ 
\noindent 6 rue Didier Daurat, 81000 Albi, France\\
URL: \href{http://sites.google.com/site/cedricarhancet}{https://sites.google.com/site/cedricarhancet}\\
cedric.arhancet@protonmail.com\\%cedric.arhancet@gmail.com 
ORCID: 0000-0002-5179-6972 
}

\end{document}